\documentclass[a4paper,leqno,11pt]{amsart}
\usepackage[utf8]{inputenc}
\usepackage[margin = 17mm]{geometry}
\usepackage[T1]{fontenc}
\usepackage{graphicx,amsmath,mathrsfs,amssymb,amsthm,amsfonts,float, tikz,tikz-cd,mathtools,bm,csquotes}

\usepackage[style=alphabetic]{biblatex}
\usepackage[pdfencoding=auto]{hyperref}
\hypersetup{colorlinks=true, allcolors=blue}        

\makeatletter
\def\@tocline#1#2#3#4#5#6#7{\relax
  \ifnum #1>\c@tocdepth 
  \else
    \par \addpenalty\@secpenalty\addvspace{#2}%
    \begingroup \hyphenpenalty\@M
    \@ifempty{#4}{%
      \@tempdima\csname r@tocindent\number#1\endcsname\relax
    }{%
      \@tempdima#4\relax
    }%
    \parindent\z@ \leftskip#3\relax \advance\leftskip\@tempdima\relax
    \rightskip\@pnumwidth plus4em \parfillskip-\@pnumwidth
    #5\leavevmode\hskip-\@tempdima
      \ifcase #1
       \or\or \hskip 1em \or \hskip 2em \else \hskip 3em \fi%
      #6\nobreak\relax
    \hfill\hbox to\@pnumwidth{\@tocpagenum{#7}}\par
    \nobreak
    \endgroup
  \fi}
\makeatother

\title{Quiver varieties for affine orthogonal quivers}

\author{Victor Pinot}
\address{Université de Lille, CNRS, UMR 8524 -- Laboratoire Paul Painlevé}
\email{victor.pinot@univ-lille.fr}
\date{\today}

\DeclareMathOperator{\NN}{\mathbb{N}}
\DeclareMathOperator{\ZZ}{\mathbb{Z}}
\DeclareMathOperator{\CC}{\mathbb{C}}
\DeclareMathOperator{\PP}{\mathbb{P}}
\DeclareMathOperator{\OO}{\mathrm{O}}
\DeclareMathOperator{\Sp}{\mathrm{Sp}}
\DeclareMathOperator{\GL}{\mathrm{GL}}
\DeclareMathOperator{\ggot}{\mathfrak{g}}

\DeclareMathOperator{\Rep}{\mathrm{Rep}}
\DeclareMathOperator{\Srep}{\mathrm{SRep}}
\DeclareMathOperator{\repinvolution}{\sigma}
\DeclareMathOperator{\lieinvolution}{\varsigma}
\DeclareMathOperator{\Sym}{S^2}
\DeclareMathOperator{\Hom}{\mathrm{Hom}_{\CC}}
\DeclareMathOperator{\tr}{tr}
\DeclareMathOperator{\transpose}{\mathsf t}
\DeclareMathOperator{\dd}{\mathrm{d}}

\DeclareMathOperator{\Mgot}{\mathfrak{M}}
\DeclareMathOperator{\Mgotgras}{\boldsymbol{\Mgot}}
\DeclareMathOperator{\mugras}{\boldsymbol{\mu}}

\DeclareMathOperator{\inj}{\hookrightarrow}
\DeclareMathOperator{\surj}{\twoheadrightarrow}
\DeclareMathOperator{\action}{\curvearrowright}
\DeclareMathOperator{\opp}{\mathrm{opp}}

\DeclareMathOperator{\Mor}{\mathrm{Mor}}
\DeclareMathOperator{\End}{\mathrm{End}}
\DeclareMathOperator{\VV}{\mathbb{V}}
\DeclareMathOperator{\Spec}{\mathrm{Spec}}
\DeclareMathOperator{\pt}{\mathrm{reduced \ point}}

\renewcommand*{\multicitedelim}{\addcomma\space}
\newcommand{\cycleCone}[1][1]{%
\mathord{%
\hspace{+2mm}%
\raisebox{-0.4\height}{%
\scalebox{#1}{%
\begin{tikzpicture}[x=0.3pt,y=0.3pt,yscale=-1,xscale=1]

\draw  [color={rgb, 255:red, 0; green, 0; blue, 0 }  ,draw opacity=1 ][fill={rgb, 255:red, 0; green, 0; blue, 0 }  ,fill opacity=1 ][line width=0.75]  (272,151) .. controls (272,152.66) and (273.34,154) .. (275,154) .. controls (276.66,154) and (278,152.66) .. (278,151) .. controls (278,149.34) and (276.66,148) .. (275,148) .. controls (273.34,148) and (272,149.34) .. (272,151) -- cycle ;
\draw  [color={rgb, 255:red, 0; green, 0; blue, 0 }  ,draw opacity=1 ][fill={rgb, 255:red, 0; green, 0; blue, 0 }  ,fill opacity=1 ][line width=0.75]  (422,151) .. controls (422,152.66) and (423.34,154) .. (425,154) .. controls (426.66,154) and (428,152.66) .. (428,151) .. controls (428,149.34) and (426.66,148) .. (425,148) .. controls (423.34,148) and (422,149.34) .. (422,151) -- cycle ;
\draw  [color={rgb, 255:red, 0; green, 0; blue, 0 }  ,draw opacity=1 ][fill={rgb, 255:red, 0; green, 0; blue, 0 }  ,fill opacity=1 ][line width=0.75]  (307.5,193) .. controls (307.5,194.66) and (308.84,196) .. (310.5,196) .. controls (312.16,196) and (313.5,194.66) .. (313.5,193) .. controls (313.5,191.34) and (312.16,190) .. (310.5,190) .. controls (308.84,190) and (307.5,191.34) .. (307.5,193) -- cycle ;
\draw  [color={rgb, 255:red, 0; green, 0; blue, 0 }  ,draw opacity=1 ][fill={rgb, 255:red, 0; green, 0; blue, 0 }  ,fill opacity=1 ][line width=0.75]  (386.5,193) .. controls (386.5,194.66) and (387.84,196) .. (389.5,196) .. controls (391.16,196) and (392.5,194.66) .. (392.5,193) .. controls (392.5,191.34) and (391.16,190) .. (389.5,190) .. controls (387.84,190) and (386.5,191.34) .. (386.5,193) -- cycle ;
\draw  [color={rgb, 255:red, 0; green, 0; blue, 0 }  ,draw opacity=1 ][fill={rgb, 255:red, 0; green, 0; blue, 0 }  ,fill opacity=1 ][line width=0.75]  (307.5,108) .. controls (307.5,109.66) and (308.84,111) .. (310.5,111) .. controls (312.16,111) and (313.5,109.66) .. (313.5,108) .. controls (313.5,106.34) and (312.16,105) .. (310.5,105) .. controls (308.84,105) and (307.5,106.34) .. (307.5,108) -- cycle ;
\draw  [color={rgb, 255:red, 0; green, 0; blue, 0 }  ,draw opacity=1 ][fill={rgb, 255:red, 0; green, 0; blue, 0 }  ,fill opacity=1 ][line width=0.75]  (386.5,108) .. controls (386.5,109.66) and (387.84,111) .. (389.5,111) .. controls (391.16,111) and (392.5,109.66) .. (392.5,108) .. controls (392.5,106.34) and (391.16,105) .. (389.5,105) .. controls (387.84,105) and (386.5,106.34) .. (386.5,108) -- cycle ;
\draw [line width=0.75]    (277.29,157.07) .. controls (284.18,170.4) and (292.13,182.42) .. (312.36,187.98) .. controls (332.58,193.53) and (368.58,193.98) .. (387.69,187.76) .. controls (406.8,181.53) and (418.8,165.31) .. (418.58,151.09) .. controls (418.36,136.87) and (403.87,118.67) .. (385.47,112.64) .. controls (367.07,106.62) and (331.64,108.89) .. (311.91,113.31) .. controls (293.26,117.49) and (284.89,128.24) .. (278.03,142.35) ;
\draw [shift={(276.84,144.84)}, rotate = 295.95] [fill={rgb, 255:red, 0; green, 0; blue, 0 }  ][line width=0.08]  [draw opacity=0] (10.72,-5.15) -- (0,0) -- (10.72,5.15) -- (7.12,0) -- cycle    ;
\end{tikzpicture}
}%
}%
}%
}

\newcommand{\cycleConehaut}[1][1]{%
\mathord{%
\hspace{+2mm}%
\raisebox{-0.4\height}{%
\scalebox{#1}{%
\begin{tikzpicture}[x=0.3pt,y=0.3pt,yscale=-1,xscale=1]

\draw  [color={rgb, 255:red, 0; green, 0; blue, 0 }  ,draw opacity=1 ][fill={rgb, 255:red, 0; green, 0; blue, 0 }  ,fill opacity=1 ][line width=0.75]  (272,151) .. controls (272,152.66) and (273.34,154) .. (275,154) .. controls (276.66,154) and (278,152.66) .. (278,151) .. controls (278,149.34) and (276.66,148) .. (275,148) .. controls (273.34,148) and (272,149.34) .. (272,151) -- cycle ;
\draw  [color={rgb, 255:red, 0; green, 0; blue, 0 }  ,draw opacity=1 ][fill={rgb, 255:red, 0; green, 0; blue, 0 }  ,fill opacity=1 ][line width=0.75]  (422,151) .. controls (422,152.66) and (423.34,154) .. (425,154) .. controls (426.66,154) and (428,152.66) .. (428,151) .. controls (428,149.34) and (426.66,148) .. (425,148) .. controls (423.34,148) and (422,149.34) .. (422,151) -- cycle ;
\draw  [color={rgb, 255:red, 0; green, 0; blue, 0 }  ,draw opacity=1 ][fill={rgb, 255:red, 0; green, 0; blue, 0 }  ,fill opacity=1 ][line width=0.75]  (307.5,193) .. controls (307.5,194.66) and (308.84,196) .. (310.5,196) .. controls (312.16,196) and (313.5,194.66) .. (313.5,193) .. controls (313.5,191.34) and (312.16,190) .. (310.5,190) .. controls (308.84,190) and (307.5,191.34) .. (307.5,193) -- cycle ;
\draw  [color={rgb, 255:red, 0; green, 0; blue, 0 }  ,draw opacity=1 ][fill={rgb, 255:red, 0; green, 0; blue, 0 }  ,fill opacity=1 ][line width=0.75]  (386.5,193) .. controls (386.5,194.66) and (387.84,196) .. (389.5,196) .. controls (391.16,196) and (392.5,194.66) .. (392.5,193) .. controls (392.5,191.34) and (391.16,190) .. (389.5,190) .. controls (387.84,190) and (386.5,191.34) .. (386.5,193) -- cycle ;
\draw  [color={rgb, 255:red, 0; green, 0; blue, 0 }  ,draw opacity=1 ][fill={rgb, 255:red, 0; green, 0; blue, 0 }  ,fill opacity=1 ][line width=0.75]  (307.5,108) .. controls (307.5,109.66) and (308.84,111) .. (310.5,111) .. controls (312.16,111) and (313.5,109.66) .. (313.5,108) .. controls (313.5,106.34) and (312.16,105) .. (310.5,105) .. controls (308.84,105) and (307.5,106.34) .. (307.5,108) -- cycle ;
\draw  [color={rgb, 255:red, 0; green, 0; blue, 0 }  ,draw opacity=1 ][fill={rgb, 255:red, 0; green, 0; blue, 0 }  ,fill opacity=1 ][line width=0.75]  (386.5,108) .. controls (386.5,109.66) and (387.84,111) .. (389.5,111) .. controls (391.16,111) and (392.5,109.66) .. (392.5,108) .. controls (392.5,106.34) and (391.16,105) .. (389.5,105) .. controls (387.84,105) and (386.5,106.34) .. (386.5,108) -- cycle ;
\draw [line width=0.75]    (418,148.1) .. controls (411,135.1) and (403.87,118.67) .. (385.47,112.64) .. controls (367.07,106.62) and (331.64,108.89) .. (311.91,113.31) .. controls (293.26,117.49) and (290.67,131.89) .. (282.56,145.28) ;
\draw [shift={(281.09,147.6)}, rotate = 301.2] [fill={rgb, 255:red, 0; green, 0; blue, 0 }  ][line width=0.08]  [draw opacity=0] (10.72,-5.15) -- (0,0) -- (10.72,5.15) -- (7.12,0) -- cycle    ;
\end{tikzpicture}
}%
}%
}%
}

\newcommand{\cycleConebas}[1][1]{%
\mathord{%
\hspace{+2mm}%
\raisebox{-0.4\height}{%
\scalebox{#1}{%
\begin{tikzpicture}[x=0.3pt,y=0.3pt,yscale=-1,xscale=1]

\draw  [color={rgb, 255:red, 0; green, 0; blue, 0 }  ,draw opacity=1 ][fill={rgb, 255:red, 0; green, 0; blue, 0 }  ,fill opacity=1 ][line width=0.75]  (246,142) .. controls (246,143.66) and (247.34,145) .. (249,145) .. controls (250.66,145) and (252,143.66) .. (252,142) .. controls (252,140.34) and (250.66,139) .. (249,139) .. controls (247.34,139) and (246,140.34) .. (246,142) -- cycle ;
\draw  [color={rgb, 255:red, 0; green, 0; blue, 0 }  ,draw opacity=1 ][fill={rgb, 255:red, 0; green, 0; blue, 0 }  ,fill opacity=1 ][line width=0.75]  (396,142) .. controls (396,143.66) and (397.34,145) .. (399,145) .. controls (400.66,145) and (402,143.66) .. (402,142) .. controls (402,140.34) and (400.66,139) .. (399,139) .. controls (397.34,139) and (396,140.34) .. (396,142) -- cycle ;
\draw  [color={rgb, 255:red, 0; green, 0; blue, 0 }  ,draw opacity=1 ][fill={rgb, 255:red, 0; green, 0; blue, 0 }  ,fill opacity=1 ][line width=0.75]  (281.5,184) .. controls (281.5,185.66) and (282.84,187) .. (284.5,187) .. controls (286.16,187) and (287.5,185.66) .. (287.5,184) .. controls (287.5,182.34) and (286.16,181) .. (284.5,181) .. controls (282.84,181) and (281.5,182.34) .. (281.5,184) -- cycle ;
\draw  [color={rgb, 255:red, 0; green, 0; blue, 0 }  ,draw opacity=1 ][fill={rgb, 255:red, 0; green, 0; blue, 0 }  ,fill opacity=1 ][line width=0.75]  (360.5,184) .. controls (360.5,185.66) and (361.84,187) .. (363.5,187) .. controls (365.16,187) and (366.5,185.66) .. (366.5,184) .. controls (366.5,182.34) and (365.16,181) .. (363.5,181) .. controls (361.84,181) and (360.5,182.34) .. (360.5,184) -- cycle ;
\draw  [color={rgb, 255:red, 0; green, 0; blue, 0 }  ,draw opacity=1 ][fill={rgb, 255:red, 0; green, 0; blue, 0 }  ,fill opacity=1 ][line width=0.75]  (281.5,99) .. controls (281.5,100.66) and (282.84,102) .. (284.5,102) .. controls (286.16,102) and (287.5,100.66) .. (287.5,99) .. controls (287.5,97.34) and (286.16,96) .. (284.5,96) .. controls (282.84,96) and (281.5,97.34) .. (281.5,99) -- cycle ;
\draw  [color={rgb, 255:red, 0; green, 0; blue, 0 }  ,draw opacity=1 ][fill={rgb, 255:red, 0; green, 0; blue, 0 }  ,fill opacity=1 ][line width=0.75]  (360.5,99) .. controls (360.5,100.66) and (361.84,102) .. (363.5,102) .. controls (365.16,102) and (366.5,100.66) .. (366.5,99) .. controls (366.5,97.34) and (365.16,96) .. (363.5,96) .. controls (361.84,96) and (360.5,97.34) .. (360.5,99) -- cycle ;
\draw [line width=0.75]    (251.29,148.07) .. controls (258.18,161.4) and (266.13,173.42) .. (286.36,178.98) .. controls (306.58,184.53) and (342.58,184.98) .. (361.69,178.76) .. controls (379.65,172.91) and (385.62,163.93) .. (393.47,151.52) ;
\draw [shift={(395,149.1)}, rotate = 122.31] [fill={rgb, 255:red, 0; green, 0; blue, 0 }  ][line width=0.08]  [draw opacity=0] (10.72,-5.15) -- (0,0) -- (10.72,5.15) -- (7.12,0) -- cycle    ;
\end{tikzpicture}
}%
}%
}%
}

\newcommand{\cycleConebashaut}[1][1]{%
\mathord{%
\hspace{+2mm}%
\raisebox{-0.4\height}{%
\scalebox{#1}{%
\begin{tikzpicture}[x=0.3pt,y=0.3pt,yscale=-1,xscale=1]

\draw  [color={rgb, 255:red, 0; green, 0; blue, 0 }  ,draw opacity=1 ][fill={rgb, 255:red, 0; green, 0; blue, 0 }  ,fill opacity=1 ][line width=0.75]  (253,154) .. controls (253,152.34) and (254.34,151) .. (256,151) .. controls (257.66,151) and (259,152.34) .. (259,154) .. controls (259,155.66) and (257.66,157) .. (256,157) .. controls (254.34,157) and (253,155.66) .. (253,154) -- cycle ;
\draw  [color={rgb, 255:red, 0; green, 0; blue, 0 }  ,draw opacity=1 ][fill={rgb, 255:red, 0; green, 0; blue, 0 }  ,fill opacity=1 ][line width=0.75]  (403,154) .. controls (403,152.34) and (404.34,151) .. (406,151) .. controls (407.66,151) and (409,152.34) .. (409,154) .. controls (409,155.66) and (407.66,157) .. (406,157) .. controls (404.34,157) and (403,155.66) .. (403,154) -- cycle ;
\draw  [color={rgb, 255:red, 0; green, 0; blue, 0 }  ,draw opacity=1 ][fill={rgb, 255:red, 0; green, 0; blue, 0 }  ,fill opacity=1 ][line width=0.75]  (288.5,112) .. controls (288.5,110.34) and (289.84,109) .. (291.5,109) .. controls (293.16,109) and (294.5,110.34) .. (294.5,112) .. controls (294.5,113.66) and (293.16,115) .. (291.5,115) .. controls (289.84,115) and (288.5,113.66) .. (288.5,112) -- cycle ;
\draw  [color={rgb, 255:red, 0; green, 0; blue, 0 }  ,draw opacity=1 ][fill={rgb, 255:red, 0; green, 0; blue, 0 }  ,fill opacity=1 ][line width=0.75]  (367.5,112) .. controls (367.5,110.34) and (368.84,109) .. (370.5,109) .. controls (372.16,109) and (373.5,110.34) .. (373.5,112) .. controls (373.5,113.66) and (372.16,115) .. (370.5,115) .. controls (368.84,115) and (367.5,113.66) .. (367.5,112) -- cycle ;
\draw  [color={rgb, 255:red, 0; green, 0; blue, 0 }  ,draw opacity=1 ][fill={rgb, 255:red, 0; green, 0; blue, 0 }  ,fill opacity=1 ][line width=0.75]  (288.5,197) .. controls (288.5,195.34) and (289.84,194) .. (291.5,194) .. controls (293.16,194) and (294.5,195.34) .. (294.5,197) .. controls (294.5,198.66) and (293.16,200) .. (291.5,200) .. controls (289.84,200) and (288.5,198.66) .. (288.5,197) -- cycle ;
\draw  [color={rgb, 255:red, 0; green, 0; blue, 0 }  ,draw opacity=1 ][fill={rgb, 255:red, 0; green, 0; blue, 0 }  ,fill opacity=1 ][line width=0.75]  (367.5,197) .. controls (367.5,195.34) and (368.84,194) .. (370.5,194) .. controls (372.16,194) and (373.5,195.34) .. (373.5,197) .. controls (373.5,198.66) and (372.16,200) .. (370.5,200) .. controls (368.84,200) and (367.5,198.66) .. (367.5,197) -- cycle ;
\draw [line width=0.75]    (258.29,147.93) .. controls (265.18,134.6) and (273.13,122.58) .. (293.36,117.02) .. controls (313.58,111.47) and (349.58,111.02) .. (368.69,117.24) .. controls (386.65,123.09) and (392.62,132.07) .. (400.47,144.48) ;
\draw [shift={(402,146.9)}, rotate = 237.69] [fill={rgb, 255:red, 0; green, 0; blue, 0 }  ][line width=0.08]  [draw opacity=0] (10.72,-5.15) -- (0,0) -- (10.72,5.15) -- (7.12,0) -- cycle    ;
\end{tikzpicture}
}%
}%
}%
}

\newcommand{\cycleConehomotopietrivial}[1][1]{%
\mathord{%
\hspace{+2mm}%
\raisebox{-0.4\height}{%
\scalebox{#1}{%
\begin{tikzpicture}[x=0.3pt,y=0.3pt,yscale=-1,xscale=1]

\draw  [color={rgb, 255:red, 0; green, 0; blue, 0 }  ,draw opacity=1 ][fill={rgb, 255:red, 0; green, 0; blue, 0 }  ,fill opacity=1 ][line width=0.75]  (272,151) .. controls (272,152.66) and (273.34,154) .. (275,154) .. controls (276.66,154) and (278,152.66) .. (278,151) .. controls (278,149.34) and (276.66,148) .. (275,148) .. controls (273.34,148) and (272,149.34) .. (272,151) -- cycle ;
\draw  [color={rgb, 255:red, 0; green, 0; blue, 0 }  ,draw opacity=1 ][fill={rgb, 255:red, 0; green, 0; blue, 0 }  ,fill opacity=1 ][line width=0.75]  (422,151) .. controls (422,152.66) and (423.34,154) .. (425,154) .. controls (426.66,154) and (428,152.66) .. (428,151) .. controls (428,149.34) and (426.66,148) .. (425,148) .. controls (423.34,148) and (422,149.34) .. (422,151) -- cycle ;
\draw  [color={rgb, 255:red, 0; green, 0; blue, 0 }  ,draw opacity=1 ][fill={rgb, 255:red, 0; green, 0; blue, 0 }  ,fill opacity=1 ][line width=0.75]  (307.5,193) .. controls (307.5,194.66) and (308.84,196) .. (310.5,196) .. controls (312.16,196) and (313.5,194.66) .. (313.5,193) .. controls (313.5,191.34) and (312.16,190) .. (310.5,190) .. controls (308.84,190) and (307.5,191.34) .. (307.5,193) -- cycle ;
\draw  [color={rgb, 255:red, 0; green, 0; blue, 0 }  ,draw opacity=1 ][fill={rgb, 255:red, 0; green, 0; blue, 0 }  ,fill opacity=1 ][line width=0.75]  (386.5,193) .. controls (386.5,194.66) and (387.84,196) .. (389.5,196) .. controls (391.16,196) and (392.5,194.66) .. (392.5,193) .. controls (392.5,191.34) and (391.16,190) .. (389.5,190) .. controls (387.84,190) and (386.5,191.34) .. (386.5,193) -- cycle ;
\draw  [color={rgb, 255:red, 0; green, 0; blue, 0 }  ,draw opacity=1 ][fill={rgb, 255:red, 0; green, 0; blue, 0 }  ,fill opacity=1 ][line width=0.75]  (307.5,108) .. controls (307.5,109.66) and (308.84,111) .. (310.5,111) .. controls (312.16,111) and (313.5,109.66) .. (313.5,108) .. controls (313.5,106.34) and (312.16,105) .. (310.5,105) .. controls (308.84,105) and (307.5,106.34) .. (307.5,108) -- cycle ;
\draw  [color={rgb, 255:red, 0; green, 0; blue, 0 }  ,draw opacity=1 ][fill={rgb, 255:red, 0; green, 0; blue, 0 }  ,fill opacity=1 ][line width=0.75]  (386.5,108) .. controls (386.5,109.66) and (387.84,111) .. (389.5,111) .. controls (391.16,111) and (392.5,109.66) .. (392.5,108) .. controls (392.5,106.34) and (391.16,105) .. (389.5,105) .. controls (387.84,105) and (386.5,106.34) .. (386.5,108) -- cycle ;
\draw [line width=0.75]    (274.33,143.43) .. controls (279.67,131.77) and (292.67,106.43) .. (308.67,102.43) .. controls (324.67,98.43) and (380.33,96.77) .. (393.33,101.1) .. controls (406.33,105.43) and (440,149.77) .. (428.67,157.1) .. controls (417.33,164.43) and (403.87,118.67) .. (385.47,112.64) .. controls (367.07,106.62) and (331.64,108.89) .. (311.91,113.31) .. controls (293.26,117.49) and (290.67,131.89) .. (282.56,145.28) ;
\draw [shift={(281.09,147.6)}, rotate = 301.2] [fill={rgb, 255:red, 0; green, 0; blue, 0 }  ][line width=0.08]  [draw opacity=0] (10.72,-5.15) -- (0,0) -- (10.72,5.15) -- (7.12,0) -- cycle    ;
\end{tikzpicture}
}%
}%
}%
}

\newcommand{\phasmeCtwo}[1][1]{%
\mathord{%
\hspace{+2mm}%
\raisebox{-0.4\height}{%
\scalebox{#1}{%
\begin{tikzpicture}[x=0.3pt,y=0.3pt,yscale=-1,xscale=1]

\draw  [color={rgb, 255:red, 0; green, 0; blue, 0 }  ,draw opacity=1 ][fill={rgb, 255:red, 0; green, 0; blue, 0 }  ,fill opacity=1 ][line width=0.75]  (270,153) .. controls (270,151.34) and (271.34,150) .. (273,150) .. controls (274.66,150) and (276,151.34) .. (276,153) .. controls (276,154.66) and (274.66,156) .. (273,156) .. controls (271.34,156) and (270,154.66) .. (270,153) -- cycle ;
\draw  [color={rgb, 255:red, 0; green, 0; blue, 0 }  ,draw opacity=1 ][fill={rgb, 255:red, 0; green, 0; blue, 0 }  ,fill opacity=1 ][line width=0.75]  (325,153) .. controls (325,151.34) and (326.34,150) .. (328,150) .. controls (329.66,150) and (331,151.34) .. (331,153) .. controls (331,154.66) and (329.66,156) .. (328,156) .. controls (326.34,156) and (325,154.66) .. (325,153) -- cycle ;
\draw  [color={rgb, 255:red, 0; green, 0; blue, 0 }  ,draw opacity=1 ][fill={rgb, 255:red, 0; green, 0; blue, 0 }  ,fill opacity=1 ][line width=0.75]  (430,153) .. controls (430,151.34) and (431.34,150) .. (433,150) .. controls (434.66,150) and (436,151.34) .. (436,153) .. controls (436,154.66) and (434.66,156) .. (433,156) .. controls (431.34,156) and (430,154.66) .. (430,153) -- cycle ;
\draw  [color={rgb, 255:red, 0; green, 0; blue, 0 }  ,draw opacity=1 ][fill={rgb, 255:red, 0; green, 0; blue, 0 }  ,fill opacity=1 ] (240,203) .. controls (240,201.34) and (241.34,200) .. (243,200) .. controls (244.66,200) and (246,201.34) .. (246,203) .. controls (246,204.66) and (244.66,206) .. (243,206) .. controls (241.34,206) and (240,204.66) .. (240,203) -- cycle ;
\draw  [color={rgb, 255:red, 0; green, 0; blue, 0 }  ,draw opacity=1 ][fill={rgb, 255:red, 0; green, 0; blue, 0 }  ,fill opacity=1 ] (240,103) .. controls (240,101.34) and (241.34,100) .. (243,100) .. controls (244.66,100) and (246,101.34) .. (246,103) .. controls (246,104.66) and (244.66,106) .. (243,106) .. controls (241.34,106) and (240,104.66) .. (240,103) -- cycle ;
\draw  [color={rgb, 255:red, 0; green, 0; blue, 0 }  ,draw opacity=1 ][fill={rgb, 255:red, 0; green, 0; blue, 0 }  ,fill opacity=1 ] (460,203) .. controls (460,201.34) and (461.34,200) .. (463,200) .. controls (464.66,200) and (466,201.34) .. (466,203) .. controls (466,204.66) and (464.66,206) .. (463,206) .. controls (461.34,206) and (460,204.66) .. (460,203) -- cycle ;
\draw  [color={rgb, 255:red, 0; green, 0; blue, 0 }  ,draw opacity=1 ][fill={rgb, 255:red, 0; green, 0; blue, 0 }  ,fill opacity=1 ][line width=0.75]  (460,103) .. controls (460,101.34) and (461.34,100) .. (463,100) .. controls (464.66,100) and (466,101.34) .. (466,103) .. controls (466,104.66) and (464.66,106) .. (463,106) .. controls (461.34,106) and (460,104.66) .. (460,103) -- cycle ;
\draw  [color={rgb, 255:red, 0; green, 0; blue, 0 }  ,draw opacity=1 ][fill={rgb, 255:red, 0; green, 0; blue, 0 }  ,fill opacity=1 ][line width=0.75]  (375,153) .. controls (375,151.34) and (376.34,150) .. (378,150) .. controls (379.66,150) and (381,151.34) .. (381,153) .. controls (381,154.66) and (379.66,156) .. (378,156) .. controls (376.34,156) and (375,154.66) .. (375,153) -- cycle ;
\draw [line width=0.75]    (278.73,146.43) .. controls (296.4,146.1) and (319.07,146.1) .. (328.73,145.77) .. controls (338.4,145.43) and (366.4,146.43) .. (377.07,146.43) .. controls (387.73,146.43) and (419.07,152.43) .. (430.07,144.77) .. controls (441.07,137.1) and (452.07,88.43) .. (466.07,95.1) .. controls (480.07,101.77) and (452.07,154.77) .. (438.4,158.77) .. controls (424.73,162.77) and (364.73,159.43) .. (349.73,159.43) .. controls (335.48,159.43) and (300.78,163.65) .. (283.34,158.06) ;
\draw [shift={(280.73,157.1)}, rotate = 16.35] [fill={rgb, 255:red, 0; green, 0; blue, 0 }  ][line width=0.08]  [draw opacity=0] (10.72,-5.15) -- (0,0) -- (10.72,5.15) -- (7.12,0) -- cycle    ;
\end{tikzpicture}
}%
}%
}%
}

\newcommand{\riri}[1][1]{%
\mathord{%
\hspace{+2mm}%
\raisebox{-0.4\height}{%
\scalebox{#1}{%
\begin{tikzpicture}[x=0.3pt,y=0.3pt,yscale=-1,xscale=1]

\draw  [color={rgb, 255:red, 0; green, 0; blue, 0 }  ,draw opacity=1 ][fill={rgb, 255:red, 0; green, 0; blue, 0 }  ,fill opacity=1 ][line width=0.75]  (239,140) .. controls (239,138.34) and (240.34,137) .. (242,137) .. controls (243.66,137) and (245,138.34) .. (245,140) .. controls (245,141.66) and (243.66,143) .. (242,143) .. controls (240.34,143) and (239,141.66) .. (239,140) -- cycle ;
\draw  [color={rgb, 255:red, 0; green, 0; blue, 0 }  ,draw opacity=1 ][fill={rgb, 255:red, 0; green, 0; blue, 0 }  ,fill opacity=1 ][line width=0.75]  (294,140) .. controls (294,138.34) and (295.34,137) .. (297,137) .. controls (298.66,137) and (300,138.34) .. (300,140) .. controls (300,141.66) and (298.66,143) .. (297,143) .. controls (295.34,143) and (294,141.66) .. (294,140) -- cycle ;
\draw  [color={rgb, 255:red, 0; green, 0; blue, 0 }  ,draw opacity=1 ][fill={rgb, 255:red, 0; green, 0; blue, 0 }  ,fill opacity=1 ][line width=0.75]  (399,140) .. controls (399,138.34) and (400.34,137) .. (402,137) .. controls (403.66,137) and (405,138.34) .. (405,140) .. controls (405,141.66) and (403.66,143) .. (402,143) .. controls (400.34,143) and (399,141.66) .. (399,140) -- cycle ;
\draw  [color={rgb, 255:red, 0; green, 0; blue, 0 }  ,draw opacity=1 ][fill={rgb, 255:red, 0; green, 0; blue, 0 }  ,fill opacity=1 ] (209,190) .. controls (209,188.34) and (210.34,187) .. (212,187) .. controls (213.66,187) and (215,188.34) .. (215,190) .. controls (215,191.66) and (213.66,193) .. (212,193) .. controls (210.34,193) and (209,191.66) .. (209,190) -- cycle ;
\draw  [color={rgb, 255:red, 0; green, 0; blue, 0 }  ,draw opacity=1 ][fill={rgb, 255:red, 0; green, 0; blue, 0 }  ,fill opacity=1 ] (209,90) .. controls (209,88.34) and (210.34,87) .. (212,87) .. controls (213.66,87) and (215,88.34) .. (215,90) .. controls (215,91.66) and (213.66,93) .. (212,93) .. controls (210.34,93) and (209,91.66) .. (209,90) -- cycle ;
\draw  [color={rgb, 255:red, 0; green, 0; blue, 0 }  ,draw opacity=1 ][fill={rgb, 255:red, 0; green, 0; blue, 0 }  ,fill opacity=1 ] (429,190) .. controls (429,188.34) and (430.34,187) .. (432,187) .. controls (433.66,187) and (435,188.34) .. (435,190) .. controls (435,191.66) and (433.66,193) .. (432,193) .. controls (430.34,193) and (429,191.66) .. (429,190) -- cycle ;
\draw  [color={rgb, 255:red, 0; green, 0; blue, 0 }  ,draw opacity=1 ][fill={rgb, 255:red, 0; green, 0; blue, 0 }  ,fill opacity=1 ][line width=0.75]  (429,90) .. controls (429,88.34) and (430.34,87) .. (432,87) .. controls (433.66,87) and (435,88.34) .. (435,90) .. controls (435,91.66) and (433.66,93) .. (432,93) .. controls (430.34,93) and (429,91.66) .. (429,90) -- cycle ;
\draw  [color={rgb, 255:red, 0; green, 0; blue, 0 }  ,draw opacity=1 ][fill={rgb, 255:red, 0; green, 0; blue, 0 }  ,fill opacity=1 ][line width=0.75]  (344,140) .. controls (344,138.34) and (345.34,137) .. (347,137) .. controls (348.66,137) and (350,138.34) .. (350,140) .. controls (350,141.66) and (348.66,143) .. (347,143) .. controls (345.34,143) and (344,141.66) .. (344,140) -- cycle ;
\draw [line width=0.75]    (399.07,131.77) .. controls (410.07,124.1) and (421.07,75.43) .. (435.07,82.1) .. controls (449.07,88.77) and (421.07,141.77) .. (407.4,145.77) .. controls (393.73,149.77) and (333.73,146.43) .. (318.73,146.43) .. controls (304.48,146.43) and (269.78,150.65) .. (252.34,145.06) ;
\draw [shift={(249.73,144.1)}, rotate = 16.35] [fill={rgb, 255:red, 0; green, 0; blue, 0 }  ][line width=0.08]  [draw opacity=0] (10.72,-5.15) -- (0,0) -- (10.72,5.15) -- (7.12,0) -- cycle    ;
\end{tikzpicture}
}%
}%
}%
}

\newcommand{\fifi}[1][1]{%
\mathord{%
\hspace{+2mm}%
\raisebox{-0.4\height}{%
\scalebox{#1}{%
\begin{tikzpicture}[x=0.3pt,y=0.3pt,yscale=-1,xscale=1]

\draw  [color={rgb, 255:red, 0; green, 0; blue, 0 }  ,draw opacity=1 ][fill={rgb, 255:red, 0; green, 0; blue, 0 }  ,fill opacity=1 ][line width=0.75]  (241.73,137.68) .. controls (241.73,136.02) and (243.08,134.68) .. (244.73,134.68) .. controls (246.39,134.68) and (247.73,136.02) .. (247.73,137.68) .. controls (247.73,139.34) and (246.39,140.68) .. (244.73,140.68) .. controls (243.08,140.68) and (241.73,139.34) .. (241.73,137.68) -- cycle ;
\draw  [color={rgb, 255:red, 0; green, 0; blue, 0 }  ,draw opacity=1 ][fill={rgb, 255:red, 0; green, 0; blue, 0 }  ,fill opacity=1 ][line width=0.75]  (296.73,137.68) .. controls (296.73,136.02) and (298.08,134.68) .. (299.73,134.68) .. controls (301.39,134.68) and (302.73,136.02) .. (302.73,137.68) .. controls (302.73,139.34) and (301.39,140.68) .. (299.73,140.68) .. controls (298.08,140.68) and (296.73,139.34) .. (296.73,137.68) -- cycle ;
\draw  [color={rgb, 255:red, 0; green, 0; blue, 0 }  ,draw opacity=1 ][fill={rgb, 255:red, 0; green, 0; blue, 0 }  ,fill opacity=1 ][line width=0.75]  (401.73,137.68) .. controls (401.73,136.02) and (403.08,134.68) .. (404.73,134.68) .. controls (406.39,134.68) and (407.73,136.02) .. (407.73,137.68) .. controls (407.73,139.34) and (406.39,140.68) .. (404.73,140.68) .. controls (403.08,140.68) and (401.73,139.34) .. (401.73,137.68) -- cycle ;
\draw  [color={rgb, 255:red, 0; green, 0; blue, 0 }  ,draw opacity=1 ][fill={rgb, 255:red, 0; green, 0; blue, 0 }  ,fill opacity=1 ] (211.73,187.68) .. controls (211.73,186.02) and (213.08,184.68) .. (214.73,184.68) .. controls (216.39,184.68) and (217.73,186.02) .. (217.73,187.68) .. controls (217.73,189.34) and (216.39,190.68) .. (214.73,190.68) .. controls (213.08,190.68) and (211.73,189.34) .. (211.73,187.68) -- cycle ;
\draw  [color={rgb, 255:red, 0; green, 0; blue, 0 }  ,draw opacity=1 ][fill={rgb, 255:red, 0; green, 0; blue, 0 }  ,fill opacity=1 ] (211.73,87.68) .. controls (211.73,86.02) and (213.08,84.68) .. (214.73,84.68) .. controls (216.39,84.68) and (217.73,86.02) .. (217.73,87.68) .. controls (217.73,89.34) and (216.39,90.68) .. (214.73,90.68) .. controls (213.08,90.68) and (211.73,89.34) .. (211.73,87.68) -- cycle ;
\draw  [color={rgb, 255:red, 0; green, 0; blue, 0 }  ,draw opacity=1 ][fill={rgb, 255:red, 0; green, 0; blue, 0 }  ,fill opacity=1 ] (431.73,187.68) .. controls (431.73,186.02) and (433.08,184.68) .. (434.73,184.68) .. controls (436.39,184.68) and (437.73,186.02) .. (437.73,187.68) .. controls (437.73,189.34) and (436.39,190.68) .. (434.73,190.68) .. controls (433.08,190.68) and (431.73,189.34) .. (431.73,187.68) -- cycle ;
\draw  [color={rgb, 255:red, 0; green, 0; blue, 0 }  ,draw opacity=1 ][fill={rgb, 255:red, 0; green, 0; blue, 0 }  ,fill opacity=1 ][line width=0.75]  (431.73,87.68) .. controls (431.73,86.02) and (433.08,84.68) .. (434.73,84.68) .. controls (436.39,84.68) and (437.73,86.02) .. (437.73,87.68) .. controls (437.73,89.34) and (436.39,90.68) .. (434.73,90.68) .. controls (433.08,90.68) and (431.73,89.34) .. (431.73,87.68) -- cycle ;
\draw  [color={rgb, 255:red, 0; green, 0; blue, 0 }  ,draw opacity=1 ][fill={rgb, 255:red, 0; green, 0; blue, 0 }  ,fill opacity=1 ][line width=0.75]  (346.73,137.68) .. controls (346.73,136.02) and (348.08,134.68) .. (349.73,134.68) .. controls (351.39,134.68) and (352.73,136.02) .. (352.73,137.68) .. controls (352.73,139.34) and (351.39,140.68) .. (349.73,140.68) .. controls (348.08,140.68) and (346.73,139.34) .. (346.73,137.68) -- cycle ;
\draw [line width=0.75]    (250.47,131.11) .. controls (268.13,130.78) and (290.8,130.78) .. (300.47,130.45) .. controls (310.13,130.11) and (338.13,131.11) .. (348.8,131.11) .. controls (359.47,131.11) and (390.8,137.11) .. (401.8,129.45) .. controls (412.8,121.78) and (427.32,74.76) .. (437.8,79.78) .. controls (447.65,84.5) and (428.16,123.31) .. (414.43,131.27) ;
\draw [shift={(411.88,132.4)}, rotate = 324.94] [fill={rgb, 255:red, 0; green, 0; blue, 0 }  ][line width=0.08]  [draw opacity=0] (10.72,-5.15) -- (0,0) -- (10.72,5.15) -- (7.12,0) -- cycle    ;
\end{tikzpicture}
}%
}%
}%
}

\newcommand{\loulou}[1][1]{%
\mathord{%
\hspace{+2mm}%
\raisebox{-0.4\height}{%
\scalebox{#1}{%
\begin{tikzpicture}[x=0.3pt,y=0.3pt,yscale=-1,xscale=1]

\draw  [color={rgb, 255:red, 0; green, 0; blue, 0 }  ,draw opacity=1 ][fill={rgb, 255:red, 0; green, 0; blue, 0 }  ,fill opacity=1 ][line width=0.75]  (200.93,104.18) .. controls (200.93,102.52) and (202.28,101.18) .. (203.93,101.18) .. controls (205.59,101.18) and (206.93,102.52) .. (206.93,104.18) .. controls (206.93,105.84) and (205.59,107.18) .. (203.93,107.18) .. controls (202.28,107.18) and (200.93,105.84) .. (200.93,104.18) -- cycle ;
\draw  [color={rgb, 255:red, 0; green, 0; blue, 0 }  ,draw opacity=1 ][fill={rgb, 255:red, 0; green, 0; blue, 0 }  ,fill opacity=1 ][line width=0.75]  (255.93,104.18) .. controls (255.93,102.52) and (257.28,101.18) .. (258.93,101.18) .. controls (260.59,101.18) and (261.93,102.52) .. (261.93,104.18) .. controls (261.93,105.84) and (260.59,107.18) .. (258.93,107.18) .. controls (257.28,107.18) and (255.93,105.84) .. (255.93,104.18) -- cycle ;
\draw  [color={rgb, 255:red, 0; green, 0; blue, 0 }  ,draw opacity=1 ][fill={rgb, 255:red, 0; green, 0; blue, 0 }  ,fill opacity=1 ][line width=0.75]  (360.93,104.18) .. controls (360.93,102.52) and (362.28,101.18) .. (363.93,101.18) .. controls (365.59,101.18) and (366.93,102.52) .. (366.93,104.18) .. controls (366.93,105.84) and (365.59,107.18) .. (363.93,107.18) .. controls (362.28,107.18) and (360.93,105.84) .. (360.93,104.18) -- cycle ;
\draw  [color={rgb, 255:red, 0; green, 0; blue, 0 }  ,draw opacity=1 ][fill={rgb, 255:red, 0; green, 0; blue, 0 }  ,fill opacity=1 ] (170.93,154.18) .. controls (170.93,152.52) and (172.28,151.18) .. (173.93,151.18) .. controls (175.59,151.18) and (176.93,152.52) .. (176.93,154.18) .. controls (176.93,155.84) and (175.59,157.18) .. (173.93,157.18) .. controls (172.28,157.18) and (170.93,155.84) .. (170.93,154.18) -- cycle ;
\draw  [color={rgb, 255:red, 0; green, 0; blue, 0 }  ,draw opacity=1 ][fill={rgb, 255:red, 0; green, 0; blue, 0 }  ,fill opacity=1 ] (170.93,54.18) .. controls (170.93,52.52) and (172.28,51.18) .. (173.93,51.18) .. controls (175.59,51.18) and (176.93,52.52) .. (176.93,54.18) .. controls (176.93,55.84) and (175.59,57.18) .. (173.93,57.18) .. controls (172.28,57.18) and (170.93,55.84) .. (170.93,54.18) -- cycle ;
\draw  [color={rgb, 255:red, 0; green, 0; blue, 0 }  ,draw opacity=1 ][fill={rgb, 255:red, 0; green, 0; blue, 0 }  ,fill opacity=1 ] (390.93,154.18) .. controls (390.93,152.52) and (392.28,151.18) .. (393.93,151.18) .. controls (395.59,151.18) and (396.93,152.52) .. (396.93,154.18) .. controls (396.93,155.84) and (395.59,157.18) .. (393.93,157.18) .. controls (392.28,157.18) and (390.93,155.84) .. (390.93,154.18) -- cycle ;
\draw  [color={rgb, 255:red, 0; green, 0; blue, 0 }  ,draw opacity=1 ][fill={rgb, 255:red, 0; green, 0; blue, 0 }  ,fill opacity=1 ][line width=0.75]  (390.93,54.18) .. controls (390.93,52.52) and (392.28,51.18) .. (393.93,51.18) .. controls (395.59,51.18) and (396.93,52.52) .. (396.93,54.18) .. controls (396.93,55.84) and (395.59,57.18) .. (393.93,57.18) .. controls (392.28,57.18) and (390.93,55.84) .. (390.93,54.18) -- cycle ;
\draw  [color={rgb, 255:red, 0; green, 0; blue, 0 }  ,draw opacity=1 ][fill={rgb, 255:red, 0; green, 0; blue, 0 }  ,fill opacity=1 ][line width=0.75]  (305.93,104.18) .. controls (305.93,102.52) and (307.28,101.18) .. (308.93,101.18) .. controls (310.59,101.18) and (311.93,102.52) .. (311.93,104.18) .. controls (311.93,105.84) and (310.59,107.18) .. (308.93,107.18) .. controls (307.28,107.18) and (305.93,105.84) .. (305.93,104.18) -- cycle ;
\draw [line width=0.75]    (211,97.85) .. controls (232.5,94.35) and (250.83,97.68) .. (260.5,97.35) .. controls (270.17,97.02) and (297.5,98.38) .. (310,98.61) .. controls (322.5,98.85) and (358.5,95.35) .. (369.5,99.35) .. controls (380.5,103.35) and (407,154.35) .. (399.5,162.35) .. controls (392.26,170.07) and (376.18,139.61) .. (367.42,115) ;
\draw [shift={(366.5,112.35)}, rotate = 68.9] [fill={rgb, 255:red, 0; green, 0; blue, 0 }  ][line width=0.08]  [draw opacity=0] (10.72,-5.15) -- (0,0) -- (10.72,5.15) -- (7.12,0) -- cycle    ;
\end{tikzpicture}
}%
}%
}%
}

\newcommand{\donald}[1][1]{%
\mathord{%
\hspace{+2mm}%
\raisebox{-0.4\height}{%
\scalebox{#1}{%
\begin{tikzpicture}[x=0.3pt,y=0.3pt,yscale=-1,xscale=1]

\draw  [color={rgb, 255:red, 0; green, 0; blue, 0 }  ,draw opacity=1 ][fill={rgb, 255:red, 0; green, 0; blue, 0 }  ,fill opacity=1 ][line width=0.75]  (243.2,138.28) .. controls (243.2,136.62) and (244.54,135.28) .. (246.2,135.28) .. controls (247.86,135.28) and (249.2,136.62) .. (249.2,138.28) .. controls (249.2,139.94) and (247.86,141.28) .. (246.2,141.28) .. controls (244.54,141.28) and (243.2,139.94) .. (243.2,138.28) -- cycle ;
\draw  [color={rgb, 255:red, 0; green, 0; blue, 0 }  ,draw opacity=1 ][fill={rgb, 255:red, 0; green, 0; blue, 0 }  ,fill opacity=1 ][line width=0.75]  (298.2,138.28) .. controls (298.2,136.62) and (299.54,135.28) .. (301.2,135.28) .. controls (302.86,135.28) and (304.2,136.62) .. (304.2,138.28) .. controls (304.2,139.94) and (302.86,141.28) .. (301.2,141.28) .. controls (299.54,141.28) and (298.2,139.94) .. (298.2,138.28) -- cycle ;
\draw  [color={rgb, 255:red, 0; green, 0; blue, 0 }  ,draw opacity=1 ][fill={rgb, 255:red, 0; green, 0; blue, 0 }  ,fill opacity=1 ][line width=0.75]  (403.2,138.28) .. controls (403.2,136.62) and (404.54,135.28) .. (406.2,135.28) .. controls (407.86,135.28) and (409.2,136.62) .. (409.2,138.28) .. controls (409.2,139.94) and (407.86,141.28) .. (406.2,141.28) .. controls (404.54,141.28) and (403.2,139.94) .. (403.2,138.28) -- cycle ;
\draw  [color={rgb, 255:red, 0; green, 0; blue, 0 }  ,draw opacity=1 ][fill={rgb, 255:red, 0; green, 0; blue, 0 }  ,fill opacity=1 ] (213.2,188.28) .. controls (213.2,186.62) and (214.54,185.28) .. (216.2,185.28) .. controls (217.86,185.28) and (219.2,186.62) .. (219.2,188.28) .. controls (219.2,189.94) and (217.86,191.28) .. (216.2,191.28) .. controls (214.54,191.28) and (213.2,189.94) .. (213.2,188.28) -- cycle ;
\draw  [color={rgb, 255:red, 0; green, 0; blue, 0 }  ,draw opacity=1 ][fill={rgb, 255:red, 0; green, 0; blue, 0 }  ,fill opacity=1 ] (213.2,88.28) .. controls (213.2,86.62) and (214.54,85.28) .. (216.2,85.28) .. controls (217.86,85.28) and (219.2,86.62) .. (219.2,88.28) .. controls (219.2,89.94) and (217.86,91.28) .. (216.2,91.28) .. controls (214.54,91.28) and (213.2,89.94) .. (213.2,88.28) -- cycle ;
\draw  [color={rgb, 255:red, 0; green, 0; blue, 0 }  ,draw opacity=1 ][fill={rgb, 255:red, 0; green, 0; blue, 0 }  ,fill opacity=1 ] (433.2,188.28) .. controls (433.2,186.62) and (434.54,185.28) .. (436.2,185.28) .. controls (437.86,185.28) and (439.2,186.62) .. (439.2,188.28) .. controls (439.2,189.94) and (437.86,191.28) .. (436.2,191.28) .. controls (434.54,191.28) and (433.2,189.94) .. (433.2,188.28) -- cycle ;
\draw  [color={rgb, 255:red, 0; green, 0; blue, 0 }  ,draw opacity=1 ][fill={rgb, 255:red, 0; green, 0; blue, 0 }  ,fill opacity=1 ][line width=0.75]  (433.2,88.28) .. controls (433.2,86.62) and (434.54,85.28) .. (436.2,85.28) .. controls (437.86,85.28) and (439.2,86.62) .. (439.2,88.28) .. controls (439.2,89.94) and (437.86,91.28) .. (436.2,91.28) .. controls (434.54,91.28) and (433.2,89.94) .. (433.2,88.28) -- cycle ;
\draw  [color={rgb, 255:red, 0; green, 0; blue, 0 }  ,draw opacity=1 ][fill={rgb, 255:red, 0; green, 0; blue, 0 }  ,fill opacity=1 ][line width=0.75]  (348.2,138.28) .. controls (348.2,136.62) and (349.54,135.28) .. (351.2,135.28) .. controls (352.86,135.28) and (354.2,136.62) .. (354.2,138.28) .. controls (354.2,139.94) and (352.86,141.28) .. (351.2,141.28) .. controls (349.54,141.28) and (348.2,139.94) .. (348.2,138.28) -- cycle ;
\draw [line width=0.75]    (398.64,143) .. controls (381.84,147) and (313.44,144.6) .. (299.04,144.6) .. controls (284.64,144.6) and (239.44,150.6) .. (239.44,138.6) .. controls (239.44,126.6) and (279.44,131.4) .. (298.24,132.2) .. controls (316.1,132.96) and (374.75,131.19) .. (395.32,133.42) ;
\draw [shift={(398.24,133.8)}, rotate = 186.17] [fill={rgb, 255:red, 0; green, 0; blue, 0 }  ][line width=0.08]  [draw opacity=0] (10.72,-5.15) -- (0,0) -- (10.72,5.15) -- (7.12,0) -- cycle    ;
\end{tikzpicture}
}%
}%
}%
}

\newcommand{\picsou}[1][1]{%
\mathord{%
\hspace{+2mm}%
\raisebox{-0.4\height}{%
\scalebox{#1}{%
\begin{tikzpicture}[x=0.3pt,y=0.3pt,yscale=-1,xscale=1]

\draw  [color={rgb, 255:red, 0; green, 0; blue, 0 }  ,draw opacity=1 ][fill={rgb, 255:red, 0; green, 0; blue, 0 }  ,fill opacity=1 ][line width=0.75]  (270,153) .. controls (270,151.34) and (271.34,150) .. (273,150) .. controls (274.66,150) and (276,151.34) .. (276,153) .. controls (276,154.66) and (274.66,156) .. (273,156) .. controls (271.34,156) and (270,154.66) .. (270,153) -- cycle ;
\draw  [color={rgb, 255:red, 0; green, 0; blue, 0 }  ,draw opacity=1 ][fill={rgb, 255:red, 0; green, 0; blue, 0 }  ,fill opacity=1 ][line width=0.75]  (325,153) .. controls (325,151.34) and (326.34,150) .. (328,150) .. controls (329.66,150) and (331,151.34) .. (331,153) .. controls (331,154.66) and (329.66,156) .. (328,156) .. controls (326.34,156) and (325,154.66) .. (325,153) -- cycle ;
\draw  [color={rgb, 255:red, 0; green, 0; blue, 0 }  ,draw opacity=1 ][fill={rgb, 255:red, 0; green, 0; blue, 0 }  ,fill opacity=1 ][line width=0.75]  (430,153) .. controls (430,151.34) and (431.34,150) .. (433,150) .. controls (434.66,150) and (436,151.34) .. (436,153) .. controls (436,154.66) and (434.66,156) .. (433,156) .. controls (431.34,156) and (430,154.66) .. (430,153) -- cycle ;
\draw  [color={rgb, 255:red, 0; green, 0; blue, 0 }  ,draw opacity=1 ][fill={rgb, 255:red, 0; green, 0; blue, 0 }  ,fill opacity=1 ] (240,203) .. controls (240,201.34) and (241.34,200) .. (243,200) .. controls (244.66,200) and (246,201.34) .. (246,203) .. controls (246,204.66) and (244.66,206) .. (243,206) .. controls (241.34,206) and (240,204.66) .. (240,203) -- cycle ;
\draw  [color={rgb, 255:red, 0; green, 0; blue, 0 }  ,draw opacity=1 ][fill={rgb, 255:red, 0; green, 0; blue, 0 }  ,fill opacity=1 ] (240,103) .. controls (240,101.34) and (241.34,100) .. (243,100) .. controls (244.66,100) and (246,101.34) .. (246,103) .. controls (246,104.66) and (244.66,106) .. (243,106) .. controls (241.34,106) and (240,104.66) .. (240,103) -- cycle ;
\draw  [color={rgb, 255:red, 0; green, 0; blue, 0 }  ,draw opacity=1 ][fill={rgb, 255:red, 0; green, 0; blue, 0 }  ,fill opacity=1 ] (460,203) .. controls (460,201.34) and (461.34,200) .. (463,200) .. controls (464.66,200) and (466,201.34) .. (466,203) .. controls (466,204.66) and (464.66,206) .. (463,206) .. controls (461.34,206) and (460,204.66) .. (460,203) -- cycle ;
\draw  [color={rgb, 255:red, 0; green, 0; blue, 0 }  ,draw opacity=1 ][fill={rgb, 255:red, 0; green, 0; blue, 0 }  ,fill opacity=1 ][line width=0.75]  (460,103) .. controls (460,101.34) and (461.34,100) .. (463,100) .. controls (464.66,100) and (466,101.34) .. (466,103) .. controls (466,104.66) and (464.66,106) .. (463,106) .. controls (461.34,106) and (460,104.66) .. (460,103) -- cycle ;
\draw  [color={rgb, 255:red, 0; green, 0; blue, 0 }  ,draw opacity=1 ][fill={rgb, 255:red, 0; green, 0; blue, 0 }  ,fill opacity=1 ][line width=0.75]  (375,153) .. controls (375,151.34) and (376.34,150) .. (378,150) .. controls (379.66,150) and (381,151.34) .. (381,153) .. controls (381,154.66) and (379.66,156) .. (378,156) .. controls (376.34,156) and (375,154.66) .. (375,153) -- cycle ;
\draw [line width=0.75]    (284.41,146.54) .. controls (307.37,144.57) and (335.33,146.83) .. (348.7,146.35) .. controls (362.7,145.85) and (409.2,151.35) .. (424.7,144.85) .. controls (440.2,138.35) and (452.7,90.18) .. (466.7,96.85) .. controls (480.7,103.52) and (441.7,140.85) .. (441.7,151.85) .. controls (441.7,162.85) and (478.7,198.35) .. (468.7,209.35) .. controls (458.7,220.35) and (438.7,167.85) .. (425.7,162.35) .. controls (412.7,156.85) and (364.7,160.35) .. (349.2,160.35) .. controls (333.7,160.35) and (305.7,162.85) .. (279.2,158.85) ;
\draw [shift={(281.2,146.85)}, rotate = 355.09] [fill={rgb, 255:red, 0; green, 0; blue, 0 }  ][line width=0.08]  [draw opacity=0] (10.72,-5.15) -- (0,0) -- (10.72,5.15) -- (7.12,0) -- cycle    ;
\end{tikzpicture}
}%
}%
}%
}

\newcommand{\zaza}[1][1]{%
\mathord{%
\hspace{+2mm}%
\raisebox{-0.4\height}{%
\scalebox{#1}{%
\begin{tikzpicture}[x=0.3pt,y=0.3pt,yscale=-1,xscale=1]

\draw  [color={rgb, 255:red, 0; green, 0; blue, 0 }  ,draw opacity=1 ][fill={rgb, 255:red, 0; green, 0; blue, 0 }  ,fill opacity=1 ][line width=0.75]  (234,125) .. controls (234,123.34) and (235.34,122) .. (237,122) .. controls (238.66,122) and (240,123.34) .. (240,125) .. controls (240,126.66) and (238.66,128) .. (237,128) .. controls (235.34,128) and (234,126.66) .. (234,125) -- cycle ;
\draw  [color={rgb, 255:red, 0; green, 0; blue, 0 }  ,draw opacity=1 ][fill={rgb, 255:red, 0; green, 0; blue, 0 }  ,fill opacity=1 ][line width=0.75]  (289,125) .. controls (289,123.34) and (290.34,122) .. (292,122) .. controls (293.66,122) and (295,123.34) .. (295,125) .. controls (295,126.66) and (293.66,128) .. (292,128) .. controls (290.34,128) and (289,126.66) .. (289,125) -- cycle ;
\draw  [color={rgb, 255:red, 0; green, 0; blue, 0 }  ,draw opacity=1 ][fill={rgb, 255:red, 0; green, 0; blue, 0 }  ,fill opacity=1 ][line width=0.75]  (394,125) .. controls (394,123.34) and (395.34,122) .. (397,122) .. controls (398.66,122) and (400,123.34) .. (400,125) .. controls (400,126.66) and (398.66,128) .. (397,128) .. controls (395.34,128) and (394,126.66) .. (394,125) -- cycle ;
\draw  [color={rgb, 255:red, 0; green, 0; blue, 0 }  ,draw opacity=1 ][fill={rgb, 255:red, 0; green, 0; blue, 0 }  ,fill opacity=1 ] (204,175) .. controls (204,173.34) and (205.34,172) .. (207,172) .. controls (208.66,172) and (210,173.34) .. (210,175) .. controls (210,176.66) and (208.66,178) .. (207,178) .. controls (205.34,178) and (204,176.66) .. (204,175) -- cycle ;
\draw  [color={rgb, 255:red, 0; green, 0; blue, 0 }  ,draw opacity=1 ][fill={rgb, 255:red, 0; green, 0; blue, 0 }  ,fill opacity=1 ] (204,75) .. controls (204,73.34) and (205.34,72) .. (207,72) .. controls (208.66,72) and (210,73.34) .. (210,75) .. controls (210,76.66) and (208.66,78) .. (207,78) .. controls (205.34,78) and (204,76.66) .. (204,75) -- cycle ;
\draw  [color={rgb, 255:red, 0; green, 0; blue, 0 }  ,draw opacity=1 ][fill={rgb, 255:red, 0; green, 0; blue, 0 }  ,fill opacity=1 ] (424,175) .. controls (424,173.34) and (425.34,172) .. (427,172) .. controls (428.66,172) and (430,173.34) .. (430,175) .. controls (430,176.66) and (428.66,178) .. (427,178) .. controls (425.34,178) and (424,176.66) .. (424,175) -- cycle ;
\draw  [color={rgb, 255:red, 0; green, 0; blue, 0 }  ,draw opacity=1 ][fill={rgb, 255:red, 0; green, 0; blue, 0 }  ,fill opacity=1 ][line width=0.75]  (424,75) .. controls (424,73.34) and (425.34,72) .. (427,72) .. controls (428.66,72) and (430,73.34) .. (430,75) .. controls (430,76.66) and (428.66,78) .. (427,78) .. controls (425.34,78) and (424,76.66) .. (424,75) -- cycle ;
\draw  [color={rgb, 255:red, 0; green, 0; blue, 0 }  ,draw opacity=1 ][fill={rgb, 255:red, 0; green, 0; blue, 0 }  ,fill opacity=1 ][line width=0.75]  (339,125) .. controls (339,123.34) and (340.34,122) .. (342,122) .. controls (343.66,122) and (345,123.34) .. (345,125) .. controls (345,126.66) and (343.66,128) .. (342,128) .. controls (340.34,128) and (339,126.66) .. (339,125) -- cycle ;
\draw [line width=0.75]    (248.41,118.54) .. controls (271.37,116.57) and (299.33,118.83) .. (312.7,118.35) .. controls (326.7,117.85) and (373.2,123.35) .. (388.7,116.85) .. controls (404.2,110.35) and (416.7,62.18) .. (430.7,68.85) .. controls (444.7,75.52) and (412.67,126.43) .. (400.33,133.1) .. controls (388,139.77) and (345.33,139.1) .. (345.33,132.1) .. controls (345.33,125.1) and (388.67,122.43) .. (389,128.43) .. controls (389.33,134.43) and (328.7,132.35) .. (313.2,132.35) .. controls (297.7,132.35) and (272.17,133.1) .. (245.67,129.1) ;
\draw [shift={(245.2,118.85)}, rotate = 355.09] [fill={rgb, 255:red, 0; green, 0; blue, 0 }  ][line width=0.08]  [draw opacity=0] (10.72,-5.15) -- (0,0) -- (10.72,5.15) -- (7.12,0) -- cycle    ;
\end{tikzpicture}
}%
}%
}%
}

\usepackage{aliascnt}
\usepackage[nameinlink]{cleveref}

\newtheorem{thm}{Theorem}[section]

\newaliascnt{lem}{thm}
\newtheorem{lem}[lem]{Lemma}
\aliascntresetthe{lem}

\newaliascnt{prop}{thm}
\newtheorem{prop}[prop]{Proposition}
\aliascntresetthe{prop}

\newaliascnt{cor}{thm}
\newtheorem{cor}[cor]{Corollary}
\aliascntresetthe{cor}

\theoremstyle{definition}

\newaliascnt{defn}{thm}
\newtheorem{defn}[defn]{Definition}
\aliascntresetthe{defn}

\newaliascnt{notation}{thm}
\newtheorem{notation}[notation]{Notation}
\aliascntresetthe{notation}

\theoremstyle{remark}

\newaliascnt{rmq}{thm}
\newtheorem{rmq}[rmq]{Remark}
\aliascntresetthe{rmq}

\newaliascnt{ex}{thm}
\newtheorem{ex}[ex]{Example}
\aliascntresetthe{ex}

\crefname{thm}{theorem}{theorems}
\Crefname{thm}{Theorem}{Theorems}

\crefname{lem}{lemma}{lemmas}
\Crefname{lem}{Lemma}{Lemmas}

\crefname{prop}{proposition}{propositions}
\Crefname{prop}{Proposition}{Propositions}

\crefname{cor}{corollary}{corollaries}
\Crefname{cor}{Corollary}{Corollaries}

\crefname{defn}{definition}{definitions}
\Crefname{defn}{Definition}{Definitions}

\crefname{notation}{notation}{notations}
\Crefname{notation}{Notation}{Notations}

\crefname{rmq}{remark}{remarks}
\Crefname{rmq}{Remark}{Remarks}

\crefname{ex}{example}{examples}
\Crefname{ex}{Example}{Examples}

\usepackage{stmaryrd}

\begin{document}

\begin{abstract}
    We study quiver varieties associated with orthogonal quivers and determine the Kleinian surface singularities arising from affine Dynkin types.
    In the classical setting, affine Dynkin quivers play a distinguished role through Kronheimer's construction of Kleinian singularities and its reformulation by Nakajima in terms of quiver varieties.
    Using a graphical calculus in the preprojective algebra of the quiver, we determine which Kleinian surface singularities arise from orthogonal quivers whose underlying graph is an affine Dynkin diagram.
\end{abstract}

\maketitle

\tableofcontents

\section{Introduction}

Quivers are directed graphs whose representation theory was initiated by Gabriel \cite{Gab72}. 
Representations of quivers provide a natural framework for studying representations of products of general linear groups and their invariant theory.

Quiver varieties, introduced by Nakajima in \cite{Nak94}, arise as moduli spaces obtained as symplectic (or hyperkähler) quotients of representation spaces of quivers.
In physics, appropriately chosen quivers and quiver varieties describe Higgs branches of certain supersymmetric gauge theories \cite[Introduction~1(vii)]{Nak2016}.
The corresponding gauge group is the compact Lie group $\prod_{i \in I} \mathrm{U}(\dd_i)$, whose complexification yields the base-change group $G_{\dd} = \prod_{i \in I}\GL_{\dd_i}(\CC)$ in the construction of quiver varieties.
For an introduction to quiver varieties, we refer to \cite{Nakintro}.

In 2002, Derksen and Weyman introduced the theory of symmetric quiver representations (see \cite{DW02}), providing a framework for studying more general representations, including those involving classical groups such as orthogonal and symplectic groups.
More recently, quiver varieties associated with classical groups have been studied in the work of Li and Nakajima \cite{li2018,Nak25classical}. 
A suitable generalization of quiver varieties is expected to describe Higgs branches for orientifolds.

This work studies analogues of Nakajima quiver varieties in this more general symmetric setting.
Various notions of quivers with involution, symmetric quivers, and supermixed quivers have already been studied in the literature, although their associated quiver varieties have not been systematically investigated (see \cite{DW02,Sam12,Shm03,Zub03}).
In this work, we restrict our attention to a class of quivers that we call orthogonal quivers, namely quivers whose base-change groups may contain orthogonal factors.

A central object in the construction of quiver varieties is the moment map $\mugras$, defined on the representation space of the doubled quiver as shown in Figure~\ref{fig:1} (see \Cref{section:2} for details).

\begin{figure}[H]
\centering
\[\begin{tikzcd}[column sep=9em, row sep=4em]
\boxed{
\begin{matrix}
Q \text{ quiver} \\
\dd \in \NN^I \text{dimension vector}
\end{matrix}
}
&
\boxed{
\begin{matrix}
G_{\dd} \action \Rep(Q,\dd) \\
\text{group representation}
\end{matrix}
}
\\
\boxed{
\begin{matrix}
\overline{Q} \text{ double quiver} \\
\dd \in \NN^I \text{ same dimension vector}
\end{matrix}
}
&
\boxed{
\begin{matrix}
G_{\dd} \action \Rep(\overline{Q},\dd) \overset{\mugras}{\to} \ggot_{\dd}^* \\
\text{Hamiltonian action}
\end{matrix}
}
\arrow["{\dd\text{-dim rep}}", from=1-1, to=1-2]
\arrow["{\begin{array}{c} \text{symplectic} \\ \text{geometry} \\ \mathrm{T}^*\Rep(Q,\dd) \end{array}}", from=1-2, to=2-2]
\arrow["{\dd\text{-dim rep}}", from=2-1, to=2-2]
\end{tikzcd}\]
\caption{}
\label{fig:1}
\end{figure}

Quiver varieties are defined as geometric invariant theory (GIT) quotients, in the sense of Mumford \cite{MumGIT}, of the scheme-theoretic fiber $\mugras^{-1}(\zeta)$ of the moment map under the action of the base-change group $G_{\dd}$, where $\zeta \in (\ggot_{\dd}^*)^{G_{\dd}}$ and $\chi \colon G_{\dd} \to \CC^*$ is a character:
\begin{align}\label{defn:quiv varieties}
    \Mgotgras_{\zeta,\chi,\dd}(Q) = \mugras^{-1}(\zeta)\sslash_{\chi} G_{\dd}.
\end{align}

In the setting of orthogonal quiver representations, Figure~\ref{fig:1} is replaced by Figure~\ref{fig:2}. 
We obtain the associated quiver varieties $\Mgot_{\zeta,\chi,\dd}(Q)$, where $\mu$ is the moment map for orthogonal quivers introduced in \Cref{moment of symmetric quivers}, distinguished from the boldface notation $\mugras$, $\zeta \in (\ggot_{\dd}^{b})^{\ast}$ fixed by $G^b_{\dd}$, and $\chi\colon G^b_{\dd} \to \CC^*$ is a character (see \Cref{section:2} for the notation concerning symmetric quivers).

\begin{figure}[H]
\centering
\[
\begin{tikzcd}[column sep=9em, row sep=4em]
\boxed{
\begin{matrix}
Q \text{ orthogonal quiver} \\
\dd \in \NN^I \text{ symmetric } \\
\text{dimension vector}
\end{matrix}
}
&
\boxed{
\begin{matrix}
G_{\dd}^b \action \Srep(Q,\dd) \\
\text{group representation}
\end{matrix}
}
\\
\boxed{
\begin{matrix}
\overline{Q} \text{ orthogonal quiver} \\
\dd \in \NN^I \text{ same symmetric } \\
\text{dimension vector}
\end{matrix}
}
&
\boxed{
\begin{matrix}
G_{\dd}^b \action \Srep(\overline{Q},\dd) \overset{\mu}{\to} (\ggot^{b}_{\dd})^* \\
\text{Hamiltonian action}
\end{matrix}
}
\arrow["{\dd\text{-dim rep}}", from=1-1, to=1-2]
\arrow["{\begin{array}{c} \text{symplectic} \\ \text{geometry} \\ \mathrm{T}^*\Srep(Q,\dd) \end{array}}", from=1-2, to=2-2]
\arrow["{\dd\text{-dim rep}}", from=2-1, to=2-2]
\end{tikzcd}
\]
\caption{}
\label{fig:2}
\end{figure}

In \cite{Kro89}, Kronheimer used Hitchin's hyperkähler quotient construction, together with McKay's 
correspondence relating the irreducible representations of finite subgroups
$\Gamma \subset \mathrm{SU}(2)$ to the simply laced Dynkin diagrams $A_n$, $D_n$, and $E_{6},E_{7},E_{8}$, to construct families of hyperkähler $4$-manifolds $X_\zeta$, depending on a parameter $\zeta$.

The author showed that, when $\zeta=0$, one recovers the surface with Kleinian singularity $\CC^2/\Gamma$ associated with the corresponding Dynkin diagram, while, for generic $\zeta$, the varieties $X_\zeta$ are smooth hyperkähler resolutions of $\CC^2/\Gamma$ \cite[Section~3, Corollary~3.2]{Kro89}.
This is the construction that Nakajima later reformulated in terms of quiver varieties associated with the affine Dynkin diagrams $\widetilde{A}_n$, $\widetilde{D}_n$, and $\widetilde{E}_{6},\widetilde{E}_{7},\widetilde{E}_{8}$. 
In this framework, the spaces $X_\zeta$ are realized first as hyperkähler quotients and then as GIT quotients $\Mgotgras_{\zeta_{\mathbb{I}},\zeta_{\mathbb{R}},\delta}(Q)$, where $\delta$ denotes the minimal imaginary root, and $\zeta_{\mathbb{R}}$ and $\zeta_{\mathbb{I}}$ are the real and imaginary parts of $\zeta$.

\subsubsection*{Main results}
Our results in \Cref{Thm} concern the study of Kleinian surface singularities arising in families of orthogonal quiver varieties associated with affine Dynkin diagrams.

The definition of such quivers requires the existence of a contravariant involution.
Many quivers do not admit a contravariant involution; in particular, the identity involution is not contravariant.
Nevertheless, for every classical quiver, one can construct an orthogonal quiver whose symmetric representations are naturally identified with the representations of the classical quiver.
In this sense, the representation theory of orthogonal quivers extends the classical theory of quiver representations.
Not every affine Dynkin diagram admits an orientation compatible with a contravariant involution, that is, an orthogonal quiver structure.
Such affine Dynkin diagrams define six infinite families of quivers with a contravariant involution, listed below up to orientation:

\begin{figure}[H]
    \centering

\caption{}
\label{fig:3}
\end{figure}

For each family, the labels $v$ and $a$ indicate respectively the presence of a fixed vertex or a fixed arrow, used to distinguish the different families, while $c$ denotes the central symmetry.
Taking into account the possibility of choosing a sign for the fixed arrow (see \Cref{defn:ortho quivers}), we obtain ten infinite families of group actions defined by orthogonal quivers whose underlying diagrams are affine Dynkin diagrams.
Different group actions arise from different choices of signs on the fixed arrows of the quivers. The different group actions realized by these quivers are described in \Cref{symmetries Atilde,symmetries Dtilde}.

\begin{rmq}
    There are also orthogonal quiver structures based on affine Dynkin diagrams that are not included in these ten infinite families, namely, those associated with the Jordan quiver $\mathcal{J}$, or, with a slight abuse of notation, with $\widetilde{A}_0$.
    The two remaining orthogonal structures give rise to two distinct families of group actions.
    $$\begin{tikzpicture}[x=0.7pt,y=0.7pt,yscale=-1,xscale=1]
    
    \draw  [color={rgb, 255:red, 0; green, 0; blue, 0 }  ,draw opacity=1 ][fill={rgb, 255:red, 0; green, 0; blue, 0 }  ,fill opacity=1 ][line width=0.75]  (183.9,162.81) .. controls (183.9,161.15) and (185.25,159.81) .. (186.9,159.81) .. controls (188.56,159.81) and (189.9,161.15) .. (189.9,162.81) .. controls (189.9,164.47) and (188.56,165.81) .. (186.9,165.81) .. controls (185.25,165.81) and (183.9,164.47) .. (183.9,162.81) -- cycle ;
    \draw [line width=0.75]    (180.71,157.46) .. controls (161.29,136.6) and (175.86,120.6) .. (188.14,121.17) .. controls (200.06,121.73) and (212.78,135.18) .. (194.5,156.59) ;
    \draw [shift={(192.71,158.6)}, rotate = 305.97] [fill={rgb, 255:red, 0; green, 0; blue, 0 }  ][line width=0.08]  [draw opacity=0] (10.72,-5.15) -- (0,0) -- (10.72,5.15) -- (7.12,0) -- cycle    ;
    
    \draw (76.43,126.57) node [anchor=north west][inner sep=0.75pt]    {$\mathcal{J} \ =\ \widetilde{A}_{0} =$};
    \draw (207,132.4) node [anchor=north west][inner sep=0.75pt]    {$a$};
    \draw (332,121.4) node [anchor=north west][inner sep=0.75pt]    {$\begin{vmatrix}
    s( a) \ =\ +1& \mid & s( a) =-1\\
    \OO_n(\CC) \curvearrowright\ \Sym( \CC^n)& \mid & \OO_n(\CC) \curvearrowright\ \Lambda^2( \CC^n)
    \end{vmatrix}$};
    \end{tikzpicture}$$
    We do not treat this case in our article, since it recovers known results for symmetric pairs of $(\OO_n,\GL_n)$ in lower dimensions.
\end{rmq}

A main contribution of this article is a method for explicitly computing orthogonal quiver varieties and their singularities.
The method is illustrated in the following table, which describes the surfaces obtained as quiver varieties $\Mgot_{0,0,\delta}(Q)$ and $\Mgot_{0,0,2\delta}(Q)$, or, more simply, $\Mgot_{\delta}$ and $\Mgot_{2\delta}$, associated with orthogonal quivers of affine Dynkin type.

\newpage
\begin{thm}\label{Thm}\emph{[Proof in \Cref{proof of Thm}]}
For nine infinite families of orthogonal quivers among the ten families presented in \Cref{fig:3}, we obtain the following table of surface singularities arising from the quiver varieties $\Mgot_{\dd}$.
The sign column corresponds to the choice of sign for the fixed arrows.
\begin{table}[H]
\centering
    \renewcommand{\arraystretch}{1.2}
    \begin{tabular}{|c|c|c||c|c|}
    \hline
    \multicolumn{5}{|c|}{Quiver varieties of orthogonal quivers whose support is an affine Dynkin diagram} \\
    \hline
    Orthogonal quivers & sign of fixed arrow &  dimension $\dd$ & $\Mgot_{\dd}$& Kleinian singularity\\
    \hline
$(\widetilde{A}_{2n-2} ,v\text{-}a)$ &$+1$& $\delta$ & $\VV(xy-z^{2n-1})$& $A_{2n-2}$ \\
    $n \geq 2$&$-1$& $\delta$ & $\{\pt\}$& \\
    &$-1$& $2\delta$ & $\VV(xy-z^{4n-2})$& $A_{4n-3}$\\
    \hline
      & $(+1,+1)$ & $\delta$ & $\VV(xy-z^{2n})$ & $A_{2n-1}$\\
$(\widetilde{A}_{2n-1} ,a\text{-}a)$ & $(+1,-1)$ & $\delta$ & $\{\pt\}$ &\\
    & $(+1,-1)$ & $2\delta$ & $\VV(xy-z^{4n})$ & $A_{4n-1}$\\
    $n \geq 1$& $(-1,-1)$ & $\delta$ & $\{\pt\}$ & \\
    & $(-1,-1)$ & $2\delta$ & $\VV(xy-z^{2n})$ & $A_{2n-1}$\\
    \hline
$(\widetilde{A}_{2n-1} ,v\text{-}v), n \geq 1$ && $\delta$ & $\VV(xy - z^{2n})$ & $A_{2n-1}$ \\
    \hline
    $(\widetilde{A}_{2n-1} ,c), n \geq 1$ & & $\delta$& $\{\pt\}$& \\
    & & $2\delta$ & $\VV(x^{n+1} - y^2x - z^2)$ & $D_{n+2}$ \\
    \hline 
    $(\widetilde{D}_{2n-2} ,v), n \geq 3$& & $\delta$ & $\VV(z^2 - x^{n-1}y + xy^2)$ & $D_{n+1}$ \\
    \hline
    $(\widetilde{D}_{2n-1} ,a)$& $+1$ & $\delta$ & $\VV(z^2-x^{n-1}y + xy^2)$ & $D_{n+1}$ \\
    $n \geq 3$& $-1$ & $\delta$ & $\{\pt\}$ &  \\
    \hline 
    \end{tabular}
\end{table}
\end{thm}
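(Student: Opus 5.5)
Since $\zeta=0$ and $\chi=0$, each $\Mgot_{\dd}$ is the affine quotient $\Spec \CC[\mu^{-1}(0)]^{G^b_{\dd}}$. The plan is to compute this invariant ring explicitly, family by family.

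The first step is to produce generators. By first fundamental theorem type results for products of general linear and orthogonal groups (Le Bruyn--Procesi in the classical case, and its symmetric analogue due to Derksen--Weyman, Zubkov and Shmelkin for the groups $G^b_{\dd}$), the ring $\CC[\Srep(\overline{Q},\dd)]^{G^b_{\dd}}$ is generated by traces of oriented cycles in the double quiver $\overline{Q}$. At fixed orthogonal vertices one also allows the transposes $a^{\transpose}=s(a)\,a^\ast$ induced by the involution.

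The second step is the graphical calculus in the preprojective algebra $\Pi$ of $\overline{Q}$, subject to the symmetrized relations given by $\mu=0$. Using the moment map relation at each vertex, one reduces these cycles, modulo the ideal of $\mu^{-1}(0)$ and up to homotopy-type moves, to a small list of basic cycles; this is what the pictograms defined in the preamble depict. For $\dd=\delta$, the fixed-vertex and fixed-arrow structure forces most cycles through an arrow and its transpose to be either symmetric or antisymmetric. With the sign $-1$, every basic cycle is then a trace of an antisymmetric matrix, hence of odd degree in a way that makes it vanish on a $1$-dimensional fixed space, or it is nilpotent. This explains the $\{\pt\}$ entries, and I would prove them by showing that every generator is nilpotent on $\mu^{-1}(0)$.

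The third step treats the nontrivial cases. There I expect exactly three basic invariants $x,y,z$: two going around the cycle in opposite directions (for type $\widetilde A$) or built from the branch vertices (for type $\widetilde D$), and $z$ a short loop at a fixed vertex. The relation follows by pushing the relations of $\Pi$ along the cycle. For instance, for $(\widetilde A_{2n-2},v\text{-}a)$ with sign $+1$, one gets $xy=z^{2n-1}$ because the half-cycle length is $2n-1$. The $2\delta$ and $\widetilde D$ cases are obtained analogously, giving the $D_{n+2}$ and $D_{n+1}$ equations.

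To conclude that $\Mgot_{\dd}$ is exactly $\VV(f)$ and not a proper cover or subvariety of it, I would proceed as follows:
\begin{enumerate}
\item show that $x,y,z$ generate the invariant ring, using the reduction above;
\item compute the dimension: $\dim \mu^{-1}(0)\sslash G^b_{\dd}=2$, by exhibiting a $2$-parameter family of closed orbits (semisimple representations) and comparing with $\dim\Srep(\overline{Q},\dd)-2\dim G^b_{\dd}$ plus stabilizers;
\item use normality, or reducedness of the hypersurface $\VV(f)$, which is irreducible of dimension $2$, so that the surjection $\CC[x,y,z]/(f)\surj \CC[\Mgot_{\dd}]$ is an isomorphism.
\end{enumerate}

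The main obstacle is the generation statement in the $2\delta$ cases, especially $(\widetilde A_{2n-1},c)$. There the group contains $\OO_2$ or $\Sp$-like factors, and cycles of length up to twice the period could a priori give new invariants. I would first try to bound the degree of generators by a Cayley--Hamilton argument on $2\times2$ blocks, showing that every longer cycle is a polynomial in the basic ones. If that fails, I would compare Hilbert series via a Molien--Weyl integral over the $2\delta$ torus.
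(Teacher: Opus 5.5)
Your overall architecture matches the paper's. Cycle traces generate $\CC[\mu^{-1}(0)]^{G^b_{\dd}}$ (Le Bruyn--Procesi, Zubkov). Cycles are then reduced in $\Pi(Q)$ to a few basic cycles $C_1,C_2,C_3$, with Cayley--Hamilton on $2\times2$ blocks, giving a closed immersion into a hypersurface $\VV(P)\subset\CC^3$. An explicit two-parameter family gives $\dim\Mgot_{\dd}\geq 2$, and irreducibility plus reducedness of $\VV(P)$ finish the proof.

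The gap is the step you call ``analogous''. Your mechanism ($x,y$ the two oriented cycles, $z$ a short loop, relation obtained by pushing $\Pi$-relations along the cycle) gives the wrong answer in the $2\delta$ cases. For $(\widetilde{A}_{2n-2},v\text{-}a)$ with sign $-1$, pushing relations gives only $C_1C_2=C_3^{2n-1}$, hence $\det(C_1)\det(C_2)=\det(C_3)^{2n-1}$, an $A_{2n-2}$-type equation. Moreover, $\det C_1,\det C_2,\det C_3$ do not generate the ring, since they miss the degree-$2$ invariant $\tr(C_3)$. Two ingredients are needed. First, the sign lemma $\tau(C)\sim_{\tr}\bigl(\prod_k s(a_k)\bigr)C$ kills $\tr(C_1),\tr(C_2),\tr(C_1C_3),\tr(C_2C_3)$. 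Second, by sliding short loops, every power of $C_3$ is trace-equivalent to the corresponding power of the short loop across the skew arrow. That short loop is a product of two skew $2\times2$ matrices, hence scalar, so $\tr(C_3)^2=4\det(C_3)$. The third coordinate is therefore $z=\tr(C_3)/2$, and the exponent doubles to $4n-2$ (resp.\ $4n$ for signs $(+1,-1)$). For signs $(-1,-1)$ the same argument makes $C_1,C_2$ scalar too, which gives back $A_{2n-1}$. For $(\widetilde{A}_{2n-1},c)$ you need $\det(C_1)=\det(C_3)^n$, obtained by splitting $C_1$ into two halves and applying $\tau$ to one of them. Cayley--Hamilton applied to $C_1$, $C_3$ and $C_3C_1$ then yields $4x^{n+1}-y^2x-z^2=0$ with $(x,y,z)=(\det C_3,\tr C_1,\tr(C_1C_3))$. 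For $\widetilde{D}$ you need the trivalent-vertex relations $C_1+C_1'=C_3$ and $C_1^2=0$, the parity-dependent formula for $C_2^2$, and the resulting identity for $(C_1C_2C_3)^2$. None of this follows formally from the $\delta$ computation.

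Three further steps would fail as written.
\begin{itemize}
\item Proving the generators nilpotent on $\mu^{-1}(0)$ only makes $\Mgot_{\dd}$ a point set-theoretically. The table claims a \emph{reduced} point, so the generators must be $0$ in $\CC[\mu^{-1}(0)]$. This is what identities such as $\tr(C)=-\tr(C)$ from the sign lemma, or short loops through a $1\times1$ skew arrow (identically zero on $\Srep$), provide.
\item The point for $(\widetilde{A}_{2n-1},c)$ at $\delta$ does not come from a sign $-1$, since there is no fixed arrow. It comes from the moment relation at $\tau1$, where two arrows arrive; this gives $\tau C_3\sim_{\tr}-C_3$, hence $\tr(C_3)=0$.
\item Do not rely on $\dim\Srep(\overline{Q},\dd)-2\dim G^b_{\dd}$. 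In the $2\delta$ cases it equals $0$, and even $-4$ for $(\widetilde{A}_{2n-1},a\text{-}a)$ with signs $(-1,-1)$, because closed orbits in $\mu^{-1}(0)$ have positive-dimensional stabilizers. Only $\dim\geq2$ is needed. Get it, as the paper does, from an explicit family $tF_\lambda\in\mu^{-1}(0)$ whose image is shown to be two-dimensional by evaluating the invariants.
\end{itemize}
Finally, $a^{\transpose}=s(a)a^*$ conflates two things. The dual arrow $a^*$ of $\overline{Q}$ is an independent coordinate, whereas the $b$-transpose of $v_a$ is $s(a)v_{\tau a}$ up to the $J_i$. Also, in the $c$ case $G^b_{\dd}=\GL_2^n$ has no orthogonal factor.
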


\begin{rmq}
    We expect that the family $(\widetilde{D}_{2n-1},a)$ with sign $-1$ in dimension $2\delta$ also yields a quiver variety corresponding to a Kleinian singularity of type $D$.
\end{rmq}

The proof uses a graphical calculus based on five lemmas to obtain normal forms of cycles in the preprojective algebra $\Pi(Q)$ (see \Cref{graphical calculus}) up to a suitable equivalence relation.
Once these normal forms are established, the remaining relations between the corresponding invariant functions determine, in each case, the equation defining the associated Kleinian singularity.

\subsubsection*{Contents}
In \Cref{section:2}, we introduce the theory of orthogonal quivers, their symmetric representations, and the associated group representations.
We then review the symplectic geometry of quivers and orthogonal quivers in \Cref{carquois symplectique}.
In \Cref{section:3}, we develop the methods used for computations in preprojective algebras and apply them to prove \Cref{Thm}.
A list of notation is provided at the end of the article.

\subsubsection*{Perspectives}
The results presented here suggest several directions for further investigation. In particular, it would be interesting to consider other families of ortho-symplectic quivers.

The results of \Cref{section:2} also extend to orthosymplectic quivers in the sense of \Cref{rmq: orthosymplectic quivers}.
The graphical methods developed in this work provide an effective approach to determining the $ADE$ type of singular surfaces arising in quiver varieties $\Mgot_{\dd}$ associated with orthosymplectic quivers.
However, their complexity grows rapidly with the dimension, number of generators and relations involved. 
It would therefore be interesting to develop more conceptual methods for studying the singularities and their invariants in the other case.

\subsubsection*{Acknowledgements}
The author acknowledges the support of the CDP C2EMPI, together with the French State under the France-2030 programme, the University of Lille, the Initiative of Excellence of the University of Lille, the European Metropolis of Lille for their funding and support of the R-CDP-24-004-C2EMPI project.
I would like to thank my two advisors, R.~Terpereau and O.~Serman, for their guidance and patience.

\newpage
\section{Orthogonal quivers and symmetric representations}\label{section:2}

A \emph{quiver} consists of two maps between finite sets $h,t \colon Q \to I$, called the head and the tail, respectively; the elements of $Q$ are the arrows and the elements of $I$ are the vertices of the quiver.

\subsection{Symmetric representations}

\begin{defn}\label{defn:ortho quivers}
    An \emph{orthogonal quiver} is a quiver $Q$ endowed with two additional pieces of data.
    \begin{itemize}
        \item a contravariant involution  $\tau \colon Q \sqcup I \to Q \sqcup I$ which sends arrows to arrows and vertices to vertices:
        the contravariance condition is expressed by the relations $$\forall a \in Q, \quad  \tau t a = h\tau a \text{ and } \tau h a = t\tau a.$$
        \item a sign function $s \colon Q \to \{\pm 1\}$, invariant under $\tau$.
    \end{itemize}
\end{defn}

\begin{rmq}\label{defn:opp/double quivers}
    When passing from an orthogonal quiver $Q$ to its opposite quiver $Q^{\opp}$ or its double $\overline{Q}$, the involutions and signs on arrows are naturally extended such that $s(a^{*}) = s(a)$ and $\tau(a^{*}) = \tau(a)^{*}$, where $a^{*} \in Q^{\opp}$ is the arrow opposite to $a \in Q$.
\end{rmq}

Throughout this section, we fix an orthogonal quiver.
A representation of a quiver, denoted $(V,v)$, consists of linear maps $v_a:V_{ta} \to V_{ha}$, for all $a \in Q$, on a collection of vector spaces $V = \bigoplus_{i \in I} V_i$.
We call $\dd = (\dim V_i)_{i \in I} \in \NN^I$ the dimension vector of $(V,v)$.
The family of maps $(v_a)_{a \in Q}$ can be viewed as a point in the vector space $$\Rep(Q,V)  = \bigoplus_{a \in Q} \Hom(V_{ta},V_{ha}).$$

For symmetric representations of orthogonal quivers, the space $\bigoplus_{i \in I} V_i$ is endowed with a nondegenerate symmetric bilinear form. 

\begin{defn}\label{defn:srep}
Symmetric representations are representations of quivers satisfying a self-duality condition.
    \begin{itemize}
        \item A \emph{signed space} $(V,b)$ is an $I$-graded vector space $V = \bigoplus_{i \in I} V_i$ equipped with a symmetric nondegenerate bilinear form $b \colon V \times V \to \CC$ such that $b(V_i,V_j)=0$ unless $j=\tau i$, and the induced pairing $b:V_i \times V_{\tau i}\to \CC$ is perfect. This bilinear form defines isomorphisms denoted by $J_i \colon V_i \overset{\sim}{\longrightarrow} V_{\tau i}^*$.
        \item A \emph{symmetric representation}, denoted by $(V,v,b)$, is a signed space $(V,b)$ endowed with a representation $(V,v)$ of $Q$ such that for every $a \in Q$ one has:
        \begin{align*}
            b(v_ax,y) & = s(a)b(x,v_{\tau a}y) \qquad \forall x,y \in V, a \in Q.
        \end{align*}
        \item For a signed space $(V,b)$, there is a linear involution $\repinvolution$ on $\Rep(Q,V)$ given by the formula:
        \begin{align*}
            \repinvolution(v)_a = s(a)\, J^{-1}_{h a} v_{\tau a}^* J_{t a}.
        \end{align*}
        Its fixed points are the symmetric representations.
        
        \item We denote $\Srep(Q,V) = \Rep(Q,V)^{\repinvolution}$ as the fixed subspace of symmetric representations of $Q$ with signed space $(V,b)$.
    \end{itemize}
\end{defn}

\begin{rmq}\label{decomp srep arep}
    The antisymmetric representations of $Q$ (i.e. the fixed points of $\Rep(Q,V)^{-\repinvolution}$) are the symmetric representations of the orthogonal quiver $(Q,\tau,-s)$, and we have the direct sum:
    \begin{align*}
        \Rep(Q, V) = \Srep(Q, V) \oplus \Rep(Q,V)^{-\repinvolution}.
    \end{align*}
\end{rmq}

A dimension vector $\dd \in \NN^I$ is called \emph{symmetric} if $\dd_{\tau i}=\dd_i$ for all $i\in I$.

\begin{defn}\label{standard model}
Let $\dd \in \NN^I$ be a symmetric dimension vector.
The \emph{standard signed space} $(\CC^{\dd},b)$ is defined by setting $(\CC^{\dd})*i=\CC^{\dd_i}$ for all $i\in I$, with the following bilinear forms:
\begin{itemize}
    \item for each fixed vertex $i\in I^\tau$, we equip $\CC^{\dd_i}$ with the symmetric bilinear form $b(x,y)=\sum_k x_k y_k$;
    \item for each pair of vertices $(i,\tau i)$ with $i\notin I^\tau$, we equip $\CC^{\dd_i}\times\CC^{\dd*{\tau i}}$ with the perfect pairing $b(x,y)=\sum_k x_k y_k$.
\end{itemize}
\end{defn}

Up to a choice of bases for the spaces $V_i$, every signed space $(V,b)$ is isomorphic to this standard model.
Hence, a signed space is uniquely determined, up to isomorphism, by its dimension vector $\dd\in\NN^I$.
For the standard signed space $\CC^{\dd}$, we write $\Rep(Q,\dd)$ and $\Srep(Q,\dd)$ instead of $\Rep(Q,\CC^{\dd})$ and $\Srep(Q,\CC^{\dd})$.

For a signed space $(V,b)$, the condition $\repinvolution(v)=v$ determines the space of symmetric representations $\Srep(Q,V)$. We have:
$$\Srep(Q,V) \simeq \bigoplus_{\underset{a \ne \tau a}{\{a,\tau a\} \subset Q}} \Hom(V_{ta}, V_{ha}) 
\oplus \bigoplus_{\underset{s(a) = +1}{a \in Q^\tau}} \Sym(V_{ta}^*) 
\oplus \bigoplus_{\underset{s(a) = -1}{a \in Q^\tau}} {\Lambda}^2(V_{ta}^*).$$

A morphism $\varphi\colon(V_1,v_1,b_1) \to (V_2,v_2,b_2)$ between symmetric representations is a morphism $\varphi\colon(V_1,v_1) \to (V_2,v_2)$ for which the pull-back of the signed form coincides $\varphi_*b_2 = b_1$; any such morphism must be injective.
One can define symmetric subrepresentations, direct sums, and tensor products of symmetric representations.
A version of the Krull--Schmidt decomposition theorem holds for symmetric representations; for further properties, we refer to \cite{DW02}.

\subsection{Group representations}

In this section, we introduce the group $G^b_V$ and its action on the space of symmetric representations $\Srep(Q,V)$.
The elements of this group describe isomorphisms between symmetric quiver representations, and its orbits correspond to isomorphism classes of representations.
Recall that the base-change group of a quiver $G_V = \prod_{i \in I} \GL(V_i)$ acts on $\Rep(Q,V)$ by: 
$$\forall a \in Q,v \in \Rep(Q,V) \text{ and }g \in G_V, \quad (g \cdot v)_a = g_{ha} v_a g_{ta}^{-1}.$$

\begin{defn}\label{defn:group}
    \begin{itemize}
        \item\label{defn:group:base change group} The \emph{base-change group} of $(V,b)$ is the closed subgroup $G_V^b \subset G_V$ consisting of the transformations that preserve the signed form $b$.
        \item It can also be characterized as the fixed-point subgroup of the group involution $\varphi^b$:
            \begin{align*}
                \varphi^b\colon G_V & \longrightarrow G_V \\
                (g_i)_{i \in I} & \longmapsto (J_i^{-1} g_{\tau i}^{*-1} J_i)_{i \in I}. \notag
            \end{align*}
        \item The Lie algebra $\ggot_V=\bigoplus_{i\in I}\End_{\CC}(V_i)$ of $G_V$ admits an involution $\lieinvolution$, obtained by differentiating $\varphi^b$, given by
        $\lieinvolution(\theta)_i=-J_i^{-1}\theta_{\tau i}^*J_i \in \End_{\CC}(V_i)$.
        Its fixed-point subalgebra $\ggot^b_V$ is the Lie algebra of $G_V^b$.
        \item The eigenspaces of $\lieinvolution$ decompose the Lie algebra as
        $$\ggot_V = \ggot_V^b \oplus \ggot_V^{-\lieinvolution}.$$
    \end{itemize}
\end{defn}

The group $G^b_V$ is isomorphic to a product of classical reductive linear and orthogonal groups:
\begin{align*}
    G_V^b & = \{g \in G_V \mid g_*b = b\} = \{(g_i)_{i \in I} \in G_V\mid \forall i \in I, \quad g_{\tau i}^*J_ig_i = J_i\} \\
    & \simeq \prod_{i \in I^\tau} \OO(V_i) \times \prod_{\underset{i \ne \tau i}{\{i,\tau i\} \subset I}} \GL(V_i).
\end{align*}

\begin{lem}\label{lem:sym action}
   The representation $G^b_V \curvearrowright \Rep(Q,V)$ splits as the direct sum
$$\Rep(Q,V) = \Srep(Q,V) \oplus \Rep(Q,V)^{-\repinvolution}.$$
Moreover:
    $$\forall g \in G_V, \quad \varphi^b(g) = \repinvolution \circ g \circ \repinvolution \in G_V.$$ 
\end{lem}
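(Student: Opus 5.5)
The plan is to prove the second identity first, by computing $\repinvolution \circ g \circ \repinvolution$ directly, and then to read off the splitting as a formal consequence. Here $g \in G_V$ acts on $\Rep(Q,V)$ by $(g\cdot v)_a = g_{ha} v_a g_{ta}^{-1}$, and $\repinvolution$ is the linear involution of \Cref{defn:srep}. Everything reduces to the contravariance relations $\tau t a = h \tau a$, $\tau h a = t\tau a$ of \Cref{defn:ortho quivers}, together with the $\tau$-invariance of $s$. These guarantee that each formula below lands in the correct $\Hom$ space.

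\textbf{The conjugation identity.} Fix $v$ and an arrow $a$, and write $v' = \repinvolution(v)$, so that $v'_{\tau a} = s(a) J^{-1}_{t a} v_a^* J_{h a}$ (using $h\tau a = \tau t a$ and $J_i \colon V_i \to V_{\tau i}^*$). Applying $g$ gives
\[
(g\cdot v')_{\tau a} = s(a)\, g_{h\tau a} J^{-1}_{ta} v_a^* J_{ha}\, g_{t\tau a}^{-1}.
\]
Applying $\repinvolution$ once more, and using $s(\tau a)=s(a)$ so that $s(a)^2=1$, we get
\[
\repinvolution(g\cdot v')_a = J^{-1}_{ha}\,\bigl(g_{\tau h a}^{-1}\bigr)^{*} J_{ha}\; v_a \; J^{-1}_{ta}\, g_{\tau t a}^{*} J_{ta}.
\]
This is exactly $\varphi^b(g)_{ha}\, v_a\, \varphi^b(g)_{ta}^{-1}$ with $\varphi^b(g)_i = J_i^{-1} g_{\tau i}^{*-1} J_i$. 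The only care needed is bookkeeping: the transpose $J_{\tau i}^*$ must agree with $J_i$ up to the identification $V_i^{**}=V_i$. This holds because $b$ is symmetric, and it is the point where the identity could acquire a spurious sign, so I will check it explicitly.

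\textbf{The splitting.} For $g \in G^b_V$ we have $\varphi^b(g)=g$, so the identity says that $g$ commutes with $\repinvolution$. Since $\repinvolution$ is a linear involution (\Cref{decomp srep arep}), its $\pm1$ eigenspaces give the direct sum $\Rep(Q,V) = \Srep(Q,V) \oplus \Rep(Q,V)^{-\repinvolution}$. Both eigenspaces are $G^b_V$-stable precisely because $g$ commutes with $\repinvolution$, so the splitting is one of $G^b_V$-representations.

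The main (mild) obstacle is the index and transpose bookkeeping in the conjugation identity; everything else is formal.
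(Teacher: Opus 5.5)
Your proposal is correct and follows essentially the same route as the paper's proof. Like the paper, you compute $\repinvolution(g\cdot\repinvolution(v))_a$ directly, use $s(\tau a)=s(a)$ and the symmetry identity $J_{\tau i}^*=J_i$ to recognise it as $\varphi^b(g)_{ha}v_a\varphi^b(g)_{ta}^{-1}$, and then use $\varphi^b(g)=g$ for $g\in G^b_V$ to get $G^b_V$-stability of the $\pm1$ eigenspaces of $\repinvolution$.

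One small bookkeeping slip: in your intermediate formula for $\repinvolution(v)_{\tau a}$, the outer factors should be $J_{\tau t a}^{-1}=(J_{ta}^*)^{-1}$ and $J_{\tau h a}=J_{ha}^*$, not $J_{ta}^{-1}$ and $J_{ha}$. This is exactly the identification you flagged, and your final expression is correct.
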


\begin{proof}
    Let $v \in \Rep(Q,V)$, $g \in G_V$, and let $\repinvolution$ be the involution on $\Rep(Q,V)$ induced by the signed space (see \Cref{defn:srep}).
    First, we have:
    \begin{align*}
        (g\cdot\repinvolution)(v)_a &= s(a)g_{ha}J_{ha}^{-1}v_{\tau a}^*J_{ta}g_{ta}^{-1},
    \end{align*}
    then,
    \begin{align*}
        \repinvolution(g\cdot\repinvolution(v))_a & = s(a)s(\tau a)J_{ha}^{-1}\big( g_{h\tau a} J_{h\tau a}^{-1}v_{\tau\tau a}^* J_{t\tau a}g_{t\tau a}^{-1} \big)^*J_{ta} \\
        & = J_{ha}^{-1}g_{t\tau a}^{-1*} J_{t\tau a}^*v_{a} J_{h\tau a}^{-1*} g_{h\tau a}^*J_{ta} \\
        & = J_{ha}^{-1}g_{\tau ha}^{-1*} J_{\tau ha}^*v_{a} J_{ta}^{-1} g_{\tau ta}^*J_{ta} \\
        & = J_{ha}^{-1}g_{\tau ha}^{-1*} J_{ha}v_{a} J_{ta}^{-1} g_{\tau ta}^*J_{ta} \\
        & = \varphi^b(g)_{ha}v_{a} \varphi^b(g)_{ta}^{-1}\\
        & = (\varphi^b(g) \cdot v)_{a}.
    \end{align*}
    It remains to show that $\Srep(Q,V)$ is a $G^b_V$-stable subspace of $\Rep(Q,V)$.
    Let $g \in G^b_V$ and $v \in \Srep(Q,V)$. By definition, $\varphi^b(g)=g$ and $\repinvolution(v)=v$, so we conclude that:
    \begin{align*}
        \repinvolution(g\cdot v) & = \repinvolution \circ g \circ \repinvolution \circ \repinvolution (v) = \varphi^b(g) \cdot \repinvolution(v) = g \cdot v
        \text{ and } g \cdot v \in \Srep(Q,V). \qedhere
    \end{align*}
\end{proof}

\begin{ex}
We may consider the following example of an orthogonal quiver with $8$ vertices and $8$ arrows.
The vertices $1$ and $3$, as well as the arrows $c$ and $e$, are fixed by the involution $\tau$. Every other vertex and arrow is paired with a distinct one under $\tau$.
$$
\begin{tikzpicture}[x=0.65pt,y=0.65pt,yscale=-1,xscale=1]

\draw [line width=0.75]    (201.82,149.76) -- (244.15,149.63) ;
\draw [shift={(247.15,149.62)}, rotate = 179.81] [fill={rgb, 255:red, 0; green, 0; blue, 0 }  ][line width=0.08]  [draw opacity=0] (10.72,-5.15) -- (0,0) -- (10.72,5.15) -- (7.12,0) -- cycle    ;
\draw [line width=0.75]    (177.53,139.47) .. controls (167.13,125.87) and (165.13,111.87) .. (185.13,112.27) .. controls (204.23,112.65) and (204.72,122.16) .. (193.93,137.64) ;
\draw [shift={(192.33,139.87)}, rotate = 306.43] [fill={rgb, 255:red, 0; green, 0; blue, 0 }  ][line width=0.08]  [draw opacity=0] (10.72,-5.15) -- (0,0) -- (10.72,5.15) -- (7.12,0) -- cycle    ;
\draw [line width=0.75]    (387.72,151.18) .. controls (401.32,140.78) and (415.32,138.78) .. (414.92,158.78) .. controls (414.53,177.88) and (405.03,178.37) .. (389.55,167.58) ;
\draw [shift={(387.32,165.98)}, rotate = 36.43] [fill={rgb, 255:red, 0; green, 0; blue, 0 }  ][line width=0.08]  [draw opacity=0] (10.72,-5.15) -- (0,0) -- (10.72,5.15) -- (7.12,0) -- cycle    ;
\draw [line width=0.75]    (319.83,140.97) .. controls (309.43,127.37) and (307.43,113.37) .. (327.43,113.77) .. controls (346.53,114.15) and (347.02,123.66) .. (336.23,139.14) ;
\draw [shift={(334.63,141.37)}, rotate = 306.43] [fill={rgb, 255:red, 0; green, 0; blue, 0 }  ][line width=0.08]  [draw opacity=0] (10.72,-5.15) -- (0,0) -- (10.72,5.15) -- (7.12,0) -- cycle    ;
\draw [line width=0.75]    (266.11,157.06) .. controls (276.51,170.66) and (278.51,184.66) .. (258.51,184.26) .. controls (239.41,183.88) and (238.92,174.38) .. (249.71,158.89) ;
\draw [shift={(251.31,156.66)}, rotate = 126.43] [fill={rgb, 255:red, 0; green, 0; blue, 0 }  ][line width=0.08]  [draw opacity=0] (10.72,-5.15) -- (0,0) -- (10.72,5.15) -- (7.12,0) -- cycle    ;
\draw [line width=0.75]    (269.76,149.62) -- (312.09,149.48) ;
\draw [shift={(315.09,149.47)}, rotate = 179.81] [fill={rgb, 255:red, 0; green, 0; blue, 0 }  ][line width=0.08]  [draw opacity=0] (10.72,-5.15) -- (0,0) -- (10.72,5.15) -- (7.12,0) -- cycle    ;
\draw [line width=0.75]    (458.8,116.02) -- (501.13,115.88) ;
\draw [shift={(504.13,115.87)}, rotate = 179.81] [fill={rgb, 255:red, 0; green, 0; blue, 0 }  ][line width=0.08]  [draw opacity=0] (10.72,-5.15) -- (0,0) -- (10.72,5.15) -- (7.12,0) -- cycle    ;
\draw [line width=0.75]    (496.8,169.22) -- (539.13,169.08) ;
\draw [shift={(542.13,169.07)}, rotate = 179.81] [fill={rgb, 255:red, 0; green, 0; blue, 0 }  ][line width=0.08]  [draw opacity=0] (10.72,-5.15) -- (0,0) -- (10.72,5.15) -- (7.12,0) -- cycle    ;

\draw (174.89,139.78) node [anchor=north west][inner sep=0.75pt]  [font=\normalsize]  {$\tau 2$};
\draw (254.09,140.18) node [anchor=north west][inner sep=0.75pt]  [font=\normalsize]  {$1$};
\draw (323.29,139.78) node [anchor=north west][inner sep=0.75pt]  [font=\normalsize]  {$2$};
\draw (376.89,150.18) node [anchor=north west][inner sep=0.75pt]  [font=\normalsize]  {$3$};
\draw (442.59,105.98) node [anchor=north west][inner sep=0.75pt]  [font=\normalsize]  {$4$};
\draw (508.99,105.18) node [anchor=north west][inner sep=0.75pt]  [font=\normalsize]  {$5$};
\draw (472.09,159.48) node [anchor=north west][inner sep=0.75pt]  [font=\normalsize]  {$\tau 4$};
\draw (546.99,158.18) node [anchor=north west][inner sep=0.75pt]  [font=\normalsize]  {$\tau 5$};
\draw (177.59,92.98) node [anchor=north west][inner sep=0.75pt]  [font=\normalsize]  {$\tau b$};
\draw (323.09,92.98) node [anchor=north west][inner sep=0.75pt]  [font=\normalsize]  {$b$};
\draw (288.09,128.48) node [anchor=north west][inner sep=0.75pt]  [font=\normalsize]  {$a$};
\draw (214.09,127.48) node [anchor=north west][inner sep=0.75pt]  [font=\normalsize]  {$\tau a$};
\draw (253.59,185.98) node [anchor=north west][inner sep=0.75pt]  [font=\normalsize]  {$c$};
\draw (420.09,148.48) node [anchor=north west][inner sep=0.75pt]  [font=\normalsize]  {$e$};
\draw (471.59,93.98) node [anchor=north west][inner sep=0.75pt]  [font=\normalsize]  {$d$};
\draw (507.59,147.48) node [anchor=north west][inner sep=0.75pt]  [font=\normalsize]  {$\tau d$};
\draw (76,137.4) node [anchor=north west][inner sep=0.75pt]    {$Q\ =\ $};
\end{tikzpicture}
$$
Signs on unfixed arrows do not affect the family of group representations described by the orthogonal quivers.
Unless otherwise specified, all unfixed arrows are assumed to have sign $+1$.
Otherwise, we choose signs for the arrows $c$ and $e$ to specify the representation space.
If we choose $s(c)=+1$ and $s(e)=-1$, then this orthogonal quiver describes the representation of
$$G^b_V= \OO(V_1)\times \GL(V_2)\times \OO(V_3)\times \GL(V_4)\times \GL(V_5)$$
on the space
$$\Srep(Q,V)= \Hom(V_1,V_2)\oplus \End_{\CC}(V_2)\oplus \Sym(V_1^*)\oplus \Hom(V_4,V_5)\oplus {\Lambda}^{2}(V_3^*),$$
for prescribed vector spaces $V_i$, $1\leq i\leq 5$.
\end{ex}

\subsection{Symplectic geometry for orthogonal quivers}\label{carquois symplectique}
For the remainder of this section, we fix an orthogonal quiver $Q$ and a signed space $(V,b)$.
We recall some classical facts about moment maps and symplectic geometry for quivers. For the classical theory of Hamiltonian reduction for quivers, we refer to \cite[Chapter~9]{QrQv2016}.
The main goal of this section is to prove that restricting the quiver moment map $\mugras$ to the symmetric representations space yields a moment map $\mu$ for the orthogonal quiver theory (see \Cref{moment of symmetric quivers}).

For every arrow $a \in Q$, there is the following perfect pairing: 
\begin{align*}
    \Hom(V_{ta}, V_{ha}) \times & \Hom(V_{ha}, V_{ta})\to \CC, \quad
    (v_a, v_{a^*})\mapsto \tr(v_a v_{a^*}).
\end{align*} 
This is used to identify $\Hom(V_{ta}, V_{ha})^*$ with $\Hom(V_{ha}, V_{ta})$. 
After doing this for all arrows, we obtain a trivialization of the cotangent bundle of $\Rep(Q,V)$, identified with a quiver representation space:
\begin{align*}
    \mathrm{T}^*\Rep(Q,V) = \Rep(Q, V) \times \Rep(Q, V)^* \simeq  \Rep(\overline{Q}, V)
\end{align*}
where $\overline{Q}$ denotes the doubled quiver, obtained by adjoining, for each arrow $a\colon i\to j$ of $Q$, an opposite arrow $a^*\colon j\to i$. Equivalently, the head and tail maps of $\overline{Q}$ are given by
$$(h,h^{\opp}),(t,t^{\opp})\colon Q\sqcup Q^{\opp}\longrightarrow I.$$
The Lie algebra $\ggot_V = \bigoplus_{i \in I} \End_{\CC}(V_i)$ is identified with its dual via a similar pairing:
\begin{align}\label{pairing kappa}
    \kappa\colon \ggot_V \times\ggot_V \to \CC, \quad (\theta,\theta') \mapsto \sum_{i \in I} \tr(\theta_i\theta'_i).
\end{align}
The Liouville symplectic form $\omega$ on $\mathrm{T}^*\Rep(Q,V)$ corresponds to the form:
\begin{align*}
    \forall x, y \in \Rep(\overline{Q},V), \quad \omega(x, y) = \tr\big(\sum_{a \in Q} x_a y_{a^*} - y_a x_{a^*}\big).
\end{align*}
The \emph{infinitesimal action} is given by the vector field $\theta_X \in \Gamma(X,TX)$ associated with $\theta \in \mathfrak{g}_V$. At a point $v \in X = \Rep(\overline{Q},V)$, this yields the tangent vector
$$\theta_X(v) = \frac{d}{dt}\Big|_{t=0} (\exp(t\theta)\cdot v) = \sum_{a \in \overline{Q}}[\theta,v]_a, \ \text{ where } \forall a \in \overline{Q}, \ [\theta, v]_a = \theta_{ha} v_a - v_a \theta_{ta} \in \Hom(V_{ta}, V_{ha}).$$
The vector $[\theta,v] \in T_v\Rep(\overline{Q},V)$ vanishes on the representation $v \in \Rep(\overline{Q},V)$ whenever $\theta \in \End_Q(v)$, i.e.\ when $\theta$ is an endomorphism of the quiver representation $(V,v)$.

\begin{defn}\label{defn:moment map quivers}
    The \emph{quiver moment map} $\mugras\colon \Rep(\overline{Q}, V) \to \ggot_V$ is given by
    \begin{align*}
        \forall i \in I, \forall v \in \Rep(\overline{Q},V), \quad \mugras(v)_i = \sum_{a \in Q,\, ha = i} v_a v_{a^*} - \sum_{a \in Q,\, ta = i} v_{a^*} v_a \in \End_{\CC}(V_i).
    \end{align*}
\end{defn}

\begin{lem}\label{lem: pairing kappa}
    The induced involution $\repinvolution\colon \Rep(\overline{Q}, V) \to \Rep(\overline{Q}, V)$ (see \Cref{defn:srep}) is a symplectomorphism, and the pairing $\kappa:\ggot_V \times \ggot_V \to \CC$ is invariant under the involution $\lieinvolution$.
    Moreover, $\Srep(\overline{Q}, V)$ and $\Rep(\overline{Q}, V)^{-\repinvolution}$ are symplectic vector subspaces of $(\Rep(\overline{Q}, V), \omega)$, and $\kappa$ restricts to a perfect pairing $\kappa^b\colon \ggot^b_V \times \ggot^b_V \to \CC$.
\end{lem}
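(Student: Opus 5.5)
The plan is to check each claim directly from the formulas in \Cref{defn:srep} and \Cref{defn:group}, using the extension of $\tau$ and $s$ to $\overline{Q}$ from \Cref{defn:opp/double quivers}. Throughout we use the identity $J_{\tau i}^* = J_i$, which holds because $b$ is symmetric.

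\textbf{Step 1: $\repinvolution$ preserves $\omega$.} Write $\omega(x,y)=\sum_{a\in Q}\tr(x_a y_{a^*}) - \tr(y_a x_{a^*})$. For $a\in Q$, the definition gives
\[
\repinvolution(x)_a \,\repinvolution(y)_{a^*} = s(a)^2\, J_{ha}^{-1} x_{\tau a}^* J_{ta} J_{ta}^{-1} y_{(\tau a)^*}^* J_{ha} .
\]
This uses $s(a^*)=s(a)$, $\tau(a^*)=(\tau a)^*$ and $t(a^*)=ha$. Taking the trace removes the conjugation by $J_{ha}$, so it equals $\tr\bigl(x_{\tau a}^* y_{(\tau a)^*}^*\bigr)=\tr\bigl(y_{(\tau a)^*} x_{\tau a}\bigr)$. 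Summing over $a$ and reindexing by the bijection $a\mapsto \tau a$ of $Q$, the first sum of $\omega(\repinvolution x,\repinvolution y)$ becomes $\sum_{a}\tr(y_{a^*}x_a)$. The second sum transforms in the same way, so $\omega$ is preserved. I expect this step to be the main obstacle, though only as careful bookkeeping: keeping the transposes and the $J$'s in order and checking that the contravariance of $\tau$ makes the heads and tails match.

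\textbf{Step 2: $\kappa$ is $\lieinvolution$-invariant.} We have
\[
\tr\bigl(\lieinvolution(\theta)_i\lieinvolution(\theta')_i\bigr)=\tr\bigl(J_i^{-1}\theta_{\tau i}^*\theta'^*_{\tau i}J_i\bigr)=\tr(\theta'_{\tau i}\theta_{\tau i}),
\]
and summing over $i$ with the reindexing $i\mapsto\tau i$ gives $\kappa(\lieinvolution\theta,\lieinvolution\theta')=\kappa(\theta,\theta')$.

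\textbf{Step 3: the eigenspace splittings.} This is the standard linear algebra fact for an involution preserving a nondegenerate bilinear form $B$, which applies to both $(\repinvolution,\omega)$ and $(\lieinvolution,\kappa)$. If $x$ is in the $+1$ eigenspace and $y$ in the $-1$ eigenspace, then
\[
B(x,y)=B(\iota x,\iota y)=-B(x,y),
\]
so $B(x,y)=0$. The two eigenspaces are therefore $B$-orthogonal, and the whole space is their direct sum by \Cref{lem:sym action} (respectively \Cref{defn:group}). Since $B$ is nondegenerate on the whole space, its restriction to each summand must be nondegenerate. For $\omega$ this shows that $\Srep(\overline{Q},V)=\Rep(\overline{Q},V)^{\repinvolution}$ and $\Rep(\overline{Q},V)^{-\repinvolution}$ are symplectic subspaces. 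For $\kappa$, which is nondegenerate on $\ggot_V$ (the trace pairing on each $\End(V_i)$), this shows that $\kappa^b$ is perfect on $\ggot^b_V$.
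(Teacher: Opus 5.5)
Your proposal is correct and follows essentially the same route as the paper: a direct trace computation (conjugation by $J_{ha}$ or $J_i$ drops out under the trace, then reindex by $\tau$) showing $\omega(\repinvolution x,\repinvolution y)=\omega(x,y)$ and $\kappa(\lieinvolution\theta,\lieinvolution\theta')=\kappa(\theta,\theta')$. It then uses the observation that the $\pm1$-eigenspaces are orthogonal, so the restrictions stay nondegenerate. Your version is slightly more complete: you spell out why orthogonality plus nondegeneracy on the whole space forces nondegeneracy on each summand, and you treat $\Rep(\overline{Q},V)^{-\repinvolution}$ explicitly rather than leaving it implicit.
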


\begin{proof}
    Since $\repinvolution\colon \Rep(\overline{Q}, V) \to \Rep(\overline{Q}, V)$ is linear, it is a symplectomorphism if 
    $$\forall x, y \in \Rep(\overline{Q}, V), \quad \omega(\repinvolution(x), \repinvolution(y)) = \omega(x, y).$$
    A direct calculation gives
    \begin{align*}
        \omega(\repinvolution(x),\repinvolution(y)) & = \tr\Big(\sum_{a \in Q} \repinvolution(x)_a \repinvolution(y)_{a^*} - \repinvolution(y)_a \repinvolution(x)_{a^*}\Big) \\
        & = \tr\Big(\sum_{a \in Q} J_{ha}^{-1}x_{\tau a}^* J_{ta} J_{ta}^{-1}y_{\tau a^*}^*J_{ha} -J_{ta}^{-1}y_{\tau a}^* J_{ha} J_{ha}^{-1}x_{\tau a^*}^*J_{ta}\Big)\\
        & = \tr\Big(\sum_{a \in Q} x_{\tau a}^* y_{\tau a^*}^*-y_{\tau a}^* x_{\tau a^*}^*\Big) \\
        & =  - \tr\Big(\sum_{a \in Q} x_{\tau a^*} y_{\tau a}-y_{\tau a^*} x_{\tau a}\Big) \\
        & =  - \tr\Big(\sum_{c = \tau a \in Q} x_{c^*} y_{c}-y_{c^*} x_{c}\Big)\\
        & =  \tr\Big(\sum_{c = \tau a \in Q}  x_{c}y_{c^*} - y_{c}x_{c^*}\Big) = \omega(x,y).
    \end{align*}
    If $x \in \Srep(\overline{Q}, V)$ and $y \in \Rep(\overline{Q}, V)^{-\repinvolution}$, we have $\omega(x, y) = 0$, then the restriction $\omega_{|\Srep(\overline{Q}, V)}$ is non-degenerate, and therefore symplectic.
    For all $\theta,\theta' \in \ggot_V$:
    \begin{align*}
        \sum_{i \in I}\tr(\lieinvolution(\theta)_i\lieinvolution(\theta')_i) = \sum_{i \in I} \tr(J_i^{-1}(\theta'_{\tau i}\theta_{\tau i})^*J_i) = \sum_{i \in I} \tr((\theta'_{\tau i}\theta_{\tau i})^*) = \sum_{i \in I} \tr(\theta'_{\tau i}\theta_{\tau i}) = \sum_{i \in I} \tr(\theta_{i}\theta_{i}').
    \end{align*}
    Thus, if we take $\theta \in \ggot_V^b$ and $\theta' \in (\ggot_V)^{-\lieinvolution}$, we obtain $\tr(\theta\theta') = 0,$ and therefore the restriction to $\ggot_V^b$ is a perfect pairing.
\end{proof}

\begin{prop}\label{mu et crochet commute à t et tgot}
    The moment map and the bracket of the infinitesimal action intertwine the two involutions $\repinvolution$ and $\lieinvolution$ (see \Cref{defn:group,defn:srep}):
    $$\forall v \in \Rep(\overline{Q},V), \theta \in \ggot_V, \quad \mugras(\repinvolution(v)) = \lieinvolution(\mugras(v)) \text{ and }\ \repinvolution([\theta, v]) = [\lieinvolution(\theta), \repinvolution(v)].$$
\end{prop}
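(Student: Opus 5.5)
Both identities are checked by direct computation, componentwise, from the formulas $\repinvolution(v)_a = s(a) J_{ha}^{-1} v_{\tau a}^* J_{ta}$ and $\lieinvolution(\theta)_i = -J_i^{-1}\theta_{\tau i}^* J_i$. The structural inputs are three.
\begin{itemize}
\item Contravariance: $h\tau a = \tau t a$ and $t\tau a = \tau h a$.
\item The extension of $\tau$ and $s$ to $\overline{Q}$: $s(a^*)=s(a)$ and $\tau(a^*)=\tau(a)^*$.
\item Symmetry of $b$: $J_{\tau i} = J_i^*$ under $V_i^{**}=V_i$.
\end{itemize}
The last one is the identity already used in the proof of \Cref{lem:sym action}. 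It gives $J_{ha}^{-1*}J_{\tau h a}^{\,}$-type cancellations and lets transposes pass through the $J$'s.

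\textbf{Bracket identity.} Fix $a\in\overline{Q}$ and expand $\repinvolution([\theta,v])_a = s(a)J_{ha}^{-1}\big(\theta_{h\tau a}v_{\tau a}-v_{\tau a}\theta_{t\tau a}\big)^*J_{ta}$. Transposing reverses products, giving $v_{\tau a}^*\theta_{\tau ta}^* - \theta_{\tau ha}^* v_{\tau a}^*$. Next insert $J_{ha}J_{ha}^{-1}$ or $J_{ta}J_{ta}^{-1}$ between the factors. This rewrites the expression as
\[
\big(-J_{ha}^{-1}\theta_{\tau ha}^*J_{ha}\big)\,\repinvolution(v)_a-\repinvolution(v)_a\,\big(-J_{ta}^{-1}\theta_{\tau ta}^*J_{ta}\big)=[\lieinvolution(\theta),\repinvolution(v)]_a .
\]
Alternatively, this is the derivative at $t=0$ of the identity $\repinvolution(\exp(t\theta)\cdot\repinvolution(v))=\varphi^b(\exp(t\theta))\cdot v$ from \Cref{lem:sym action}, applied to $\overline{Q}$. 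Here one uses $\frac{d}{dt}\varphi^b(\exp t\theta)=\lieinvolution(\theta)$ and replaces $v$ by $\repinvolution(v)$. I would present this shorter route and keep the direct computation as a check.

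\textbf{Moment map identity.} Fix $i\in I$ and compute $\mugras(\repinvolution(v))_i$. A term $\repinvolution(v)_a\repinvolution(v)_{a^*}$ with $ha=i$ equals
\[
s(a)^2J_{i}^{-1}v_{\tau a}^*J_{ta}J_{ta}^{-1}v_{\tau a^*}^*J_i = J_i^{-1}(v_{\tau a^*}v_{\tau a})^*J_i .
\]
Since $\tau(a^*)=(\tau a)^*$ and $t\tau a=\tau h a=\tau i$, the map $a\mapsto c=\tau a$ is a bijection from $\{a\in Q : ha=i\}$ onto $\{c\in Q : tc=\tau i\}$. So the first sum becomes $J_i^{-1}\big(\sum_{tc=\tau i}v_{c^*}v_c\big)^*J_i$. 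Likewise the second sum becomes $J_i^{-1}\big(\sum_{hc=\tau i}v_cv_{c^*}\big)^*J_i$. Taking the difference, the two sums trade roles with a sign flip, giving $-J_i^{-1}\mugras(v)_{\tau i}^*J_i=\lieinvolution(\mugras(v))_i$.

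\textbf{Main obstacle.} I expect the only delicate point to be this bookkeeping: checking that $\tau$ restricted to $Q$ (not $\overline{Q}$) maps the arrows with head $i$ bijectively to those with tail $\tau i$. This requires $\tau$ to preserve $Q\subset\overline{Q}$, which holds by \Cref{defn:opp/double quivers}. A second point is that the signs $s(a)^2=1$ cancel, so no sign constraint beyond $s\circ\tau=s$ is needed.
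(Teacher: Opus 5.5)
Your proof is correct. For the moment-map identity it is the paper's computation. You expand $\mugras(\repinvolution(v))_i$ termwise, cancel $s(a)^2$ and the inner $J_{ta}J_{ta}^{-1}$, and reindex by $c=\tau a$. You also make explicit the bijection $\{a\in Q: ha=i\}\to\{c\in Q: tc=\tau i\}$, together with the fact that $\tau$ preserves $Q\subset\overline{Q}$; the paper uses both silently.

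For the bracket identity the paper gives only the direct componentwise expansion, which is your ``check''. Your preferred route instead differentiates the identity $\repinvolution(g\cdot v)=\varphi^b(g)\cdot\repinvolution(v)$ from \Cref{lem:sym action}, applied to the orthogonal quiver $\overline{Q}$, along $g=\exp(t\theta)$. This is valid, since $\repinvolution$ is linear and involutive and $\varphi^b$ is a homomorphism whose differential is $\lieinvolution$. It presents the bracket identity as the infinitesimal form of the group-level equivariance already proved. The cost is that it inherits the hypotheses of \Cref{lem:sym action}, in particular $J_{\tau i}=J_i^*$, which is also what makes $\repinvolution$ involutive.

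The direct expansion is more self-contained. As your own computation shows, neither identity actually uses the symmetry of $b$. It needs only insertions of $J_iJ_i^{-1}$, contravariance, and the rules $s(a^*)=s(a)$, $\tau(a^*)=\tau(a)^*$. The paper, by contrast, writes $J_{\tau ha}^{*-1}$ in place of $J_{ha}^{-1}$, and its displayed chain loses a sign in the last step; your expansion keeps the signs right.
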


\begin{proof}
    Let $v \in \Rep(\overline{Q},V)$ and $\theta \in \ggot_V$ then:
    \begin{align*}
        \mugras(\repinvolution(v))_i & = \sum_{\stackrel{a \in Q}{ha  = i}} \repinvolution(v)_a\repinvolution(v)_{a^*} - \sum_{\stackrel{a \in Q}{ta = i}}\repinvolution(v)_{a^*}\repinvolution(v)_a \\
        & = \sum_{\stackrel{a \in Q}{ha  = i}} (J_{ha}^{-1}v_{\tau a}^*J_{ta})(J_{ ta}^{-1}v_{\tau a^*}^*J_{ha}) - \sum_{\stackrel{a \in Q}{ta = i}}(J_{ ta}^{-1}v_{\tau a^*}^*J_{ha})(J_{ ha}^{-1}v_{\tau a}^*J_{ta}) \\
        & = \sum_{\stackrel{a \in Q}{ha  = i}} J_{ha}^{-1}v_{\tau a}^*J_{ta}J_{ ta}^{-1}v_{\tau a^*}^*J_{ha} - \sum_{\stackrel{a \in Q}{ta = i}}J_{ta}^{-1}v_{\tau a^*}^*J_{ha}J_{ha}^{-1}v_{\tau a}^*J_{ta} \\
        & = - J_i^{-1}(\sum_{\stackrel{a \in Q}{ha  = \tau i}} v_av_{a^*} - \sum_{\stackrel{a \in Q}{ta = \tau i}} v_{a^*}v_a)^*J_i = \lieinvolution \mugras(v)_i, \\
        [\lieinvolution(\theta),\repinvolution(v)]_a & = \lieinvolution(\theta)_{ha}\repinvolution(v)_a - \repinvolution(v)_a\lieinvolution(\theta)_{ta} \\
        & = - s(a)(J_{ha}^{-1}\theta_{\tau ha}^*J_{ha})(J_{\tau ha}^{\ast -1}v_{\tau a}^*J_{ta}) -  (J_{\tau ha}^{\ast -1}v_{\tau a}^*J_{ta})(-J_{ta}^{-1}\theta_{\tau ta}^*J_{ta}) \\
        & = - s(a)(J_{\tau ha}^{-1*}\theta_{\tau ha}^*J_{ha}J_{ ha}^{ -1}v_{\tau a}^*J_{ta} -  J_{\tau ha}^{\ast -1}v_{\tau a}^*J_{ta}J_{ta}^{-1}\theta_{\tau ta}^*J_{ta}) \\
        & = s(a)J_{\tau ha}^{-1*}(\theta_{\tau ha}^*v_{\tau a}^* -  v_{\tau a}^*\theta_{\tau ta}^*)J_{ta} = \repinvolution([\theta,v])_a. \qedhere
    \end{align*}
\end{proof}

\begin{rmq}\label{rmq sign rules}
    The moment map and the bracket are bilinear:
    $$\mugras\colon \Rep(Q, V) \times \Rep(Q^{\opp}, V) \to \ggot_V \quad \text{and} \quad [\cdot, \cdot]\colon \ggot_V \times \Rep(\overline{Q}, V) \to \Rep(\overline{Q}, V).$$
    They respect the sign rules for the involutions $\repinvolution$ and $\lieinvolution$ (from \Cref{defn:srep,defn:group}); thus $$\mugras(\Srep(\overline{Q},V)) \subset \ggot_V^b.$$
\end{rmq}

\begin{cor}\label{moment of symmetric quivers}
    The restriction of the moment map $\mugras$ (from \Cref{defn:moment map quivers}) defines a moment map for orthogonal quivers:
    $$\mu\colon \Srep(\overline{Q},V) \to \Rep(\overline{Q},V) \overset{\mugras}{\to}\underset{\simeq \ggot_V^*}{\ggot_V} \to \underset{\simeq(\ggot^b_V)^*}{\ggot^b_V}.$$
\end{cor}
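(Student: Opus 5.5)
The plan is to verify directly the two defining properties of a moment map for the Hamiltonian action $G^b_V \action \Srep(\overline{Q},V)$: equivariance, and the identity $\dd\,\kappa^b(\mu(\cdot),\theta) = \omega(\theta_X,\cdot)$ for $\theta\in\ggot^b_V$. Each property will be deduced from the corresponding classical property of $\mugras$ on $\Rep(\overline{Q},V)$, which I take from \cite[Chapter~9]{QrQv2016}, combined with the compatibilities proved in \Cref{lem: pairing kappa} and \Cref{mu et crochet commute à t et tgot}.

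\emph{Well-definedness and the identification with $(\ggot^b_V)^*$.} By \Cref{lem: pairing kappa}, $\Srep(\overline{Q},V)$ is a symplectic subspace, so it is a symplectic manifold on which $G^b_V$ acts linearly. This action is well defined by \Cref{lem:sym action} applied to $\overline{Q}$. If $\repinvolution(v)=v$, then \Cref{mu et crochet commute à t et tgot} gives $\lieinvolution(\mugras(v))=\mugras(v)$, hence $\mugras(v)\in\ggot^b_V$, as in \Cref{rmq sign rules}. Next I identify the composite $\ggot_V\simeq\ggot_V^*\to(\ggot^b_V)^*$. By the invariance of $\kappa$ under $\lieinvolution$ in \Cref{lem: pairing kappa}, the eigenspaces $\ggot^b_V$ and $\ggot_V^{-\lieinvolution}$ are $\kappa$-orthogonal. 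Consequently, for $\xi\in\ggot^b_V$ the functional $\kappa(\xi,\cdot)|_{\ggot^b_V}$ is exactly $\kappa^b(\xi,\cdot)$. Since $\kappa^b$ is perfect, $\mu$ is simply $v\mapsto\mugras(v)\in\ggot^b_V\simeq(\ggot^b_V)^*$.

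\emph{Equivariance.} The map $\mugras$ is $G_V$-equivariant for the adjoint action. For $g\in G^b_V$, the adjoint action preserves $\ggot^b_V$, and the identification via $\kappa^b$ is $G^b_V$-equivariant because $\kappa$ is $\mathrm{Ad}$-invariant. Restricting to $\Srep(\overline{Q},V)$ therefore gives equivariance of $\mu$.

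\emph{Moment map identity.} Let $\theta\in\ggot^b_V$ and $v\in\Srep(\overline{Q},V)$. By \Cref{mu et crochet commute à t et tgot}, $\repinvolution([\theta,v])=[\lieinvolution\theta,\repinvolution v]=[\theta,v]$. Hence the fundamental vector field of $\theta$ on $\Rep(\overline{Q},V)$ is tangent to $\Srep(\overline{Q},V)$, and it coincides with the fundamental vector field of the restricted action. The classical identity states that for all $x\in\Rep(\overline{Q},V)$,
\[
\dd_v\,\kappa(\mugras(\cdot),\theta)(x)=\omega([\theta,v],x).
\]
Restricting this to $x\in\Srep(\overline{Q},V)=T_v\Srep(\overline{Q},V)$ and using $\kappa=\kappa^b$ on $\ggot^b_V$ gives precisely the moment map identity for $\mu$ with respect to $\omega|_{\Srep}$.

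The only delicate point I anticipate is bookkeeping of sign conventions. The classical identity must hold with the same orientation of $\omega$ and of the infinitesimal action as fixed in \Cref{carquois symplectique}. I would confirm this by a one-line trace computation: expand $\tr(\theta\,\mugras(v))$, differentiate it, and compare term by term with $\omega([\theta,v],x)$.
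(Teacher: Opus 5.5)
Your proposal is correct and follows the paper's proof step for step. You show $\mugras(\Srep(\overline{Q},V))\subset\ggot_V^b$ by the intertwining relation $\mugras\circ\repinvolution=\lieinvolution\circ\mugras$, identify $\ggot_V^*\to(\ggot_V^b)^*$ with the $\kappa$-orthogonal projection, get equivariance by restriction, and obtain the moment map identity by restricting the classical one once $[\theta,v]$ is tangent to $\Srep(\overline{Q},V)$. Your planned sign check also works with the paper's conventions: expanding $\kappa(\mugras(v),\theta)=\sum_{a\in Q}\tr([\theta,v]_a v_{a^*})$ and differentiating gives exactly $\mathrm{d}_v\,\kappa(\mugras(\cdot),\theta)(x)=\omega([\theta,v],x)$.
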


\begin{proof}
    In classical symplectic geometry, since $G^b_V \subset G_V$ is a closed subgroup, the moment map is obtained by considering the composition
    $$\Srep(\overline{Q},V) \to \Rep(\overline{Q},V) \to \ggot_V^* \to (\ggot_V^b)^*,$$
    where the last arrow $\ggot_V^* \to (\ggot_V^b)^*$ is induced by the natural closed immersion $\ggot_V^b \inj \ggot_V$.
    
    For quivers, there is a chosen identification $\ggot_V \overset{\kappa}{\simeq} \ggot_V^*$. For orthogonal quivers, before identifying with $\kappa$ and $\kappa^b$, the last arrow in
    $$\Srep(\overline{Q},V) \to \Rep(\overline{Q},V) \to \ggot_V^* \to (\ggot_V^b)^*$$
    is also given by the composition
    $$\ggot_V^* \overset{\kappa}{\simeq} \ggot_V = \ggot_V^b \oplus (\ggot_V)^{-\lieinvolution} \to \ggot_V^b \overset{\kappa^b}{\simeq} (\ggot_V^b)^*,$$
    since \Cref{lem: pairing kappa} shows that the following diagram commutes
    \begin{align*}
        \begin{matrix}
            \ggot_V = & \ggot_V^b \oplus\ggot_V^{-\lieinvolution}&  \surj& \ggot_V^b \\
            \downarrow \kappa& & & \downarrow \kappa^b \\
            (\ggot_V)^* = & (\ggot_V^b)^* \oplus(\ggot_V^{-\lieinvolution})^*&  \surj& (\ggot_V^b)^*. \\
        \end{matrix}
    \end{align*}
    \Cref{mu et crochet commute à t et tgot} ensures that the moment map $\mugras\colon \Rep(\overline{Q},V) \to \ggot_V$ sends $\Srep(\overline{Q},V)$ into $\ggot_V^b$. 
    Hence, the restricted map $\mu\colon \Srep(\overline{Q},V) \to \ggot_V^b$ is well defined. Since $\mugras$ is $G_V$-equivariant, it is in particular $G_V^b$-equivariant.
    Moreover, for every $\theta \in \ggot_V^b$, the tangent vector $[\theta,v]$ belongs to $T_v\Srep(\overline{Q},V)\subset T_v\Rep(\overline{Q},V)$. 
    The defining differential property of the moment map,
    $$\forall v \in \Rep(\overline{Q},V),\ \theta \in \ggot_V, \quad d_v\langle \mugras(v), \theta \rangle = \iota_{\theta_{\Rep}}\omega,$$
    remains valid after restricting to the closed invariant subspace $\Srep(\overline{Q},V)$ and to $\ggot_V^b$, provided that these restrictions agree with the identification induced by $\kappa$, which follows from \Cref{mu et crochet commute à t et tgot},
    \begin{align*}
        \forall v \in \Srep(\overline{Q},V),\theta \in \ggot_V^b, \quad d_v\langle \mu(v), \theta \rangle = \iota_{\theta_{\Srep}}\omega. \quad  \qedhere
    \end{align*}
\end{proof}

\begin{rmq}\label{rmq: orthosymplectic quivers}
    The different statements presented in \Cref{section:2} remain valid for ortho-symplectic quivers, in the following sense:
    \begin{itemize}
        \item An \emph{ortho-symplectic quiver} is a quiver equipped with a contravariant involution $(Q,\tau)$ and a sign function
        $$s\colon Q\sqcup I\longrightarrow\{\pm1\},$$
        such that $s_{\mid I}$ is $\tau$-invariant and, for every arrow $a\colon i\to j$, $s(a)s(\tau a)=s(ha)s(ta).$
        \item A \emph{signed space} $(V,b)$ associated with an ortho-symplectic quiver is a graded vector space $V = \bigoplus_{i \in I} V_i$ equipped with a nondegenerate bilinear form $b$ such that:
        \begin{itemize}
            \item for every fixed vertex $i\in I^\tau$, the restriction $b_i$ is symmetric if $s(i)=+1$ and skew-symmetric if $s(i)=-1$;
            \item for every pair of non-fixed vertices $\{i,\tau i\}$, $b$ which is symmetric on $V_i\oplus V_{\tau i}$ if $s(i)=+1$, skew-symmetric if $s(i)=-1$, and induces a perfect pairing between $V_i$ and $V_{\tau i}$.
        \end{itemize}
    \end{itemize}
    Orthogonal quivers are recovered as the special case where $s_{\mid I}=+1$.
\end{rmq}

\begin{rmq}
    Let $\zeta \in (\ggot^b_V)^{G^b_V}$. We can consider the two respective quiver varieties $$\Mgot_{\zeta,0,\dd} =\mu^{-1}(\zeta)\sslash G^b_{\dd}\text{ and } \Mgotgras_{\zeta,0,\dd} = \mugras^{-1}(\zeta)\sslash G_{\dd}.$$
    Then the involution $\repinvolution$ (see \Cref{defn:srep}) descends to an involution $[\repinvolution]$ on $\Mgotgras_{\zeta,0,\dd}$. 
    As shown in \cite[Proposition 9.2.1]{li2018}, there is a natural closed embedding
    $$\Mgot_{\zeta,0,\dd} \hookrightarrow (\Mgotgras_{\zeta,0,\dd})^{[\repinvolution]}.$$
    According to \cite{Nak25classical}, this embedding is stated to be an isomorphism.
\end{rmq}

\newpage
\section{Orthogonal quivers of affine Dynkin type}\label{section:3}

\subsection{Path algebra and preprojective algebra}

In this section, we fix an orthogonal quiver $Q$ and a signed space $(V,b)$ of dimension $\dd$ (see \Cref{defn:srep}).
We begin by recalling the definitions of the path algebra and the preprojective algebra associated with a quiver. 
For these notions, we refer to \cite{QrQv2016} and \cite{Ringel91}.
We also introduce several lemmas forming what we call a \emph{graphical calculus}, in order to classify cycles up to an equivalence relation (see \Cref{defn:invariant tr relation}).

\begin{defn}\label{defn:path algebra}
    The \emph{path algebra} $\CC Q$ is the associative $\CC$-algebra defined as the free $\CC$-vector space generated by the finite paths contained in $Q$, including the trivial paths $e_i$ of length zero at each vertex $i \in I$.
    The product of two paths $p$ and $q$ is defined as the concatenation $pq$ if $tp = hq$, and as zero otherwise. Hence, the elements $e_i$ are idempotents in $\CC Q$.
    The algebra is graded by path length; for a path $p$, its length is denoted $\ell(p)$.
    By definition, $I$ gives a basis $(e_i)_{i \in I}$ for $\CC Q_0$, while $Q$ gives a basis for $\CC Q_1$.
\end{defn}

\begin{thm}\label{bijection module}{\emph{\cite[Thm.~1.7]{QrQv2016}}}
    There exists a natural equivalence of categories
    $$\Rep(Q) \leftrightarrow \CC Q\text{-mod},$$
    between the category of representations of the quiver $Q$ and the category of left $\CC Q$-modules.
\end{thm}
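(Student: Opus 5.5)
The plan is to construct explicit quasi-inverse functors $F\colon \Rep(Q)\to \CC Q\text{-mod}$ and $G\colon \CC Q\text{-mod}\to\Rep(Q)$, and to check that both composites are naturally isomorphic to the identity. Throughout, the only structural inputs are the ones recorded in \Cref{defn:path algebra}: the $e_i$ are pairwise orthogonal idempotents with $\sum_{i\in I} e_i = 1$ (the sum is finite because $I$ is finite), and $\CC Q$ is generated as an algebra by $\CC Q_0\oplus\CC Q_1$. Concatenation is subject to the rules $a=e_{ha}\,a\,e_{ta}$, $e_ie_j=\delta_{ij}e_i$, and $ab=0$ unless $ta=hb$.

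\emph{Step 1: from representations to modules.} Given a representation $(V,v)$, set $F(V,v)=V=\bigoplus_{i\in I}V_i$. Let $e_i$ act as the projection onto $V_i$, and let an arrow $a$ act as the composite
$$V\surj V_{ta}\xrightarrow{v_a}V_{ha}\inj V.$$
A path $p=a_1\cdots a_k$ acts by the composite of the corresponding maps; by construction this is zero unless the path is composable. Since $\CC Q$ has the paths as a basis, this defines a linear map $\CC Q\to\End_\CC(V)$. It is multiplicative because both concatenation and the composite of the operators vanish exactly when heads and tails do not match, and $\sum_i e_i$ acts as the identity. A morphism $\phi=(\phi_i)_{i\in I}$ of representations, i.e.\ one satisfying $\phi_{ha}v_a=v'_a\phi_{ta}$, gives the map $\bigoplus_i\phi_i$. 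This map commutes with every $e_i$ and every arrow, hence with all of $\CC Q$.

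\emph{Step 2: from modules to representations.} Given a left module $M$, set $G(M)_i=e_iM$. Orthogonality of the idempotents together with $\sum_i e_i=1$ gives $M=\bigoplus_{i\in I}e_iM$. For an arrow $a$, define $v_a\colon e_{ta}M\to e_{ha}M$ as left multiplication by $a$; this lands in $e_{ha}M$ because $a=e_{ha}a$. A module map $f\colon M\to M'$ commutes with each $e_i$, so it restricts to maps $f_i\colon e_iM\to e_iM'$, and commuting with each arrow gives $f_{ha}v_a=v'_af_{ta}$.

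\emph{Step 3: the natural isomorphisms.} First, $GF(V,v)$ has $e_iV=V_i$ and arrow maps $v_a$, so $GF\cong\mathrm{id}$, in fact on the nose. Second, $FG(M)=\bigoplus_i e_iM$, and the summation map $\bigoplus_i e_iM\to M$ is a natural isomorphism. Since both modules are generated by the idempotents and arrows, it suffices to check that this map intertwines their actions. That reduces to the identity $a\cdot m=a\,e_{ta}m$ for $m\in M$, which holds because $a=a\,e_{ta}$. Finally, functoriality and naturality are immediate, since every map involved is defined componentwise.

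I do not expect a genuine obstacle; the proof is bookkeeping. The one point requiring care is Step 1, showing that the assignment on basis paths extends to an algebra homomorphism, that is, that the zero-product convention for non-composable paths matches the vanishing of the composed operators. An alternative is to invoke the universal property of $\CC Q$ as the tensor algebra $T_{\CC Q_0}(\CC Q_1)$ of the $\CC Q_0$-bimodule $\CC Q_1$. Under that description, a module is the same thing as a $\CC Q_0$-module $V=\bigoplus_i V_i$ together with a bimodule map $\CC Q_1\otimes_{\CC Q_0}V\to V$, which is exactly a family $(v_a)_{a\in Q}$.
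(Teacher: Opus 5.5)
Your argument is correct and complete. The paper gives no proof of its own and simply cites \cite[Thm.~1.7]{QrQv2016}, whose standard proof is exactly your construction: $e_i$ acts on $\bigoplus_{i\in I}V_i$ as the projections, and conversely $V_i=e_iM$, using $\sum_{i\in I}e_i=1$ for finite $I$. One small imprecision in your closing aside does not affect the main proof: the structure map $\CC Q_1\otimes_{\CC Q_0}V\to V$ should be a homomorphism of left $\CC Q_0$-modules rather than a bimodule map.
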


\begin{cor}\label{bijection module 2}
    We have natural equivalences between the following full subcategories:
    \begin{align*}
        \Rep(Q,V) &\leftrightarrow \big\{ \varphi \in \Mor_{\CC\text{-}\mathrm{alg}}(\CC Q,\End_{\CC}(V)) \mid \forall i\in I, \varphi(e_i) = \mathrm{Id}_{V_i} \big\}, \\
        \Srep(Q,V) &\leftrightarrow \big\{ \varphi \in\Mor_{\CC\text{-}\mathrm{alg}}(\CC Q,\End_{\CC}(V))^{\repinvolution} \mid \forall i\in I, \varphi(e_i) = \mathrm{Id}_{V_i} \big\}.
    \end{align*}
\end{cor}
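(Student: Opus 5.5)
The plan is to deduce the statement directly from \Cref{bijection module}, or more concretely from the universal property of the path algebra, and then transport the involution $\repinvolution$ across the bijection. Throughout, $\mathrm{Id}_{V_i}$ is read as the idempotent projection of $V=\bigoplus_j V_j$ onto $V_i$, extended by zero on the other summands.

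\emph{First equivalence.} I would use that $\CC Q$ is the tensor algebra over the semisimple algebra $\CC Q_0=\bigoplus_i \CC e_i$ of the $\CC Q_0$-bimodule $\CC Q_1$. Hence an algebra morphism $\varphi\colon \CC Q\to\End_{\CC}(V)$ is the same as two pieces of data. The first is a family of orthogonal idempotents $\varphi(e_i)$ summing to the identity. The second is a family of elements $\varphi(a)$ with $\varphi(a)=\varphi(e_{ha})\varphi(a)\varphi(e_{ta})$. If we impose $\varphi(e_i)=\mathrm{Id}_{V_i}$, the second condition says exactly that $\varphi(a)$ is a map $V_{ta}\to V_{ha}$, extended by zero.
\begin{itemize}
  \item Given $v\in\Rep(Q,V)$, I set $\varphi_v(a)=v_a$. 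On a path $p=a_k\cdots a_1$ this gives $\varphi_v(p)=v_{a_k}\cdots v_{a_1}$.
  \item Conversely, given $\varphi$, I set $v_a=\varphi(a)_{|V_{ta}}$.
\end{itemize}
These two constructions are mutually inverse. On morphisms, a map $g$ intertwines the representations if and only if it intertwines the corresponding algebra maps, since both conditions can be checked on the generators $e_i$ and $a$. This gives the first equivalence, and it is just the restriction of \Cref{bijection module} to modules with underlying graded space $V$.

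\emph{Defining $\repinvolution$ on algebra maps.} I would set $\repinvolution(\varphi)=\varphi_{\repinvolution(v)}$, and then describe this intrinsically. Extend $\tau$ to paths by $\tau(a_k\cdots a_1)=\tau a_1\cdots\tau a_k$; this is a path because $\tau$ is contravariant. Extend $s$ multiplicatively, $s(p)=\prod_j s(a_j)$. Using $\tau t a=h\tau a$, the formula of \Cref{defn:srep} then propagates to
$$\repinvolution(\varphi)(p)=s(p)\,J_{hp}^{-1}\,\varphi(\tau p)^{*}\,J_{tp}.$$

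\emph{Key check.} The step I expect to need the most care is verifying that this formula is compatible with concatenation. The argument is that the transpose $(\cdot)^*$ reverses the order of a product, and $\tau$ also reverses the order of a path. These two reversals cancel. The intermediate factors $J_{i}J_{i}^{-1}$ telescope, exactly as in the computation of \Cref{lem:sym action}. Consequently $\repinvolution(\varphi)$ is again an algebra morphism with $\repinvolution(\varphi)(e_i)=\mathrm{Id}_{V_i}$, and $\repinvolution$ is an involution on this set.

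\emph{Second equivalence.} By construction, the bijection $v\mapsto\varphi_v$ intertwines the two involutions. It therefore restricts to a bijection between fixed points: $\Srep(Q,V)=\Rep(Q,V)^{\repinvolution}$ on one side and $\repinvolution$-invariant algebra maps on the other. On morphisms, we take the full subcategories on both sides, so the equivalence of the first part restricts to these subcategories. This yields the second equivalence.
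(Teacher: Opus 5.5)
Your proposal is correct and follows the route the paper intends. The paper gives no separate proof: it treats the statement as the restriction of \Cref{bijection module} to modules whose underlying graded space is $V$, with each $e_i$ acting as the projection onto $V_i$, and it transports $\repinvolution$ to algebra morphisms. Your explicit formula $\repinvolution(\varphi)(p)=s(p)\,J_{hp}^{-1}\,\varphi(\tau p)^{*}\,J_{tp}$ spells out the action of $\repinvolution$ on $\Mor_{\CC\text{-}\mathrm{alg}}(\CC Q,\End_{\CC}(V))$, which the paper leaves implicit and later uses, in this path form, in the proof of \Cref{lem:tracetau}.
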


\begin{notation}\label{path morphism}
    For any path $p = a_1\cdots a_\ell$, we denote its representation under $v \in \Srep(Q,V)$ by
    $$v(p)=v(a_1)\cdots v(a_\ell).$$
    Throughout the sequel, we use the fact that every path $p$ defines an affine morphism $\Srep(Q,V)\to \Hom(V_{tp},V_{hp}) \subset \End_{\CC}(V).$
\end{notation}

The notion of a preprojective algebra $\Pi(Q)$ was first introduced in the work of Gelfand and Ponomarev \cite{GP79}.
The modern presentation as the quotient of the path algebra of the doubled quiver by the moment map relations emerged later in the work of Dlab and Ringel \cite{Dlab80}.

\begin{defn}\label{defn:preprojectiv algebra}{\cite[Introduction]{Ringel91}}
    The \emph{preprojective algebra} is the quotient noncommutative algebra $\Pi(Q)=\CC\overline{Q}/(\mu_Q),$ where:
    \begin{itemize}\label{defn:formal moment}
        \item $\mu_i = \sum_{\stackrel{a \in Q}{ha = i}} aa^* - \sum_{\stackrel{a \in Q}{ta = i}}a^*a$,
        \item $\mu_Q = \sum_{i \in I} \mu_i$ is the formal moment, and $(\mu_Q)$ is the two-sided ideal generated by $\mu_Q$.
    \end{itemize}
\end{defn}

\begin{lem}\label{bijection alg prepro}
    Let $\mugras\colon \Rep(\overline{Q},V) \to \ggot_V$ and $\mu\colon \Srep(\overline{Q},V) \to \ggot^b_V$ be the two moment maps from \Cref{defn:moment map quivers} and \Cref{moment of symmetric quivers}. 
    We have the following natural equivalences of full subcategories:
    \begin{align*}
        \mugras^{-1}(0) & \leftrightarrow \big\{ \varphi \in\Mor_{\CC\text{-}\mathrm{alg}}(\Pi(Q), \End_{\CC}(V))\mid \forall i\in I, \varphi(e_i) = \mathrm{Id}_{V_i} \big\}, \\
        \mu^{-1}(0) & \leftrightarrow \big\{ \varphi \in \Mor_{\CC\text{-}\mathrm{alg}}(\Pi(Q), \End_{\CC}(V))^{\repinvolution}\mid \forall i\in I, \varphi(e_i) = \mathrm{Id}_{V_i} \big\}.
    \end{align*}
\end{lem}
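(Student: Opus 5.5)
The plan is to deduce both equivalences from \Cref{bijection module 2}, applied to the doubled quiver $\overline{Q}$ (with the extended involution and signs of \Cref{defn:opp/double quivers}). The task is then to show that the two conditions match: a representation lies in the zero fibre of the moment map exactly when the corresponding algebra morphism kills the ideal $(\mu_Q)$.

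First I will treat $\mugras^{-1}(0)$. A point $v \in \Rep(\overline{Q},V)$ corresponds to a unital algebra morphism $\varphi_v\colon \CC\overline{Q} \to \End_{\CC}(V)$ with $\varphi_v(e_i)=\mathrm{Id}_{V_i}$. Compare the images of the elements $\mu_i$ of \Cref{defn:preprojectiv algebra} with the moment map of \Cref{defn:moment map quivers}. By construction, $\varphi_v(\mu_i)=\mugras(v)_i$ as an endomorphism supported on $V_i$, since $\varphi_v(aa^*)=v_av_{a^*}$ and $\varphi_v(a^*a)=v_{a^*}v_a$. Hence $\varphi_v(\mu_Q)=\sum_i \mugras(v)_i = \mugras(v)$, viewed as a block-diagonal element of $\End_{\CC}(V)$.

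Next I will check that $\varphi_v$ factors through $\Pi(Q)$ if and only if $\mugras(v)=0$. One direction is immediate: if $\varphi_v$ kills $(\mu_Q)$, then $\varphi_v(\mu_Q)=0$. Conversely, if $\mugras(v)=0$, then $\varphi_v(\mu_Q)=0$, so $\varphi_v$ kills the two-sided ideal generated by $\mu_Q$. The two-sided ideal generated by $\mu_Q$ equals the one generated by the $\mu_i$, because $\mu_i=e_i\mu_Q e_i$; so nothing is lost in either direction. Morphisms on both sides are the same as for $\CC\overline{Q}$-modules, so the correspondence is fully faithful and gives the first equivalence.

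For the second equivalence, I will intersect with the $\repinvolution$-fixed parts. By \Cref{bijection module 2}, $\Srep(\overline{Q},V)$ corresponds to the $\repinvolution$-fixed morphisms. On $\Srep(\overline{Q},V)$ we have $\mu=\mugras|_{\Srep}$ with values in $\ggot_V^b$ (\Cref{moment of symmetric quivers} and \Cref{rmq sign rules}), so $\mu^{-1}(0)=\mugras^{-1}(0)\cap\Srep(\overline{Q},V)$. It remains to make sense of $\repinvolution$ on morphisms out of $\Pi(Q)$. The involution on $\CC\overline{Q}$ is the anti-involution induced by $\tau$ and the signs, under which $\mu_Q$ is sent to $-\mu_Q$ by the computation in \Cref{mu et crochet commute à t et tgot}. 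This is the step I expect to need the most care: one must check that the ideal $(\mu_Q)$ is $\repinvolution$-stable, so that the involution descends to $\Mor(\Pi(Q),\End_{\CC}(V))$ compatibly with the quotient map. Once that is verified, the fixed points on both sides match, giving the second equivalence.
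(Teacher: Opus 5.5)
Your proposal is correct and follows the paper's own argument. You apply \Cref{bijection module 2} to $\overline{Q}$, identify $v(\mu_Q)$ with $\mugras(v)$ as a block-diagonal element of $\End_{\CC}(V)$, and use the universal property of the quotient $\CC\overline{Q}\to\Pi(Q)$; you then pass to $\repinvolution$-fixed points via $\mu^{-1}(0)=\mugras^{-1}(0)\cap\Srep(\overline{Q},V)$.

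The extra point you flag, that $\repinvolution$ descends to morphisms out of $\Pi(Q)$, is left implicit in the paper, and it does hold. The sign-twisted anti-involution sends $\mu_i$ to $-\mu_{\tau i}$, so the ideal $(\mu_Q)$ is stable. Alternatively, it follows directly from $\mugras\circ\repinvolution=\lieinvolution\circ\mugras$.
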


\begin{proof}
    Noting that $\ggot_V \subset \End_{\CC}(V)$, for every $v \in \Rep(\overline{Q},V)$ there is an identification between $v(\mu_Q)=v\!\left(\sum_{i \in I}\mu_i\right)$ (in the sense of \Cref{bijection module}) and the value of the moment map $\mugras(v)\in\ggot_V$.

    The equivalences are the restrictions of \Cref{bijection module 2} to the subcategories $\mugras^{-1}(0)$ and $\mu^{-1}(0)$, together with the universal property of quotient rings applied to the two-sided ideal generated by $(\mu_Q)$ and to the ring morphisms $v \in \Mor_{\CC\text{-}\mathrm{alg}}(\CC\overline{Q},\End_{\CC}(V))$.
\end{proof}

\begin{rmq}
    For a fixed orthogonal quiver, \Cref{bijection alg prepro} allows us to lift equalities between cycles in $\Pi(Q)$, without fixing a dimension vector, to trace-equivalence relations as defined in \Cref{defn:invariant tr relation} and explained in \Cref{rmq: prepro trace}.
\end{rmq}

Cycles are paths $a_1 \cdots a_{\ell}$ such that $t(a_{\ell})=h(a_1)$.
Their linear span forms a subalgebra of the path algebras $\CC Q,\CC \overline{Q}$ and $\Pi(Q)$, since the quotient is taken by the ideal $\mu_Q$ generated by linear combinations of cycles.
The representations of a cycle takes values in $\ggot_V = \bigoplus_{i \in I} \End_{\CC}(V_i)$.
For the study of quiver varieties, we will use invariants defined by these cycles.

\begin{defn}\label{defn:invariant tr relation}
    Let $C$ be a cycle of $\overline{Q}$. We denote by
    $\tr(C), \tr(\Lambda^kC), \det(C)$ the regular functions:
    \begin{align*}
        \tr(C)\colon\ & Z\to \CC, \quad 
        v \mapsto \tr(v(C)), \\
        \tr(\Lambda^kC)\colon\ & Z \to \CC, \quad 
        v \mapsto \tr(\Lambda^kv(C)), \\
        \det(C)\colon\ & Z\to \CC, \quad 
        v \mapsto \det(v(C)).
    \end{align*}
    For various closed subsets $Z \subset \Rep(\overline{Q},V)$, such as $\mu^{-1}(0)$ and $\Srep(\overline{Q},V)$,
    these functions define invariants under the actions of the groups $G_V$ and $G_V^b$, whenever $Z$ is stable under the corresponding group action.
    Moreover, $\tr(\Lambda^k C)$ is homogeneous of degree $k\cdot\ell(C)$.
    When $Z=\mu^{-1}(0)$, we write $C\sim_{\mathrm{tr}}C'$ and say that $C$ and $C'$ are trace-equivalent whenever $\tr(C)=\tr(C')$ in $\CC[\mu^{-1}(0)]$.
\end{defn}

To prove \Cref{Thm}, we need two results from invariant theory, namely \Cref{thm:invariant procesi and zubkov}, describing the invariant rings of quiver representations and symmetric quiver representations. 
We then restrict these invariants to the closed subset $\mu^{-1}(0)\subset \Srep(\overline{Q},V).$

\begin{thm}\label{thm:invariant procesi and zubkov}
{\emph{\cite[Thm.~1]{LbProcesi90}; \cite[Thm.~1--2]{Zub03}}}\\
    The two invariant rings $\CC[\Rep(Q,V)]^{G_V}$ and $\CC[\Srep(Q,V)]^{G^b_V}$ are generated by traces of cycles contained in $Q$:
    $$\CC[\Rep(Q,V)]^{G_V} = \CC\big[\tr(C):\Rep(Q,V)\to \CC \mid \forall C \text{ cycle in } Q\big],$$
    $$\CC[\Srep(Q,V)]^{G_V^b} = \CC\big[\tr(C):\Srep(Q,V) \to \CC \mid  \forall C \text{ cycle in } Q\big],$$
    and the morphism $\CC[\Rep(Q,V)]^{G_V} \to \CC[\Srep(Q,V)]^{G_V^b}$ induced by the restriction of regular functions is surjective.
\end{thm}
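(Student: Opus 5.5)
Both statements rest on the first fundamental theorems of classical invariant theory in characteristic zero, combined with polarization. The plan is to prove the $\GL$ statement first and then derive the orthogonal one, including surjectivity, by showing that every generator of $\CC[\Srep(Q,V)]^{G_V^b}$ is the restriction of a trace of a cycle. Since all groups involved are reductive, the invariant rings are graded and the Reynolds operator is available throughout.

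\textbf{Reduction to multilinear invariants.} The ring $\CC[\Rep(Q,V)]$ is multigraded by the degrees in each arrow variable $v_a$, and the $G_V$-action preserves this grading. It therefore suffices to describe homogeneous invariants of multidegree $(m_a)_{a\in Q}$. By full polarization, which is invertible in characteristic $0$, such an invariant is obtained by restitution from a multilinear invariant on
$$\bigoplus_a \Hom(V_{ta},V_{ha})^{\oplus m_a}.$$
A multilinear invariant is a $G_V$-invariant vector in the tensor product $\bigotimes_a (V_{ta}\otimes V_{ha}^*)^{\otimes m_a}$, taken up to duality.

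\textbf{The $\GL$ case.} The first fundamental theorem for $\GL(V_i)$, applied vertex by vertex, says that these invariants are spanned by complete contractions, each pairing a copy of $V_i$ with a copy of $V_i^*$. Such a pairing glues the head of one arrow copy to the tail of another at the same vertex. The contraction graph is therefore a disjoint union of oriented cycles in $Q$, so each contraction is a product of traces $\tr(v(C))$. Restitution turns these back into products of traces of cycles of $Q$, which proves the first equality.

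\textbf{The orthogonal case.} By \Cref{lem:sym action}, $\Srep(Q,V)$ is a $G_V^b$-stable summand, and $G_V^b\simeq\prod_{I^\tau}\OO(V_i)\times\prod\GL(V_i)$. I would choose coordinates on $\Srep(Q,V)$ given by $v_a$ for one representative $a$ of each pair $\{a,\tau a\}$, together with $v_a$ for each $a\in Q^\tau$. After polarization, the first fundamental theorem now has two parts:
\begin{itemize}
\item at a pair $\{i,\tau i\}$, the $\GL$ theorem pairs copies of $V_i$ with copies of $V_i^*\simeq V_{\tau i}$ through $J_i$;
\item at a fixed vertex, the $\OO$ theorem allows contractions by $b$, i.e.\ by $J_i$, between any two tensor slots.
\end{itemize}
The key observation is how a contraction through $J$ should be read. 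Traversing an arrow copy "backwards" through $J$ amounts to replacing $v_a$ by $J^{-1}v_a^*J=s(a)\,v_{\tau a}$, using the identity $v=\repinvolution(v)$ from \Cref{defn:srep}. Because $\tau$ is contravariant ($\tau ta=h\tau a$), $\tau a$ is again an arrow of $Q$ and composes correctly. Hence every closed contraction graph, read along a chosen orientation of each component, becomes $\pm\tr(v(C))$ for a genuine cycle $C$ in $Q$.

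\textbf{Conclusion and main obstacle.} Restitution then shows that $\CC[\Srep(Q,V)]^{G_V^b}$ is generated by traces of cycles. Each $\tr(C)$ is visibly the restriction of the $G_V$-invariant $\tr(C)$ on $\Rep(Q,V)$, which gives the surjectivity of the restriction map. I expect the main obstacle to be this orthogonal bookkeeping. One must check carefully that each component of the contraction graph, which may switch orientation several times at fixed vertices, can be consistently re-oriented into a path of $Q$. One must also track the signs $s(a)$, so that the $\Lambda^2$ summands contribute correctly and do not cancel needed generators. I would handle this by induction on the number of orientation reversals in a component, absorbing one reversal at a time via the substitution $J^{-1}v_a^*J=s(a)\,v_{\tau a}$.
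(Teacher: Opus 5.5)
Your proposal is correct, and it takes a different route from the paper, which gives no argument at all: it imports the statement from Le Bruyn--Procesi for $G_V$ and from Zubkov for $G^b_V$. You instead reprove it by the classical characteristic-zero method. That is: multigrading, full polarization, the first fundamental theorems for $\GL$ and $\OO$, and reading complete contractions as traces of cycles. This makes the proof self-contained over $\CC$, and the surjectivity of $\CC[\Rep(Q,V)]^{G_V}\to\CC[\Srep(Q,V)]^{G^b_V}$ becomes an immediate by-product of generation. What the cited sources buy is generality beyond characteristic zero, where polarization is unavailable and heavier tools are needed. The paper, working over $\CC$, does not need that generality.

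Two points in your sketch can be tightened or simplified.

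First, at a fixed arrow the summand is $\Sym(V_{ta}^*)$ or $\Lambda^2(V_{ta}^*)$ rather than a full $\Hom$-space, so the FFT does not apply verbatim after polarizing on $\Srep(Q,V)$. The cleanest fix is as follows. By \Cref{lem:sym action}, $\repinvolution$ commutes with $G^b_V$, so $\Srep(Q,V)$ is a $G^b_V$-stable direct summand of $\Rep(Q,V)$. By linear reductivity, restriction $\CC[\Rep(Q,V)]^{G^b_V}\to\CC[\Srep(Q,V)]^{G^b_V}$ is then surjective. You can therefore polarize on $\Rep(Q,V)$, apply the FFT for $\prod\OO\times\prod\GL$ there, and only at the end substitute $J^{-1}v_a^*J=s(a)v_{\tau a}$.

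Second, the bookkeeping you flag as the main obstacle needs no induction. Orientation reversals also occur at non-fixed pairs, when a tail slot at $i$ is paired with a tail slot at $\tau i$ through $J_i$. But once each arrow copy in a component is read forwards or backwards along a chosen orientation, the pairing rules are exactly the composability conditions after replacing reversed arrows by their $\tau$-images. The sign is simply the product of the $s(a)$ over reversed copies. There is also no risk of ``cancelling needed generators'': only generation is claimed, so a contraction that vanishes identically on $\Srep(Q,V)$, as happens for $\Lambda^2$ pieces, is harmless.
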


\subsection{Graphical calculus}\label{graphical calculus}
The \emph{graphical calculus} is used to establish relations between invariants associated with cycles contained in $\overline{Q}$, whose basic rules are given by the following remark and five lemmas.

For simplicity throughout this section, we fix an orthogonal quiver $Q$ and a symmetric dimension vector $\dd$, and we assume that $(V,b)$ is the standard signed space $(\CC^{\dd},b)$ defined in \Cref{standard model}.
Recall that $\mu \colon \Srep(\overline{Q},\dd)\longrightarrow \ggot_{\dd}^b$ is the moment map introduced in \Cref{moment of symmetric quivers}.

\begin{rmq}\label{Hamilton-Cayley scheme morphism}
    Let $C$ be a cycle in $Q$ based at a vertex $i \in I$. Consider the affine scheme morphism
    \begin{align*}
        \chi_C(C) & =\sum_{k=0}^{\dd_i} (-1)^k \tr(\Lambda^k C)\, C^{\dd_i -k} \colon \Rep(Q,V) \to \ggot_V \\
        & v \mapsto \sum_{k=0}^{\dd_i} (-1)^k \tr(\Lambda^k v(C))\, v(C)^{\dd_i-k} .
    \end{align*}
    Then the Cayley--Hamilton relation gives that this morphism $\chi_C(C)$ is equal to the constant morphism $0$.
\end{rmq}

The trace-equivalence relation $\sim_{\tr}$ (from \Cref{defn:invariant tr relation}) can be translated into graphical transformations that deform cycles drawn in $\overline{Q}$.
First, note that when $(h,t)\colon\overline{Q} \to I \times I$ is injective, i.e.\ there are no distinct arrows $a \neq a'$ such that $ha=ha'$ and $ta=ta'$, any path drawn defines at most one path in the path algebra $\CC\overline{Q}$.
With this convention, we represent a path in $\CC\overline{Q}$ or $\Pi(Q)$ by its diagram, keeping in mind that each diagram corresponds to a formal path and that the composition of paths is read from right to left.
Any computation involving such diagrams can therefore be translated into a formal composition of arrows.

\begin{lem}\label{lem:pqqp}
    For two paths $p$ and $q$, if $C=pq$ and $C'=qp$ are cycles, then they define the same invariant functions:
$$\text{for all }k\geq 0, \qquad \tr(\Lambda^k C)=\tr(\Lambda^k C')\in\CC[\Rep(Q,\dd)].$$
    In particular, $C\sim_{\tr}C'$.
\end{lem}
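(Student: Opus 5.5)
The plan is to reduce to the standard linear-algebra fact that for two linear maps $A\colon W\to U$ and $B\colon U\to W$ the endomorphisms $AB$ of $U$ and $BA$ of $W$ have the same nonzero eigenvalues with multiplicities.

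Fix $v\in\Rep(Q,\dd)$; we do not need $v$ to be symmetric. The statement then restricts to $\Srep$ and to $\mu^{-1}(0)$. Write $p$ as a path from $tp$ to $hp$ and $q$ from $tq=hp$ to $hq=tp$. Set $A=v(p)\colon V_{tp}\to V_{hp}$ and $B=v(q)\colon V_{hp}\to V_{tp}$. Then $v(C)=AB\in\End(V_{hp})$ and $v(C')=BA\in\End(V_{tp})$.

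We prove the identity of characteristic polynomials up to a power of $t$:
\[
t^{\dd_{tp}}\det(t-AB)=t^{\dd_{hp}}\det(t-BA).
\]
First compute the determinant of the block matrix $\begin{pmatrix} t & A\\ B & t\end{pmatrix}$ on $V_{hp}\oplus V_{tp}$. Factor it once through each diagonal block by Schur complements, for $t\neq 0$, and conclude by polynomial density. The coefficient of $t^{\dd-k}$ in $\det(t-X)$ is $(-1)^k\tr(\Lambda^kX)$. Comparing coefficients therefore gives $\tr(\Lambda^k AB)=\tr(\Lambda^k BA)$ for all $k\geq 0$. For $k$ larger than the smaller dimension, both sides vanish: $\Lambda^k$ of a map factoring through a space of dimension less than $k$ is zero.

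Since this holds pointwise for every $v$, it is an equality of regular functions on $\Rep(Q,\dd)$. Restricting to the closed subset $\mu^{-1}(0)$ and taking $k=1$ gives $C\sim_{\tr}C'$ by \Cref{defn:invariant tr relation}.

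The only delicate point is when $tp\neq hp$, so that the two dimensions differ. This is handled by the power-of-$t$ factor, or alternatively by the direct argument $\tr(\Lambda^k(AB))=\tr(\Lambda^kA\,\Lambda^kB)=\tr(\Lambda^kB\,\Lambda^kA)$, which uses functoriality of $\Lambda^k$ and cyclicity of the ordinary trace. The latter is perhaps the cleanest route. I would present it as the main proof, with the determinant identity as the check for $\det$ when the dimensions agree.
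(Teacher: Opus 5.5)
Your main argument, $\tr(\Lambda^k(AB))=\tr(\Lambda^kA\,\Lambda^kB)=\tr(\Lambda^kB\,\Lambda^kA)$ via functoriality of $\Lambda^k$ and cyclicity of the trace for rectangular maps, is exactly the paper's proof and is correct. It holds pointwise on all of $\Rep$, hence also on $\Srep$ and $\mu^{-1}(0)$. The Sylvester-type identity $t^{\dd_{tp}}\det(t-AB)=t^{\dd_{hp}}\det(t-BA)$ is a valid but unnecessary cross-check; with the block matrix $\begin{pmatrix} t & A\\ B & t\end{pmatrix}$ the two Schur complements give this identity in the variable $t^2$ rather than $t$, which is harmless.
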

\begin{center}
\begin{tikzpicture}[x=0.5pt,y=0.5pt,yscale=-1,xscale=1]
\draw  [color={rgb, 255:red, 0; green, 0; blue, 0 }  ,draw opacity=1 ][fill={rgb, 255:red, 0; green, 0; blue, 0 }  ,fill opacity=1 ][line width=0.75]  (314,138.33) .. controls (314,136.68) and (315.34,135.33) .. (317,135.33) .. controls (318.66,135.33) and (320,136.68) .. (320,138.33) .. controls (320,139.99) and (318.66,141.33) .. (317,141.33) .. controls (315.34,141.33) and (314,139.99) .. (314,138.33) -- cycle ;
\draw  [color={rgb, 255:red, 0; green, 0; blue, 0 }  ,draw opacity=1 ][fill={rgb, 255:red, 0; green, 0; blue, 0 }  ,fill opacity=1 ][line width=0.75]  (464,138.33) .. controls (464,136.68) and (465.34,135.33) .. (467,135.33) .. controls (468.66,135.33) and (470,136.68) .. (470,138.33) .. controls (470,139.99) and (468.66,141.33) .. (467,141.33) .. controls (465.34,141.33) and (464,139.99) .. (464,138.33) -- cycle ;
\draw  [color={rgb, 255:red, 0; green, 0; blue, 0 }  ,draw opacity=1 ][fill={rgb, 255:red, 0; green, 0; blue, 0 }  ,fill opacity=1 ][line width=0.75]  (349.5,96.33) .. controls (349.5,94.68) and (350.84,93.33) .. (352.5,93.33) .. controls (354.16,93.33) and (355.5,94.68) .. (355.5,96.33) .. controls (355.5,97.99) and (354.16,99.33) .. (352.5,99.33) .. controls (350.84,99.33) and (349.5,97.99) .. (349.5,96.33) -- cycle ;
\draw  [color={rgb, 255:red, 0; green, 0; blue, 0 }  ,draw opacity=1 ][fill={rgb, 255:red, 0; green, 0; blue, 0 }  ,fill opacity=1 ][line width=0.75]  (428.5,96.33) .. controls (428.5,94.68) and (429.84,93.33) .. (431.5,93.33) .. controls (433.16,93.33) and (434.5,94.68) .. (434.5,96.33) .. controls (434.5,97.99) and (433.16,99.33) .. (431.5,99.33) .. controls (429.84,99.33) and (428.5,97.99) .. (428.5,96.33) -- cycle ;
\draw  [color={rgb, 255:red, 0; green, 0; blue, 0 }  ,draw opacity=1 ][fill={rgb, 255:red, 0; green, 0; blue, 0 }  ,fill opacity=1 ][line width=0.75]  (349.5,181.33) .. controls (349.5,179.68) and (350.84,178.33) .. (352.5,178.33) .. controls (354.16,178.33) and (355.5,179.68) .. (355.5,181.33) .. controls (355.5,182.99) and (354.16,184.33) .. (352.5,184.33) .. controls (350.84,184.33) and (349.5,182.99) .. (349.5,181.33) -- cycle ;
\draw  [color={rgb, 255:red, 0; green, 0; blue, 0 }  ,draw opacity=1 ][fill={rgb, 255:red, 0; green, 0; blue, 0 }  ,fill opacity=1 ][line width=0.75]  (428.5,181.33) .. controls (428.5,179.68) and (429.84,178.33) .. (431.5,178.33) .. controls (433.16,178.33) and (434.5,179.68) .. (434.5,181.33) .. controls (434.5,182.99) and (433.16,184.33) .. (431.5,184.33) .. controls (429.84,184.33) and (428.5,182.99) .. (428.5,181.33) -- cycle ;
\draw [line width=0.75]    (424.33,180.77) .. controls (406,181.1) and (372.33,178.77) .. (352.67,173.77) .. controls (333,168.77) and (322.67,155.43) .. (323.33,138.1) .. controls (324,120.77) and (328.67,107.43) .. (352,102.77) .. controls (375.33,98.1) and (406.33,95.43) .. (429,102.1) .. controls (451.67,108.77) and (460.67,119.1) .. (460.33,138.43) .. controls (460.02,156.7) and (451.96,166.04) .. (439.26,176.02) ;
\draw [shift={(437,177.77)}, rotate = 321.86] [fill={rgb, 255:red, 0; green, 0; blue, 0 }  ][line width=0.08]  [draw opacity=0] (10.72,-5.15) -- (0,0) -- (10.72,5.15) -- (7.12,0) -- cycle    ;
\draw  [color={rgb, 255:red, 0; green, 0; blue, 0 }  ,draw opacity=1 ][fill={rgb, 255:red, 0; green, 0; blue, 0 }  ,fill opacity=1 ][line width=0.75]  (535,138.67) .. controls (535,137.01) and (536.34,135.67) .. (538,135.67) .. controls (539.66,135.67) and (541,137.01) .. (541,138.67) .. controls (541,140.32) and (539.66,141.67) .. (538,141.67) .. controls (536.34,141.67) and (535,140.32) .. (535,138.67) -- cycle ;
\draw  [color={rgb, 255:red, 0; green, 0; blue, 0 }  ,draw opacity=1 ][fill={rgb, 255:red, 0; green, 0; blue, 0 }  ,fill opacity=1 ][line width=0.75]  (685,138.67) .. controls (685,137.01) and (686.34,135.67) .. (688,135.67) .. controls (689.66,135.67) and (691,137.01) .. (691,138.67) .. controls (691,140.32) and (689.66,141.67) .. (688,141.67) .. controls (686.34,141.67) and (685,140.32) .. (685,138.67) -- cycle ;
\draw  [color={rgb, 255:red, 0; green, 0; blue, 0 }  ,draw opacity=1 ][fill={rgb, 255:red, 0; green, 0; blue, 0 }  ,fill opacity=1 ][line width=0.75]  (570.5,96.67) .. controls (570.5,95.01) and (571.84,93.67) .. (573.5,93.67) .. controls (575.16,93.67) and (576.5,95.01) .. (576.5,96.67) .. controls (576.5,98.32) and (575.16,99.67) .. (573.5,99.67) .. controls (571.84,99.67) and (570.5,98.32) .. (570.5,96.67) -- cycle ;
\draw  [color={rgb, 255:red, 0; green, 0; blue, 0 }  ,draw opacity=1 ][fill={rgb, 255:red, 0; green, 0; blue, 0 }  ,fill opacity=1 ][line width=0.75]  (649.5,96.67) .. controls (649.5,95.01) and (650.84,93.67) .. (652.5,93.67) .. controls (654.16,93.67) and (655.5,95.01) .. (655.5,96.67) .. controls (655.5,98.32) and (654.16,99.67) .. (652.5,99.67) .. controls (650.84,99.67) and (649.5,98.32) .. (649.5,96.67) -- cycle ;
\draw  [color={rgb, 255:red, 0; green, 0; blue, 0 }  ,draw opacity=1 ][fill={rgb, 255:red, 0; green, 0; blue, 0 }  ,fill opacity=1 ][line width=0.75]  (570.5,181.67) .. controls (570.5,180.01) and (571.84,178.67) .. (573.5,178.67) .. controls (575.16,178.67) and (576.5,180.01) .. (576.5,181.67) .. controls (576.5,183.32) and (575.16,184.67) .. (573.5,184.67) .. controls (571.84,184.67) and (570.5,183.32) .. (570.5,181.67) -- cycle ;
\draw  [color={rgb, 255:red, 0; green, 0; blue, 0 }  ,draw opacity=1 ][fill={rgb, 255:red, 0; green, 0; blue, 0 }  ,fill opacity=1 ][line width=0.75]  (649.5,181.67) .. controls (649.5,180.01) and (650.84,178.67) .. (652.5,178.67) .. controls (654.16,178.67) and (655.5,180.01) .. (655.5,181.67) .. controls (655.5,183.32) and (654.16,184.67) .. (652.5,184.67) .. controls (650.84,184.67) and (649.5,183.32) .. (649.5,181.67) -- cycle ;
\draw [line width=0.75]    (540.67,132.1) .. controls (546.67,111.77) and (556.67,106.43) .. (575,102.1) .. controls (593.33,97.77) and (623.67,95.43) .. (650,102.1) .. controls (676.33,108.77) and (681,120.43) .. (681.67,138.1) .. controls (682.33,155.77) and (674.33,172.43) .. (649,175.43) .. controls (623.67,178.43) and (598.67,176.43) .. (576.33,174.77) .. controls (555.34,173.2) and (550.25,161.62) .. (544.46,148.05) ;
\draw [shift={(543.33,145.43)}, rotate = 66.87] [fill={rgb, 255:red, 0; green, 0; blue, 0 }  ][line width=0.08]  [draw opacity=0] (10.72,-5.15) -- (0,0) -- (10.72,5.15) -- (7.12,0) -- cycle    ;
\draw  [color={rgb, 255:red, 0; green, 0; blue, 0 }  ,draw opacity=1 ][fill={rgb, 255:red, 0; green, 0; blue, 0 }  ,fill opacity=1 ] (63,140) .. controls (63,138.34) and (64.34,137) .. (66,137) .. controls (67.66,137) and (69,138.34) .. (69,140) .. controls (69,141.66) and (67.66,143) .. (66,143) .. controls (64.34,143) and (63,141.66) .. (63,140) -- cycle ;
\draw  [color={rgb, 255:red, 0; green, 0; blue, 0 }  ,draw opacity=1 ][fill={rgb, 255:red, 0; green, 0; blue, 0 }  ,fill opacity=1 ] (213,140) .. controls (213,138.34) and (214.34,137) .. (216,137) .. controls (217.66,137) and (219,138.34) .. (219,140) .. controls (219,141.66) and (217.66,143) .. (216,143) .. controls (214.34,143) and (213,141.66) .. (213,140) -- cycle ;
\draw  [color={rgb, 255:red, 0; green, 0; blue, 0 }  ,draw opacity=1 ][fill={rgb, 255:red, 0; green, 0; blue, 0 }  ,fill opacity=1 ] (98.5,98) .. controls (98.5,96.34) and (99.84,95) .. (101.5,95) .. controls (103.16,95) and (104.5,96.34) .. (104.5,98) .. controls (104.5,99.66) and (103.16,101) .. (101.5,101) .. controls (99.84,101) and (98.5,99.66) .. (98.5,98) -- cycle ;
\draw  [color={rgb, 255:red, 0; green, 0; blue, 0 }  ,draw opacity=1 ][fill={rgb, 255:red, 0; green, 0; blue, 0 }  ,fill opacity=1 ] (177.5,98) .. controls (177.5,96.34) and (178.84,95) .. (180.5,95) .. controls (182.16,95) and (183.5,96.34) .. (183.5,98) .. controls (183.5,99.66) and (182.16,101) .. (180.5,101) .. controls (178.84,101) and (177.5,99.66) .. (177.5,98) -- cycle ;
\draw  [color={rgb, 255:red, 0; green, 0; blue, 0 }  ,draw opacity=1 ][fill={rgb, 255:red, 0; green, 0; blue, 0 }  ,fill opacity=1 ] (98.5,183) .. controls (98.5,181.34) and (99.84,180) .. (101.5,180) .. controls (103.16,180) and (104.5,181.34) .. (104.5,183) .. controls (104.5,184.66) and (103.16,186) .. (101.5,186) .. controls (99.84,186) and (98.5,184.66) .. (98.5,183) -- cycle ;
\draw  [color={rgb, 255:red, 0; green, 0; blue, 0 }  ,draw opacity=1 ][fill={rgb, 255:red, 0; green, 0; blue, 0 }  ,fill opacity=1 ] (177.5,183) .. controls (177.5,181.34) and (178.84,180) .. (180.5,180) .. controls (182.16,180) and (183.5,181.34) .. (183.5,183) .. controls (183.5,184.66) and (182.16,186) .. (180.5,186) .. controls (178.84,186) and (177.5,184.66) .. (177.5,183) -- cycle ;
\draw [line width=1.5]    (67.03,133.38) .. controls (71.62,118.53) and (73.3,113.84) .. (91.41,103.04) ;
\draw [shift={(94.74,101.09)}, rotate = 149.01] [fill={rgb, 255:red, 0; green, 0; blue, 0 }  ][line width=0.08]  [draw opacity=0] (13.4,-6.43) -- (0,0) -- (13.4,6.44) -- (8.9,0) -- cycle    ;
\draw [line width=1.5]    (108.74,95.09) .. controls (127.96,91.79) and (143.11,91.34) .. (170.25,94.6) ;
\draw [shift={(174.17,95.09)}, rotate = 186.82] [fill={rgb, 255:red, 0; green, 0; blue, 0 }  ][line width=0.08]  [draw opacity=0] (13.4,-6.43) -- (0,0) -- (13.4,6.44) -- (8.9,0) -- cycle    ;
\draw [line width=1.5]    (173.88,184.63) .. controls (155.72,189.41) and (137.88,190.23) .. (112.15,185.35) ;
\draw [shift={(108.45,184.63)}, rotate = 10.54] [fill={rgb, 255:red, 0; green, 0; blue, 0 }  ][line width=0.08]  [draw opacity=0] (13.4,-6.43) -- (0,0) -- (13.4,6.44) -- (8.9,0) -- cycle    ;
\draw [line width=1.5]    (214.74,147.38) .. controls (210.15,162.23) and (208.46,166.92) .. (190.36,177.71) ;
\draw [shift={(187.03,179.66)}, rotate = 329.01] [fill={rgb, 255:red, 0; green, 0; blue, 0 }  ][line width=0.08]  [draw opacity=0] (13.4,-6.43) -- (0,0) -- (13.4,6.44) -- (8.9,0) -- cycle    ;
\draw [line width=1.5]    (187.96,101.07) .. controls (201.79,108.17) and (206.12,110.64) .. (213.6,130.35) ;
\draw [shift={(214.95,133.97)}, rotate = 249.01] [fill={rgb, 255:red, 0; green, 0; blue, 0 }  ][line width=0.08]  [draw opacity=0] (13.4,-6.43) -- (0,0) -- (13.4,6.44) -- (8.9,0) -- cycle    ;
\draw [line width=1.5]    (94.37,179.68) .. controls (80.55,172.58) and (76.22,170.11) .. (68.73,150.4) ;
\draw [shift={(67.39,146.78)}, rotate = 69.01] [fill={rgb, 255:red, 0; green, 0; blue, 0 }  ][line width=0.08]  [draw opacity=0] (13.4,-6.43) -- (0,0) -- (13.4,6.44) -- (8.9,0) -- cycle    ;

\draw (480.6,131) node [anchor=north west][inner sep=0.75pt]    {$\sim _{\tr}{}$};
\draw (18,128.4) node [anchor=north west][inner sep=0.75pt]    {$Q=$};
\draw (-103,104.67) node [anchor=north west][inner sep=0.75pt]   [align=left] {\begin{minipage}[lt]{61.68pt}\setlength\topsep{0pt}
For instance 
\begin{center}
if we set
\end{center}

\end{minipage}};
\draw (252,129) node [anchor=north west][inner sep=0.75pt]   [align=left] {then };
\end{tikzpicture}
\end{center}

\begin{proof}
    Let $v \in \Rep(Q,\dd),k\geq 0$ and $C = pq$ a cycle: 
    \begin{align*}
        \tr(\Lambda^kv(C)) = \tr(\Lambda^kv(p)\Lambda^kv(q)) = \tr(\Lambda^kv(q)\Lambda^kv(p)) = \tr(\Lambda^kv(C')).\quad \qedhere
    \end{align*}
\end{proof}

\begin{lem}\label{lem:tracetau}
    Let $ C = a_1 \cdots a_{\ell}$ be a cycle in $\overline{Q}$.
    Then $\tau(C) \sim_{\tr} \pm C$.
    More precisely, $\tau(C) \sim_{\tr} \left(\prod_{k=1}^\ell s(a_k)\right) C$.
\end{lem}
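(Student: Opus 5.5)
The plan is to compute $v(\tau(C))$ directly from the symmetry condition $\repinvolution(v)=v$ for $v\in\Srep(\overline{Q},\dd)$. By \Cref{defn:srep} and \Cref{defn:opp/double quivers}, every arrow $a\in\overline{Q}$ satisfies
\[
v_{\tau a} = s(a)\, J_{h\tau a}^{-1} v_a^{*} J_{t\tau a} = s(a)\, J_{ta}^{-1} v_a^{*} J_{ha},
\]
using $h\tau a=\tau t a$, $t\tau a=\tau h a$, and the relation $J_{\tau i}=J_i^{*}$. The latter holds because $b$ is symmetric, which is exactly where the orthogonal hypothesis enters.

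Next I fix the convention for $\tau$ on paths. Since $\tau$ is contravariant, for $C=a_1\cdots a_\ell$ (composed right to left, so $h a_{k+1}=t a_k$) the image $\tau(C)$ must be the reversed word $\tau(a_\ell)\cdots\tau(a_1)$; this is again a cycle, based at $\tau(ta_\ell)=\tau(ha_1)$. Substituting the formula above gives
\[
v(\tau C) = \Big(\prod_{k} s(a_k)\Big)\, J_{ta_\ell}^{-1} v_{a_\ell}^{*} J_{ha_\ell} J_{ta_{\ell-1}}^{-1} v_{a_{\ell-1}}^{*} \cdots J_{ta_1}^{-1} v_{a_1}^{*} J_{ha_1}.
\]
Because $ha_\ell = ta_{\ell-1}$, etc., every interior factor $J_{ha_{k+1}}J_{ta_k}^{-1}$ cancels. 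What remains is
\[
v(\tau C) = \Big(\prod_k s(a_k)\Big) J_{ta_\ell}^{-1}\,(v_{a_1}\cdots v_{a_\ell})^{*}\, J_{ha_1},
\]
which is a conjugate of the transpose of $\pm v(C)$, since $ta_\ell = ha_1$.

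Taking traces then gives $\tr(v(\tau C)) = \big(\prod_k s(a_k)\big)\tr(v(C)^{*}) = \big(\prod_k s(a_k)\big)\tr(v(C))$ for every $v\in\Srep(\overline{Q},\dd)$. In particular this holds on $\mu^{-1}(0)$, which is the claimed trace-equivalence. The same argument applied to $\Lambda^k$ shows $\tr(\Lambda^k \tau C)=(\prod s(a_k))^k\tr(\Lambda^k C)$ as a bonus.

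The only delicate step is the bookkeeping of the $J$'s. One has to check carefully that $J_{\tau i}=J_i^{*}$ under the identification $V_i^{**}=V_i$ (this follows from the symmetry of $b$), and that the endpoint indices line up so the telescoping is exact. Sign conventions for $a^{*}$ in the doubled quiver, namely $s(a^{*})=s(a)$, are already fixed by \Cref{defn:opp/double quivers}, so no extra signs from the moment map appear.
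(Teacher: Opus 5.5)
Your proposal is correct and is essentially the paper's proof: use $\repinvolution(v)=v$ to write $v(\tau C)$ as a $J$-conjugate of $\bigl(\prod_k s(a_k)\bigr)v(C)^{*}$, then take traces. The paper runs the same computation in the other direction, expressing $v(C)$ through $v(\tau C)^{*}$. The only blemish is notational: $J_{\tau i}=J_i^{*}$ actually yields $v_{\tau a}=s(a)\,(J_{ta}^{*})^{-1}v_a^{*}J_{ha}^{*}$ rather than $J_{ta}^{-1}v_a^{*}J_{ha}$. The telescoping goes through verbatim with the corrected factors, and in the standard model all $J_i$ are identity matrices anyway.
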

    \begin{center}
\begin{tikzpicture}[x=0.5pt,y=0.5pt,yscale=-1,xscale=1]
\draw [color={rgb, 255:red, 155; green, 155; blue, 155 }  ,draw opacity=1 ][line width=1.5]    (409,85.33) -- (409,200.33) ;
\draw  [color={rgb, 255:red, 0; green, 0; blue, 0 }  ,draw opacity=1 ][fill={rgb, 255:red, 0; green, 0; blue, 0 }  ,fill opacity=1 ] (326,143.33) .. controls (326,141.68) and (327.34,140.33) .. (329,140.33) .. controls (330.66,140.33) and (332,141.68) .. (332,143.33) .. controls (332,144.99) and (330.66,146.33) .. (329,146.33) .. controls (327.34,146.33) and (326,144.99) .. (326,143.33) -- cycle ;
\draw  [color={rgb, 255:red, 0; green, 0; blue, 0 }  ,draw opacity=1 ][fill={rgb, 255:red, 0; green, 0; blue, 0 }  ,fill opacity=1 ] (381,143.33) .. controls (381,141.68) and (382.34,140.33) .. (384,140.33) .. controls (385.66,140.33) and (387,141.68) .. (387,143.33) .. controls (387,144.99) and (385.66,146.33) .. (384,146.33) .. controls (382.34,146.33) and (381,144.99) .. (381,143.33) -- cycle ;
\draw  [color={rgb, 255:red, 0; green, 0; blue, 0 }  ,draw opacity=1 ][fill={rgb, 255:red, 0; green, 0; blue, 0 }  ,fill opacity=1 ] (486,143.33) .. controls (486,141.68) and (487.34,140.33) .. (489,140.33) .. controls (490.66,140.33) and (492,141.68) .. (492,143.33) .. controls (492,144.99) and (490.66,146.33) .. (489,146.33) .. controls (487.34,146.33) and (486,144.99) .. (486,143.33) -- cycle ;
\draw  [color={rgb, 255:red, 0; green, 0; blue, 0 }  ,draw opacity=1 ][fill={rgb, 255:red, 0; green, 0; blue, 0 }  ,fill opacity=1 ] (296,193.33) .. controls (296,191.68) and (297.34,190.33) .. (299,190.33) .. controls (300.66,190.33) and (302,191.68) .. (302,193.33) .. controls (302,194.99) and (300.66,196.33) .. (299,196.33) .. controls (297.34,196.33) and (296,194.99) .. (296,193.33) -- cycle ;
\draw  [color={rgb, 255:red, 0; green, 0; blue, 0 }  ,draw opacity=1 ][fill={rgb, 255:red, 0; green, 0; blue, 0 }  ,fill opacity=1 ] (296,93.33) .. controls (296,91.68) and (297.34,90.33) .. (299,90.33) .. controls (300.66,90.33) and (302,91.68) .. (302,93.33) .. controls (302,94.99) and (300.66,96.33) .. (299,96.33) .. controls (297.34,96.33) and (296,94.99) .. (296,93.33) -- cycle ;
\draw  [color={rgb, 255:red, 0; green, 0; blue, 0 }  ,draw opacity=1 ][fill={rgb, 255:red, 0; green, 0; blue, 0 }  ,fill opacity=1 ] (516,193.33) .. controls (516,191.68) and (517.34,190.33) .. (519,190.33) .. controls (520.66,190.33) and (522,191.68) .. (522,193.33) .. controls (522,194.99) and (520.66,196.33) .. (519,196.33) .. controls (517.34,196.33) and (516,194.99) .. (516,193.33) -- cycle ;
\draw  [color={rgb, 255:red, 0; green, 0; blue, 0 }  ,draw opacity=1 ][fill={rgb, 255:red, 0; green, 0; blue, 0 }  ,fill opacity=1 ] (516,93.33) .. controls (516,91.68) and (517.34,90.33) .. (519,90.33) .. controls (520.66,90.33) and (522,91.68) .. (522,93.33) .. controls (522,94.99) and (520.66,96.33) .. (519,96.33) .. controls (517.34,96.33) and (516,94.99) .. (516,93.33) -- cycle ;
\draw  [color={rgb, 255:red, 0; green, 0; blue, 0 }  ,draw opacity=1 ][fill={rgb, 255:red, 0; green, 0; blue, 0 }  ,fill opacity=1 ] (431,143.33) .. controls (431,141.68) and (432.34,140.33) .. (434,140.33) .. controls (435.66,140.33) and (437,141.68) .. (437,143.33) .. controls (437,144.99) and (435.66,146.33) .. (434,146.33) .. controls (432.34,146.33) and (431,144.99) .. (431,143.33) -- cycle ;
\draw [line width=0.75]    (336,138.73) .. controls (361.6,132.33) and (470.17,139.82) .. (485.89,135.82) .. controls (501.6,131.82) and (508.55,82.37) .. (521.6,86.96) .. controls (534.65,91.55) and (497.31,136.39) .. (497.89,144.1) .. controls (498.46,151.82) and (536.91,196.16) .. (522.4,201.13) .. controls (507.89,206.1) and (501.31,157.53) .. (486.4,151.13) .. controls (471.49,144.73) and (340.4,160.73) .. (323.6,149.13) .. controls (306.8,137.53) and (282.8,86.73) .. (294.8,84.73) .. controls (306.32,82.81) and (315.63,110.38) .. (324.49,132.41) ;
\draw [shift={(325.6,135.13)}, rotate = 247.67] [fill={rgb, 255:red, 0; green, 0; blue, 0 }  ][line width=0.08]  [draw opacity=0] (10.72,-5.15) -- (0,0) -- (10.72,5.15) -- (7.12,0) -- cycle    ;
\draw [color={rgb, 255:red, 155; green, 155; blue, 155 }  ,draw opacity=1 ][line width=1.5]    (698.97,84.33) -- (698.97,199.33) ;
\draw  [color={rgb, 255:red, 0; green, 0; blue, 0 }  ,draw opacity=1 ][fill={rgb, 255:red, 0; green, 0; blue, 0 }  ,fill opacity=1 ] (781.97,142.33) .. controls (781.97,140.68) and (780.62,139.33) .. (778.97,139.33) .. controls (777.31,139.33) and (775.97,140.68) .. (775.97,142.33) .. controls (775.97,143.99) and (777.31,145.33) .. (778.97,145.33) .. controls (780.62,145.33) and (781.97,143.99) .. (781.97,142.33) -- cycle ;
\draw  [color={rgb, 255:red, 0; green, 0; blue, 0 }  ,draw opacity=1 ][fill={rgb, 255:red, 0; green, 0; blue, 0 }  ,fill opacity=1 ] (726.97,142.33) .. controls (726.97,140.68) and (725.62,139.33) .. (723.97,139.33) .. controls (722.31,139.33) and (720.97,140.68) .. (720.97,142.33) .. controls (720.97,143.99) and (722.31,145.33) .. (723.97,145.33) .. controls (725.62,145.33) and (726.97,143.99) .. (726.97,142.33) -- cycle ;
\draw  [color={rgb, 255:red, 0; green, 0; blue, 0 }  ,draw opacity=1 ][fill={rgb, 255:red, 0; green, 0; blue, 0 }  ,fill opacity=1 ] (621.97,142.33) .. controls (621.97,140.68) and (620.62,139.33) .. (618.97,139.33) .. controls (617.31,139.33) and (615.97,140.68) .. (615.97,142.33) .. controls (615.97,143.99) and (617.31,145.33) .. (618.97,145.33) .. controls (620.62,145.33) and (621.97,143.99) .. (621.97,142.33) -- cycle ;
\draw  [color={rgb, 255:red, 0; green, 0; blue, 0 }  ,draw opacity=1 ][fill={rgb, 255:red, 0; green, 0; blue, 0 }  ,fill opacity=1 ] (811.97,192.33) .. controls (811.97,190.68) and (810.62,189.33) .. (808.97,189.33) .. controls (807.31,189.33) and (805.97,190.68) .. (805.97,192.33) .. controls (805.97,193.99) and (807.31,195.33) .. (808.97,195.33) .. controls (810.62,195.33) and (811.97,193.99) .. (811.97,192.33) -- cycle ;
\draw  [color={rgb, 255:red, 0; green, 0; blue, 0 }  ,draw opacity=1 ][fill={rgb, 255:red, 0; green, 0; blue, 0 }  ,fill opacity=1 ] (811.97,92.33) .. controls (811.97,90.68) and (810.62,89.33) .. (808.97,89.33) .. controls (807.31,89.33) and (805.97,90.68) .. (805.97,92.33) .. controls (805.97,93.99) and (807.31,95.33) .. (808.97,95.33) .. controls (810.62,95.33) and (811.97,93.99) .. (811.97,92.33) -- cycle ;
\draw  [color={rgb, 255:red, 0; green, 0; blue, 0 }  ,draw opacity=1 ][fill={rgb, 255:red, 0; green, 0; blue, 0 }  ,fill opacity=1 ] (591.97,192.33) .. controls (591.97,190.68) and (590.62,189.33) .. (588.97,189.33) .. controls (587.31,189.33) and (585.97,190.68) .. (585.97,192.33) .. controls (585.97,193.99) and (587.31,195.33) .. (588.97,195.33) .. controls (590.62,195.33) and (591.97,193.99) .. (591.97,192.33) -- cycle ;
\draw  [color={rgb, 255:red, 0; green, 0; blue, 0 }  ,draw opacity=1 ][fill={rgb, 255:red, 0; green, 0; blue, 0 }  ,fill opacity=1 ] (591.97,92.33) .. controls (591.97,90.68) and (590.62,89.33) .. (588.97,89.33) .. controls (587.31,89.33) and (585.97,90.68) .. (585.97,92.33) .. controls (585.97,93.99) and (587.31,95.33) .. (588.97,95.33) .. controls (590.62,95.33) and (591.97,93.99) .. (591.97,92.33) -- cycle ;
\draw  [color={rgb, 255:red, 0; green, 0; blue, 0 }  ,draw opacity=1 ][fill={rgb, 255:red, 0; green, 0; blue, 0 }  ,fill opacity=1 ] (676.97,142.33) .. controls (676.97,140.68) and (675.62,139.33) .. (673.97,139.33) .. controls (672.31,139.33) and (670.97,140.68) .. (670.97,142.33) .. controls (670.97,143.99) and (672.31,145.33) .. (673.97,145.33) .. controls (675.62,145.33) and (676.97,143.99) .. (676.97,142.33) -- cycle ;
\draw [line width=0.75]    (768.98,137.11) .. controls (738.88,131.84) and (637.25,138.68) .. (622.08,134.82) .. controls (606.37,130.82) and (599.41,81.37) .. (586.37,85.96) .. controls (573.32,90.55) and (610.65,135.39) .. (610.08,143.1) .. controls (609.51,150.82) and (571.05,195.16) .. (585.57,200.13) .. controls (600.08,205.1) and (606.65,156.53) .. (621.57,150.13) .. controls (636.48,143.73) and (767.57,159.73) .. (784.37,148.13) .. controls (801.17,136.53) and (825.17,85.73) .. (813.17,83.73) .. controls (801.17,81.73) and (791.57,111.73) .. (782.37,134.13) ;
\draw [shift={(771.97,137.73)}, rotate = 194.04] [fill={rgb, 255:red, 0; green, 0; blue, 0 }  ][line width=0.08]  [draw opacity=0] (10.72,-5.15) -- (0,0) -- (10.72,5.15) -- (7.12,0) -- cycle    ;
\draw [color={rgb, 255:red, 155; green, 155; blue, 155 }  ,draw opacity=1 ][line width=1.5]    (121,85) -- (121,200) ;
\draw  [color={rgb, 255:red, 0; green, 0; blue, 0 }  ,draw opacity=1 ][fill={rgb, 255:red, 0; green, 0; blue, 0 }  ,fill opacity=1 ] (38,143) .. controls (38,141.34) and (39.34,140) .. (41,140) .. controls (42.66,140) and (44,141.34) .. (44,143) .. controls (44,144.66) and (42.66,146) .. (41,146) .. controls (39.34,146) and (38,144.66) .. (38,143) -- cycle ;
\draw  [color={rgb, 255:red, 0; green, 0; blue, 0 }  ,draw opacity=1 ][fill={rgb, 255:red, 0; green, 0; blue, 0 }  ,fill opacity=1 ] (93,143) .. controls (93,141.34) and (94.34,140) .. (96,140) .. controls (97.66,140) and (99,141.34) .. (99,143) .. controls (99,144.66) and (97.66,146) .. (96,146) .. controls (94.34,146) and (93,144.66) .. (93,143) -- cycle ;
\draw  [color={rgb, 255:red, 0; green, 0; blue, 0 }  ,draw opacity=1 ][fill={rgb, 255:red, 0; green, 0; blue, 0 }  ,fill opacity=1 ] (198,143) .. controls (198,141.34) and (199.34,140) .. (201,140) .. controls (202.66,140) and (204,141.34) .. (204,143) .. controls (204,144.66) and (202.66,146) .. (201,146) .. controls (199.34,146) and (198,144.66) .. (198,143) -- cycle ;
\draw  [color={rgb, 255:red, 0; green, 0; blue, 0 }  ,draw opacity=1 ][fill={rgb, 255:red, 0; green, 0; blue, 0 }  ,fill opacity=1 ] (8,193) .. controls (8,191.34) and (9.34,190) .. (11,190) .. controls (12.66,190) and (14,191.34) .. (14,193) .. controls (14,194.66) and (12.66,196) .. (11,196) .. controls (9.34,196) and (8,194.66) .. (8,193) -- cycle ;
\draw  [color={rgb, 255:red, 0; green, 0; blue, 0 }  ,draw opacity=1 ][fill={rgb, 255:red, 0; green, 0; blue, 0 }  ,fill opacity=1 ] (8,93) .. controls (8,91.34) and (9.34,90) .. (11,90) .. controls (12.66,90) and (14,91.34) .. (14,93) .. controls (14,94.66) and (12.66,96) .. (11,96) .. controls (9.34,96) and (8,94.66) .. (8,93) -- cycle ;
\draw  [color={rgb, 255:red, 0; green, 0; blue, 0 }  ,draw opacity=1 ][fill={rgb, 255:red, 0; green, 0; blue, 0 }  ,fill opacity=1 ] (228,193) .. controls (228,191.34) and (229.34,190) .. (231,190) .. controls (232.66,190) and (234,191.34) .. (234,193) .. controls (234,194.66) and (232.66,196) .. (231,196) .. controls (229.34,196) and (228,194.66) .. (228,193) -- cycle ;
\draw  [color={rgb, 255:red, 0; green, 0; blue, 0 }  ,draw opacity=1 ][fill={rgb, 255:red, 0; green, 0; blue, 0 }  ,fill opacity=1 ] (228,93) .. controls (228,91.34) and (229.34,90) .. (231,90) .. controls (232.66,90) and (234,91.34) .. (234,93) .. controls (234,94.66) and (232.66,96) .. (231,96) .. controls (229.34,96) and (228,94.66) .. (228,93) -- cycle ;
\draw [line width=1.5]    (14.76,98.1) -- (35.35,133.64) ;
\draw [shift={(37.36,137.1)}, rotate = 239.91] [fill={rgb, 255:red, 0; green, 0; blue, 0 }  ][line width=0.08]  [draw opacity=0] (13.4,-6.43) -- (0,0) -- (13.4,6.44) -- (8.9,0) -- cycle    ;
\draw [line width=1.5]    (14.56,187.5) -- (34.74,152.95) ;
\draw [shift={(36.76,149.5)}, rotate = 120.29] [fill={rgb, 255:red, 0; green, 0; blue, 0 }  ][line width=0.08]  [draw opacity=0] (13.4,-6.43) -- (0,0) -- (13.4,6.44) -- (8.9,0) -- cycle    ;
\draw [line width=1.5]    (46.96,143.3) -- (86.53,143.22) ;
\draw [shift={(90.53,143.22)}, rotate = 179.89] [fill={rgb, 255:red, 0; green, 0; blue, 0 }  ][line width=0.08]  [draw opacity=0] (13.4,-6.43) -- (0,0) -- (13.4,6.44) -- (8.9,0) -- cycle    ;
\draw [line width=1.5]    (151.07,142.88) -- (190.36,142.9) ;
\draw [shift={(194.36,142.91)}, rotate = 180.03] [fill={rgb, 255:red, 0; green, 0; blue, 0 }  ][line width=0.08]  [draw opacity=0] (13.4,-6.43) -- (0,0) -- (13.4,6.44) -- (8.9,0) -- cycle    ;
\draw [line width=1.5]    (205.02,148.77) -- (225.62,184.31) ;
\draw [shift={(227.62,187.77)}, rotate = 239.91] [fill={rgb, 255:red, 0; green, 0; blue, 0 }  ][line width=0.08]  [draw opacity=0] (13.4,-6.43) -- (0,0) -- (13.4,6.44) -- (8.9,0) -- cycle    ;
\draw [line width=1.5]    (205.69,136.56) -- (225.87,102.01) ;
\draw [shift={(227.89,98.56)}, rotate = 120.29] [fill={rgb, 255:red, 0; green, 0; blue, 0 }  ][line width=0.08]  [draw opacity=0] (13.4,-6.43) -- (0,0) -- (13.4,6.44) -- (8.9,0) -- cycle    ;
\draw  [color={rgb, 255:red, 0; green, 0; blue, 0 }  ,draw opacity=1 ][fill={rgb, 255:red, 0; green, 0; blue, 0 }  ,fill opacity=1 ] (143,143) .. controls (143,141.34) and (144.34,140) .. (146,140) .. controls (147.66,140) and (149,141.34) .. (149,143) .. controls (149,144.66) and (147.66,146) .. (146,146) .. controls (144.34,146) and (143,144.66) .. (143,143) -- cycle ;
\draw [line width=1.5]    (101.23,143.05) -- (136.23,143.2) ;
\draw [shift={(140.23,143.22)}, rotate = 180.24] [fill={rgb, 255:red, 0; green, 0; blue, 0 }  ][line width=0.08]  [draw opacity=0] (13.4,-6.43) -- (0,0) -- (13.4,6.44) -- (8.9,0) -- cycle    ;

\draw (527,133.07) node [anchor=north west][inner sep=0.75pt]    {$\sim _{\tr} \pm $};
\draw (-34,136.4) node [anchor=north west][inner sep=0.75pt]    {$Q\ =\ $};
\draw (248.13,136) node [anchor=north west][inner sep=0.75pt]   [align=left] {then};
\draw (-152.33,113.33) node [anchor=north west][inner sep=0.75pt]   [align=left] {\begin{minipage}[lt]{61.68pt}\setlength\topsep{0pt}
For instance 
\begin{center}
if we set
\end{center}
\end{minipage}};
\end{tikzpicture}
\end{center}
    
\begin{proof}
    Let $C = a_1 \cdots a_{\ell}$ be a cycle and let $v \in \Srep(Q,\dd)$ be a symmetric representation. Then:
    \begin{align*}
        v(C) & = \repinvolution(v)(C) = \big(\prod_{k=1}^ls(a_k)\big)(J_{\tau tC}^{-1}v_{\tau a_1}^{*} \cdots v_{\tau a_{\ell}}^*J_{\tau hC}) \\
        & = \big(\prod_{k=1}^ls(a_k)\big)(J_{\tau tC}^{-1}(v_{\tau a_{\ell}} \cdots v_{\tau a_1})^*J_{\tau tC}) \\
        & = \big(\prod_{k=1}^ls(a_k)\big)(J_{\tau tC}^{-1}(v(\tau a_{\ell} \cdots \tau a_1))^*J_{\tau tC})\\
        & = \big(\prod_{k=1}^ls(a_k)\big)(J_{\tau tC}^{-1}v(\tau (C))^*J_{\tau tC}), \\
        & \tr(v(C)) = \big(\prod_{k=1}^ls(a_k)\big)\tr(v(\tau (C))), \text{ and thus } C \sim_{\tr} \pm \tau (C). \qedhere
    \end{align*}
\end{proof}

We say that a cycle is a \emph{short loop} if it is of the form $aa^*$ for some $a \in \overline{Q}$.

\begin{rmq}\label{rmq: relation moment}
    The relations in $\Pi(Q)$ given by the formal moment map (from \Cref{defn:formal moment}) can be interpreted as stating that, for every vertex $i \in I$, the sum of short loops ending at $i$ equals the sum of short loops starting at $i$.
\begin{center}
\begin{tikzpicture}[x=0.5pt,y=0.5pt,yscale=-1,xscale=1]
\draw  [color={rgb, 255:red, 0; green, 0; blue, 0 }  ,draw opacity=1 ][fill={rgb, 255:red, 0; green, 0; blue, 0 }  ,fill opacity=1 ][line width=1.5]  (341.33,80.67) .. controls (341.33,79.01) and (342.68,77.67) .. (344.33,77.67) .. controls (345.99,77.67) and (347.33,79.01) .. (347.33,80.67) .. controls (347.33,82.32) and (345.99,83.67) .. (344.33,83.67) .. controls (342.68,83.67) and (341.33,82.32) .. (341.33,80.67) -- cycle ;
\draw  [color={rgb, 255:red, 0; green, 0; blue, 0 }  ,draw opacity=1 ][fill={rgb, 255:red, 0; green, 0; blue, 0 }  ,fill opacity=1 ][line width=1.5]  (289.33,133.87) .. controls (289.33,132.21) and (290.68,130.87) .. (292.33,130.87) .. controls (293.99,130.87) and (295.33,132.21) .. (295.33,133.87) .. controls (295.33,135.52) and (293.99,136.87) .. (292.33,136.87) .. controls (290.68,136.87) and (289.33,135.52) .. (289.33,133.87) -- cycle ;
\draw [line width=1.5]    (298.53,127.67) -- (335.7,90.5) ;
\draw [shift={(338.53,87.67)}, rotate = 135] [fill={rgb, 255:red, 0; green, 0; blue, 0 }  ][line width=0.08]  [draw opacity=0] (13.4,-6.43) -- (0,0) -- (13.4,6.44) -- (8.9,0) -- cycle    ;
\draw [line width=1.5]    (352.13,74.47) -- (389.3,37.3) ;
\draw [shift={(392.13,34.47)}, rotate = 135] [fill={rgb, 255:red, 0; green, 0; blue, 0 }  ][line width=0.08]  [draw opacity=0] (13.4,-6.43) -- (0,0) -- (13.4,6.44) -- (8.9,0) -- cycle    ;
\draw [line width=1.5]    (350.53,88.47) -- (387.7,125.64) ;
\draw [shift={(390.53,128.47)}, rotate = 225] [fill={rgb, 255:red, 0; green, 0; blue, 0 }  ][line width=0.08]  [draw opacity=0] (13.4,-6.43) -- (0,0) -- (13.4,6.44) -- (8.9,0) -- cycle    ;
\draw [line width=1.5]    (297.33,34.07) -- (334.5,71.24) ;
\draw [shift={(337.33,74.07)}, rotate = 225] [fill={rgb, 255:red, 0; green, 0; blue, 0 }  ][line width=0.08]  [draw opacity=0] (13.4,-6.43) -- (0,0) -- (13.4,6.44) -- (8.9,0) -- cycle    ;
\draw  [color={rgb, 255:red, 0; green, 0; blue, 0 }  ,draw opacity=1 ][fill={rgb, 255:red, 0; green, 0; blue, 0 }  ,fill opacity=1 ][line width=1.5]  (391.58,133.87) .. controls (391.58,132.21) and (392.93,130.87) .. (394.58,130.87) .. controls (396.24,130.87) and (397.58,132.21) .. (397.58,133.87) .. controls (397.58,135.52) and (396.24,136.87) .. (394.58,136.87) .. controls (392.93,136.87) and (391.58,135.52) .. (391.58,133.87) -- cycle ;
\draw  [color={rgb, 255:red, 0; green, 0; blue, 0 }  ,draw opacity=1 ][fill={rgb, 255:red, 0; green, 0; blue, 0 }  ,fill opacity=1 ][line width=1.5]  (288.08,29.92) .. controls (288.08,28.26) and (289.43,26.92) .. (291.08,26.92) .. controls (292.74,26.92) and (294.08,28.26) .. (294.08,29.92) .. controls (294.08,31.57) and (292.74,32.92) .. (291.08,32.92) .. controls (289.43,32.92) and (288.08,31.57) .. (288.08,29.92) -- cycle ;
\draw  [color={rgb, 255:red, 0; green, 0; blue, 0 }  ,draw opacity=1 ][fill={rgb, 255:red, 0; green, 0; blue, 0 }  ,fill opacity=1 ][line width=1.5]  (394.58,30.67) .. controls (394.58,29.01) and (395.93,27.67) .. (397.58,27.67) .. controls (399.24,27.67) and (400.58,29.01) .. (400.58,30.67) .. controls (400.58,32.32) and (399.24,33.67) .. (397.58,33.67) .. controls (395.93,33.67) and (394.58,32.32) .. (394.58,30.67) -- cycle ;
\draw  [color={rgb, 255:red, 0; green, 0; blue, 0 }  ,draw opacity=1 ][fill={rgb, 255:red, 0; green, 0; blue, 0 }  ,fill opacity=1 ] (239,219) .. controls (239,217.34) and (240.34,216) .. (242,216) .. controls (243.66,216) and (245,217.34) .. (245,219) .. controls (245,220.66) and (243.66,222) .. (242,222) .. controls (240.34,222) and (239,220.66) .. (239,219) -- cycle ;
\draw  [color={rgb, 255:red, 0; green, 0; blue, 0 }  ,draw opacity=1 ][fill={rgb, 255:red, 0; green, 0; blue, 0 }  ,fill opacity=1 ][line width=0.75]  (187,272.2) .. controls (187,270.54) and (188.34,269.2) .. (190,269.2) .. controls (191.66,269.2) and (193,270.54) .. (193,272.2) .. controls (193,273.86) and (191.66,275.2) .. (190,275.2) .. controls (188.34,275.2) and (187,273.86) .. (187,272.2) -- cycle ;
\draw  [color={rgb, 255:red, 0; green, 0; blue, 0 }  ,draw opacity=1 ][fill={rgb, 255:red, 0; green, 0; blue, 0 }  ,fill opacity=1 ] (289.25,272.2) .. controls (289.25,270.54) and (290.59,269.2) .. (292.25,269.2) .. controls (293.91,269.2) and (295.25,270.54) .. (295.25,272.2) .. controls (295.25,273.86) and (293.91,275.2) .. (292.25,275.2) .. controls (290.59,275.2) and (289.25,273.86) .. (289.25,272.2) -- cycle ;
\draw  [color={rgb, 255:red, 0; green, 0; blue, 0 }  ,draw opacity=1 ][fill={rgb, 255:red, 0; green, 0; blue, 0 }  ,fill opacity=1 ][line width=0.75]  (185.75,168.25) .. controls (185.75,166.59) and (187.09,165.25) .. (188.75,165.25) .. controls (190.41,165.25) and (191.75,166.59) .. (191.75,168.25) .. controls (191.75,169.91) and (190.41,171.25) .. (188.75,171.25) .. controls (187.09,171.25) and (185.75,169.91) .. (185.75,168.25) -- cycle ;
\draw  [color={rgb, 255:red, 0; green, 0; blue, 0 }  ,draw opacity=1 ][fill={rgb, 255:red, 0; green, 0; blue, 0 }  ,fill opacity=1 ] (292.25,169) .. controls (292.25,167.34) and (293.59,166) .. (295.25,166) .. controls (296.91,166) and (298.25,167.34) .. (298.25,169) .. controls (298.25,170.66) and (296.91,172) .. (295.25,172) .. controls (293.59,172) and (292.25,170.66) .. (292.25,169) -- cycle ;
\draw  [color={rgb, 255:red, 0; green, 0; blue, 0 }  ,draw opacity=1 ][fill={rgb, 255:red, 0; green, 0; blue, 0 }  ,fill opacity=1 ] (411,219.67) .. controls (411,218.01) and (412.34,216.67) .. (414,216.67) .. controls (415.66,216.67) and (417,218.01) .. (417,219.67) .. controls (417,221.32) and (415.66,222.67) .. (414,222.67) .. controls (412.34,222.67) and (411,221.32) .. (411,219.67) -- cycle ;
\draw  [color={rgb, 255:red, 0; green, 0; blue, 0 }  ,draw opacity=1 ][fill={rgb, 255:red, 0; green, 0; blue, 0 }  ,fill opacity=1 ] (359,272.87) .. controls (359,271.21) and (360.34,269.87) .. (362,269.87) .. controls (363.66,269.87) and (365,271.21) .. (365,272.87) .. controls (365,274.52) and (363.66,275.87) .. (362,275.87) .. controls (360.34,275.87) and (359,274.52) .. (359,272.87) -- cycle ;
\draw  [color={rgb, 255:red, 0; green, 0; blue, 0 }  ,draw opacity=1 ][fill={rgb, 255:red, 0; green, 0; blue, 0 }  ,fill opacity=1 ][line width=0.75]  (461.25,272.87) .. controls (461.25,271.21) and (462.59,269.87) .. (464.25,269.87) .. controls (465.91,269.87) and (467.25,271.21) .. (467.25,272.87) .. controls (467.25,274.52) and (465.91,275.87) .. (464.25,275.87) .. controls (462.59,275.87) and (461.25,274.52) .. (461.25,272.87) -- cycle ;
\draw  [color={rgb, 255:red, 0; green, 0; blue, 0 }  ,draw opacity=1 ][fill={rgb, 255:red, 0; green, 0; blue, 0 }  ,fill opacity=1 ] (357.75,168.92) .. controls (357.75,167.26) and (359.09,165.92) .. (360.75,165.92) .. controls (362.41,165.92) and (363.75,167.26) .. (363.75,168.92) .. controls (363.75,170.57) and (362.41,171.92) .. (360.75,171.92) .. controls (359.09,171.92) and (357.75,170.57) .. (357.75,168.92) -- cycle ;
\draw  [color={rgb, 255:red, 0; green, 0; blue, 0 }  ,draw opacity=1 ][fill={rgb, 255:red, 0; green, 0; blue, 0 }  ,fill opacity=1 ][line width=0.75]  (464.25,169.67) .. controls (464.25,168.01) and (465.59,166.67) .. (467.25,166.67) .. controls (468.91,166.67) and (470.25,168.01) .. (470.25,169.67) .. controls (470.25,171.32) and (468.91,172.67) .. (467.25,172.67) .. controls (465.59,172.67) and (464.25,171.32) .. (464.25,169.67) -- cycle ;
\draw [line width=0.75]    (230.5,233.7) .. controls (227.04,252.55) and (208.15,267.66) .. (201.02,262.22) .. controls (194.24,257.05) and (208.12,240.31) .. (223.01,230.84) ;
\draw [shift={(225.37,229.41)}, rotate = 144.5] [fill={rgb, 255:red, 0; green, 0; blue, 0 }  ][line width=0.08]  [draw opacity=0] (10.72,-5.15) -- (0,0) -- (10.72,5.15) -- (7.12,0) -- cycle    ;
\draw [line width=0.75]    (425.61,206.08) .. controls (429.04,187.22) and (447.9,172.08) .. (455.04,177.5) .. controls (461.82,182.66) and (447.98,199.42) .. (433.11,208.92) ;
\draw [shift={(430.75,210.36)}, rotate = 324.39] [fill={rgb, 255:red, 0; green, 0; blue, 0 }  ][line width=0.08]  [draw opacity=0] (10.72,-5.15) -- (0,0) -- (10.72,5.15) -- (7.12,0) -- cycle    ;
\draw  [color={rgb, 255:red, 0; green, 0; blue, 0 }  ,draw opacity=1 ][fill={rgb, 255:red, 0; green, 0; blue, 0 }  ,fill opacity=1 ] (67.67,219.13) .. controls (67.67,217.48) and (69.01,216.13) .. (70.67,216.13) .. controls (72.32,216.13) and (73.67,217.48) .. (73.67,219.13) .. controls (73.67,220.79) and (72.32,222.13) .. (70.67,222.13) .. controls (69.01,222.13) and (67.67,220.79) .. (67.67,219.13) -- cycle ;
\draw  [color={rgb, 255:red, 0; green, 0; blue, 0 }  ,draw opacity=1 ][fill={rgb, 255:red, 0; green, 0; blue, 0 }  ,fill opacity=1 ][line width=0.75]  (15.67,272.33) .. controls (15.67,270.68) and (17.01,269.33) .. (18.67,269.33) .. controls (20.32,269.33) and (21.67,270.68) .. (21.67,272.33) .. controls (21.67,273.99) and (20.32,275.33) .. (18.67,275.33) .. controls (17.01,275.33) and (15.67,273.99) .. (15.67,272.33) -- cycle ;
\draw  [color={rgb, 255:red, 0; green, 0; blue, 0 }  ,draw opacity=1 ][fill={rgb, 255:red, 0; green, 0; blue, 0 }  ,fill opacity=1 ] (117.92,272.33) .. controls (117.92,270.68) and (119.26,269.33) .. (120.92,269.33) .. controls (122.57,269.33) and (123.92,270.68) .. (123.92,272.33) .. controls (123.92,273.99) and (122.57,275.33) .. (120.92,275.33) .. controls (119.26,275.33) and (117.92,273.99) .. (117.92,272.33) -- cycle ;
\draw  [color={rgb, 255:red, 0; green, 0; blue, 0 }  ,draw opacity=1 ][fill={rgb, 255:red, 0; green, 0; blue, 0 }  ,fill opacity=1 ][line width=0.75]  (14.42,168.38) .. controls (14.42,166.73) and (15.76,165.38) .. (17.42,165.38) .. controls (19.07,165.38) and (20.42,166.73) .. (20.42,168.38) .. controls (20.42,170.04) and (19.07,171.38) .. (17.42,171.38) .. controls (15.76,171.38) and (14.42,170.04) .. (14.42,168.38) -- cycle ;
\draw  [color={rgb, 255:red, 0; green, 0; blue, 0 }  ,draw opacity=1 ][fill={rgb, 255:red, 0; green, 0; blue, 0 }  ,fill opacity=1 ] (120.92,169.13) .. controls (120.92,167.48) and (122.26,166.13) .. (123.92,166.13) .. controls (125.57,166.13) and (126.92,167.48) .. (126.92,169.13) .. controls (126.92,170.79) and (125.57,172.13) .. (123.92,172.13) .. controls (122.26,172.13) and (120.92,170.79) .. (120.92,169.13) -- cycle ;
\draw [line width=0.75]    (54.13,207.54) .. controls (35.3,203.97) and (20.29,185) .. (25.78,177.9) .. controls (30.98,171.15) and (47.64,185.12) .. (57.03,200.06) ;
\draw [shift={(58.45,202.42)}, rotate = 234.81] [fill={rgb, 255:red, 0; green, 0; blue, 0 }  ][line width=0.08]  [draw opacity=0] (10.72,-5.15) -- (0,0) -- (10.72,5.15) -- (7.12,0) -- cycle    ;
\draw  [color={rgb, 255:red, 0; green, 0; blue, 0 }  ,draw opacity=1 ][fill={rgb, 255:red, 0; green, 0; blue, 0 }  ,fill opacity=1 ] (566.33,219.8) .. controls (566.33,218.14) and (567.68,216.8) .. (569.33,216.8) .. controls (570.99,216.8) and (572.33,218.14) .. (572.33,219.8) .. controls (572.33,221.46) and (570.99,222.8) .. (569.33,222.8) .. controls (567.68,222.8) and (566.33,221.46) .. (566.33,219.8) -- cycle ;
\draw  [color={rgb, 255:red, 0; green, 0; blue, 0 }  ,draw opacity=1 ][fill={rgb, 255:red, 0; green, 0; blue, 0 }  ,fill opacity=1 ] (514.33,273) .. controls (514.33,271.34) and (515.68,270) .. (517.33,270) .. controls (518.99,270) and (520.33,271.34) .. (520.33,273) .. controls (520.33,274.66) and (518.99,276) .. (517.33,276) .. controls (515.68,276) and (514.33,274.66) .. (514.33,273) -- cycle ;
\draw  [color={rgb, 255:red, 0; green, 0; blue, 0 }  ,draw opacity=1 ][fill={rgb, 255:red, 0; green, 0; blue, 0 }  ,fill opacity=1 ][line width=0.75]  (616.58,273) .. controls (616.58,271.34) and (617.93,270) .. (619.58,270) .. controls (621.24,270) and (622.58,271.34) .. (622.58,273) .. controls (622.58,274.66) and (621.24,276) .. (619.58,276) .. controls (617.93,276) and (616.58,274.66) .. (616.58,273) -- cycle ;
\draw  [color={rgb, 255:red, 0; green, 0; blue, 0 }  ,draw opacity=1 ][fill={rgb, 255:red, 0; green, 0; blue, 0 }  ,fill opacity=1 ] (513.08,169.05) .. controls (513.08,167.39) and (514.43,166.05) .. (516.08,166.05) .. controls (517.74,166.05) and (519.08,167.39) .. (519.08,169.05) .. controls (519.08,170.71) and (517.74,172.05) .. (516.08,172.05) .. controls (514.43,172.05) and (513.08,170.71) .. (513.08,169.05) -- cycle ;
\draw  [color={rgb, 255:red, 0; green, 0; blue, 0 }  ,draw opacity=1 ][fill={rgb, 255:red, 0; green, 0; blue, 0 }  ,fill opacity=1 ][line width=0.75]  (619.58,169.8) .. controls (619.58,168.14) and (620.93,166.8) .. (622.58,166.8) .. controls (624.24,166.8) and (625.58,168.14) .. (625.58,169.8) .. controls (625.58,171.46) and (624.24,172.8) .. (622.58,172.8) .. controls (620.93,172.8) and (619.58,171.46) .. (619.58,169.8) -- cycle ;
\draw [line width=0.75]    (580.93,228.84) .. controls (599.77,232.37) and (614.81,251.31) .. (609.34,258.42) .. controls (604.15,265.18) and (587.46,251.25) .. (578.04,236.32) ;
\draw [shift={(576.61,233.96)}, rotate = 54.7] [fill={rgb, 255:red, 0; green, 0; blue, 0 }  ][line width=0.08]  [draw opacity=0] (10.72,-5.15) -- (0,0) -- (10.72,5.15) -- (7.12,0) -- cycle    ;

\draw (246,72.4) node [anchor=north west][inner sep=0.75pt]    {$Q=$};
\draw (321.33,211.07) node [anchor=north west][inner sep=0.75pt]    {$=$};
\draw (627.67,211.07) node [anchor=north west][inner sep=0.75pt]    {$\text{in } \ \Pi ( Q) .$};
\draw (149.6,210.33) node [anchor=north west][inner sep=0.75pt]    {$+$};
\draw (485.6,210.27) node [anchor=north west][inner sep=0.75pt]    {$+$};
\draw (412.12,73.93) node [anchor=north west][inner sep=0.75pt]   [align=left] {then};
\draw (115,48) node [anchor=north west][inner sep=0.75pt]   [align=left] {\begin{minipage}[lt]{61.68pt}\setlength\topsep{0pt}
For instance 
\begin{center}
if we set
\end{center}
\end{minipage}};
\end{tikzpicture}
\end{center}
\end{rmq}

\begin{lem}\label{lem: sliding short loops}
    Let $p = a_1 \cdots a_{\ell}$ be a path in $\overline{Q}$ such that, for each $a_k$ with $1\leq k < l$, there are exactly two distinct arrows in $Q$ incident to the vertex $ta_k=ha_{k+1} \in I$. Then:
    $$a_1a_1^*p = \pm pa_{\ell}^*a_{\ell} \in \Pi(Q).$$
\end{lem}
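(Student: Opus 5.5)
Proceed by induction on the length $\ell$ of $p$, using only the preprojective relations of \Cref{rmq: relation moment}. The inductive step is a single-step sliding rule: for two consecutive arrows $a_k a_{k+1}$ in $p$, we will show
$$a_k^* a_k\, a_{k+1} = \pm\, a_{k+1}\, a_{k+1}^* a_{k+1}$$
when read at the right vertices. Here we use the convention $a^{**}=a$, so that for any $a\in\overline{Q}$ the two short loops involving $a$ are $aa^*$ at $ha$ and $a^*a$ at $ta$.

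\textbf{Base case.} For $\ell=1$ the claim is $a_1a_1^*a_1 = \pm a_1a_1^*a_1$, which holds with sign $+1$.

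\textbf{One-step slide.} Let $i=ta_k=ha_{k+1}$. By hypothesis exactly two arrows of $Q$ are incident to $i$, and in $\overline{Q}$ the arrows $a_k^*$ and $a_{k+1}$ come from these two. We first assume $a_k^*\neq a_{k+1}$, i.e.\ the path does not backtrack; the backtracking case is treated below. The moment relation $\mu_i=0$ at $i$ then involves exactly the two short loops based at $i$. These are $a_k^*a_k$, coming from $a_k$ with tail $i$, and $a_{k+1}a_{k+1}^*$, coming from $a_{k+1}$ with head $i$. Each appears with sign $+1$ or $-1$ according to whether the underlying arrow of $Q$ has head $i$ (the term $aa^*$) or tail $i$ (the term $-a^*a$). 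Hence
$$e_i\mu_Q e_i = \varepsilon\, a_k^*a_k + \varepsilon'\, a_{k+1}a_{k+1}^* = 0 \quad\text{in } \Pi(Q),$$
with $\varepsilon,\varepsilon'\in\{\pm1\}$ determined by the orientation. Therefore $a_k^*a_k = \pm\, a_{k+1}a_{k+1}^*$ as elements of $e_i\Pi(Q)e_i$.

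\textbf{Induction.} Write $a_1a_1^*p = a_1(a_1^*a_1)a_2\cdots a_\ell$. Apply the one-step slide at the vertex $ta_1$ to replace $a_1^*a_1$ by $\pm a_2a_2^*$, which gives $\pm\, a_1a_2(a_2^*a_2)a_3\cdots a_\ell$. Iterating through each intermediate vertex $ta_k$ for $k<\ell$ moves the short loop to the end, yielding $\pm\, p\, a_\ell^*a_\ell$. The final sign is the product of the local signs $\varepsilon\varepsilon'$. Equivalently, one may phrase this as an induction on $\ell$ applied to $a_2\cdots a_\ell$.

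\textbf{Main obstacle.} The delicate point is ensuring that $\mu_i$ has exactly two terms and that these are the two loops we need. This fails in two situations.

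First, if the two arrows incident to $i$ are the same arrow (a loop at $i$), $\mu_i$ collapses. We will argue that the hypothesis ``two distinct arrows'' excludes this.

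Second, the path may backtrack, i.e.\ $a_{k+1}=a_k^*$. In that case $a_k^*a_k$ and $a_{k+1}a_{k+1}^*$ are literally the same element, so the slide holds trivially with sign $+1$.

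We also need the sign bookkeeping to be tracked only up to $\pm$, which is all the statement asserts.
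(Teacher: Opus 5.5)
Your proposal is correct and follows the paper's proof: induction on $\ell$, where the relation $\mu_{ta_k}$ trades $a_k^*a_k$ for $\pm a_{k+1}a_{k+1}^*$, and the backtracking case $a_{k+1}=a_k^*$ is trivial. One correction: the hypothesis does not exclude loops, since a loop $b$ at $ta_k$ together with one other arrow satisfies ``exactly two distinct arrows'' but adds the extra term $bb^*-b^*b$ to $\mu_{ta_k}$, and the lemma can then fail; so you cannot argue this case away and must instead assume, as the paper tacitly does for its loop-free $\widetilde{A}$ and $\widetilde{D}$ quivers, that the intermediate vertices carry no loops.
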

\begin{center}
\begin{tikzpicture}[x=0.5pt,y=0.5pt,yscale=-1,xscale=1]

\draw [line width=0.75]    (725.1,138.16) .. controls (713.1,132.45) and (674.21,125.89) .. (674.21,141.22) .. controls (674.21,156.55) and (723.95,154.07) .. (723.95,146.73) .. controls (723.95,139.4) and (690.02,141.36) .. (674.21,141.22) .. controls (658.63,141.08) and (549.28,140.92) .. (494.21,140.93) ;
\draw [shift={(491.73,140.93)}, rotate = 359.98] [fill={rgb, 255:red, 0; green, 0; blue, 0 }  ][line width=0.08]  [draw opacity=0] (10.72,-5.15) -- (0,0) -- (10.72,5.15) -- (7.12,0) -- cycle    ;
\draw  [color={rgb, 255:red, 0; green, 0; blue, 0 }  ,draw opacity=1 ][fill={rgb, 255:red, 0; green, 0; blue, 0 }  ,fill opacity=1 ][line width=0.75]  (671.21,141.22) .. controls (671.21,139.56) and (672.55,138.22) .. (674.21,138.22) .. controls (675.87,138.22) and (677.21,139.56) .. (677.21,141.22) .. controls (677.21,142.88) and (675.87,144.22) .. (674.21,144.22) .. controls (672.55,144.22) and (671.21,142.88) .. (671.21,141.22) -- cycle ;
\draw  [color={rgb, 255:red, 0; green, 0; blue, 0 }  ,draw opacity=1 ][fill={rgb, 255:red, 0; green, 0; blue, 0 }  ,fill opacity=1 ][line width=0.75]  (730.56,141.06) .. controls (730.56,139.4) and (731.91,138.06) .. (733.56,138.06) .. controls (735.22,138.06) and (736.56,139.4) .. (736.56,141.06) .. controls (736.56,142.71) and (735.22,144.06) .. (733.56,144.06) .. controls (731.91,144.06) and (730.56,142.71) .. (730.56,141.06) -- cycle ;
\draw  [color={rgb, 255:red, 0; green, 0; blue, 0 }  ,draw opacity=1 ][fill={rgb, 255:red, 0; green, 0; blue, 0 }  ,fill opacity=1 ][line width=0.75]  (482.11,141.63) .. controls (482.11,139.97) and (483.45,138.63) .. (485.11,138.63) .. controls (486.76,138.63) and (488.11,139.97) .. (488.11,141.63) .. controls (488.11,143.28) and (486.76,144.63) .. (485.11,144.63) .. controls (483.45,144.63) and (482.11,143.28) .. (482.11,141.63) -- cycle ;
\draw  [color={rgb, 255:red, 0; green, 0; blue, 0 }  ,draw opacity=1 ][fill={rgb, 255:red, 0; green, 0; blue, 0 }  ,fill opacity=1 ][line width=0.75]  (530.11,141.23) .. controls (530.11,139.57) and (531.45,138.23) .. (533.11,138.23) .. controls (534.76,138.23) and (536.11,139.57) .. (536.11,141.23) .. controls (536.11,142.88) and (534.76,144.23) .. (533.11,144.23) .. controls (531.45,144.23) and (530.11,142.88) .. (530.11,141.23) -- cycle ;
\draw  [color={rgb, 255:red, 0; green, 0; blue, 0 }  ,draw opacity=1 ][fill={rgb, 255:red, 0; green, 0; blue, 0 }  ,fill opacity=1 ][line width=0.75]  (579.71,141.23) .. controls (579.71,139.57) and (581.05,138.23) .. (582.71,138.23) .. controls (584.36,138.23) and (585.71,139.57) .. (585.71,141.23) .. controls (585.71,142.88) and (584.36,144.23) .. (582.71,144.23) .. controls (581.05,144.23) and (579.71,142.88) .. (579.71,141.23) -- cycle ;
\draw [line width=0.75]    (436.12,141.4) .. controls (419.72,141.4) and (320.92,141.09) .. (305.64,140.96) .. controls (290.36,140.83) and (255.68,123.2) .. (256.04,140.96) .. controls (256.4,158.72) and (306.48,161.6) .. (306.48,154) .. controls (306.48,146.4) and (274.08,140.8) .. (256.04,140.96) .. controls (238.63,141.11) and (241.53,140.69) .. (217.4,140.67) ;
\draw [shift={(214.67,140.67)}, rotate = 359.98] [fill={rgb, 255:red, 0; green, 0; blue, 0 }  ][line width=0.08]  [draw opacity=0] (10.72,-5.15) -- (0,0) -- (10.72,5.15) -- (7.12,0) -- cycle    ;
\draw  [color={rgb, 255:red, 0; green, 0; blue, 0 }  ,draw opacity=1 ][fill={rgb, 255:red, 0; green, 0; blue, 0 }  ,fill opacity=1 ][line width=0.75]  (205.04,141.36) .. controls (205.04,139.7) and (206.38,138.36) .. (208.04,138.36) .. controls (209.7,138.36) and (211.04,139.7) .. (211.04,141.36) .. controls (211.04,143.02) and (209.7,144.36) .. (208.04,144.36) .. controls (206.38,144.36) and (205.04,143.02) .. (205.04,141.36) -- cycle ;
\draw  [color={rgb, 255:red, 0; green, 0; blue, 0 }  ,draw opacity=1 ][fill={rgb, 255:red, 0; green, 0; blue, 0 }  ,fill opacity=1 ][line width=0.75]  (253.04,140.96) .. controls (253.04,139.3) and (254.38,137.96) .. (256.04,137.96) .. controls (257.7,137.96) and (259.04,139.3) .. (259.04,140.96) .. controls (259.04,142.62) and (257.7,143.96) .. (256.04,143.96) .. controls (254.38,143.96) and (253.04,142.62) .. (253.04,140.96) -- cycle ;
\draw  [color={rgb, 255:red, 0; green, 0; blue, 0 }  ,draw opacity=1 ][fill={rgb, 255:red, 0; green, 0; blue, 0 }  ,fill opacity=1 ][line width=0.75]  (302.64,140.96) .. controls (302.64,139.3) and (303.98,137.96) .. (305.64,137.96) .. controls (307.3,137.96) and (308.64,139.3) .. (308.64,140.96) .. controls (308.64,142.62) and (307.3,143.96) .. (305.64,143.96) .. controls (303.98,143.96) and (302.64,142.62) .. (302.64,140.96) -- cycle ;
\draw  [color={rgb, 255:red, 0; green, 0; blue, 0 }  ,draw opacity=1 ][fill={rgb, 255:red, 0; green, 0; blue, 0 }  ,fill opacity=1 ][line width=0.75]  (393,141.07) .. controls (393,139.41) and (394.34,138.07) .. (396,138.07) .. controls (397.66,138.07) and (399,139.41) .. (399,141.07) .. controls (399,142.72) and (397.66,144.07) .. (396,144.07) .. controls (394.34,144.07) and (393,142.72) .. (393,141.07) -- cycle ;
\draw  [color={rgb, 255:red, 0; green, 0; blue, 0 }  ,draw opacity=1 ][fill={rgb, 255:red, 0; green, 0; blue, 0 }  ,fill opacity=1 ][line width=0.75]  (440.64,141.36) .. controls (440.64,139.7) and (441.98,138.36) .. (443.64,138.36) .. controls (445.3,138.36) and (446.64,139.7) .. (446.64,141.36) .. controls (446.64,143.02) and (445.3,144.36) .. (443.64,144.36) .. controls (441.98,144.36) and (440.64,143.02) .. (440.64,141.36) -- cycle ;
\draw [line width=0.75]    (168.95,141) .. controls (152.55,141) and (73.52,140.83) .. (58.24,140.69) .. controls (42.96,140.56) and (-41.32,140.53) .. (-40.12,150.53) .. controls (-38.92,160.53) and (10.6,152.45) .. (8.64,140.69) .. controls (6.78,129.52) and (-19.99,132.79) .. (-38.13,133.28) ;
\draw [shift={(-40.92,133.33)}, rotate = 359.49] [fill={rgb, 255:red, 0; green, 0; blue, 0 }  ][line width=0.08]  [draw opacity=0] (10.72,-5.15) -- (0,0) -- (10.72,5.15) -- (7.12,0) -- cycle    ;
\draw  [color={rgb, 255:red, 0; green, 0; blue, 0 }  ,draw opacity=1 ][fill={rgb, 255:red, 0; green, 0; blue, 0 }  ,fill opacity=1 ][line width=0.75]  (-42.36,141.09) .. controls (-42.36,139.44) and (-41.02,138.09) .. (-39.36,138.09) .. controls (-37.7,138.09) and (-36.36,139.44) .. (-36.36,141.09) .. controls (-36.36,142.75) and (-37.7,144.09) .. (-39.36,144.09) .. controls (-41.02,144.09) and (-42.36,142.75) .. (-42.36,141.09) -- cycle ;
\draw  [color={rgb, 255:red, 0; green, 0; blue, 0 }  ,draw opacity=1 ][fill={rgb, 255:red, 0; green, 0; blue, 0 }  ,fill opacity=1 ][line width=0.75]  (5.64,140.69) .. controls (5.64,139.04) and (6.98,137.69) .. (8.64,137.69) .. controls (10.3,137.69) and (11.64,139.04) .. (11.64,140.69) .. controls (11.64,142.35) and (10.3,143.69) .. (8.64,143.69) .. controls (6.98,143.69) and (5.64,142.35) .. (5.64,140.69) -- cycle ;
\draw  [color={rgb, 255:red, 0; green, 0; blue, 0 }  ,draw opacity=1 ][fill={rgb, 255:red, 0; green, 0; blue, 0 }  ,fill opacity=1 ][line width=0.75]  (55.24,140.69) .. controls (55.24,139.04) and (56.58,137.69) .. (58.24,137.69) .. controls (59.9,137.69) and (61.24,139.04) .. (61.24,140.69) .. controls (61.24,142.35) and (59.9,143.69) .. (58.24,143.69) .. controls (56.58,143.69) and (55.24,142.35) .. (55.24,140.69) -- cycle ;
\draw  [color={rgb, 255:red, 0; green, 0; blue, 0 }  ,draw opacity=1 ][fill={rgb, 255:red, 0; green, 0; blue, 0 }  ,fill opacity=1 ][line width=0.75]  (133.6,140.8) .. controls (133.6,139.14) and (134.94,137.8) .. (136.6,137.8) .. controls (138.26,137.8) and (139.6,139.14) .. (139.6,140.8) .. controls (139.6,142.46) and (138.26,143.8) .. (136.6,143.8) .. controls (134.94,143.8) and (133.6,142.46) .. (133.6,140.8) -- cycle ;
\draw  [color={rgb, 255:red, 0; green, 0; blue, 0 }  ,draw opacity=1 ][fill={rgb, 255:red, 0; green, 0; blue, 0 }  ,fill opacity=1 ][line width=0.75]  (172.44,141.09) .. controls (172.44,139.44) and (173.78,138.09) .. (175.44,138.09) .. controls (177.1,138.09) and (178.44,139.44) .. (178.44,141.09) .. controls (178.44,142.75) and (177.1,144.09) .. (175.44,144.09) .. controls (173.78,144.09) and (172.44,142.75) .. (172.44,141.09) -- cycle ;
\draw  [color={rgb, 255:red, 0; green, 0; blue, 0 }  ,draw opacity=1 ][fill={rgb, 255:red, 0; green, 0; blue, 0 }  ,fill opacity=1 ][line width=0.75]  (276.93,50.43) .. controls (276.93,48.77) and (278.27,47.43) .. (279.93,47.43) .. controls (281.58,47.43) and (282.93,48.77) .. (282.93,50.43) .. controls (282.93,52.08) and (281.58,53.43) .. (279.93,53.43) .. controls (278.27,53.43) and (276.93,52.08) .. (276.93,50.43) -- cycle ;
\draw  [color={rgb, 255:red, 0; green, 0; blue, 0 }  ,draw opacity=1 ][fill={rgb, 255:red, 0; green, 0; blue, 0 }  ,fill opacity=1 ][line width=0.75]  (324.93,50.03) .. controls (324.93,48.37) and (326.27,47.03) .. (327.93,47.03) .. controls (329.58,47.03) and (330.93,48.37) .. (330.93,50.03) .. controls (330.93,51.68) and (329.58,53.03) .. (327.93,53.03) .. controls (326.27,53.03) and (324.93,51.68) .. (324.93,50.03) -- cycle ;
\draw  [color={rgb, 255:red, 0; green, 0; blue, 0 }  ,draw opacity=1 ][fill={rgb, 255:red, 0; green, 0; blue, 0 }  ,fill opacity=1 ][line width=0.75]  (374.53,50.03) .. controls (374.53,48.37) and (375.87,47.03) .. (377.53,47.03) .. controls (379.18,47.03) and (380.53,48.37) .. (380.53,50.03) .. controls (380.53,51.68) and (379.18,53.03) .. (377.53,53.03) .. controls (375.87,53.03) and (374.53,51.68) .. (374.53,50.03) -- cycle ;
\draw  [color={rgb, 255:red, 0; green, 0; blue, 0 }  ,draw opacity=1 ][fill={rgb, 255:red, 0; green, 0; blue, 0 }  ,fill opacity=1 ][line width=0.75]  (424.49,49.73) .. controls (424.49,48.08) and (425.83,46.73) .. (427.49,46.73) .. controls (429.14,46.73) and (430.49,48.08) .. (430.49,49.73) .. controls (430.49,51.39) and (429.14,52.73) .. (427.49,52.73) .. controls (425.83,52.73) and (424.49,51.39) .. (424.49,49.73) -- cycle ;
\draw [line width=1.5]    (286.49,50.23) -- (315.82,50.23) ;
\draw [shift={(319.82,50.23)}, rotate = 180] [fill={rgb, 255:red, 0; green, 0; blue, 0 }  ][line width=0.08]  [draw opacity=0] (13.4,-6.43) -- (0,0) -- (13.4,6.44) -- (8.9,0) -- cycle    ;
\draw [line width=1.5]    (335.49,50.23) -- (364.82,50.23) ;
\draw [shift={(368.82,50.23)}, rotate = 180] [fill={rgb, 255:red, 0; green, 0; blue, 0 }  ][line width=0.08]  [draw opacity=0] (13.4,-6.43) -- (0,0) -- (13.4,6.44) -- (8.9,0) -- cycle    ;
\draw [line width=1.5]    (385.49,50.23) -- (414.82,50.23) ;
\draw [shift={(418.82,50.23)}, rotate = 180] [fill={rgb, 255:red, 0; green, 0; blue, 0 }  ][line width=0.08]  [draw opacity=0] (13.4,-6.43) -- (0,0) -- (13.4,6.44) -- (8.9,0) -- cycle    ;
\draw  [color={rgb, 255:red, 0; green, 0; blue, 0 }  ,draw opacity=1 ][fill={rgb, 255:red, 0; green, 0; blue, 0 }  ,fill opacity=1 ][line width=0.75]  (477.06,49.45) .. controls (477.06,47.63) and (478.4,46.16) .. (480.06,46.16) .. controls (481.71,46.16) and (483.06,47.63) .. (483.06,49.45) .. controls (483.06,51.26) and (481.71,52.73) .. (480.06,52.73) .. controls (478.4,52.73) and (477.06,51.26) .. (477.06,49.45) -- cycle ;
\draw [line width=1.5]    (438.06,49.99) -- (467.39,49.99) ;
\draw [shift={(471.39,49.99)}, rotate = 180] [fill={rgb, 255:red, 0; green, 0; blue, 0 }  ][line width=0.08]  [draw opacity=0] (13.4,-6.43) -- (0,0) -- (13.4,6.44) -- (8.9,0) -- cycle    ;
\draw  [color={rgb, 255:red, 0; green, 0; blue, 0 }  ,draw opacity=1 ][fill={rgb, 255:red, 0; green, 0; blue, 0 }  ,fill opacity=1 ][line width=0.75]  (529.06,49.45) .. controls (529.06,47.63) and (530.4,46.16) .. (532.06,46.16) .. controls (533.71,46.16) and (535.06,47.63) .. (535.06,49.45) .. controls (535.06,51.26) and (533.71,52.73) .. (532.06,52.73) .. controls (530.4,52.73) and (529.06,51.26) .. (529.06,49.45) -- cycle ;
\draw [line width=1.5]    (490.06,49.99) -- (519.39,49.99) ;
\draw [shift={(523.39,49.99)}, rotate = 180] [fill={rgb, 255:red, 0; green, 0; blue, 0 }  ][line width=0.08]  [draw opacity=0] (13.4,-6.43) -- (0,0) -- (13.4,6.44) -- (8.9,0) -- cycle    ;

\draw (510,118.07) node [anchor=north west][inner sep=0.75pt]    {$a_{1}$};
\draw (554.11,119.51) node [anchor=north west][inner sep=0.75pt]    {$a_{2}$};
\draw (232.93,117.8) node [anchor=north west][inner sep=0.75pt]    {$a_{1}$};
\draw (419.47,118.4) node [anchor=north west][inner sep=0.75pt]    {$a_{l}$};
\draw (275.87,160) node [anchor=north west][inner sep=0.75pt]    {$a_{2}^{*}$};
\draw (277.04,119.24) node [anchor=north west][inner sep=0.75pt]    {$a_{2}$};
\draw (-22.33,107.27) node [anchor=north west][inner sep=0.75pt]    {$a_{1}$};
\draw (151.27,118.93) node [anchor=north west][inner sep=0.75pt]    {$a_{l}$};
\draw (-22.33,159.33) node [anchor=north west][inner sep=0.75pt]    {$a_{1}^{*}$};
\draw (29.64,118.97) node [anchor=north west][inner sep=0.75pt]    {$a_{2}$};
\draw (181.24,137.11) node [anchor=north west][inner sep=0.75pt]    {$=$};
\draw (453.67,139.07) node [anchor=north west][inner sep=0.75pt]    {$=$};
\draw (693.33,110.78) node [anchor=north west][inner sep=0.75pt]    {$a_{l}$};
\draw (693.9,152.5) node [anchor=north west][inner sep=0.75pt]    {$a_{l}^{*}$};
\draw (218.75,39.47) node [anchor=north west][inner sep=0.75pt]    {$Q\ =$};
\draw (77,15) node [anchor=north west][inner sep=0.75pt]   [align=left] {\begin{minipage}[lt]{61.68pt}\setlength\topsep{0pt}
For instance 
\begin{center}
if we set
\end{center}

\end{minipage}};
\draw (758.67,136.07) node [anchor=north west][inner sep=0.75pt]    {$\text{in } \ \ \Pi ( Q) .$};
\draw (584,39) node [anchor=north west][inner sep=0.75pt]   [align=left] {then };
\end{tikzpicture}
\end{center}

\begin{proof}
    We proceed by induction on $\ell$, if $\ell=1$, the statement is trivial.
    Now assume that for $p' = a_1 \cdots a_{\ell}$ we have that $p'a_{\ell}^*a_{\ell} = \pm a_1a_1^*p'$, and set $p = a_1 \cdots a_la_{\ell+1}$.
    If $a_{\ell} =a_{\ell+1}^*$ ( i.e.\ $a_{\ell}^* = a_{\ell+1}$) then:
    \begin{align*}
        pa_{\ell+1}^*a_{\ell+1} & = a_1 \cdots a_{\ell} a_{\ell+1}a_{\ell+1}^*a_{\ell+1} = a_1 \cdots a_{\ell} a_{\ell}^*a_{\ell}a_{\ell+1} = p'a_{\ell}^*a_{\ell}a_{\ell+1}  = \pm a_1a_1^*p'a_{\ell+1}  = \pm a_1a_1^*p.
    \end{align*}
    In the remaining case, i.e. when $a_{\ell} \neq a_{\ell+1}^*$, the intersection
    $$Q \cap \{a_{\ell},a_{\ell}^*,a_{\ell+1},a_{\ell+1}^*\}$$
    contains exactly two elements by assumption. 
    Moreover, the cases $\{a_{\ell},a_{\ell}^*\}$ and $\{a_{\ell+1},a_{\ell+1}^*\}$ cannot occur.
    In the four remaining cases, the relation $\mu_{ta_\ell}$ implies that $a_{\ell}^*a_{\ell} = \pm a_{\ell+1}a_{\ell+1}^* \in \Pi(Q).$
    We apply the previous calculation, up to a sign.
    Note that the sign changes in the induction step only when the two arrows in $Q$ have opposite orientations.
\end{proof}

\begin{rmq}\label{rmq: prepro trace}
    If $C,C'$ are two cycles in $\overline{Q}$ whose classes in $\Pi(Q)$ are equal, then the corresponding morphisms of affine schemes
    $$
    \begin{aligned}
        C \colon \Rep(\overline{Q},V)&\longrightarrow\ggot_V,
        &v&\mapsto v(C),\\
        C' \colon \Rep(\overline{Q},V)&\longrightarrow\ggot_V,
        &v&\mapsto v(C'),
    \end{aligned}
    $$
    have the same restriction to $\mu^{-1}(0)$. Composing with the trace morphism $\ggot_V\overset{\tr}{\longrightarrow}\CC$, we obtain $\tr(C)=\tr(C')\in\CC[\mu^{-1}(0)]$, and hence $C\sim_{\tr}C'$.
    Therefore,
    $$C=C'\text{ in }\Pi(Q) \quad\Longrightarrow\quad C\sim_{\tr}C'.$$
    
    Trace-equivalence relations, as well as the relations obtained from the \Cref{Hamilton-Cayley scheme morphism}, depend on the choice of a dimension vector and do not, in general, define equalities in $\Pi(Q)$.
    Therefore, in what follows, we first establish equalities in $\Pi(Q)$,
    such as the one in \Cref{lem: sliding short loops}, from which we can deduce trace-equivalence relations using \Cref{bijection alg prepro}.
    After fixing a dimension vector, we then consider the additional trace-equivalence relations obtained from the \Cref{Hamilton-Cayley scheme morphism}.
\end{rmq}

\begin{lem}\label{lem: tlp short loop}
    Let $p = a_1 \cdots a_{\ell}$ be a path in $\overline{Q}$ such that, for each $a_k$ with $1\leq k < l$, there are exactly two distinct arrows in $Q$ incident to the vertex $ta_k=ha_{k+1}$. Then $(a_1a_1^*)^n \sim_{\tr} \pm(a_{\ell}^*a_{\ell})^n$, for all $n \in \NN$.
\end{lem}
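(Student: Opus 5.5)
The plan is to reduce the statement to the identity in $\Pi(Q)$ given by \Cref{lem: sliding short loops}, and then pass to traces using \Cref{lem:pqqp} and \Cref{rmq: prepro trace}. For $n=0$ both sides are the idempotent $e$ at a vertex, and we interpret the relation as trivially true, or as $\tr(e_{ha_1})$ versus $\tr(e_{ta_\ell})$ if the vertices differ. We therefore assume $n\geq 1$.

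We first use cyclic invariance to move the path $p$ inside the cycle. Write
\[
(a_1a_1^*)^n = (a_1a_1^*)^{n-1}\,a_1\,a_1^* .
\]
The key step is to realize $(a_1a_1^*)^n$ and $(a_\ell^* a_\ell)^n$ as two cyclic rotations of one common cycle, after inserting $p$ and its reverse. Let $\bar p = a_\ell^*\cdots a_1^*$ be the reverse path, from $ha_1$ to $ta_\ell$. By \Cref{lem: sliding short loops} we have
\[
a_1a_1^*p=\varepsilon\, p\,a_\ell^*a_\ell \quad\text{in } \Pi(Q),\qquad \varepsilon\in\{\pm1\}.
\]
Iterating this $n$ times gives
\[
(a_1a_1^*)^n p=\varepsilon^n p\,(a_\ell^*a_\ell)^n .
\]

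This relation does not directly compare traces, because the two sides do not have the same length. We handle this as follows. Rather than multiplying by $\bar p$, we peel off one arrow at a time. For $\ell\ge 2$, the proof of \Cref{lem: sliding short loops} shows that at each interior vertex $ta_k$ the moment relation $\mu_{ta_k}$ has only two terms. This gives the identity
\[
a_k^*a_k=\pm\, a_{k+1}a_{k+1}^* \quad\text{in }\Pi(Q)
\]
whenever $a_{k+1}\neq a_k^*$. Combined with \Cref{lem:pqqp}, we obtain
\[
(a_ka_k^*)^n = a_k\,(a_k^*a_k)^{n-1}a_k^* \sim_{\tr} (a_k^*a_k)^n = (\pm1)^n (a_{k+1}a_{k+1}^*)^n .
\]
The trace-equivalence here uses $\tr(a_k X a_k^*) = \tr(X a_k^* a_k)$, which is \Cref{lem:pqqp} with $p=a_k$ and $q=(a_k^*a_k)^{n-1}a_k^*$.

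If instead $a_{k+1}=a_k^*$, then $a_{k+1}a_{k+1}^*=a_k^*a_k$ directly, and no sign appears. Chaining these steps for $k=1,\dots,\ell-1$, and finishing with the rotation $(a_\ell a_\ell^*)^n\sim_{\tr}(a_\ell^*a_\ell)^n$, yields
\[
(a_1a_1^*)^n \sim_{\tr} \pm (a_\ell^*a_\ell)^n ,
\]
where the overall sign is the product of the $(\pm1)^n$ signs. Each equality in $\Pi(Q)$ is converted into a trace-equivalence by \Cref{rmq: prepro trace}, and trace-equivalence is transitive since it is an equality in $\CC[\mu^{-1}(0)]$.

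The main obstacle is justifying the two-term moment relation $a_k^*a_k=\pm a_{k+1}a_{k+1}^*$ at each interior vertex. This requires the case analysis on the orientations of the two arrows of $Q$ at that vertex, as in the proof of \Cref{lem: sliding short loops}. In particular, we must check that the hypothesis on interior vertices excludes any further short loops in $\mu_{ta_k}$. Once that is in place, the rest is bookkeeping of signs.
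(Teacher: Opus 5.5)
Your argument is correct and is essentially the paper's proof: the paper inducts on $\ell$, using at each step the two-term moment relation $\mu_{ta_\ell}$ to get $a_\ell^*a_\ell=\pm a_{\ell+1}a_{\ell+1}^*$ in $\Pi(Q)$ and then rotating with \Cref{lem:pqqp}, and your explicit chain over $k=1,\dots,\ell-1$ is that induction unrolled. Your opening detour through \Cref{lem: sliding short loops} is unnecessary, while your separate treatment of the case $a_{k+1}=a_k^*$, which the paper leaves implicit, adds some care.
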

\begin{center}
\begin{tikzpicture}[x=0.5pt,y=0.5pt,yscale=-1,xscale=1]

\draw [line width=0.75]    (135.5,95.1) .. controls (154.5,100.6) and (174.7,104.6) .. (176.2,97.6) .. controls (177.7,90.6) and (131.5,82.1) .. (141.5,90.6) .. controls (151.5,99.1) and (176.7,86.6) .. (176.2,80.1) .. controls (175.73,73.96) and (153.82,76.75) .. (137.32,79.6) ;
\draw [shift={(134.5,80.1)}, rotate = 349.82] [fill={rgb, 255:red, 0; green, 0; blue, 0 }  ][line width=0.08]  [draw opacity=0] (10.72,-5.15) -- (0,0) -- (10.72,5.15) -- (7.12,0) -- cycle    ;
\draw [line width=0.75]    (620.41,82.35) .. controls (601.41,76.85) and (581.21,72.85) .. (579.71,79.85) .. controls (578.21,86.85) and (624.41,95.35) .. (614.41,86.85) .. controls (604.41,78.35) and (579.21,90.85) .. (579.71,97.35) .. controls (580.18,103.5) and (602.08,100.71) .. (618.58,97.85) ;
\draw [shift={(621.41,97.35)}, rotate = 169.82] [fill={rgb, 255:red, 0; green, 0; blue, 0 }  ][line width=0.08]  [draw opacity=0] (10.72,-5.15) -- (0,0) -- (10.72,5.15) -- (7.12,0) -- cycle    ;
\draw  [color={rgb, 255:red, 0; green, 0; blue, 0 }  ,draw opacity=1 ][fill={rgb, 255:red, 0; green, 0; blue, 0 }  ,fill opacity=1 ][line width=0.75]  (230.93,33.43) .. controls (230.93,31.77) and (232.27,30.43) .. (233.93,30.43) .. controls (235.58,30.43) and (236.93,31.77) .. (236.93,33.43) .. controls (236.93,35.08) and (235.58,36.43) .. (233.93,36.43) .. controls (232.27,36.43) and (230.93,35.08) .. (230.93,33.43) -- cycle ;
\draw  [color={rgb, 255:red, 0; green, 0; blue, 0 }  ,draw opacity=1 ][fill={rgb, 255:red, 0; green, 0; blue, 0 }  ,fill opacity=1 ][line width=0.75]  (278.93,33.03) .. controls (278.93,31.37) and (280.27,30.03) .. (281.93,30.03) .. controls (283.58,30.03) and (284.93,31.37) .. (284.93,33.03) .. controls (284.93,34.68) and (283.58,36.03) .. (281.93,36.03) .. controls (280.27,36.03) and (278.93,34.68) .. (278.93,33.03) -- cycle ;
\draw  [color={rgb, 255:red, 0; green, 0; blue, 0 }  ,draw opacity=1 ][fill={rgb, 255:red, 0; green, 0; blue, 0 }  ,fill opacity=1 ][line width=0.75]  (328.53,33.03) .. controls (328.53,31.37) and (329.87,30.03) .. (331.53,30.03) .. controls (333.18,30.03) and (334.53,31.37) .. (334.53,33.03) .. controls (334.53,34.68) and (333.18,36.03) .. (331.53,36.03) .. controls (329.87,36.03) and (328.53,34.68) .. (328.53,33.03) -- cycle ;
\draw  [color={rgb, 255:red, 0; green, 0; blue, 0 }  ,draw opacity=1 ][fill={rgb, 255:red, 0; green, 0; blue, 0 }  ,fill opacity=1 ][line width=0.75]  (378.49,32.73) .. controls (378.49,31.08) and (379.83,29.73) .. (381.49,29.73) .. controls (383.14,29.73) and (384.49,31.08) .. (384.49,32.73) .. controls (384.49,34.39) and (383.14,35.73) .. (381.49,35.73) .. controls (379.83,35.73) and (378.49,34.39) .. (378.49,32.73) -- cycle ;
\draw [line width=1.5]    (240.49,33.23) -- (269.82,33.23) ;
\draw [shift={(273.82,33.23)}, rotate = 180] [fill={rgb, 255:red, 0; green, 0; blue, 0 }  ][line width=0.08]  [draw opacity=0] (13.4,-6.43) -- (0,0) -- (13.4,6.44) -- (8.9,0) -- cycle    ;
\draw [line width=1.5]    (289.49,33.23) -- (318.82,33.23) ;
\draw [shift={(322.82,33.23)}, rotate = 180] [fill={rgb, 255:red, 0; green, 0; blue, 0 }  ][line width=0.08]  [draw opacity=0] (13.4,-6.43) -- (0,0) -- (13.4,6.44) -- (8.9,0) -- cycle    ;
\draw [line width=1.5]    (339.49,33.23) -- (368.82,33.23) ;
\draw [shift={(372.82,33.23)}, rotate = 180] [fill={rgb, 255:red, 0; green, 0; blue, 0 }  ][line width=0.08]  [draw opacity=0] (13.4,-6.43) -- (0,0) -- (13.4,6.44) -- (8.9,0) -- cycle    ;
\draw  [color={rgb, 255:red, 0; green, 0; blue, 0 }  ,draw opacity=1 ][fill={rgb, 255:red, 0; green, 0; blue, 0 }  ,fill opacity=1 ][line width=0.75]  (431.06,32.45) .. controls (431.06,30.63) and (432.4,29.16) .. (434.06,29.16) .. controls (435.71,29.16) and (437.06,30.63) .. (437.06,32.45) .. controls (437.06,34.26) and (435.71,35.73) .. (434.06,35.73) .. controls (432.4,35.73) and (431.06,34.26) .. (431.06,32.45) -- cycle ;
\draw [line width=1.5]    (392.06,32.99) -- (421.39,32.99) ;
\draw [shift={(425.39,32.99)}, rotate = 180] [fill={rgb, 255:red, 0; green, 0; blue, 0 }  ][line width=0.08]  [draw opacity=0] (13.4,-6.43) -- (0,0) -- (13.4,6.44) -- (8.9,0) -- cycle    ;
\draw  [color={rgb, 255:red, 0; green, 0; blue, 0 }  ,draw opacity=1 ][fill={rgb, 255:red, 0; green, 0; blue, 0 }  ,fill opacity=1 ][line width=0.75]  (483.06,32.45) .. controls (483.06,30.63) and (484.4,29.16) .. (486.06,29.16) .. controls (487.71,29.16) and (489.06,30.63) .. (489.06,32.45) .. controls (489.06,34.26) and (487.71,35.73) .. (486.06,35.73) .. controls (484.4,35.73) and (483.06,34.26) .. (483.06,32.45) -- cycle ;
\draw [line width=1.5]    (444.06,32.99) -- (473.39,32.99) ;
\draw [shift={(477.39,32.99)}, rotate = 180] [fill={rgb, 255:red, 0; green, 0; blue, 0 }  ][line width=0.08]  [draw opacity=0] (13.4,-6.43) -- (0,0) -- (13.4,6.44) -- (8.9,0) -- cycle    ;
\draw  [color={rgb, 255:red, 0; green, 0; blue, 0 }  ,draw opacity=1 ][fill={rgb, 255:red, 0; green, 0; blue, 0 }  ,fill opacity=1 ][line width=0.75]  (80.93,88.43) .. controls (80.93,86.77) and (82.27,85.43) .. (83.93,85.43) .. controls (85.58,85.43) and (86.93,86.77) .. (86.93,88.43) .. controls (86.93,90.08) and (85.58,91.43) .. (83.93,91.43) .. controls (82.27,91.43) and (80.93,90.08) .. (80.93,88.43) -- cycle ;
\draw  [color={rgb, 255:red, 0; green, 0; blue, 0 }  ,draw opacity=1 ][fill={rgb, 255:red, 0; green, 0; blue, 0 }  ,fill opacity=1 ][line width=0.75]  (128.93,88.03) .. controls (128.93,86.37) and (130.27,85.03) .. (131.93,85.03) .. controls (133.58,85.03) and (134.93,86.37) .. (134.93,88.03) .. controls (134.93,89.68) and (133.58,91.03) .. (131.93,91.03) .. controls (130.27,91.03) and (128.93,89.68) .. (128.93,88.03) -- cycle ;
\draw  [color={rgb, 255:red, 0; green, 0; blue, 0 }  ,draw opacity=1 ][fill={rgb, 255:red, 0; green, 0; blue, 0 }  ,fill opacity=1 ][line width=0.75]  (178.53,88.03) .. controls (178.53,86.37) and (179.87,85.03) .. (181.53,85.03) .. controls (183.18,85.03) and (184.53,86.37) .. (184.53,88.03) .. controls (184.53,89.68) and (183.18,91.03) .. (181.53,91.03) .. controls (179.87,91.03) and (178.53,89.68) .. (178.53,88.03) -- cycle ;
\draw  [color={rgb, 255:red, 0; green, 0; blue, 0 }  ,draw opacity=1 ][fill={rgb, 255:red, 0; green, 0; blue, 0 }  ,fill opacity=1 ][line width=0.75]  (228.49,87.73) .. controls (228.49,86.08) and (229.83,84.73) .. (231.49,84.73) .. controls (233.14,84.73) and (234.49,86.08) .. (234.49,87.73) .. controls (234.49,89.39) and (233.14,90.73) .. (231.49,90.73) .. controls (229.83,90.73) and (228.49,89.39) .. (228.49,87.73) -- cycle ;
\draw  [color={rgb, 255:red, 0; green, 0; blue, 0 }  ,draw opacity=1 ][fill={rgb, 255:red, 0; green, 0; blue, 0 }  ,fill opacity=1 ][line width=0.75]  (281.06,87.45) .. controls (281.06,85.63) and (282.4,84.16) .. (284.06,84.16) .. controls (285.71,84.16) and (287.06,85.63) .. (287.06,87.45) .. controls (287.06,89.26) and (285.71,90.73) .. (284.06,90.73) .. controls (282.4,90.73) and (281.06,89.26) .. (281.06,87.45) -- cycle ;
\draw  [color={rgb, 255:red, 0; green, 0; blue, 0 }  ,draw opacity=1 ][fill={rgb, 255:red, 0; green, 0; blue, 0 }  ,fill opacity=1 ][line width=0.75]  (333.06,87.45) .. controls (333.06,85.63) and (334.4,84.16) .. (336.06,84.16) .. controls (337.71,84.16) and (339.06,85.63) .. (339.06,87.45) .. controls (339.06,89.26) and (337.71,90.73) .. (336.06,90.73) .. controls (334.4,90.73) and (333.06,89.26) .. (333.06,87.45) -- cycle ;
\draw  [color={rgb, 255:red, 0; green, 0; blue, 0 }  ,draw opacity=1 ][fill={rgb, 255:red, 0; green, 0; blue, 0 }  ,fill opacity=1 ][line width=0.75]  (422.93,89.09) .. controls (422.93,87.44) and (424.27,86.09) .. (425.93,86.09) .. controls (427.58,86.09) and (428.93,87.44) .. (428.93,89.09) .. controls (428.93,90.75) and (427.58,92.09) .. (425.93,92.09) .. controls (424.27,92.09) and (422.93,90.75) .. (422.93,89.09) -- cycle ;
\draw  [color={rgb, 255:red, 0; green, 0; blue, 0 }  ,draw opacity=1 ][fill={rgb, 255:red, 0; green, 0; blue, 0 }  ,fill opacity=1 ][line width=0.75]  (470.93,88.69) .. controls (470.93,87.04) and (472.27,85.69) .. (473.93,85.69) .. controls (475.58,85.69) and (476.93,87.04) .. (476.93,88.69) .. controls (476.93,90.35) and (475.58,91.69) .. (473.93,91.69) .. controls (472.27,91.69) and (470.93,90.35) .. (470.93,88.69) -- cycle ;
\draw  [color={rgb, 255:red, 0; green, 0; blue, 0 }  ,draw opacity=1 ][fill={rgb, 255:red, 0; green, 0; blue, 0 }  ,fill opacity=1 ][line width=0.75]  (520.53,88.69) .. controls (520.53,87.04) and (521.87,85.69) .. (523.53,85.69) .. controls (525.18,85.69) and (526.53,87.04) .. (526.53,88.69) .. controls (526.53,90.35) and (525.18,91.69) .. (523.53,91.69) .. controls (521.87,91.69) and (520.53,90.35) .. (520.53,88.69) -- cycle ;
\draw  [color={rgb, 255:red, 0; green, 0; blue, 0 }  ,draw opacity=1 ][fill={rgb, 255:red, 0; green, 0; blue, 0 }  ,fill opacity=1 ][line width=0.75]  (570.49,88.4) .. controls (570.49,86.74) and (571.83,85.4) .. (573.49,85.4) .. controls (575.14,85.4) and (576.49,86.74) .. (576.49,88.4) .. controls (576.49,90.06) and (575.14,91.4) .. (573.49,91.4) .. controls (571.83,91.4) and (570.49,90.06) .. (570.49,88.4) -- cycle ;
\draw  [color={rgb, 255:red, 0; green, 0; blue, 0 }  ,draw opacity=1 ][fill={rgb, 255:red, 0; green, 0; blue, 0 }  ,fill opacity=1 ][line width=0.75]  (623.06,88.11) .. controls (623.06,86.3) and (624.4,84.83) .. (626.06,84.83) .. controls (627.71,84.83) and (629.06,86.3) .. (629.06,88.11) .. controls (629.06,89.93) and (627.71,91.4) .. (626.06,91.4) .. controls (624.4,91.4) and (623.06,89.93) .. (623.06,88.11) -- cycle ;
\draw  [color={rgb, 255:red, 0; green, 0; blue, 0 }  ,draw opacity=1 ][fill={rgb, 255:red, 0; green, 0; blue, 0 }  ,fill opacity=1 ][line width=0.75]  (669.06,87.11) .. controls (669.06,85.3) and (670.4,83.83) .. (672.06,83.83) .. controls (673.71,83.83) and (675.06,85.3) .. (675.06,87.11) .. controls (675.06,88.93) and (673.71,90.4) .. (672.06,90.4) .. controls (670.4,90.4) and (669.06,88.93) .. (669.06,87.11) -- cycle ;

\draw (362.59,84.77) node [anchor=north west][inner sep=0.75pt]    {$\sim _{\tr}$};
\draw (19.4,78) node [anchor=north west][inner sep=0.75pt]   [align=left] {then };
\draw (172.75,22.47) node [anchor=north west][inner sep=0.75pt]    {$Q\ =$};
\draw (44,3) node [anchor=north west][inner sep=0.75pt]   [align=left] {\begin{minipage}[lt]{61.68pt}\setlength\topsep{0pt}
For instance 
\begin{center}
if we set
\end{center}

\end{minipage}};
\end{tikzpicture}
\end{center}
\begin{proof}
    We proceed by induction on $\ell$. The case $\ell=1$ is a consequence of \Cref{lem:pqqp}.
    Now suppose the statement holds for $\ell$, and let us prove it for $\ell+1$.
    By the induction hypothesis, we have $(a_1a_1^*)^n \sim_{\tr} \pm (a_{\ell}^*a_{\ell})^n$ for all $n \in \NN$.
    By assumption, the relation $\mu_{ta_\ell}$ implies that $a_{\ell}^*a_{\ell} = \pm a_{\ell+1}a_{\ell+1}^* \in \Pi(Q).$
    By \Cref{lem:pqqp}, we obtain $(a_1a_1^*)^n \sim_{\tr} \pm (a_{\ell+1}^*a_{\ell+1})^n$ for all $n \in \NN$.
\end{proof}

\begin{notation}\label{homotopic class}
    Let $Q$ be a quiver, and denote by $|Q|$ the underlying topological graph obtained by forgetting the orientation of the arrows.
    We also consider the natural map $|\overline{Q}| \to |Q|$ obtained by identifying each pair of arrows $a,a^*$ with a single edge of $|Q|$.
    Any cycle in $\overline{Q}$ is mapped to a cycle in $|Q|$, and we can consider its homotopy class in $\pi_1(|Q|)$.
\end{notation}

\begin{lem}\label{cycle homotopically trivial}
    Let $C = a_1 \cdots a_{\ell}$ be a cycle in $\overline{Q}$ such that, for each $a_k$ with $1\leq k < l$, there are exactly two distinct arrows in $Q$ incident to the vertex $ta_k=ha_{k+1}$, and suppose that $C$ is homotopically trivial in $|Q|$. Then:
    $$C  = \pm(a_{\ell}^*a_{\ell})^{\ell/2} \in \Pi(Q).$$
\end{lem}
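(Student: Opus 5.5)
Because all intermediate vertices of $C$ have valence two in $Q$, the cycle lives on a path-shaped piece of $|Q|$. My plan is a combinatorial reduction: repeatedly cancel backtracks using \Cref{lem: sliding short loops} and the moment relations $\mu_i$, always pushing short loops to the right end of the word. I will argue by induction on $\ell$.

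\textbf{Step 1: parity and backtracks.} Since $C$ is homotopically trivial in $|Q|$, its image in $|Q|$ is a closed walk whose reduced form is trivial. Hence $\ell$ is even, and the word $a_1\cdots a_\ell$ contains an adjacent pair $a_k a_{k+1}$ with $a_{k+1}=a_k^*$. The case $k=\ell$, read cyclically, can be avoided: either there is also an internal backtrack, or $C$ has the form $a_1 p a_1^*$. That last form is not of the required shape with respect to the basepoint, so I will work with the internal backtrack.

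\textbf{Step 2: eliminate one backtrack.} Write $C = p\,(a_k a_k^*)\,q$, where $p=a_1\cdots a_{k-1}$ and $q=a_{k+2}\cdots a_\ell$. The vertex $ha_k = ta_{k-1}$ is an internal vertex of $C$, so it is two-valent. Using $\mu_{ta_{k-1}}$, I can write the short loop $a_k a_k^*$ as $\pm a_{k-1}^*a_{k-1}$ in $\Pi(Q)$, exactly as in the proof of \Cref{lem: sliding short loops}. When $a_k = a_{k-1}^*$ the equality is immediate.

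Alternatively, I can apply \Cref{lem: sliding short loops} to the path $q$ directly. The path $(a_k a_k^*)\, q$ has the form $b b^* q'$ with $b = a_k$, up to the convention on the first arrow, and so it equals $\pm\, a_k a_k^{*}$-type material moved to the end: $\pm\, q\, a_\ell^* a_\ell$. Here I need $q$ to start at $ta_k^*=ha_k$. I will fix the exact reindexing so that the lemma's hypothesis (two-valence at the internal vertices of $q$, which are internal vertices of $C$) holds.

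This turns $C$ into $\pm\, p\,q\,(a_\ell^* a_\ell)$. The word $pq$ is a shorter cycle, still homotopically trivial (we removed a backtrack), of length $\ell-2$, and its internal vertices are internal vertices of $C$. It also ends with $a_\ell$, or with the last surviving arrow. I then apply the induction hypothesis to get $pq=\pm (a_\ell^*a_\ell)^{(\ell-2)/2}$. Multiplying gives $\pm(a_\ell^*a_\ell)^{\ell/2}$.

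\textbf{Main obstacle.} The hard part is the bookkeeping of endpoints. After deletion, the last arrow of $pq$ may no longer be $a_\ell$; this happens if the backtrack involved $a_\ell$ itself, i.e.\ $k=\ell-1$. In that case I instead use $C = p\,a_{\ell-1}a_\ell$ with $a_{\ell-1}=a_\ell^*$, so $C = p\, a_\ell^* a_\ell$ directly, and induct on $p$. Here $p$ ends with $a_{\ell-2}$, and I must convert $a_{\ell-2}^*a_{\ell-2}$ to $\pm a_\ell^*a_\ell$. This uses $\mu$ at the vertex $ta_{\ell-2}=ha_{\ell-1}$: if $a_{\ell-2}\neq a_\ell$ the two-valence relation applies, and if $a_{\ell-2}=a_\ell$ it is trivial.

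I also need to ensure that a short loop at the basepoint $ta_\ell$ commutes past the remaining word, again via \Cref{lem: sliding short loops}. The base case $\ell=2$, where $C=a_1a_2$ with $a_1=a_2^*$, is exactly $a_2^*a_2$. Signs are only tracked up to $\pm$.
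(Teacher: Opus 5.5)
Your argument is correct and is essentially the paper's proof. Both proceed by induction on $\ell$: remove a backtrack $a_ka_k^*$ by sliding it to the basepoint with \Cref{lem: sliding short loops}, apply the induction hypothesis to the shorter cycle, and correct the last arrow via the moment relation at a two-valent vertex. The paper organizes cases by $r=\#\{1\le k<\ell \mid a_k\neq a_{k+1}^*\}$ and rules out $r=\ell-1$ by contradiction, whereas you obtain an internal backtrack directly from the triviality of the free reduction; this difference is cosmetic. Your treatment of $k=\ell-1$, converting $a_{\ell-2}^*a_{\ell-2}$ at the internal vertex $ta_{\ell-2}$, also makes explicit why the paper's appeal to $\mu_{ha_1}$ is legitimate: in that case the basepoint is an interior vertex of $C$ and hence two-valent.
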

\begin{center}
\begin{tikzpicture}[x=0.5pt,y=0.5pt,yscale=-1,xscale=1]

\draw [line width=0.75]    (236.67,89.43) .. controls (205.87,87.43) and (212.21,86.4) .. (192.21,87.6) .. controls (172.21,88.8) and (86.61,81.2) .. (86.21,96.4) .. controls (85.81,111.6) and (176.21,104.4) .. (193.41,104.4) .. controls (210.1,104.4) and (208.2,104.93) .. (235.09,102.04) ;
\draw [shift={(237.67,101.77)}, rotate = 173.83] [fill={rgb, 255:red, 0; green, 0; blue, 0 }  ][line width=0.08]  [draw opacity=0] (10.72,-5.15) -- (0,0) -- (10.72,5.15) -- (7.12,0) -- cycle    ;
\draw  [color={rgb, 255:red, 0; green, 0; blue, 0 }  ,draw opacity=1 ][fill={rgb, 255:red, 0; green, 0; blue, 0 }  ,fill opacity=1 ][line width=0.75]  (92.77,96.96) .. controls (92.77,95.3) and (94.12,93.96) .. (95.77,93.96) .. controls (97.43,93.96) and (98.77,95.3) .. (98.77,96.96) .. controls (98.77,98.62) and (97.43,99.96) .. (95.77,99.96) .. controls (94.12,99.96) and (92.77,98.62) .. (92.77,96.96) -- cycle ;
\draw  [color={rgb, 255:red, 0; green, 0; blue, 0 }  ,draw opacity=1 ][fill={rgb, 255:red, 0; green, 0; blue, 0 }  ,fill opacity=1 ][line width=0.75]  (140.77,96.56) .. controls (140.77,94.9) and (142.12,93.56) .. (143.77,93.56) .. controls (145.43,93.56) and (146.77,94.9) .. (146.77,96.56) .. controls (146.77,98.22) and (145.43,99.56) .. (143.77,99.56) .. controls (142.12,99.56) and (140.77,98.22) .. (140.77,96.56) -- cycle ;
\draw  [color={rgb, 255:red, 0; green, 0; blue, 0 }  ,draw opacity=1 ][fill={rgb, 255:red, 0; green, 0; blue, 0 }  ,fill opacity=1 ][line width=0.75]  (190.37,96.56) .. controls (190.37,94.9) and (191.72,93.56) .. (193.37,93.56) .. controls (195.03,93.56) and (196.37,94.9) .. (196.37,96.56) .. controls (196.37,98.22) and (195.03,99.56) .. (193.37,99.56) .. controls (191.72,99.56) and (190.37,98.22) .. (190.37,96.56) -- cycle ;
\draw  [color={rgb, 255:red, 0; green, 0; blue, 0 }  ,draw opacity=1 ][fill={rgb, 255:red, 0; green, 0; blue, 0 }  ,fill opacity=1 ][line width=0.75]  (240.73,96.27) .. controls (240.73,94.61) and (242.08,93.27) .. (243.73,93.27) .. controls (245.39,93.27) and (246.73,94.61) .. (246.73,96.27) .. controls (246.73,97.92) and (245.39,99.27) .. (243.73,99.27) .. controls (242.08,99.27) and (240.73,97.92) .. (240.73,96.27) -- cycle ;
\draw [line width=0.75]    (435.13,87.43) .. controls (411.13,82.43) and (362.88,78.53) .. (346.08,81.33) .. controls (329.28,84.13) and (384.88,112.13) .. (384.88,94.93) .. controls (384.88,77.73) and (333.28,102.53) .. (344.88,108.53) .. controls (356.02,114.29) and (410.52,109.52) .. (434.91,103.23) ;
\draw [shift={(437.8,102.43)}, rotate = 163.61] [fill={rgb, 255:red, 0; green, 0; blue, 0 }  ][line width=0.08]  [draw opacity=0] (10.72,-5.15) -- (0,0) -- (10.72,5.15) -- (7.12,0) -- cycle    ;
\draw  [color={rgb, 255:red, 0; green, 0; blue, 0 }  ,draw opacity=1 ][fill={rgb, 255:red, 0; green, 0; blue, 0 }  ,fill opacity=1 ][line width=0.75]  (291.84,95.49) .. controls (291.84,93.84) and (293.18,92.49) .. (294.84,92.49) .. controls (296.5,92.49) and (297.84,93.84) .. (297.84,95.49) .. controls (297.84,97.15) and (296.5,98.49) .. (294.84,98.49) .. controls (293.18,98.49) and (291.84,97.15) .. (291.84,95.49) -- cycle ;
\draw  [color={rgb, 255:red, 0; green, 0; blue, 0 }  ,draw opacity=1 ][fill={rgb, 255:red, 0; green, 0; blue, 0 }  ,fill opacity=1 ][line width=0.75]  (339.84,95.09) .. controls (339.84,93.44) and (341.18,92.09) .. (342.84,92.09) .. controls (344.5,92.09) and (345.84,93.44) .. (345.84,95.09) .. controls (345.84,96.75) and (344.5,98.09) .. (342.84,98.09) .. controls (341.18,98.09) and (339.84,96.75) .. (339.84,95.09) -- cycle ;
\draw  [color={rgb, 255:red, 0; green, 0; blue, 0 }  ,draw opacity=1 ][fill={rgb, 255:red, 0; green, 0; blue, 0 }  ,fill opacity=1 ][line width=0.75]  (389.44,95.09) .. controls (389.44,93.44) and (390.78,92.09) .. (392.44,92.09) .. controls (394.1,92.09) and (395.44,93.44) .. (395.44,95.09) .. controls (395.44,96.75) and (394.1,98.09) .. (392.44,98.09) .. controls (390.78,98.09) and (389.44,96.75) .. (389.44,95.09) -- cycle ;
\draw  [color={rgb, 255:red, 0; green, 0; blue, 0 }  ,draw opacity=1 ][fill={rgb, 255:red, 0; green, 0; blue, 0 }  ,fill opacity=1 ][line width=0.75]  (439.8,94.8) .. controls (439.8,93.14) and (441.14,91.8) .. (442.8,91.8) .. controls (444.46,91.8) and (445.8,93.14) .. (445.8,94.8) .. controls (445.8,96.46) and (444.46,97.8) .. (442.8,97.8) .. controls (441.14,97.8) and (439.8,96.46) .. (439.8,94.8) -- cycle ;
\draw [line width=0.75]    (639.47,87.43) .. controls (615.47,82.43) and (602.6,73.63) .. (591.17,78.49) .. controls (579.74,83.34) and (618.31,96.49) .. (625.46,89.91) .. controls (632.6,83.34) and (589.46,85.06) .. (589.46,94.49) .. controls (589.46,103.91) and (620.03,108.2) .. (625.74,97.63) .. controls (631.46,87.06) and (580.6,109.34) .. (594.6,112.49) .. controls (607.83,115.46) and (623.87,109.24) .. (637.89,101.56) ;
\draw [shift={(640.31,100.2)}, rotate = 150.38] [fill={rgb, 255:red, 0; green, 0; blue, 0 }  ][line width=0.08]  [draw opacity=0] (10.72,-5.15) -- (0,0) -- (10.72,5.15) -- (7.12,0) -- cycle    ;
\draw  [color={rgb, 255:red, 0; green, 0; blue, 0 }  ,draw opacity=1 ][fill={rgb, 255:red, 0; green, 0; blue, 0 }  ,fill opacity=1 ][line width=0.75]  (496.17,95.49) .. controls (496.17,93.84) and (497.52,92.49) .. (499.17,92.49) .. controls (500.83,92.49) and (502.17,93.84) .. (502.17,95.49) .. controls (502.17,97.15) and (500.83,98.49) .. (499.17,98.49) .. controls (497.52,98.49) and (496.17,97.15) .. (496.17,95.49) -- cycle ;
\draw  [color={rgb, 255:red, 0; green, 0; blue, 0 }  ,draw opacity=1 ][fill={rgb, 255:red, 0; green, 0; blue, 0 }  ,fill opacity=1 ][line width=0.75]  (544.17,95.09) .. controls (544.17,93.44) and (545.52,92.09) .. (547.17,92.09) .. controls (548.83,92.09) and (550.17,93.44) .. (550.17,95.09) .. controls (550.17,96.75) and (548.83,98.09) .. (547.17,98.09) .. controls (545.52,98.09) and (544.17,96.75) .. (544.17,95.09) -- cycle ;
\draw  [color={rgb, 255:red, 0; green, 0; blue, 0 }  ,draw opacity=1 ][fill={rgb, 255:red, 0; green, 0; blue, 0 }  ,fill opacity=1 ][line width=0.75]  (593.77,95.09) .. controls (593.77,93.44) and (595.12,92.09) .. (596.77,92.09) .. controls (598.43,92.09) and (599.77,93.44) .. (599.77,95.09) .. controls (599.77,96.75) and (598.43,98.09) .. (596.77,98.09) .. controls (595.12,98.09) and (593.77,96.75) .. (593.77,95.09) -- cycle ;
\draw  [color={rgb, 255:red, 0; green, 0; blue, 0 }  ,draw opacity=1 ][fill={rgb, 255:red, 0; green, 0; blue, 0 }  ,fill opacity=1 ][line width=0.75]  (644.13,94.8) .. controls (644.13,93.14) and (645.48,91.8) .. (647.13,91.8) .. controls (648.79,91.8) and (650.13,93.14) .. (650.13,94.8) .. controls (650.13,96.46) and (648.79,97.8) .. (647.13,97.8) .. controls (645.48,97.8) and (644.13,96.46) .. (644.13,94.8) -- cycle ;
\draw  [color={rgb, 255:red, 0; green, 0; blue, 0 }  ,draw opacity=1 ][fill={rgb, 255:red, 0; green, 0; blue, 0 }  ,fill opacity=1 ][line width=0.75]  (275.17,41.83) .. controls (275.17,40.17) and (276.52,38.83) .. (278.17,38.83) .. controls (279.83,38.83) and (281.17,40.17) .. (281.17,41.83) .. controls (281.17,43.48) and (279.83,44.83) .. (278.17,44.83) .. controls (276.52,44.83) and (275.17,43.48) .. (275.17,41.83) -- cycle ;
\draw  [color={rgb, 255:red, 0; green, 0; blue, 0 }  ,draw opacity=1 ][fill={rgb, 255:red, 0; green, 0; blue, 0 }  ,fill opacity=1 ][line width=0.75]  (323.17,41.43) .. controls (323.17,39.77) and (324.52,38.43) .. (326.17,38.43) .. controls (327.83,38.43) and (329.17,39.77) .. (329.17,41.43) .. controls (329.17,43.08) and (327.83,44.43) .. (326.17,44.43) .. controls (324.52,44.43) and (323.17,43.08) .. (323.17,41.43) -- cycle ;
\draw  [color={rgb, 255:red, 0; green, 0; blue, 0 }  ,draw opacity=1 ][fill={rgb, 255:red, 0; green, 0; blue, 0 }  ,fill opacity=1 ][line width=0.75]  (372.77,41.43) .. controls (372.77,39.77) and (374.12,38.43) .. (375.77,38.43) .. controls (377.43,38.43) and (378.77,39.77) .. (378.77,41.43) .. controls (378.77,43.08) and (377.43,44.43) .. (375.77,44.43) .. controls (374.12,44.43) and (372.77,43.08) .. (372.77,41.43) -- cycle ;
\draw  [color={rgb, 255:red, 0; green, 0; blue, 0 }  ,draw opacity=1 ][fill={rgb, 255:red, 0; green, 0; blue, 0 }  ,fill opacity=1 ][line width=0.75]  (422.73,41.13) .. controls (422.73,39.48) and (424.08,38.13) .. (425.73,38.13) .. controls (427.39,38.13) and (428.73,39.48) .. (428.73,41.13) .. controls (428.73,42.79) and (427.39,44.13) .. (425.73,44.13) .. controls (424.08,44.13) and (422.73,42.79) .. (422.73,41.13) -- cycle ;
\draw [line width=1.5]    (284.73,41.63) -- (314.07,41.63) ;
\draw [shift={(318.07,41.63)}, rotate = 180] [fill={rgb, 255:red, 0; green, 0; blue, 0 }  ][line width=0.08]  [draw opacity=0] (13.4,-6.43) -- (0,0) -- (13.4,6.44) -- (8.9,0) -- cycle    ;
\draw [line width=1.5]    (333.73,41.63) -- (363.07,41.63) ;
\draw [shift={(367.07,41.63)}, rotate = 180] [fill={rgb, 255:red, 0; green, 0; blue, 0 }  ][line width=0.08]  [draw opacity=0] (13.4,-6.43) -- (0,0) -- (13.4,6.44) -- (8.9,0) -- cycle    ;
\draw [line width=1.5]    (383.73,41.63) -- (413.07,41.63) ;
\draw [shift={(417.07,41.63)}, rotate = 180] [fill={rgb, 255:red, 0; green, 0; blue, 0 }  ][line width=0.08]  [draw opacity=0] (13.4,-6.43) -- (0,0) -- (13.4,6.44) -- (8.9,0) -- cycle    ;

\draw (258,92.73) node [anchor=north west][inner sep=0.75pt]    {$=$};
\draw (462.33,91.73) node [anchor=north west][inner sep=0.75pt]    {$=$};
\draw (218,31.87) node [anchor=north west][inner sep=0.75pt]    {$Q\ =$};
\draw (478,29) node [anchor=north west][inner sep=0.75pt]   [align=left] {then };
\draw (62,7.67) node [anchor=north west][inner sep=0.75pt]   [align=left] {\begin{minipage}[lt]{61.68pt}\setlength\topsep{0pt}
For instance 
\begin{center}
if we set
\end{center}

\end{minipage}};
\draw (674,88.07) node [anchor=north west][inner sep=0.75pt]    {$\text{in \ } \Pi ( Q) .$};
\end{tikzpicture}
\end{center}

\begin{proof}
    We proceed by induction on $\ell$, and consider the integer
    $$r=\#\{\, 1\leq k < \ell \mid a_k \neq a_{k+1}^* \,\}.$$
    
    We begin with the induction step. Assume that $\ell \geq 2$ and that the statement holds for any cycle of length strictly smaller than $\ell$.
    
    If $r=0$, then by definition $C = (a_{\ell}^*a_{\ell})^{\ell/2}$.
    Assume now that $1\leq r<\ell-1$, so that $C$ contains a short loop. By \Cref{lem: sliding short loops}, we may write
    $$C = \pm C' a_{\ell}^*a_{\ell},$$
    where $C' = a_1\cdots b$ is a shorter cycle with $b\in\{a_1^*,a_{\ell}\}$.
    Since both $C$ and the short loop $a_{\ell}^*a_{\ell}$ are homotopically trivial in $|Q|$, the same holds for $C'$. By the induction hypothesis applied to $C'$, we obtain
    $$
    C'=\pm (b^*b)^{\ell/2-1}.
    $$
    If $b=a_{\ell}$, the conclusion follows immediately. Otherwise, $b=a_1^*$, and applying the relation $\mu_{ha_1}$ gives $b^*b=\pm a_{\ell}^*a_{\ell}$.
    
    Finally, consider the case $r=\ell-1$. Then, for every $1\leq k < \ell$,
    we have $a_{k+1}\neq a_k^*$, and each arrow $a_{k+1}\notin \{a_k,a_k^*,\ldots,a_1,a_1^*\}$ is a new arrow of the quiver.
    The map sending the cycle $C$ to $|Q|$, obtained by contracting each pair $(a_k,a_k^*)$, therefore embeds $C$ as a nontrivial cycle in $|Q|$, contradicting the assumption that $C$ is homotopically trivial.
    
    For the initialization, let $\ell=2$. If $r=0$, then $C$ is a short loop and the statement is immediate. If $r=1$, then $C$ is not homotopically trivial in $\pi_1(|Q|)$.
\end{proof}

\subsection{Symmetries of \texorpdfstring{$\widetilde{A}$}{A~tilde} quivers}\label{symmetries Atilde}

In this section, we first apply the graphical calculus to quivers of affine type $\widetilde{A}$. We then study the various symmetry classes of these quivers and compute embeddings of the corresponding quiver varieties into affine spaces that can be realized in $\CC^3$.
Recall that the fundamental root $\delta$ of a quiver of type $\widetilde{A}$ is the dimension vector with value $1$ at every vertex, namely
$$\delta=(1,\ldots,1).$$

\begin{prop}\label{prop:topo cycle}
Let $n \geq 1$, and let $\widetilde{A}_{n-1}$ be the cyclic quiver with $n$ vertices:
$$\begin{tikzpicture}[x=0.6pt,y=0.6pt,yscale=-1,xscale=1]

\draw  [color={rgb, 255:red, 0; green, 0; blue, 0 }  ,draw opacity=1 ][fill={rgb, 255:red, 0; green, 0; blue, 0 }  ,fill opacity=1 ] (422,150) .. controls (422,148.34) and (423.34,147) .. (425,147) .. controls (426.66,147) and (428,148.34) .. (428,150) .. controls (428,151.66) and (426.66,153) .. (425,153) .. controls (423.34,153) and (422,151.66) .. (422,150) -- cycle ;
\draw  [color={rgb, 255:red, 0; green, 0; blue, 0 }  ,draw opacity=1 ][fill={rgb, 255:red, 0; green, 0; blue, 0 }  ,fill opacity=1 ] (386.5,108) .. controls (386.5,106.34) and (387.84,105) .. (389.5,105) .. controls (391.16,105) and (392.5,106.34) .. (392.5,108) .. controls (392.5,109.66) and (391.16,111) .. (389.5,111) .. controls (387.84,111) and (386.5,109.66) .. (386.5,108) -- cycle ;
\draw  [color={rgb, 255:red, 0; green, 0; blue, 0 }  ,draw opacity=1 ][fill={rgb, 255:red, 0; green, 0; blue, 0 }  ,fill opacity=1 ] (386.5,193) .. controls (386.5,191.34) and (387.84,190) .. (389.5,190) .. controls (391.16,190) and (392.5,191.34) .. (392.5,193) .. controls (392.5,194.66) and (391.16,196) .. (389.5,196) .. controls (387.84,196) and (386.5,194.66) .. (386.5,193) -- cycle ;
\draw [line width=1.5]    (276.03,143.38) .. controls (280.62,128.53) and (282.3,123.84) .. (300.41,113.04) ;
\draw [shift={(303.74,111.09)}, rotate = 149.01] [fill={rgb, 255:red, 0; green, 0; blue, 0 }  ][line width=0.08]  [draw opacity=0] (13.4,-6.43) -- (0,0) -- (13.4,6.44) -- (8.9,0) -- cycle    ;
\draw [line width=1.5]  [dash pattern={on 5.63pt off 4.5pt}]  (317.74,105.09) .. controls (336.96,101.79) and (352.11,101.34) .. (379.25,104.6) ;
\draw [shift={(383.17,105.09)}, rotate = 186.82] [fill={rgb, 255:red, 0; green, 0; blue, 0 }  ][line width=0.08]  [draw opacity=0] (13.4,-6.43) -- (0,0) -- (13.4,6.44) -- (8.9,0) -- cycle    ;
\draw [line width=1.5]  [dash pattern={on 5.63pt off 4.5pt}]  (382.88,194.63) .. controls (364.72,199.41) and (346.88,200.23) .. (321.15,195.35) ;
\draw [shift={(317.45,194.63)}, rotate = 10.54] [fill={rgb, 255:red, 0; green, 0; blue, 0 }  ][line width=0.08]  [draw opacity=0] (13.4,-6.43) -- (0,0) -- (13.4,6.44) -- (8.9,0) -- cycle    ;
\draw [line width=1.5]    (423.74,157.38) .. controls (419.15,172.23) and (417.46,176.92) .. (399.36,187.71) ;
\draw [shift={(396.03,189.66)}, rotate = 329.01] [fill={rgb, 255:red, 0; green, 0; blue, 0 }  ][line width=0.08]  [draw opacity=0] (13.4,-6.43) -- (0,0) -- (13.4,6.44) -- (8.9,0) -- cycle    ;
\draw [line width=1.5]    (396.96,111.07) .. controls (410.79,118.17) and (415.12,120.64) .. (422.6,140.35) ;
\draw [shift={(423.95,143.97)}, rotate = 249.01] [fill={rgb, 255:red, 0; green, 0; blue, 0 }  ][line width=0.08]  [draw opacity=0] (13.4,-6.43) -- (0,0) -- (13.4,6.44) -- (8.9,0) -- cycle    ;
\draw [line width=1.5]    (303.37,189.68) .. controls (289.55,182.58) and (285.22,180.11) .. (277.73,160.4) ;
\draw [shift={(276.39,156.78)}, rotate = 69.01] [fill={rgb, 255:red, 0; green, 0; blue, 0 }  ][line width=0.08]  [draw opacity=0] (13.4,-6.43) -- (0,0) -- (13.4,6.44) -- (8.9,0) -- cycle    ;

\draw (267.29,142.98) node [anchor=north west][inner sep=0.75pt]  [font=\normalsize]  {$1$};
\draw (306.09,94.98) node [anchor=north west][inner sep=0.75pt]  [font=\normalsize]  {$2$};
\draw (306.09,183.38) node [anchor=north west][inner sep=0.75pt]  [font=\normalsize]  {$n$};
\end{tikzpicture}$$

Denote $C_1, C_2, C_3$ the following three cycles with $\ell(C_1) = \ell(C_2) = n$ and $\ell(C_3) = 2$.
$$\begin{tikzpicture}[x=0.5pt,y=0.5pt,yscale=-1,xscale=1]

\draw  [color={rgb, 255:red, 0; green, 0; blue, 0 }  ,draw opacity=1 ][fill={rgb, 255:red, 0; green, 0; blue, 0 }  ,fill opacity=1 ][line width=0.75]  (252.33,141) .. controls (252.33,139.34) and (253.68,138) .. (255.33,138) .. controls (256.99,138) and (258.33,139.34) .. (258.33,141) .. controls (258.33,142.66) and (256.99,144) .. (255.33,144) .. controls (253.68,144) and (252.33,142.66) .. (252.33,141) -- cycle ;
\draw  [color={rgb, 255:red, 0; green, 0; blue, 0 }  ,draw opacity=1 ][fill={rgb, 255:red, 0; green, 0; blue, 0 }  ,fill opacity=1 ][line width=0.75]  (402.33,141) .. controls (402.33,139.34) and (403.68,138) .. (405.33,138) .. controls (406.99,138) and (408.33,139.34) .. (408.33,141) .. controls (408.33,142.66) and (406.99,144) .. (405.33,144) .. controls (403.68,144) and (402.33,142.66) .. (402.33,141) -- cycle ;
\draw  [color={rgb, 255:red, 0; green, 0; blue, 0 }  ,draw opacity=1 ][fill={rgb, 255:red, 0; green, 0; blue, 0 }  ,fill opacity=1 ][line width=0.75]  (287.83,99) .. controls (287.83,97.34) and (289.18,96) .. (290.83,96) .. controls (292.49,96) and (293.83,97.34) .. (293.83,99) .. controls (293.83,100.66) and (292.49,102) .. (290.83,102) .. controls (289.18,102) and (287.83,100.66) .. (287.83,99) -- cycle ;
\draw  [color={rgb, 255:red, 0; green, 0; blue, 0 }  ,draw opacity=1 ][fill={rgb, 255:red, 0; green, 0; blue, 0 }  ,fill opacity=1 ][line width=0.75]  (366.83,99) .. controls (366.83,97.34) and (368.18,96) .. (369.83,96) .. controls (371.49,96) and (372.83,97.34) .. (372.83,99) .. controls (372.83,100.66) and (371.49,102) .. (369.83,102) .. controls (368.18,102) and (366.83,100.66) .. (366.83,99) -- cycle ;
\draw  [color={rgb, 255:red, 0; green, 0; blue, 0 }  ,draw opacity=1 ][fill={rgb, 255:red, 0; green, 0; blue, 0 }  ,fill opacity=1 ][line width=0.75]  (287.83,184) .. controls (287.83,182.34) and (289.18,181) .. (290.83,181) .. controls (292.49,181) and (293.83,182.34) .. (293.83,184) .. controls (293.83,185.66) and (292.49,187) .. (290.83,187) .. controls (289.18,187) and (287.83,185.66) .. (287.83,184) -- cycle ;
\draw  [color={rgb, 255:red, 0; green, 0; blue, 0 }  ,draw opacity=1 ][fill={rgb, 255:red, 0; green, 0; blue, 0 }  ,fill opacity=1 ][line width=0.75]  (366.83,184) .. controls (366.83,182.34) and (368.18,181) .. (369.83,181) .. controls (371.49,181) and (372.83,182.34) .. (372.83,184) .. controls (372.83,185.66) and (371.49,187) .. (369.83,187) .. controls (368.18,187) and (366.83,185.66) .. (366.83,184) -- cycle ;
\draw [line width=0.75]    (257.62,134.93) .. controls (264.51,121.6) and (272.47,109.58) .. (292.69,104.02) .. controls (312.91,98.47) and (348.91,98.02) .. (368.02,104.24) .. controls (387.13,110.47) and (399.13,126.69) .. (398.91,140.91) .. controls (398.69,155.13) and (384.2,173.33) .. (365.8,179.36) .. controls (347.4,185.38) and (311.98,183.11) .. (292.24,178.69) .. controls (273.6,174.51) and (265.23,163.76) .. (258.36,149.65) ;
\draw [shift={(257.18,147.16)}, rotate = 64.05] [fill={rgb, 255:red, 0; green, 0; blue, 0 }  ][line width=0.08]  [draw opacity=0] (10.72,-5.15) -- (0,0) -- (10.72,5.15) -- (7.12,0) -- cycle    ;
\draw  [color={rgb, 255:red, 0; green, 0; blue, 0 }  ,draw opacity=1 ][fill={rgb, 255:red, 0; green, 0; blue, 0 }  ,fill opacity=1 ][line width=0.75]  (-0.67,141.33) .. controls (-0.67,142.99) and (0.68,144.33) .. (2.33,144.33) .. controls (3.99,144.33) and (5.33,142.99) .. (5.33,141.33) .. controls (5.33,139.68) and (3.99,138.33) .. (2.33,138.33) .. controls (0.68,138.33) and (-0.67,139.68) .. (-0.67,141.33) -- cycle ;
\draw  [color={rgb, 255:red, 0; green, 0; blue, 0 }  ,draw opacity=1 ][fill={rgb, 255:red, 0; green, 0; blue, 0 }  ,fill opacity=1 ][line width=0.75]  (149.33,141.33) .. controls (149.33,142.99) and (150.68,144.33) .. (152.33,144.33) .. controls (153.99,144.33) and (155.33,142.99) .. (155.33,141.33) .. controls (155.33,139.68) and (153.99,138.33) .. (152.33,138.33) .. controls (150.68,138.33) and (149.33,139.68) .. (149.33,141.33) -- cycle ;
\draw  [color={rgb, 255:red, 0; green, 0; blue, 0 }  ,draw opacity=1 ][fill={rgb, 255:red, 0; green, 0; blue, 0 }  ,fill opacity=1 ][line width=0.75]  (34.83,183.33) .. controls (34.83,184.99) and (36.18,186.33) .. (37.83,186.33) .. controls (39.49,186.33) and (40.83,184.99) .. (40.83,183.33) .. controls (40.83,181.68) and (39.49,180.33) .. (37.83,180.33) .. controls (36.18,180.33) and (34.83,181.68) .. (34.83,183.33) -- cycle ;
\draw  [color={rgb, 255:red, 0; green, 0; blue, 0 }  ,draw opacity=1 ][fill={rgb, 255:red, 0; green, 0; blue, 0 }  ,fill opacity=1 ][line width=0.75]  (113.83,183.33) .. controls (113.83,184.99) and (115.18,186.33) .. (116.83,186.33) .. controls (118.49,186.33) and (119.83,184.99) .. (119.83,183.33) .. controls (119.83,181.68) and (118.49,180.33) .. (116.83,180.33) .. controls (115.18,180.33) and (113.83,181.68) .. (113.83,183.33) -- cycle ;
\draw  [color={rgb, 255:red, 0; green, 0; blue, 0 }  ,draw opacity=1 ][fill={rgb, 255:red, 0; green, 0; blue, 0 }  ,fill opacity=1 ][line width=0.75]  (34.83,98.33) .. controls (34.83,99.99) and (36.18,101.33) .. (37.83,101.33) .. controls (39.49,101.33) and (40.83,99.99) .. (40.83,98.33) .. controls (40.83,96.68) and (39.49,95.33) .. (37.83,95.33) .. controls (36.18,95.33) and (34.83,96.68) .. (34.83,98.33) -- cycle ;
\draw  [color={rgb, 255:red, 0; green, 0; blue, 0 }  ,draw opacity=1 ][fill={rgb, 255:red, 0; green, 0; blue, 0 }  ,fill opacity=1 ][line width=0.75]  (113.83,98.33) .. controls (113.83,99.99) and (115.18,101.33) .. (116.83,101.33) .. controls (118.49,101.33) and (119.83,99.99) .. (119.83,98.33) .. controls (119.83,96.68) and (118.49,95.33) .. (116.83,95.33) .. controls (115.18,95.33) and (113.83,96.68) .. (113.83,98.33) -- cycle ;
\draw [line width=0.75]    (4.62,147.4) .. controls (11.51,160.73) and (19.47,172.76) .. (39.69,178.31) .. controls (59.91,183.87) and (95.91,184.31) .. (115.02,178.09) .. controls (134.13,171.87) and (146.13,155.64) .. (145.91,141.42) .. controls (145.69,127.2) and (131.2,109) .. (112.8,102.98) .. controls (94.4,96.96) and (58.98,99.22) .. (39.24,103.64) .. controls (20.6,107.82) and (12.23,118.57) .. (5.36,132.68) ;
\draw [shift={(4.18,135.18)}, rotate = 295.95] [fill={rgb, 255:red, 0; green, 0; blue, 0 }  ][line width=0.08]  [draw opacity=0] (10.72,-5.15) -- (0,0) -- (10.72,5.15) -- (7.12,0) -- cycle    ;
\draw  [color={rgb, 255:red, 0; green, 0; blue, 0 }  ,draw opacity=1 ][fill={rgb, 255:red, 0; green, 0; blue, 0 }  ,fill opacity=1 ][line width=0.75]  (514.33,140.67) .. controls (514.33,139.01) and (515.68,137.67) .. (517.33,137.67) .. controls (518.99,137.67) and (520.33,139.01) .. (520.33,140.67) .. controls (520.33,142.32) and (518.99,143.67) .. (517.33,143.67) .. controls (515.68,143.67) and (514.33,142.32) .. (514.33,140.67) -- cycle ;
\draw  [color={rgb, 255:red, 0; green, 0; blue, 0 }  ,draw opacity=1 ][fill={rgb, 255:red, 0; green, 0; blue, 0 }  ,fill opacity=1 ][line width=0.75]  (664.33,140.67) .. controls (664.33,139.01) and (665.68,137.67) .. (667.33,137.67) .. controls (668.99,137.67) and (670.33,139.01) .. (670.33,140.67) .. controls (670.33,142.32) and (668.99,143.67) .. (667.33,143.67) .. controls (665.68,143.67) and (664.33,142.32) .. (664.33,140.67) -- cycle ;
\draw  [color={rgb, 255:red, 0; green, 0; blue, 0 }  ,draw opacity=1 ][fill={rgb, 255:red, 0; green, 0; blue, 0 }  ,fill opacity=1 ][line width=0.75]  (549.83,98.67) .. controls (549.83,97.01) and (551.18,95.67) .. (552.83,95.67) .. controls (554.49,95.67) and (555.83,97.01) .. (555.83,98.67) .. controls (555.83,100.32) and (554.49,101.67) .. (552.83,101.67) .. controls (551.18,101.67) and (549.83,100.32) .. (549.83,98.67) -- cycle ;
\draw  [color={rgb, 255:red, 0; green, 0; blue, 0 }  ,draw opacity=1 ][fill={rgb, 255:red, 0; green, 0; blue, 0 }  ,fill opacity=1 ][line width=0.75]  (628.83,98.67) .. controls (628.83,97.01) and (630.18,95.67) .. (631.83,95.67) .. controls (633.49,95.67) and (634.83,97.01) .. (634.83,98.67) .. controls (634.83,100.32) and (633.49,101.67) .. (631.83,101.67) .. controls (630.18,101.67) and (628.83,100.32) .. (628.83,98.67) -- cycle ;
\draw  [color={rgb, 255:red, 0; green, 0; blue, 0 }  ,draw opacity=1 ][fill={rgb, 255:red, 0; green, 0; blue, 0 }  ,fill opacity=1 ][line width=0.75]  (549.83,183.67) .. controls (549.83,182.01) and (551.18,180.67) .. (552.83,180.67) .. controls (554.49,180.67) and (555.83,182.01) .. (555.83,183.67) .. controls (555.83,185.32) and (554.49,186.67) .. (552.83,186.67) .. controls (551.18,186.67) and (549.83,185.32) .. (549.83,183.67) -- cycle ;
\draw  [color={rgb, 255:red, 0; green, 0; blue, 0 }  ,draw opacity=1 ][fill={rgb, 255:red, 0; green, 0; blue, 0 }  ,fill opacity=1 ][line width=0.75]  (628.83,183.67) .. controls (628.83,182.01) and (630.18,180.67) .. (631.83,180.67) .. controls (633.49,180.67) and (634.83,182.01) .. (634.83,183.67) .. controls (634.83,185.32) and (633.49,186.67) .. (631.83,186.67) .. controls (630.18,186.67) and (628.83,185.32) .. (628.83,183.67) -- cycle ;
\draw [line width=0.75]    (518.28,131.41) .. controls (521.7,112.55) and (540.56,97.41) .. (547.7,102.84) .. controls (554.49,108) and (540.65,124.76) .. (525.77,134.26) ;
\draw [shift={(523.42,135.7)}, rotate = 324.39] [fill={rgb, 255:red, 0; green, 0; blue, 0 }  ][line width=0.08]  [draw opacity=0] (10.72,-5.15) -- (0,0) -- (10.72,5.15) -- (7.12,0) -- cycle    ;

\draw (-66.33,132.4) node [anchor=north west][inner sep=0.75pt]    {$C_{1} \ =$};
\draw (189,131.4) node [anchor=north west][inner sep=0.75pt]    {$C_{2} \ =$};
\draw (447.17,131.07) node [anchor=north west][inner sep=0.75pt]    {$C_{3} \ =$};
\end{tikzpicture}$$
    Any cycle in $\Pi(\widetilde{A}_{n-1})$ is trace-equivalent (see \Cref{defn:invariant tr relation}) to a cycle of the form $C_1^{k_1} C_3^{k_3}$ or $C_2^{k_2} C_3^{k_3}$ for some integers $k_1, k_2, k_3 \in \NN$. 
    Moreover, we have the following relation in $\Pi(\widetilde{A}_{n-1})$:
    \begin{align*}
         \forall i,j \in \{1,2,3\},C_iC_j & = C_jC_i \text{ and } C_1 C_2  = C_3^{n} \in \Pi(\widetilde{A}_{n-1}).
    \end{align*}
\end{prop}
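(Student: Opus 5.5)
The plan is to prove the relations in $\Pi(\widetilde{A}_{n-1})$ first and then deduce the normal form by induction on the length of a cycle. Label the arrows $a_i\colon i\to i+1$ (indices mod $n$), so that $C_1$ and $C_2$ are the two oriented cycles of length $n$ based at $1$ and $C_3$ is a short loop at $1$.

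The degenerate case $n=1$ is the Jordan quiver. There $C_1=a$, $C_2=a^*$ and $C_3=aa^*$, and the only relation $\mu_1=aa^*-a^*a=0$ says exactly that $a$ and $a^*$ commute. Hence every cycle equals $a^{p}(a^*)^{q}$, which is $C_1^{p-q}C_3^{q}$ or $C_2^{q-p}C_3^{p}$, and $C_1C_2=C_3$.

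For $n\ge 2$, every vertex has exactly two incident arrows of $Q$, so the hypotheses of \Cref{lem: sliding short loops}, \Cref{lem: tlp short loop} and \Cref{cycle homotopically trivial} hold along any path. At vertex $i$ the moment relation reads $a_{i-1}a_{i-1}^*=a_i^*a_i$. Because the two arrows at each vertex have compatible orientations (one in, one out), no sign appears, and the short loops at consecutive vertices are identified with sign $+$.

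\textbf{Relations.}
\begin{itemize}
\item $C_3$ commutes with $C_1$. Apply \Cref{lem: sliding short loops} to the path $p=C_1$: a short loop at the head of $C_1$ equals the corresponding short loop at its tail. Both sit at vertex $1$, and $\mu_1$ identifies them with $C_3$, so $C_3C_1=C_1C_3$. The same argument gives $C_3C_2=C_2C_3$.
\item $C_1C_2=C_3^n$. The cycle $C_1C_2$ goes around and then back, so its image in $|Q|$ is homotopically trivial. \Cref{cycle homotopically trivial} therefore gives $C_1C_2=\pm(b^*b)^{n}$ for the last arrow $b$, and $b^*b$ is a short loop at $1$, which equals $C_3$ by $\mu_1$. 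To fix the sign $+$, note that all the moment relations used carry sign $+$ (equivalently, track the sign through the induction in that lemma).
\item $C_2C_1=C_3^n$ by the same argument, so $C_1C_2=C_2C_1$.
\end{itemize}

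\textbf{Normal form.} Take any cycle $C$ of $\overline{Q}$. By \Cref{lem:pqqp} we may rotate it to be based at vertex $1$ without changing its trace. Now induct on $\ell(C)$.
\begin{itemize}
\item If the word of $C$ contains a backtrack $bb^*$, it contains a short loop. Use \Cref{lem: sliding short loops} to slide that loop to the end of the word, where it becomes a short loop at $1$, hence $C_3$. This gives $C=C'C_3$ in $\Pi$ with $\ell(C')=\ell(C)-2$, and we apply induction to $C'$.
\item If the word contains no backtrack, it is a reduced closed walk on the $n$-cycle starting at $1$. Such a walk must go monotonically around, so it is $C_1^{k}$ or $C_2^{k}$.
\end{itemize}
Commutativity of $C_3$ with $C_1$ and $C_2$ then gathers all the $C_3$ factors on the right, giving the form $C_1^{k_1}C_3^{k_3}$ or $C_2^{k_2}C_3^{k_3}$. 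If both $C_1$ and $C_2$ appeared in an intermediate expression, $C_1C_2=C_3^n$ would cancel them against each other.

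The main obstacle is the sliding step. \Cref{lem: sliding short loops} is stated for a path that begins with the short loop, so a backtrack in the middle of the word must first be isolated: write $C=p\,bb^*q$ and slide $bb^*$ along $q$ to its tail at vertex $1$. One must also check that every sign is $+$, or at least that the signs do not spoil the claimed identities. I would handle this by verifying directly that $\mu_i$ gives $a_{i-1}a_{i-1}^*=a_i^*a_i$ with no sign, so that every sign produced by \Cref{lem: sliding short loops} is $+1$ in this quiver.
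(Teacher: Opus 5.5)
Your proof of the relations matches the paper's: the sliding lemma gives $C_3C_i=C_iC_3$, and \Cref{cycle homotopically trivial} gives $C_1C_2=C_2C_1=C_3^n$, with every sign $+$ because the orientation is cyclic. Your backtrack-removal induction for cycles based at vertex $1$ is sound. It is a more explicit version of the paper's decomposition $C=p_1q_1\cdots p_rq_r$, and it even yields equality in $\Pi(\widetilde{A}_{n-1})$. In the sliding step, when $b$ is not the leftmost letter $a$ of $q$, first rewrite $bb^*=aa^*$ using the moment relation at $hb$; then \Cref{lem: sliding short loops} applies verbatim. Your separate treatment of $n=1$ is also welcome, since the paper's lemmas require two incident arrows at each vertex.

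The gap is the opening move of the normal form. \Cref{lem:pqqp} only moves the base point of a cycle to a vertex the cycle actually visits. For $n\geq 3$ there are cycles that never pass through vertex $1$. Examples are the short loop $a_2^*a_2$ at vertex $2$, or any cycle supported on the path subquiver with vertices $2,\dots,n$. No rotation of such a cycle is based at $1$, so your induction never reaches it. Yet the statement ("any cycle is trace-equivalent to \dots") covers these cycles too.

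The fix uses a lemma you listed but never invoked. Run your induction at the base vertex $i$ of such a cycle, with the short loop $s_i$ at $i$ playing the role of $C_3$. Since the walk lives in a tree, its free reduction is empty, so $C=s_i^{k}$ in $\Pi(\widetilde{A}_{n-1})$, again with sign $+$. Then apply \Cref{lem: tlp short loop} to the path from $1$ to $i$ along $a_1,\dots,a_{i-1}$. This gives $\tr(s_i^{k})=\tr(C_3^{k})$, that is, $C\sim_{\tr}C_3^{k}$. This is exactly why the paper cites \Cref{lem: tlp short loop} alongside \Cref{lem:pqqp} when reducing to cycles based at vertex $1$.
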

    
\begin{proof}
    Using \Cref{lem: tlp short loop,lem:pqqp}, any cycle is trace-equivalent to a cycle based at vertex $1$.
    The image of the short loop $C_3$ is homotopically trivial in $|\widetilde{A}_{n-1}|$, while the classes of $C_1$ and $C_2$ generate the fundamental group $\pi_1(|\widetilde{A}_{n-1}|)$ (see \Cref{homotopic class}).
    Using \Cref{cycle homotopically trivial}, every homotopically trivial cycle is equal in $\Pi(\widetilde{A}_{n-1})$ to $C_3^{k_3}$ for some $k_3 \in \NN$. In particular, $C_1C_2 = C_3^n$.
    Using \Cref{lem: sliding short loops}, we deduce that $C_1C_3 = C_3C_1$ and $C_2C_3 = C_3C_2$.
    
    Now let $C$ be an arbitrary cycle in $\overline{Q}$ based at vertex $1$.
    Then the image of $C$ in $|\widetilde{A}_{n-1}|$ is homotopic to the image of either $C_1^{k_1}$ or $C_2^{k_2}$ for some $k_1, k_2 \in \NN$.
    
    Consequently, in $\overline{Q}$, it can be decomposed as
    $$
    C = p_1 q_1 p_2 q_2 \cdots p_r q_r,
    $$
    where the cycles $p_1,\ldots,p_r$ are homotopically trivial in $|\widetilde{A}_{n-1}|$, and the cycle $q_1q_2\cdots q_r$ is equal to either $C_1^{k_1}$ or $C_2^{k_2}$.
    
    Applying \Cref{cycle homotopically trivial} to each cycle $p_1,\ldots,p_r$, whose homotopy class is trivial, and using \Cref{lem: sliding short loops} to collect the powers of the short loops, we conclude that $C$ is equal in $\Pi(\widetilde{A}_{n-1})$ to either $C_1^{k_1}C_3^{k_3}$ or $C_2^{k_2}C_3^{k_3}$.
\end{proof}
\begin{rmq}\label{rmq: prepro trace 2}
    We work over the algebraically closed field $\CC$. Since $G^b_{\dd}$ is linearly reductive, \Cref{thm:invariant procesi and zubkov} implies that the invariant ring
    $$\CC[\Mgot_{\dd}]=\CC[\mu^{-1}(0)]^{G^b_{\dd}}$$
    is generated by the cycle invariants defined in \Cref{defn:invariant tr relation}.
    It is therefore sufficient to use the trace-equivalence relations (see \Cref{rmq: prepro trace}) to show that the cycles under consideration generate all cycle invariants.
\end{rmq}

\begin{lem}\label{table Av-a}
    Let $n \geq 2$ and set $(\widetilde{A}_{2n-2},v\text{-}a)$ denote the orthogonal cyclic quiver with $2n-1$ vertices:
    $$\begin{tikzpicture}[x=0.6pt,y=0.6pt,yscale=-1,xscale=1]
    
    \draw [color={rgb, 255:red, 155; green, 155; blue, 155 }  ,draw opacity=1 ][fill={rgb, 255:red, 155; green, 155; blue, 155 }  ,fill opacity=1 ][line width=1.5]    (250,150) -- (450,150) ;
    \draw [line width=1.5]    (276.03,143.38) .. controls (280.62,128.53) and (282.3,123.84) .. (300.41,113.04) ;
    \draw [shift={(303.74,111.09)}, rotate = 149.01] [fill={rgb, 255:red, 0; green, 0; blue, 0 }  ][line width=0.08]  [draw opacity=0] (13.4,-6.43) -- (0,0) -- (13.4,6.44) -- (8.9,0) -- cycle    ;
    \draw [line width=1.5]  [dash pattern={on 5.63pt off 4.5pt}]  (317.74,105.09) .. controls (336.96,101.79) and (352.11,101.34) .. (379.25,104.6) ;
    \draw [shift={(383.17,105.09)}, rotate = 186.82] [fill={rgb, 255:red, 0; green, 0; blue, 0 }  ][line width=0.08]  [draw opacity=0] (13.4,-6.43) -- (0,0) -- (13.4,6.44) -- (8.9,0) -- cycle    ;
    \draw [line width=1.5]  [dash pattern={on 5.63pt off 4.5pt}]  (382.88,194.63) .. controls (364.72,199.41) and (346.88,200.23) .. (321.15,195.35) ;
    \draw [shift={(317.45,194.63)}, rotate = 10.54] [fill={rgb, 255:red, 0; green, 0; blue, 0 }  ][line width=0.08]  [draw opacity=0] (13.4,-6.43) -- (0,0) -- (13.4,6.44) -- (8.9,0) -- cycle    ;
    \draw [line width=1.5]    (397.13,111.77) .. controls (415.8,120.43) and (425.8,135.77) .. (425,150) .. controls (424.25,163.38) and (419.14,178.2) .. (401.64,186.3) ;
    \draw [shift={(398.13,187.77)}, rotate = 334.45] [fill={rgb, 255:red, 0; green, 0; blue, 0 }  ][line width=0.08]  [draw opacity=0] (13.4,-6.43) -- (0,0) -- (13.4,6.44) -- (8.9,0) -- cycle    ;
    \draw [line width=1.5]    (303.37,189.68) .. controls (289.55,182.58) and (285.22,180.11) .. (277.73,160.4) ;
    \draw [shift={(276.39,156.78)}, rotate = 69.01] [fill={rgb, 255:red, 0; green, 0; blue, 0 }  ][line width=0.08]  [draw opacity=0] (13.4,-6.43) -- (0,0) -- (13.4,6.44) -- (8.9,0) -- cycle    ;
    
    \draw (265.54,142.23) node [anchor=north west][inner sep=0.75pt]  [font=\normalsize]  {$1$};
    \draw (305.79,100.98) node [anchor=north west][inner sep=0.75pt]  [font=\normalsize]  {$2$};
    \draw (384.54,97.98) node [anchor=north west][inner sep=0.75pt]  [font=\normalsize]  {$n$};
    \draw (383.29,184.48) node [anchor=north west][inner sep=0.75pt]  [font=\normalsize]  {$\tau n$};
    \draw (298.29,185.98) node [anchor=north west][inner sep=0.75pt]  [font=\normalsize]  {$\tau 2$};
    \end{tikzpicture}$$
    The fixed arrow $n \to \tau n$ leads to two distinct families of group actions:
    $$\OO(V_1) \times \GL(V_2) \times \cdots \times \GL(V_n) \curvearrowright \Hom(V_1,V_2)\oplus \cdots \oplus \Hom(V_{n-1},V_n) \oplus \left|
\begin{array}{l}
        \Sym(V_n)\\
        \Lambda^2(V_n)
\end{array}
\right.$$
    If the sign satisfies $s(n \to \tau n) = +1$, then there is a closed immersion
    $$\Mgot_{\delta} \inj \VV(xy-z^{2n-1}).$$
    If the sign satisfies $s(n \to \tau n) = -1$, then $\Mgot_{\delta}$ is a reduced point, and there is a closed immersion
    $$\Mgot_{2\delta} \inj \VV(xy-z^{4n-2}).$$
\end{lem}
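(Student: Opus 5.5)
The plan is to combine the normal form of \Cref{prop:topo cycle} with \Cref{rmq: prepro trace 2}. Since $\CC[\Mgot_{\dd}]$ is generated by traces of cycles, and every cycle is trace-equivalent to $C_1^{k_1}C_3^{k_3}$ or $C_2^{k_2}C_3^{k_3}$ (here $C_1,C_2$ have length $2n-1$ and $C_3$ is the short loop at the fixed vertex $1$), it suffices to produce three invariants $x,y,z$ that generate these traces and satisfy the stated relation. Each closed immersion then comes from the resulting surjection $\CC[x,y,z]/(\cdots)\surj \CC[\Mgot_{\dd}]$.

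\emph{Case $\dd=\delta$.} All $v(C)$ are scalars, so $\tr(C_1^{k_1}C_3^{k_3})=x^{k_1}z^{k_3}$ and $\tr(C_2^{k_2}C_3^{k_3})=y^{k_2}z^{k_3}$, where $x=\tr C_1$, $y=\tr C_2$, $z=\tr C_3$. The relation $C_1C_2=C_3^{2n-1}$ in $\Pi(Q)$ gives $xy=z^{2n-1}$ on $\mu^{-1}(0)$. This proves the sign $+1$ case.

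For sign $-1$ the fixed arrow $a$ takes values in $\Lambda^2$ of a line, so $v_a=v_{a^*}=0$. Hence $C_1$ and $C_2$ vanish, giving $x=y=0$; this also follows from \Cref{lem:tracetau}, since the sign product along $C_1$ is $-1$. Next I apply \Cref{lem: tlp short loop} along the path from vertex $1$ to the fixed arrow, whose intermediate vertices have exactly two incident arrows. This gives $C_3\sim_{\tr}\pm a^*a=0$, so $z=0$. Therefore the invariant ring is $\CC$ and $\Mgot_\delta$ is a reduced point.

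\emph{Case $\dd=2\delta$, sign $-1$.} Now $v_a,v_{a^*}\in\Lambda^2(\CC^2)$ are antisymmetric $2\times2$ matrices. Their product $a^*a$ is therefore a scalar matrix $\lambda\,\mathrm{Id}$, where $\lambda$ is a degree-$2$ invariant. I would prove that $v(C_3)$ itself is a scalar matrix, not just trace-equivalent to one. To do this I use \Cref{lem: sliding short loops} in the form $C_3\,p=\pm\,p\,a^*a$, where $p$ is the path from vertex $1$ to the fixed arrow. This lets me move every $C_3$ factor inside $C_1^{k_1}C_3^{k_3}$ to the fixed arrow, where it acts as $\lambda$. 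Hence $\tr(C_1^{k_1}C_3^{k_3})=\pm\lambda^{k_3}\tr(C_1^{k_1})$.

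Next, \Cref{lem:tracetau} applied to $C_1^{k}$ gives sign $(-1)^k$, so $\tr C_1^{k}=0$ for odd $k$. In particular $\tr C_1=0$. The Cayley--Hamilton relation of \Cref{Hamilton-Cayley scheme morphism} then gives $C_1^2=-\det(C_1)\,\mathrm{Id}$. So every $\tr(C_1^{k})$ is a polynomial in $x=\det C_1$, and likewise for $C_2$ with $y=\det C_2$.

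Taking determinants in $C_1C_2=C_3^{2n-1}$ gives $xy=\det(C_3)^{2n-1}=\pm\lambda^{4n-2}$. Setting $z=\lambda$, and adjusting signs or rescaling $x$ if needed, yields the surjection onto $\CC[x,y,z]/(xy-z^{4n-2})$.

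The main obstacle is the sign bookkeeping and the claim that $C_3$ is a genuine scalar at vertex $1$ in dimension $2\delta$. \Cref{lem: sliding short loops} only gives equalities up to sign. The moment relation at the fixed vertex $1$ must be checked carefully, since it involves both neighbouring arrows and the orthogonal structure, so that $C_3$ really slides to $\pm a^*a$. I would first verify the sliding identity explicitly on $\Srep(\overline{Q},2\delta)$ using the vertex relations $\mu_i$. Only after that would I reduce the general cycle traces.
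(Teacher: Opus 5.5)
Your proof is correct and follows essentially the paper's route. The ingredients are the same: the normal forms of \Cref{prop:topo cycle}, vanishing of odd traces via \Cref{lem:tracetau}, Cayley--Hamilton in rank $2$, comparison of $C_3$ with the skew-symmetric short loop via \Cref{lem: tlp short loop}, and determinants of $C_1C_2=C_3^{2n-1}$. Your sliding of $C_3$ into the fixed arrow only replaces the paper's reduction to the eight generators $\tr(C_i),\det(C_i),\tr(C_1C_3),\tr(C_2C_3)$.

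One intended step should be dropped: $v(C_3)$ is \emph{not} a scalar matrix in general. The identity $C_3\,p=\pm p\,a^*a$ only gives $v(C_3)v(p)=\pm\lambda\, v(p)$, and $v(p)$ may be singular. Here is a counterexample for $n=2$, with $c\colon 1\to 2$. Take $v_a=v_{a^*}=0$, $v_c=zu^{\mathsf t}$ and $v_{c^*}=uw^{\mathsf t}$, where $u=(1,i)^{\mathsf t}$ and $w^{\mathsf t}z=1$. All moment relations hold, because $u^{\mathsf t}u=0$ and $uu^{\mathsf t}$ is symmetric. Yet $v(C_3)=uu^{\mathsf t}$ is a nonzero nilpotent.

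Nothing in your argument actually depends on this claim.
\begin{itemize}
\item For $k_1\geq 1$, sliding inside the cycle gives the exact identity $v(C_1C_3)=\pm\lambda\, v(C_1)$, since $v(a^*a)=\lambda\,\mathrm{Id}$.
\item For pure powers, \Cref{lem: tlp short loop} gives $\tr(C_3^k)=2(\epsilon\lambda)^k$ for a fixed sign $\epsilon$, hence $\det(C_3)=\lambda^2$.
\end{itemize}
The second point is exactly what you need when taking determinants of $C_1C_2=C_3^{2n-1}$, and it is how the paper argues.
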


\begin{proof}
    If the sign satisfies $s(n \to \tau n)=+1$ and the dimension vector is $\delta=(1,\ldots,1)$, then every cycle $C$ satisfies $C=\tr(C)$.
    We consider the ring
    $$\CC[C_1,C_2,C_3]\subset\CC[\mu^{-1}(0)].$$
    \Cref{prop:topo cycle} gives the relation
    $C_1C_2=C_3^{2n-1}$ in $\Pi(\widetilde{A}_{2n-2})$, and hence in
    $\CC[\Mgot_{\delta}]$. Therefore, we obtain the closed immersion
    $$\Mgot_{\delta} \inj \VV(xy-z^{2n-1}) \subset \CC^3.$$
    
    Now assume that $s(n\to\tau n)=-1$.
    In dimension $\delta$, we obtain
    $C_1\sim_{\tr}C_2\sim_{\tr}0$, and moreover
    $C_3\sim_{\tr}0$ by using the short loop between $n$ and $\tau n$ together with \Cref{lem: tlp short loop}.
    
    In dimension $2\delta$, the representations of the cycles
    $C_1,C_2,C_3$ (see \Cref{path morphism}) take values in
    $\End_{\CC}(\CC^2)$. Using the Cayley--Hamilton relation (see \Cref{Hamilton-Cayley scheme morphism}), we study the ring
    $$\CC[\Mgot_{2\delta}] = \CC\Bigg[\begin{matrix}
        \tr(C_1),\tr(C_2),\tr(C_3) \\
        \det(C_1),\det(C_2),\det(C_3) \\
        \tr(C_1C_3),\tr(C_2C_3)
    \end{matrix}\Bigg] \subset \CC[\mu^{-1}(0)].$$
    
    By \Cref{lem:tracetau}, and using that
    \begin{align*}
        & C_1 \sim_{\tr} \tau C_1 \sim_{\tr} -C_1,\quad
        C_1C_3 \sim_{\tr} \tau C_3\tau C_1 \sim_{\tr} -C_1C_3,\\
        & C_2 \sim_{\tr} \tau C_2 \sim_{\tr} -C_2,\quad
        C_2C_3 \sim_{\tr} \tau C_3\tau C_2 \sim_{\tr} -C_2C_3,
    \end{align*}
    we obtain
    $\tr(C_1)=\tr(C_2)=\tr(C_1C_3)=\tr(C_2C_3)=0$.
    Hence, we are reduced to studying
    $$\CC[\Mgot_{2\delta}]
    =\CC[\tr(C_3),\det(C_1),\det(C_2),\det(C_3)].$$
    
    \Cref{lem: tlp short loop} shows that $C_3$ is trace-equivalent to the short loop between $n$ and $\tau n$, which is represented by skew-symmetric matrices. Thus,
    $\tr(C_3)^2=4\det(C_3)$.
    The relation $C_1C_2=C_3^{2n-1}\in\Pi(\widetilde{A}_{2n-2})$ implies that $\det(C_1)\det(C_2)=\det(C_3)^{2n-1}$.
    Therefore,
    $$\Mgot_{2\delta} \inj \VV(xy-z^{4n-2}) \subset \CC^3.$$
\end{proof}

\begin{lem}\label{table Aa-a}
    Let $n \geq 1$, and let $(\widetilde{A}_{2n-1},a\text{-}a)$ denote the orthogonal cyclic quiver with $2n$ vertices:
    $$\begin{tikzpicture}[x=0.6pt,y=0.6pt,yscale=-1,xscale=1]
    
    \draw [color={rgb, 255:red, 155; green, 155; blue, 155 }  ,draw opacity=1 ][line width=1.5]    (260.07,120.3) -- (440.07,180.3) ;
    \draw  [color={rgb, 255:red, 0; green, 0; blue, 0 }  ,draw opacity=1 ][fill={rgb, 255:red, 0; green, 0; blue, 0 }  ,fill opacity=1 ] (386.5,108) .. controls (386.5,106.34) and (387.84,105) .. (389.5,105) .. controls (391.16,105) and (392.5,106.34) .. (392.5,108) .. controls (392.5,109.66) and (391.16,111) .. (389.5,111) .. controls (387.84,111) and (386.5,109.66) .. (386.5,108) -- cycle ;
    \draw [line width=1.5]    (276.03,143.38) .. controls (280.62,128.53) and (282.3,123.84) .. (300.41,113.04) ;
    \draw [shift={(303.74,111.09)}, rotate = 149.01] [fill={rgb, 255:red, 0; green, 0; blue, 0 }  ][line width=0.08]  [draw opacity=0] (13.4,-6.43) -- (0,0) -- (13.4,6.44) -- (8.9,0) -- cycle    ;
    \draw [line width=1.5]  [dash pattern={on 5.63pt off 4.5pt}]  (317.74,105.09) .. controls (336.96,101.79) and (352.11,101.34) .. (379.25,104.6) ;
    \draw [shift={(383.17,105.09)}, rotate = 186.82] [fill={rgb, 255:red, 0; green, 0; blue, 0 }  ][line width=0.08]  [draw opacity=0] (13.4,-6.43) -- (0,0) -- (13.4,6.44) -- (8.9,0) -- cycle    ;
    \draw [line width=1.5]  [dash pattern={on 5.63pt off 4.5pt}]  (379.44,198.4) .. controls (361.28,203.18) and (346.58,200.56) .. (321.13,195.38) ;
    \draw [shift={(317.45,194.63)}, rotate = 11.5] [fill={rgb, 255:red, 0; green, 0; blue, 0 }  ][line width=0.08]  [draw opacity=0] (13.4,-6.43) -- (0,0) -- (13.4,6.44) -- (8.9,0) -- cycle    ;
    \draw [line width=1.5]    (423.74,157.38) .. controls (419.15,172.23) and (417.46,176.92) .. (399.36,187.71) ;
    \draw [shift={(396.03,189.66)}, rotate = 329.01] [fill={rgb, 255:red, 0; green, 0; blue, 0 }  ][line width=0.08]  [draw opacity=0] (13.4,-6.43) -- (0,0) -- (13.4,6.44) -- (8.9,0) -- cycle    ;
    \draw [line width=1.5]    (396.96,111.07) .. controls (410.79,118.17) and (415.12,120.64) .. (422.6,140.35) ;
    \draw [shift={(423.95,143.97)}, rotate = 249.01] [fill={rgb, 255:red, 0; green, 0; blue, 0 }  ][line width=0.08]  [draw opacity=0] (13.4,-6.43) -- (0,0) -- (13.4,6.44) -- (8.9,0) -- cycle    ;
    \draw [line width=1.5]    (303.37,189.68) .. controls (289.55,182.58) and (285.22,180.11) .. (277.73,160.4) ;
    \draw [shift={(276.39,156.78)}, rotate = 69.01] [fill={rgb, 255:red, 0; green, 0; blue, 0 }  ][line width=0.08]  [draw opacity=0] (13.4,-6.43) -- (0,0) -- (13.4,6.44) -- (8.9,0) -- cycle    ;
    \draw  [color={rgb, 255:red, 0; green, 0; blue, 0 }  ,draw opacity=1 ][fill={rgb, 255:red, 0; green, 0; blue, 0 }  ,fill opacity=1 ] (307.83,192.89) .. controls (307.83,191.23) and (309.18,189.89) .. (310.83,189.89) .. controls (312.49,189.89) and (313.83,191.23) .. (313.83,192.89) .. controls (313.83,194.55) and (312.49,195.89) .. (310.83,195.89) .. controls (309.18,195.89) and (307.83,194.55) .. (307.83,192.89) -- cycle ;
    
    \draw (305.08,99.87) node [anchor=north west][inner sep=0.75pt]  [font=\normalsize]  {$1$};
    \draw (269.38,141.89) node [anchor=north west][inner sep=0.75pt]  [font=\normalsize]  {$\tau 1$};
    \draw (420.77,144.11) node [anchor=north west][inner sep=0.75pt]  [font=\normalsize]  {$n$};
    \draw (379.17,186.36) node [anchor=north west][inner sep=0.75pt]  [font=\normalsize]  {$\tau n$};
    \end{tikzpicture}$$
    The signs determine three different families of group actions, according to the value of $s(\tau 1 \to 1)+s(n \to \tau n)\in\{-2,0,2\}$.
    $$\GL(V_1) \times \cdots \times \GL(V_n) \curvearrowright \left|
\begin{array}{l}
        \Sym(V_1)\\
        \Lambda^2(V_1)
\end{array}
\right. \oplus \Hom(V_1,V_2)\oplus \cdots \oplus \Hom(V_{n-1},V_n) \oplus \left|
\begin{array}{l}
        \Sym(V_n)\\
        \Lambda^2(V_n)
\end{array}
\right.$$
    
    If $s(\tau 1 \to 1)=s(n \to \tau n)=+1$, then there is a closed immersion
    $$\Mgot_{\delta} \inj \VV(xy-z^{2n}).$$
    
    If $s(\tau 1 \to 1)=+1$ and $s(n \to \tau n)=-1$, hence $\Mgot_{\delta}$ is a reduced point, and there is a closed immersion
    $$\Mgot_{2\delta} \inj \VV(xy-z^{4n}).$$
    
    If $s(\tau 1 \to 1)=s(n \to \tau n)=-1$, hence $\Mgot_{\delta}$ is a reduced point, and there is a closed immersion
    $$\Mgot_{2\delta} \inj \VV(xy-z^{2n}).$$
\end{lem}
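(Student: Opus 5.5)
The plan follows the proof of \Cref{table Av-a}. By \Cref{rmq: prepro trace 2} it suffices to produce cycle invariants that generate $\CC[\Mgot_{\dd}]$, together with the relations among them. By \Cref{prop:topo cycle}, every cycle of $\Pi(\widetilde{A}_{2n-1})$ is trace-equivalent to $C_1^{k_1}C_3^{k_3}$ or $C_2^{k_2}C_3^{k_3}$. Moreover $C_1C_2=C_3^{2n}$ in $\Pi(\widetilde{A}_{2n-1})$, and $C_3$ is trace-equivalent to either of the two short loops through the fixed arrows, by \Cref{lem: tlp short loop}. For each of the three sign patterns I would first evaluate \Cref{lem:tracetau} on $C_1$, $C_2$ and $C_1C_3$, $C_2C_3$. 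A pair $\{a,\tau a\}$ of non-fixed arrows contributes $s(a)s(\tau a)=1$, so the global sign of $C_1$ is $s(\tau 1\to 1)\,s(n\to\tau n)$.

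\emph{Case $(+1,+1)$, dimension $\delta$.} All matrices are $1\times 1$, so each cycle equals its trace. The ring $\CC[\mu^{-1}(0)]^{G^b_\delta}$ is then generated by $C_1,C_2,C_3$, subject to $C_1C_2=C_3^{2n}$. This gives $\Mgot_\delta\inj\VV(xy-z^{2n})$.

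\emph{Case $(+1,-1)$.} In dimension $\delta$, the arrow $n\to\tau n$ lies in $\Lambda^2$ of a line, which is $0$. Since $C_1$ and $C_2$ pass through this arrow, they vanish, and $C_3$ vanishes by \Cref{lem: tlp short loop} applied to the short loop at that arrow. Hence $\Mgot_\delta$ is a reduced point.

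In dimension $2\delta$, the global sign is $-1$, so \Cref{lem:tracetau} kills $\tr(C_1)$, $\tr(C_2)$, $\tr(C_1C_3)$ and $\tr(C_2C_3)$. The Cayley--Hamilton relation (\Cref{Hamilton-Cayley scheme morphism}) then reduces the generators to $\tr(C_3),\det(C_1),\det(C_2),\det(C_3)$. The short loop at the skew fixed arrow is a product of two skew $2\times 2$ matrices, i.e. a scalar matrix. Hence $C_3\sim_{\tr}\lambda\,\mathrm{Id}$ with $\tr C_3=2\lambda$ and $\det C_3=\lambda^2$. Taking determinants in $C_1C_2=C_3^{2n}$ gives $\det C_1\det C_2=\lambda^{4n}$, which yields $\Mgot_{2\delta}\inj\VV(xy-z^{4n})$.

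\emph{Case $(-1,-1)$.} In dimension $\delta$, both fixed arrows vanish, so $\Mgot_\delta$ is a reduced point as before. In dimension $2\delta$ the global sign is $+1$, so no trace vanishes for free, and this is the main obstacle. I would show that $C_1$ and $C_2$ act as scalar matrices on $\mu^{-1}(0)$.

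Write the fixed arrows as $\alpha J$ and $\beta J$, where $J$ is the standard skew $2\times2$ matrix. The symmetry condition of \Cref{defn:srep} forces the path $\tau 1\to\cdots\to\tau n$ to be given, up to the identifications $J_i$, by the transpose $X^{\transpose}$ of the composite $X$ of the path $1\to\cdots\to n$. Then
$$C_1=\alpha\beta\,JXJX^{\transpose}$$
up to sign. The identity $XJX^{\transpose}=\det(X)J$ for $2\times2$ matrices shows that $C_1$ is a scalar matrix, and the same argument applies to $C_2$. Consequently $\tr(C_iC_3)$ and $\det(C_i)$ are determined by $\tr C_i$ and $\lambda$, and the ring is generated by $x=\tfrac12\tr C_1$, $y=\tfrac12\tr C_2$ and $z=\lambda$. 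The relation $C_1C_2=C_3^{2n}$ between scalar matrices reads $xy=z^{2n}$, giving $\Mgot_{2\delta}\inj\VV(xy-z^{2n})$.

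The delicate point is checking the scalar-matrix claim with the correct signs from the $J_i$ and the moment-map relations. If that fails, I would fall back on Cayley--Hamilton and compute $\tr(C_1)^2-4\det(C_1)$ directly from the explicit form of $C_1$ above.
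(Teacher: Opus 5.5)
Your proposal is correct and follows the paper's proof essentially step for step: the reduction via \Cref{prop:topo cycle}, Cayley--Hamilton and \Cref{lem:tracetau}, the use of the scalar short loop at the skew fixed arrow, and the observation that $C_1$ and $C_2$ are scalar in the $(-1,-1)$ case are all exactly what the paper does. Your identity $XJX^{\transpose}=\det(X)J$ is a concrete form of the paper's remark that $C_1$ is a skew path $1\to\tau 1$ of length $2n-1$ times the skew fixed arrow, and killing $C_1,C_2$ in dimension $\delta$ because $\Lambda^2$ of a line is zero serves the same purpose as the paper's appeal to \Cref{lem:tracetau}.
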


\begin{proof}
    In dimension $\delta$, when $s(\tau 1 \to 1)=s(n \to \tau n)=+1$, every cycle satisfies $C=\tr(C)$. Hence
    $$\CC[\Mgot_{\delta}] = \CC[\tr(C_1), \tr(C_2), \tr(C_3)] \subset \CC[\mu^{-1}(0)].$$
    By \Cref{prop:topo cycle}, we have the relation
    $\tr(C_1)\tr(C_2)=\tr(C_3)^{2n}$, and therefore
    $$\Mgot_{\delta} \inj \VV(xy-z^{2n}) \subset \CC^3.$$
    
    For the sign choice $s(\tau 1 \to 1)=+1$ and $s(n \to \tau n)=-1$, in dimension $2\delta$, using the Cayley--Hamilton relations (see \Cref{Hamilton-Cayley scheme morphism}), we are reduced to
    $$\CC[\Mgot_{2\delta}] = \CC\Bigg[\begin{matrix}
        \tr(C_1),\tr(C_2),\tr(C_3) \\
        \det(C_1),\det(C_2),\det(C_3) \\
        \tr(C_1C_3),\tr(C_2C_3)
    \end{matrix}\Bigg] \subset \CC[\mu^{-1}(0)].$$
    By \Cref{lem:tracetau}, applied to $C_1$, $C_2$, $C_1C_3$, and $C_2C_3$, we obtain
    $$\tr(C_1)=\tr(C_2)=\tr(C_1C_3)=\tr(C_2C_3)=0.$$
    Hence, we are reduced to studying
    $$\CC[\Mgot_{2\delta}]
    =\CC[\tr(C_3),\det(C_1),\det(C_2),\det(C_3)].$$
    \Cref{lem: tlp short loop} shows that $C_3$ is trace-equivalent to the short loop between $n$ and $\tau n$, which is represented by a product of skew-symmetric matrices. Hence $C_3$ is a scalar matrix, and therefore
    $$\tr(C_3)^2=4\det(C_3).$$
    Moreover, \Cref{prop:topo cycle} gives the relation
    $C_1C_2=C_3^{2n}$, hence
    $$\det(C_1)\det(C_2)=\det(C_3)^{2n}.$$
    Therefore,
    $$\Mgot_{2\delta} \inj \VV(xy-z^{4n}) \subset \CC^3.$$
    
    In dimension $\delta$, the relation
    $C_1C_2=C_3^{2n}$ still holds, but at the same time
    $C_1=C_2=0$ (in the sense of \Cref{path morphism}) by \Cref{lem:tracetau}, and $C_3=0$ by \Cref{lem: tlp short loop}.
    
    Finally, suppose that
    $s(\tau 1 \to 1)=s(n \to \tau n)=-1$.
    In dimension $2\delta$, \Cref{lem: tlp short loop} shows that $C_3$ is trace-equivalent to the short loop between $n$ and $\tau n$, which is represented by a product of two skew-symmetric matrices. Hence $C_3$ is a scalar matrix, and therefore
    $$\tr(C_3)^2=4\det(C_3).$$
    Similarly, $C_1$ and $C_2$ can be decomposed as the product of a path of length $2n-1$ from $1$ to $\tau1$ and a path of length $1$ from $\tau1$ to $1$. Consequently, $C_1$ and $C_2$ are also scalar matrices.
    Hence, we are reduced to studying
    $$\CC[\Mgot_{2\delta}]
    =\CC[\tr(C_1),\tr(C_2),\tr(C_3)]
    \subset\CC[\mu^{-1}(0)].$$
    The relation
    $C_1C_2=C_3^{2n}$ is obtained from \Cref{prop:topo cycle}. Therefore,
    $$\Mgot_{2\delta} \inj \VV(xy-z^{2n}) \subset \CC^3.$$
    
    In dimension $\delta$, we have
    $C_1=C_2=0$, since there are no nonzero skew-symmetric matrices. Then $C_3=0$ by \Cref{lem: tlp short loop}.
\end{proof}

\begin{lem}\label{table Av-v}
    Let $n \geq 1$, and denote by $(\widetilde{A}_{2n-1},v\text{-}v)$ the following orthogonal quiver.
    $$
    \begin{tikzpicture}[x=0.6pt,y=0.6pt,yscale=-1,xscale=1]
    
    \draw [color={rgb, 255:red, 155; green, 155; blue, 155 }  ,draw opacity=1 ][line width=1.5]    (250.24,149.92) -- (450.64,149.92) ;
    \draw [line width=1.5]    (276.03,143.38) .. controls (280.62,128.53) and (282.3,123.84) .. (300.41,113.04) ;
    \draw [shift={(303.74,111.09)}, rotate = 149.01] [fill={rgb, 255:red, 0; green, 0; blue, 0 }  ][line width=0.08]  [draw opacity=0] (13.4,-6.43) -- (0,0) -- (13.4,6.44) -- (8.9,0) -- cycle    ;
    \draw [line width=1.5]  [dash pattern={on 5.63pt off 4.5pt}]  (317.74,105.09) .. controls (336.96,101.79) and (352.11,101.34) .. (379.25,104.6) ;
    \draw [shift={(383.17,105.09)}, rotate = 186.82] [fill={rgb, 255:red, 0; green, 0; blue, 0 }  ][line width=0.08]  [draw opacity=0] (13.4,-6.43) -- (0,0) -- (13.4,6.44) -- (8.9,0) -- cycle    ;
    \draw [line width=1.5]  [dash pattern={on 5.63pt off 4.5pt}]  (380.45,195.72) .. controls (362.29,200.51) and (349.17,201.33) .. (323.86,196.45) ;
    \draw [shift={(320.2,195.72)}, rotate = 10.75] [fill={rgb, 255:red, 0; green, 0; blue, 0 }  ][line width=0.08]  [draw opacity=0] (13.4,-6.43) -- (0,0) -- (13.4,6.44) -- (8.9,0) -- cycle    ;
    \draw [line width=1.5]    (423.74,157.38) .. controls (419.15,172.23) and (420.07,176.53) .. (402.26,187.28) ;
    \draw [shift={(398.95,189.22)}, rotate = 328.64] [fill={rgb, 255:red, 0; green, 0; blue, 0 }  ][line width=0.08]  [draw opacity=0] (13.4,-6.43) -- (0,0) -- (13.4,6.44) -- (8.9,0) -- cycle    ;
    \draw [line width=1.5]    (396.96,111.07) .. controls (410.79,118.17) and (415.12,120.64) .. (422.6,140.35) ;
    \draw [shift={(423.95,143.97)}, rotate = 249.01] [fill={rgb, 255:red, 0; green, 0; blue, 0 }  ][line width=0.08]  [draw opacity=0] (13.4,-6.43) -- (0,0) -- (13.4,6.44) -- (8.9,0) -- cycle    ;
    \draw [line width=1.5]    (303.37,189.68) .. controls (289.55,182.58) and (285.22,180.11) .. (277.73,160.4) ;
    \draw [shift={(276.39,156.78)}, rotate = 69.01] [fill={rgb, 255:red, 0; green, 0; blue, 0 }  ][line width=0.08]  [draw opacity=0] (13.4,-6.43) -- (0,0) -- (13.4,6.44) -- (8.9,0) -- cycle    ;
    
    \draw (267.04,142.23) node [anchor=north west][inner sep=0.75pt]  [font=\normalsize]  {$1$};
    \draw (305.54,99.73) node [anchor=north west][inner sep=0.75pt]  [font=\normalsize]  {$2$};
    \draw (298.04,187.98) node [anchor=north west][inner sep=0.75pt]  [font=\normalsize]  {$\tau 2$};
    \draw (384.15,101.52) node [anchor=north west][inner sep=0.75pt]  [font=\normalsize]  {$n$};
    \draw (380.36,187.55) node [anchor=north west][inner sep=0.75pt]  [font=\normalsize]  {$\tau n$};
    \draw (407.29,143.23) node [anchor=north west][inner sep=0.75pt]  [font=\small]  {$n+1$};
    \end{tikzpicture}
    $$
    which describes the family of group representations
    $$\OO(V_1)\times\GL(V_2)\times\cdots\times\GL(V_n)\times\OO(V_{n+1})
    \curvearrowright
    \Hom(V_1,V_2)\oplus\cdots\oplus\Hom(V_n,V_{n+1}).$$
    There is a closed immersion
    $$\Mgot_{\delta}\inj\VV(xy-z^{2n}).$$
\end{lem}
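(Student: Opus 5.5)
The plan mirrors the proof of \Cref{table Aa-a} in the case of two positive fixed arrows. The underlying graph of $(\widetilde{A}_{2n-1},v\text{-}v)$ is the cyclic quiver $\widetilde{A}_{2n-1}$ with $2n$ vertices. Here $\tau$ fixes the vertices $1$ and $n+1$ and exchanges $i$ with $\tau i$ for the other vertices. No arrow is fixed, so every sign may be taken to be $+1$.

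First, in dimension $\delta=(1,\ldots,1)$, each $V_i$ is one-dimensional, so for every cycle $C$ we have $v(C)=\tr(v(C))$, that is, $C=\tr(C)$. By \Cref{rmq: prepro trace 2} and \Cref{thm:invariant procesi and zubkov}, the ring $\CC[\Mgot_\delta]=\CC[\mu^{-1}(0)]^{G^b_\delta}$ is generated by traces of cycles in $\overline{Q}$. By \Cref{prop:topo cycle}, every cycle is trace-equivalent to $C_1^{k_1}C_3^{k_3}$ or $C_2^{k_2}C_3^{k_3}$. In dimension $\delta$ the trace is multiplicative, so these traces are monomials in $\tr(C_1),\tr(C_2),\tr(C_3)$. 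Hence $\CC[\Mgot_\delta]=\CC[\tr(C_1),\tr(C_2),\tr(C_3)]$.

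Second, \Cref{prop:topo cycle} gives the identity $C_1C_2=C_3^{2n}$ in $\Pi(\widetilde{A}_{2n-1})$. By \Cref{rmq: prepro trace}, it yields $\tr(C_1)\tr(C_2)=\tr(C_3)^{2n}$ on $\mu^{-1}(0)$. The surjection $\CC[x,y,z]\to\CC[\Mgot_\delta]$ sending $x\mapsto\tr(C_1)$, $y\mapsto\tr(C_2)$, $z\mapsto\tr(C_3)$ therefore factors through $\CC[x,y,z]/(xy-z^{2n})$. This gives the closed immersion $\Mgot_\delta\inj\VV(xy-z^{2n})$.

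The only point needing care is that the symmetry imposes no extra vanishing. By \Cref{lem:tracetau}, $\tau(C)\sim_{\tr}(\prod s(a_k))C=C$, since all signs are $+1$. Also $\tau C_1$ is the reverse orientation, so $\tau C_1=C_2$ up to rotation, and \Cref{lem:pqqp} gives $\tr(C_1)=\tr(C_2)$. This does not contradict the claim, which is only a closed immersion; it is not asserted that the image is the whole surface. The subtler check is the one-dimensional orthogonal groups $\OO(V_1)=\OO(V_{n+1})=\{\pm1\}$. These are not connected, so their invariants are still traces of cycles by \Cref{thm:invariant procesi and zubkov}, and nothing else is required. So the main obstacle is only the justification that the generators reduce to the three traces, which is already provided by \Cref{prop:topo cycle}.
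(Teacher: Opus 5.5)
Your first two paragraphs are exactly the paper's proof. The paper repeats the argument of \Cref{table Av-a} with sign $+1$: in dimension $\delta$ every cycle equals its trace, the invariants reduce to $\tr(C_1),\tr(C_2),\tr(C_3)$, and the relation $C_1C_2=C_3^{2n}$ from \Cref{prop:topo cycle} gives the closed immersion. That part is correct.

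The check in your last paragraph is wrong, however, and should be deleted. Here $\tau$ is a reflection of the cycle fixing $1$ and $n+1$. Because it is also contravariant, it sends each arrow of $\overline{Q}$ to an arrow turning in the same rotational direction around the cycle, and $\tau(a_1\cdots a_\ell)=\tau a_\ell\cdots\tau a_1$. So $\tau C_1$ is a rotation of $C_1$ itself, not of $C_2$. It is the central symmetry of \Cref{table A c} that exchanges $C_1$ and $C_2$; in the $v$-$a$ and $a$-$a$ cases the paper uses $\tau C_1\sim_{\tr}\pm C_1$. Here all signs are $+1$, so \Cref{lem:tracetau} gives no relation on $\tr(C_1)$, and in fact $\tr(C_1)\neq\tr(C_2)$.

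To see this concretely, write $x_1,\dots,x_n$ for the coordinates of $\Srep(Q,\delta)$ and $y_1,\dots,y_n$ for the dual arrows. Then $\tr(C_1)=(x_1\cdots x_n)^2$ and $\tr(C_2)=(y_1\cdots y_n)^2$, while $\tr(C_3)=x_1y_1=\cdots=x_ny_n$ on $\mu^{-1}(0)$. For $n=1$ the quotient is just $\CC^2/\{\pm1\}$. If $\tr(C_1)=\tr(C_2)$ held, $\Mgot_\delta$ would embed in the curve $\{x=y,\ x^2=z^{2n}\}$, contradicting $\Mgot_\delta\simeq\VV(xy-z^{2n})$ in \Cref{Thm}. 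The closed immersion you prove is unaffected, but the symmetry imposes no extra vanishing because all signs are $+1$ and $\tau$ preserves $C_1$ and $C_2$ up to rotation — not because it identifies them.
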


\begin{proof}
    The proof follows exactly the same argument as in the previous case $(\widetilde{A}_{2n-2},v\text{-}a)$ with sign $+1$, with $2n-1$ replaced by $2n$.
\end{proof}

\begin{lem}\label{table A c}
    Let $n\geq 1$ and let $(\widetilde{A}_{2n-1},c)$ denote the following quiver:
    $$
    \begin{tikzpicture}[x=0.6pt,y=0.6pt,yscale=-1,xscale=1]
    
    \draw [line width=1.5]  [dash pattern={on 5.63pt off 4.5pt}]  (317.74,105.09) .. controls (336.96,101.79) and (352.11,101.34) .. (379.25,104.6) ;
    \draw [shift={(383.17,105.09)}, rotate = 186.82] [fill={rgb, 255:red, 0; green, 0; blue, 0 }  ][line width=0.08]  [draw opacity=0] (13.4,-6.43) -- (0,0) -- (13.4,6.44) -- (8.9,0) -- cycle    ;
    \draw [line width=1.5]  [dash pattern={on 5.63pt off 4.5pt}]  (322.13,199.1) .. controls (340.05,202.94) and (346.91,202.48) .. (374.63,198.31) ;
    \draw [shift={(378.2,197.77)}, rotate = 171.48] [fill={rgb, 255:red, 0; green, 0; blue, 0 }  ][line width=0.08]  [draw opacity=0] (13.4,-6.43) -- (0,0) -- (13.4,6.44) -- (8.9,0) -- cycle    ;
    \draw [line width=1.5]    (277.4,143.37) .. controls (278.97,135.22) and (283.48,121.18) .. (300.63,112.88) ;
    \draw [shift={(304.07,111.37)}, rotate = 151.07] [fill={rgb, 255:red, 0; green, 0; blue, 0 }  ][line width=0.08]  [draw opacity=0] (13.4,-6.43) -- (0,0) -- (13.4,6.44) -- (8.9,0) -- cycle    ;
    \draw [line width=1.5]    (278.2,162.77) .. controls (278.6,171.22) and (280.48,180.58) .. (293.16,189.32) ;
    \draw [shift={(296.47,191.43)}, rotate = 216.49] [fill={rgb, 255:red, 0; green, 0; blue, 0 }  ][line width=0.08]  [draw opacity=0] (13.4,-6.43) -- (0,0) -- (13.4,6.44) -- (8.9,0) -- cycle    ;
    \draw [line width=1.5]    (398.87,191.1) .. controls (408.88,187.62) and (414.75,175.47) .. (420.46,161.61) ;
    \draw [shift={(421.96,157.96)}, rotate = 112.49] [fill={rgb, 255:red, 0; green, 0; blue, 0 }  ][line width=0.08]  [draw opacity=0] (13.4,-6.43) -- (0,0) -- (13.4,6.44) -- (8.9,0) -- cycle    ;
    \draw [line width=1.5]    (396.4,111.51) .. controls (408.4,116.19) and (416.49,125.89) .. (421.35,138.74) ;
    \draw [shift={(422.62,142.4)}, rotate = 249.65] [fill={rgb, 255:red, 0; green, 0; blue, 0 }  ][line width=0.08]  [draw opacity=0] (13.4,-6.43) -- (0,0) -- (13.4,6.44) -- (8.9,0) -- cycle    ;
    
    \draw (336.76,144.13) node [anchor=north west][inner sep=0.75pt]  [font=\LARGE,color={rgb, 255:red, 155; green, 155; blue, 155 }  ,opacity=1 ]  {$\ast $};
    \draw (270.15,145.55) node [anchor=north west][inner sep=0.75pt]  [font=\normalsize]  {$1$};
    \draw (418.53,143.84) node [anchor=north west][inner sep=0.75pt]  [font=\normalsize]  {$\tau 1$};
    \draw (306.43,99.84) node [anchor=north west][inner sep=0.75pt]  [font=\normalsize]  {$2$};
    \draw (378.53,190.08) node [anchor=north west][inner sep=0.75pt]  [font=\normalsize]  {$\tau 2$};
    \draw (383.29,104.98) node [anchor=north west][inner sep=0.75pt]  [font=\normalsize]  {$n$};
    \draw (298.15,188.7) node [anchor=north west][inner sep=0.75pt]  [font=\normalsize]  {$\tau n$};
\end{tikzpicture}
    $$
    which describes the group representation
    $$\GL(V_1)\times\cdots\times\GL(V_n)
    \curvearrowright
    \Hom(V_1,V_2)\oplus\cdots\oplus\Hom(V_n,V_1^*).$$
    Hence $\Mgot_{\delta}$ is a reduced point, and there is a closed immersion
    $$\Mgot_{2\delta}\inj\VV(x^{n+1}-y^2x-z^2).$$
\end{lem}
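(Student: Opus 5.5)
The plan is to follow the pattern of \Cref{table Av-a} and \Cref{table Aa-a}, but with one difference: in the central case there is no fixed vertex or fixed arrow, so $\tau$ acts on the whole cycle of $|\widetilde{A}_{2n-1}|$ by the rotation of angle $\pi$. First I will record how $\tau$ acts on the three basic cycles of \Cref{prop:topo cycle}.

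A contravariant involution reverses arrows, and the rotation preserves the rotational direction of the cycle. Hence $\tau(C_1)$ is a cycle of length $2n$ that, after rotation by \Cref{lem:pqqp}, runs in the opposite direction. By \Cref{lem:tracetau} this gives $C_1\sim_{\tr}\varepsilon_1 C_2$, with $\varepsilon_1=\pm1$ computed from the signs of the $2n$ arrows. Similarly, $\tau(C_3)$ is the short loop at $\tau 1$, which by \Cref{lem: tlp short loop} gives $C_3\sim_{\tr}\varepsilon_3 C_3$. I expect the sign conventions of \Cref{lem: sliding short loops} (a sign change each time two consecutive arrows of $Q$ have opposite orientation) to force $\varepsilon_3=-1$: the path from $1$ to $\tau 1$ has $n$ such changes, one of them at $1$ and one at $\tau1$ where the orientation flips. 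Concretely, the two arrows $n\to\tau1$ and $1\to\tau n$ are exchanged by $\tau$, and in $V_1\otimes V_1$ this makes $C_3$ act as the product of a matrix with its transpose twisted by a sign. I also expect $\varepsilon_1=-1$, or at least that $C_1C_3\sim_{\tr}-C_1C_3$.

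For dimension $\delta$, every cycle equals its trace. The identities $C_3\sim_{\tr}-C_3$ and $C_1C_2=C_3^{2n}$, together with the vanishing of the odd-sign invariants, should give $\tr(C_1)=\tr(C_2)=\tr(C_3)=0$, so $\Mgot_\delta$ is a reduced point, as in the sign $-1$ cases before. I must double-check here that $C_1\sim_{\tr}-C_1$ in dimension $1$; otherwise only $\tr C_1=\pm\tr C_2$ survives, and $\tr C_1\tr C_2=0$ forces nilpotence but not vanishing of the function.

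For dimension $2\delta$, Cayley--Hamilton and \Cref{rmq: prepro trace 2} reduce $\CC[\Mgot_{2\delta}]$ to the algebra generated by
\[
\tr C_1,\ \tr C_2,\ \tr C_3,\ \det C_1,\ \det C_2,\ \det C_3,\ \tr(C_1C_3),\ \tr(C_2C_3).
\]
Using the relations from $\tau$ I will eliminate generators: $\tr C_3=0$; $\tr C_2=\pm\tr C_1$; $\det C_2=\det C_1$; and $\tr(C_2C_3)=\pm\tr(C_1C_3)$. This leaves
\[
x=\det C_3\ (\text{or } -\det C_3),\qquad y=\tr C_1,\qquad w=\det C_1,\qquad z=\tr(C_1C_3).
\]
Since $\tr C_3=0$, Cayley--Hamilton gives $C_3^2=-\det(C_3)\,\mathrm{Id}$. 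Hence $C_1C_2=C_3^{2n}=(-\det C_3)^n\,\mathrm{Id}$, and taking determinants eliminates $w$ via $w^2=x^{2n}$. Taking traces then expresses $C_2$ as a scalar multiple of $\operatorname{adj}(C_1)$, which should give $w=\pm x^n$.

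The remaining relation should come from the $2\times 2$ identity for a traceless $B$:
\[
\tr(AB)^2=\tr(A)\,\tr(AB^2)-\det(B)\bigl(\tr(A)^2-4\det A\bigr)+\cdots,
\]
or equivalently from $\det(AB-BA)$ or $\det(C_1C_3+\lambda C_3C_1)$. Taking $A=C_1$ and $B=C_3$, and using \Cref{lem: sliding short loops} to relate $C_3C_1$ to $C_1C_3$ up to sign (this is the commutation $C_1C_3=C_3C_1$ of \Cref{prop:topo cycle}, possibly twisted by the sign), I aim to obtain $z^2=x^{n+1}-y^2x$ after rescaling. The degrees are consistent with this: in arrow lengths, $x$ has degree $4$, $y$ degree $2n$ and $z$ degree $2n+2$.

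The main obstacle is this last step. It requires pinning down all the signs $\varepsilon$ precisely, since they decide whether $\tr(C_1C_3)$ survives and whether the quadratic relation gives $D_{n+2}$ or degenerates. My first attempt will be to compute with the explicit normal form $C_3\simeq\begin{pmatrix}0&-x\\1&0\end{pmatrix}$ and $C_1$ a general matrix commuting with $C_3$ up to the sign twist, then read off the relation among $y$, $z$ and $x$ directly.
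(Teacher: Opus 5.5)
Your plan has a genuine gap. Nothing in it produces the identity $\det(C_1)=\det(C_3)^n$, or its rank-one version $\tr(C_1)=\pm\tr(C_3)^n$ in dimension $\delta$, and both conclusions of the lemma depend on it.

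The paper obtains it by cutting $C_1$ at $\tau1$ into two half-paths, $C_1=p\,q$, where $q$ runs from $1$ to $\tau1$ along one side and $p$ runs back along the other side. Then $\det C_1=\det v(p)\det v(q)$. Since $v(\tau q)$ is, up to sign, the transpose of $v(q)$, one has $\det v(q)=\det v(\tau q)$. Here $\tau q$ again goes from $1$ to $\tau1$, but along the same side as $p$. Thus $\det C_1=\det v(p\,\tau q)$, and $p\,\tau q$ is homotopically trivial, hence equal to $\pm C_3^n$ in $\Pi$ by \Cref{cycle homotopically trivial}.

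The involution by itself cannot replace this step. $\tau$ sends $C_1$ to the cycle running the other way, and with all signs $+1$ it gives $C_1\sim_{\tr}C_2$. So your guess $\varepsilon_1=-1$ is wrong, and you never get $C_1\sim_{\tr}-C_1$. The consequences are as follows.
\begin{itemize}
\item In dimension $\delta$ you only reach $\tr(C_1)^2=\tr(C_1C_2)=\pm\tr(C_3)^{2n}=0$. This is exactly the nilpotence problem you flagged, so you cannot conclude that $\Mgot_\delta$ is a reduced point.
\item In dimension $2\delta$ the generator $w=\det C_1$ is never eliminated. Here $C_1C_2=(-C_3^2)^n=x^n1_2$, not $C_3^{2n}$. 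Taking determinants gives $w^2=x^{2n}$, and multiplying by $\operatorname{adj}(C_1)$ and taking traces gives only $y\,(w-x^n)=0$. Nothing tells you $\CC[\mu^{-1}(0)]$ is a domain, so neither relation yields $w=x^n$. You therefore do not even get an embedding into $\CC^3$.
\end{itemize}

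Once this identity is supplied, the rest of your plan is sound, and your suggested route to the last relation is cleaner than the paper's. In $\Pi$ one has $C_1C_3=C_3C_1$ with sign $+$: sliding the short loop once around the cycle picks up one sign at the source $1$ and one at the sink $\tau1$. For $2\times2$ matrices with $\tr C_3=0$,
\[
\det[C_1,C_3]=4\det C_1\det C_3-(\tr C_1)^2\det C_3-\tr(C_1C_3)^2 .
\]
So commutation gives $4xw-xy^2-z^2=0$ in one step. This is what the paper extracts from three applications of Cayley--Hamilton, and with $w=x^n$ it becomes the $D_{n+2}$ equation after rescaling.

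Two of your sign expectations need correcting.
\begin{itemize}
\item $C_1C_3\sim_{\tr}-C_1C_3$ is false. $\tau$ only relates $\tr(C_1C_3)$ to $\pm\tr(C_2C_3)$, and $\tr(C_1C_3)$ is precisely the surviving coordinate $z$.
\item The single sign giving $\tau C_3\sim_{\tr}-C_3$ comes from $\mu_{\tau1}$ at the sink. It does not come from $n$ flips along the path from $1$ to $\tau1$, whose intermediate vertices are consistently oriented.
\end{itemize}
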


\begin{proof}
    In dimension $2\delta$, taking into account the different possible orientations of the quiver, \Cref{prop:topo cycle} gives
    $C_1C_2=(-C_3^2)^n$ in $\Pi(\widetilde{A}_{2n-1})$.
    We are reduced to studying
    $$\CC[\Mgot_{2\delta}] = \CC\Bigg[\begin{matrix}
        \tr(C_1),\tr(C_2),\tr(C_3) \\
        \det(C_1),\det(C_2),\det(C_3) \\
        \tr(C_1C_3),\tr(C_2C_3)
    \end{matrix}\Bigg] \subset \CC[\mu^{-1}(0)].$$
    
    Using \Cref{lem:tracetau}, we obtain
    $C_3 \sim_{\tr} \tau C_3 \sim_{\tr} -C_3$, hence $\tr(C_3)=0$, and
    $C_1\sim_{\tr} C_2,\ C_1C_3\sim_{\tr} C_2C_3$.
    
    \begin{align*}
        \det(C_1) & = \det\left(\cycleCone\right) = \det\left(\cycleConehaut\right)\det\left(\cycleConebas\right) \\
        \intertext{Applying $\tau$ to the right path, we obtain}
        \det(C_1) & = \det\left(\cycleConehaut\right)\det\left(\cycleConebashaut\right) = \det\left(\cycleConehomotopietrivial\right) = \det(C_3)^n.
    \end{align*}
    
    Applying the Cayley--Hamilton theorem three times (see \Cref{Hamilton-Cayley scheme morphism}), we obtain the following relations:
    \begin{align*}
        (C_3C_1)^2 & - \tr(C_3C_1)C_3C_1 + \det(C_3C_1)1_2 = 0, \\
        C_3^2C_1^2 & = (\tr(C_3)C_3 - \det(C_3)1_2)(\tr(C_1)C_1-\det(C_1)1_2) \\
        & = \det(C_3)^{n+1}\cdot 1_2 - \tr(C_1)\det(C_3)C_1, \\
        \det(C_3)^{n+1}\cdot 1_2 & - \tr(C_1)\det(C_3)C_1 - \tr(C_3C_1)C_3C_1 + \det(C_3C_1)1_2 = 0, \\
        & 4\det(C_3)^{n+1} - \tr(C_1)^2\det(C_3) - \tr(C_3C_1)^2 = 0.
    \end{align*}
    Therefore,
    $$\Mgot_{2\delta} \inj \VV(x^{n+1}-y^2x-z^2) \subset \CC^3.$$
    
    In dimension $\delta$, we have $\tr(C_1)=\tr(C_3)^n$ and $\tr(C_3)=-\tr(C_3)=0$ by \Cref{lem:tracetau,lem: tlp short loop}. Hence,
    $\Mgot_\delta$ is a reduced point.
\end{proof}

\subsection{Symmetries of \texorpdfstring{$\widetilde{D}$}{D~tilde} quivers}\label{symmetries Dtilde}

Let $(\widetilde{D}_{2n-2},v)$ (respectively $(\widetilde{D}_{2n-1},a)$), for $n\geq 3$, denote the orthogonal quiver with respectively $2n-1$ (respectively $2n$ if $n\neq\tau n$) vertices:
$$\begin{tikzpicture}[x=0.7pt,y=0.7pt,yscale=-1,xscale=1]

\draw [color={rgb, 255:red, 155; green, 155; blue, 155 }  ,draw opacity=1 ][line width=1.5]    (178,103) -- (178,218) ;
\draw [line width=1.5]    (74.16,125.2) -- (89.95,149.06) ;
\draw [shift={(92.16,152.4)}, rotate = 236.5] [fill={rgb, 255:red, 0; green, 0; blue, 0 }  ][line width=0.08]  [draw opacity=0] (13.4,-6.43) -- (0,0) -- (13.4,6.44) -- (8.9,0) -- cycle    ;
\draw [line width=1.5]  [dash pattern={on 5.63pt off 4.5pt}]  (108.7,161.35) -- (166.33,161.74) ;
\draw [shift={(170.33,161.77)}, rotate = 180.39] [fill={rgb, 255:red, 0; green, 0; blue, 0 }  ][line width=0.08]  [draw opacity=0] (13.4,-6.43) -- (0,0) -- (13.4,6.44) -- (8.9,0) -- cycle    ;
\draw [line width=1.5]  [dash pattern={on 5.63pt off 4.5pt}]  (189.7,160.85) -- (242.7,160.85) ;
\draw [shift={(246.7,160.85)}, rotate = 180] [fill={rgb, 255:red, 0; green, 0; blue, 0 }  ][line width=0.08]  [draw opacity=0] (13.4,-6.43) -- (0,0) -- (13.4,6.44) -- (8.9,0) -- cycle    ;
\draw [color={rgb, 255:red, 155; green, 155; blue, 155 }  ,draw opacity=1 ][line width=1.5]    (470,102) -- (470,217) ;
\draw [line width=1.5]  [dash pattern={on 5.63pt off 4.5pt}]  (395.96,160.3) -- (432.27,160.12) ;
\draw [shift={(436.27,160.1)}, rotate = 179.72] [fill={rgb, 255:red, 0; green, 0; blue, 0 }  ][line width=0.08]  [draw opacity=0] (13.4,-6.43) -- (0,0) -- (13.4,6.44) -- (8.9,0) -- cycle    ;
\draw [line width=1.5]  [dash pattern={on 5.63pt off 4.5pt}]  (505.7,159.35) -- (539.36,159.85) ;
\draw [shift={(543.36,159.91)}, rotate = 180.85] [fill={rgb, 255:red, 0; green, 0; blue, 0 }  ][line width=0.08]  [draw opacity=0] (13.4,-6.43) -- (0,0) -- (13.4,6.44) -- (8.9,0) -- cycle    ;
\draw [line width=1.5]    (452.6,160.1) -- (479.2,160.32) ;
\draw [shift={(483.2,160.35)}, rotate = 180.47] [fill={rgb, 255:red, 0; green, 0; blue, 0 }  ][line width=0.08]  [draw opacity=0] (13.4,-6.43) -- (0,0) -- (13.4,6.44) -- (8.9,0) -- cycle    ;
\draw [line width=1.5]    (265.76,175.2) -- (281.55,199.06) ;
\draw [shift={(283.76,202.4)}, rotate = 236.5] [fill={rgb, 255:red, 0; green, 0; blue, 0 }  ][line width=0.08]  [draw opacity=0] (13.4,-6.43) -- (0,0) -- (13.4,6.44) -- (8.9,0) -- cycle    ;
\draw [line width=1.5]    (366.96,124.8) -- (369.59,128.78) -- (382.75,148.66) ;
\draw [shift={(384.96,152)}, rotate = 236.5] [fill={rgb, 255:red, 0; green, 0; blue, 0 }  ][line width=0.08]  [draw opacity=0] (13.4,-6.43) -- (0,0) -- (13.4,6.44) -- (8.9,0) -- cycle    ;
\draw [line width=1.5]    (559.2,171.6) -- (574.99,195.46) ;
\draw [shift={(577.2,198.8)}, rotate = 236.5] [fill={rgb, 255:red, 0; green, 0; blue, 0 }  ][line width=0.08]  [draw opacity=0] (13.4,-6.43) -- (0,0) -- (13.4,6.44) -- (8.9,0) -- cycle    ;
\draw [line width=1.5]    (558.8,149.6) -- (574.59,125.74) ;
\draw [shift={(576.8,122.4)}, rotate = 123.5] [fill={rgb, 255:red, 0; green, 0; blue, 0 }  ][line width=0.08]  [draw opacity=0] (13.4,-6.43) -- (0,0) -- (13.4,6.44) -- (8.9,0) -- cycle    ;
\draw [line width=1.5]    (366,199.2) -- (381.79,175.34) ;
\draw [shift={(384,172)}, rotate = 123.5] [fill={rgb, 255:red, 0; green, 0; blue, 0 }  ][line width=0.08]  [draw opacity=0] (13.4,-6.43) -- (0,0) -- (13.4,6.44) -- (8.9,0) -- cycle    ;
\draw [line width=1.5]    (263.2,152) -- (278.99,128.14) ;
\draw [shift={(281.2,124.8)}, rotate = 123.5] [fill={rgb, 255:red, 0; green, 0; blue, 0 }  ][line width=0.08]  [draw opacity=0] (13.4,-6.43) -- (0,0) -- (13.4,6.44) -- (8.9,0) -- cycle    ;
\draw [line width=1.5]    (74.8,200.8) -- (90.59,176.94) ;
\draw [shift={(92.8,173.6)}, rotate = 123.5] [fill={rgb, 255:red, 0; green, 0; blue, 0 }  ][line width=0.08]  [draw opacity=0] (13.4,-6.43) -- (0,0) -- (13.4,6.44) -- (8.9,0) -- cycle    ;

\draw (61.29,107.98) node [anchor=north west][inner sep=0.75pt]  [font=\normalsize]  {$1$};
\draw (60.62,200.98) node [anchor=north west][inner sep=0.75pt]  [font=\normalsize]  {$2$};
\draw (93.96,154.32) node [anchor=north west][inner sep=0.75pt]  [font=\normalsize]  {$3$};
\draw (172.29,154.98) node [anchor=north west][inner sep=0.75pt]  [font=\normalsize]  {$n$};
\draw (250.62,155.65) node [anchor=north west][inner sep=0.75pt]  [font=\normalsize]  {$\tau 3$};
\draw (279.29,105.98) node [anchor=north west][inner sep=0.75pt]  [font=\normalsize]  {$\tau 1$};
\draw (278.62,205.65) node [anchor=north west][inner sep=0.75pt]  [font=\normalsize]  {$\tau 2$};
\draw (354.29,107.65) node [anchor=north west][inner sep=0.75pt]  [font=\normalsize]  {$1$};
\draw (353.96,198.32) node [anchor=north west][inner sep=0.75pt]  [font=\normalsize]  {$2$};
\draw (384.62,154.32) node [anchor=north west][inner sep=0.75pt]  [font=\normalsize]  {$3$};
\draw (438.96,154.32) node [anchor=north west][inner sep=0.75pt]  [font=\normalsize]  {$n$};
\draw (484.62,155.65) node [anchor=north west][inner sep=0.75pt]  [font=\normalsize]  {$\tau n$};
\draw (544.96,153.32) node [anchor=north west][inner sep=0.75pt]  [font=\normalsize]  {$\tau 3$};
\draw (569.96,103.32) node [anchor=north west][inner sep=0.75pt]  [font=\normalsize]  {$\tau 1$};
\draw (568.96,200.65) node [anchor=north west][inner sep=0.75pt]  [font=\normalsize]  {$\tau 2$};
\end{tikzpicture}$$
Recall that the fundamental root $\delta$ of a quiver of type $\widetilde{D}$ is the dimension vector
$$\delta=(1,1,2,\ldots,2,1,1),$$
with value $1$ at the vertices of valency one and value $2$ at all other vertices.

We denote $C_1, C_2, C_3$ the following different cycles:
$$
\begin{tikzpicture}[x=0.5pt,y=0.5pt,yscale=-1,xscale=1]

\draw  [color={rgb, 255:red, 0; green, 0; blue, 0 }  ,draw opacity=1 ][fill={rgb, 255:red, 0; green, 0; blue, 0 }  ,fill opacity=1 ][line width=0.75]  (6,205.67) .. controls (6,204.01) and (7.34,202.67) .. (9,202.67) .. controls (10.66,202.67) and (12,204.01) .. (12,205.67) .. controls (12,207.32) and (10.66,208.67) .. (9,208.67) .. controls (7.34,208.67) and (6,207.32) .. (6,205.67) -- cycle ;
\draw  [color={rgb, 255:red, 0; green, 0; blue, 0 }  ,draw opacity=1 ][fill={rgb, 255:red, 0; green, 0; blue, 0 }  ,fill opacity=1 ] (61,205.67) .. controls (61,204.01) and (62.34,202.67) .. (64,202.67) .. controls (65.66,202.67) and (67,204.01) .. (67,205.67) .. controls (67,207.32) and (65.66,208.67) .. (64,208.67) .. controls (62.34,208.67) and (61,207.32) .. (61,205.67) -- cycle ;
\draw  [color={rgb, 255:red, 0; green, 0; blue, 0 }  ,draw opacity=1 ][fill={rgb, 255:red, 0; green, 0; blue, 0 }  ,fill opacity=1 ] (166,205.67) .. controls (166,204.01) and (167.34,202.67) .. (169,202.67) .. controls (170.66,202.67) and (172,204.01) .. (172,205.67) .. controls (172,207.32) and (170.66,208.67) .. (169,208.67) .. controls (167.34,208.67) and (166,207.32) .. (166,205.67) -- cycle ;
\draw  [color={rgb, 255:red, 0; green, 0; blue, 0 }  ,draw opacity=1 ][fill={rgb, 255:red, 0; green, 0; blue, 0 }  ,fill opacity=1 ] (-24,255.67) .. controls (-24,254.01) and (-22.66,252.67) .. (-21,252.67) .. controls (-19.34,252.67) and (-18,254.01) .. (-18,255.67) .. controls (-18,257.32) and (-19.34,258.67) .. (-21,258.67) .. controls (-22.66,258.67) and (-24,257.32) .. (-24,255.67) -- cycle ;
\draw  [color={rgb, 255:red, 0; green, 0; blue, 0 }  ,draw opacity=1 ][fill={rgb, 255:red, 0; green, 0; blue, 0 }  ,fill opacity=1 ][line width=0.75]  (-24,155.67) .. controls (-24,154.01) and (-22.66,152.67) .. (-21,152.67) .. controls (-19.34,152.67) and (-18,154.01) .. (-18,155.67) .. controls (-18,157.32) and (-19.34,158.67) .. (-21,158.67) .. controls (-22.66,158.67) and (-24,157.32) .. (-24,155.67) -- cycle ;
\draw  [color={rgb, 255:red, 0; green, 0; blue, 0 }  ,draw opacity=1 ][fill={rgb, 255:red, 0; green, 0; blue, 0 }  ,fill opacity=1 ] (196,255.67) .. controls (196,254.01) and (197.34,252.67) .. (199,252.67) .. controls (200.66,252.67) and (202,254.01) .. (202,255.67) .. controls (202,257.32) and (200.66,258.67) .. (199,258.67) .. controls (197.34,258.67) and (196,257.32) .. (196,255.67) -- cycle ;
\draw  [color={rgb, 255:red, 0; green, 0; blue, 0 }  ,draw opacity=1 ][fill={rgb, 255:red, 0; green, 0; blue, 0 }  ,fill opacity=1 ] (196,155.67) .. controls (196,154.01) and (197.34,152.67) .. (199,152.67) .. controls (200.66,152.67) and (202,154.01) .. (202,155.67) .. controls (202,157.32) and (200.66,158.67) .. (199,158.67) .. controls (197.34,158.67) and (196,157.32) .. (196,155.67) -- cycle ;
\draw  [color={rgb, 255:red, 0; green, 0; blue, 0 }  ,draw opacity=1 ][fill={rgb, 255:red, 0; green, 0; blue, 0 }  ,fill opacity=1 ] (111,205.67) .. controls (111,204.01) and (112.34,202.67) .. (114,202.67) .. controls (115.66,202.67) and (117,204.01) .. (117,205.67) .. controls (117,207.32) and (115.66,208.67) .. (114,208.67) .. controls (112.34,208.67) and (111,207.32) .. (111,205.67) -- cycle ;
\draw [line width=0.75]    (-0.32,201.07) .. controls (-16.63,191) and (-23.85,167.92) .. (-16.19,163.25) .. controls (-8.92,158.81) and (1.64,177.81) .. (5.07,195.12) ;
\draw [shift={(5.55,197.84)}, rotate = 255.77] [fill={rgb, 255:red, 0; green, 0; blue, 0 }  ][line width=0.08]  [draw opacity=0] (10.72,-5.15) -- (0,0) -- (10.72,5.15) -- (7.12,0) -- cycle    ;
\draw [line width=0.75]    (9.69,216.7) .. controls (8.91,235.85) and (-7.66,253.46) .. (-15.49,249.08) .. controls (-22.93,244.91) and (-11.54,226.39) .. (1.87,214.92) ;
\draw [shift={(4,213.17)}, rotate = 136.41] [fill={rgb, 255:red, 0; green, 0; blue, 0 }  ][line width=0.08]  [draw opacity=0] (10.72,-5.15) -- (0,0) -- (10.72,5.15) -- (7.12,0) -- cycle    ;
\draw [line width=0.75]    (15.41,202.18) .. controls (31.93,192.47) and (55.71,196.87) .. (56.26,205.82) .. controls (56.79,214.33) and (35.05,214.79) .. (18.15,209.72) ;
\draw [shift={(15.52,208.88)}, rotate = 13.63] [fill={rgb, 255:red, 0; green, 0; blue, 0 }  ][line width=0.08]  [draw opacity=0] (10.72,-5.15) -- (0,0) -- (10.72,5.15) -- (7.12,0) -- cycle    ;
\draw  [color={rgb, 255:red, 0; green, 0; blue, 0 }  ,draw opacity=1 ][fill={rgb, 255:red, 0; green, 0; blue, 0 }  ,fill opacity=1 ][line width=0.75]  (301.67,206) .. controls (301.67,204.34) and (303.01,203) .. (304.67,203) .. controls (306.32,203) and (307.67,204.34) .. (307.67,206) .. controls (307.67,207.66) and (306.32,209) .. (304.67,209) .. controls (303.01,209) and (301.67,207.66) .. (301.67,206) -- cycle ;
\draw  [color={rgb, 255:red, 0; green, 0; blue, 0 }  ,draw opacity=1 ][fill={rgb, 255:red, 0; green, 0; blue, 0 }  ,fill opacity=1 ][line width=0.75]  (356.67,206) .. controls (356.67,204.34) and (358.01,203) .. (359.67,203) .. controls (361.32,203) and (362.67,204.34) .. (362.67,206) .. controls (362.67,207.66) and (361.32,209) .. (359.67,209) .. controls (358.01,209) and (356.67,207.66) .. (356.67,206) -- cycle ;
\draw  [color={rgb, 255:red, 0; green, 0; blue, 0 }  ,draw opacity=1 ][fill={rgb, 255:red, 0; green, 0; blue, 0 }  ,fill opacity=1 ][line width=0.75]  (461.67,206) .. controls (461.67,204.34) and (463.01,203) .. (464.67,203) .. controls (466.32,203) and (467.67,204.34) .. (467.67,206) .. controls (467.67,207.66) and (466.32,209) .. (464.67,209) .. controls (463.01,209) and (461.67,207.66) .. (461.67,206) -- cycle ;
\draw  [color={rgb, 255:red, 0; green, 0; blue, 0 }  ,draw opacity=1 ][fill={rgb, 255:red, 0; green, 0; blue, 0 }  ,fill opacity=1 ] (271.67,256) .. controls (271.67,254.34) and (273.01,253) .. (274.67,253) .. controls (276.32,253) and (277.67,254.34) .. (277.67,256) .. controls (277.67,257.66) and (276.32,259) .. (274.67,259) .. controls (273.01,259) and (271.67,257.66) .. (271.67,256) -- cycle ;
\draw  [color={rgb, 255:red, 0; green, 0; blue, 0 }  ,draw opacity=1 ][fill={rgb, 255:red, 0; green, 0; blue, 0 }  ,fill opacity=1 ] (271.67,156) .. controls (271.67,154.34) and (273.01,153) .. (274.67,153) .. controls (276.32,153) and (277.67,154.34) .. (277.67,156) .. controls (277.67,157.66) and (276.32,159) .. (274.67,159) .. controls (273.01,159) and (271.67,157.66) .. (271.67,156) -- cycle ;
\draw  [color={rgb, 255:red, 0; green, 0; blue, 0 }  ,draw opacity=1 ][fill={rgb, 255:red, 0; green, 0; blue, 0 }  ,fill opacity=1 ] (491.67,256) .. controls (491.67,254.34) and (493.01,253) .. (494.67,253) .. controls (496.32,253) and (497.67,254.34) .. (497.67,256) .. controls (497.67,257.66) and (496.32,259) .. (494.67,259) .. controls (493.01,259) and (491.67,257.66) .. (491.67,256) -- cycle ;
\draw  [color={rgb, 255:red, 0; green, 0; blue, 0 }  ,draw opacity=1 ][fill={rgb, 255:red, 0; green, 0; blue, 0 }  ,fill opacity=1 ][line width=0.75]  (491.67,156) .. controls (491.67,154.34) and (493.01,153) .. (494.67,153) .. controls (496.32,153) and (497.67,154.34) .. (497.67,156) .. controls (497.67,157.66) and (496.32,159) .. (494.67,159) .. controls (493.01,159) and (491.67,157.66) .. (491.67,156) -- cycle ;
\draw  [color={rgb, 255:red, 0; green, 0; blue, 0 }  ,draw opacity=1 ][fill={rgb, 255:red, 0; green, 0; blue, 0 }  ,fill opacity=1 ][line width=0.75]  (406.67,206) .. controls (406.67,204.34) and (408.01,203) .. (409.67,203) .. controls (411.32,203) and (412.67,204.34) .. (412.67,206) .. controls (412.67,207.66) and (411.32,209) .. (409.67,209) .. controls (408.01,209) and (406.67,207.66) .. (406.67,206) -- cycle ;
\draw [line width=0.75]    (310.4,199.43) .. controls (328.07,199.1) and (350.73,199.1) .. (360.4,198.77) .. controls (370.07,198.43) and (398.07,199.43) .. (408.73,199.43) .. controls (419.4,199.43) and (450.73,205.43) .. (461.73,197.77) .. controls (472.73,190.1) and (483.73,141.43) .. (497.73,148.1) .. controls (511.73,154.77) and (483.73,207.77) .. (470.07,211.77) .. controls (456.4,215.77) and (396.4,212.43) .. (381.4,212.43) .. controls (367.15,212.43) and (332.44,216.65) .. (315,211.06) ;
\draw [shift={(312.4,210.1)}, rotate = 16.35] [fill={rgb, 255:red, 0; green, 0; blue, 0 }  ][line width=0.08]  [draw opacity=0] (10.72,-5.15) -- (0,0) -- (10.72,5.15) -- (7.12,0) -- cycle    ;
\draw  [color={rgb, 255:red, 0; green, 0; blue, 0 }  ,draw opacity=1 ][fill={rgb, 255:red, 0; green, 0; blue, 0 }  ,fill opacity=1 ][line width=0.75]  (594,205.67) .. controls (594,204.01) and (595.34,202.67) .. (597,202.67) .. controls (598.66,202.67) and (600,204.01) .. (600,205.67) .. controls (600,207.32) and (598.66,208.67) .. (597,208.67) .. controls (595.34,208.67) and (594,207.32) .. (594,205.67) -- cycle ;
\draw  [color={rgb, 255:red, 0; green, 0; blue, 0 }  ,draw opacity=1 ][fill={rgb, 255:red, 0; green, 0; blue, 0 }  ,fill opacity=1 ][line width=0.75]  (649,205.67) .. controls (649,204.01) and (650.34,202.67) .. (652,202.67) .. controls (653.66,202.67) and (655,204.01) .. (655,205.67) .. controls (655,207.32) and (653.66,208.67) .. (652,208.67) .. controls (650.34,208.67) and (649,207.32) .. (649,205.67) -- cycle ;
\draw  [color={rgb, 255:red, 0; green, 0; blue, 0 }  ,draw opacity=1 ][fill={rgb, 255:red, 0; green, 0; blue, 0 }  ,fill opacity=1 ][line width=0.75]  (754,205.67) .. controls (754,204.01) and (755.34,202.67) .. (757,202.67) .. controls (758.66,202.67) and (760,204.01) .. (760,205.67) .. controls (760,207.32) and (758.66,208.67) .. (757,208.67) .. controls (755.34,208.67) and (754,207.32) .. (754,205.67) -- cycle ;
\draw  [color={rgb, 255:red, 0; green, 0; blue, 0 }  ,draw opacity=1 ][fill={rgb, 255:red, 0; green, 0; blue, 0 }  ,fill opacity=1 ][line width=0.75]  (564,255.67) .. controls (564,254.01) and (565.34,252.67) .. (567,252.67) .. controls (568.66,252.67) and (570,254.01) .. (570,255.67) .. controls (570,257.32) and (568.66,258.67) .. (567,258.67) .. controls (565.34,258.67) and (564,257.32) .. (564,255.67) -- cycle ;
\draw  [color={rgb, 255:red, 0; green, 0; blue, 0 }  ,draw opacity=1 ][fill={rgb, 255:red, 0; green, 0; blue, 0 }  ,fill opacity=1 ][line width=0.75]  (564,155.67) .. controls (564,154.01) and (565.34,152.67) .. (567,152.67) .. controls (568.66,152.67) and (570,154.01) .. (570,155.67) .. controls (570,157.32) and (568.66,158.67) .. (567,158.67) .. controls (565.34,158.67) and (564,157.32) .. (564,155.67) -- cycle ;
\draw  [color={rgb, 255:red, 0; green, 0; blue, 0 }  ,draw opacity=1 ][fill={rgb, 255:red, 0; green, 0; blue, 0 }  ,fill opacity=1 ][line width=0.75]  (784,255.67) .. controls (784,254.01) and (785.34,252.67) .. (787,252.67) .. controls (788.66,252.67) and (790,254.01) .. (790,255.67) .. controls (790,257.32) and (788.66,258.67) .. (787,258.67) .. controls (785.34,258.67) and (784,257.32) .. (784,255.67) -- cycle ;
\draw  [color={rgb, 255:red, 0; green, 0; blue, 0 }  ,draw opacity=1 ][fill={rgb, 255:red, 0; green, 0; blue, 0 }  ,fill opacity=1 ][line width=0.75]  (784,155.67) .. controls (784,154.01) and (785.34,152.67) .. (787,152.67) .. controls (788.66,152.67) and (790,154.01) .. (790,155.67) .. controls (790,157.32) and (788.66,158.67) .. (787,158.67) .. controls (785.34,158.67) and (784,157.32) .. (784,155.67) -- cycle ;
\draw  [color={rgb, 255:red, 0; green, 0; blue, 0 }  ,draw opacity=1 ][fill={rgb, 255:red, 0; green, 0; blue, 0 }  ,fill opacity=1 ][line width=0.75]  (699,205.67) .. controls (699,204.01) and (700.34,202.67) .. (702,202.67) .. controls (703.66,202.67) and (705,204.01) .. (705,205.67) .. controls (705,207.32) and (703.66,208.67) .. (702,208.67) .. controls (700.34,208.67) and (699,207.32) .. (699,205.67) -- cycle ;
\draw [line width=0.75]    (602.73,199.1) .. controls (620.4,198.77) and (643.07,198.77) .. (652.73,198.43) .. controls (662.4,198.1) and (690.4,199.1) .. (701.07,199.1) .. controls (711.73,199.1) and (750.32,194.67) .. (761.52,199.87) .. controls (772.72,205.07) and (804.72,253.07) .. (791.12,263.47) .. controls (777.52,273.87) and (767.92,216.67) .. (753.52,212.67) .. controls (739.12,208.67) and (688.73,212.1) .. (673.73,212.1) .. controls (659.48,212.1) and (624.78,216.31) .. (607.34,210.73) ;
\draw [shift={(604.73,209.77)}, rotate = 16.35] [fill={rgb, 255:red, 0; green, 0; blue, 0 }  ][line width=0.08]  [draw opacity=0] (10.72,-5.15) -- (0,0) -- (10.72,5.15) -- (7.12,0) -- cycle    ;

\draw (-105,164.07) node [anchor=north west][inner sep=0.75pt]    {$C_{1} \ =$};
\draw (-106,226.07) node [anchor=north west][inner sep=0.75pt]    {$C'_{1} \ =\ $};
\draw (54,216.57) node [anchor=north west][inner sep=0.75pt]    {$=\ C_{3}$};
\draw (209.67,196.4) node [anchor=north west][inner sep=0.75pt]    {$,C_{2} \ =$};
\draw (516,194.73) node [anchor=north west][inner sep=0.75pt]    {$,C'_{2} \ =$};
\end{tikzpicture}
$$
in the exceptional case where $n = \tau n = 3$, we define $C_3 = C_1 + C_1'$. Note that $\ell(C_1) = \ell(C_1') = \ell(C_3)=2$ and $\ell(C_2) = \ell(C_2')= 2L$ where :
$$ L = \begin{cases}
     2n-5& \text{ if } n = \tau n, \\
    2n-4 & \text{ if } n \ne \tau n.
\end{cases}$$

As in the study of symmetric $\widetilde{A}$ quivers, our goal is to classify cycles in the doubled quiver up to trace-equivalence.
In the present case, the presence of trivalent vertices requires an additional lemma describing the behavior of short loops $C_3$ and $\tau C_3$ at these vertices.

\begin{lem}\label{lem:relation phasme}
    The following relations hold in both $\Pi(\widetilde{D}_{2n-2})$ and $\Pi(\widetilde{D}_{2n-1})$.
    \begin{align*}
    \begin{matrix}
         C_1' = C_3 - C_1, & C_2'= C_3^L - C_2, \\
        C_1C_3 = C_3C_1',& C_2C_3 = C_3C_2', \\
        C_1'C_3 = C_3C_1, & C_2'C_3 = C_3C_2, \\
        \\
    \end{matrix}\quad 
    \begin{matrix}
        C_1^2 = (C_1')^{2}= (\tau C_1)^2 = (\tau C_1')^2= 0, \\
        (C_1C_2C_3)^2 = C_1C_2^2C_3^3-C_1C_2C_1C_3^{L+2} + (C_1C_2)^2C_3^2, \\
        C_2^2 = \begin{cases}
        C_2C_3^L & \text{ if } L \text{ even}, \\
        0 & \text{ if } L \text{ odd}.\\
    \end{cases}\\
    \end{matrix}
    \end{align*}
\end{lem}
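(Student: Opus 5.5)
The relations all live in $\Pi(Q)$ and concern paths based at the trivalent vertex $3$. I will use only the defining relations $\mu_i$ of \Cref{defn:preprojectiv algebra} together with \Cref{lem: sliding short loops} and \Cref{cycle homotopically trivial}.

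\emph{Relations at the leaves and at the trivalent vertex.} Write $a\colon 1\to 3$, $b\colon 2\to 3$ and $c$ for the arrow leaving $3$ towards $n$. Then $C_1=aa^*$, $C_1'=bb^*$, and $C_3$ is the short loop along $c$, up to the sign fixed by orientation. At the leaf $1$ the relation $\mu_1$ gives $a^*a=0$, so $C_1^2=a(a^*a)a^*=0$. The same argument gives $(C_1')^2=0$, and applying it at the leaves $\tau1,\tau2$ gives $(\tau C_1)^2=(\tau C_1')^2=0$. The relation $\mu_3$ reads $aa^*+bb^*=\pm cc^*$, which is exactly $C_1'=C_3-C_1$; in the case $n=\tau n=3$ this is the definition of $C_3$. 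For the commutations, the product $C_1C_3$ has the form $aa^*(\cdots)$. I will expand $C_3=C_1+C_1'$ and use $C_1^2=0$:
\[
C_1C_3=C_1C_1',\qquad C_3C_1'=C_1C_1'+(C_1')^2=C_1C_1'.
\]
Hence $C_1C_3=C_3C_1'$, and the symmetric computation gives $C_1'C_3=C_3C_1$.

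\emph{Relations for the long cycle $C_2$.} $C_2$ runs from $3$ along the chain to the other trivalent vertex $\tau3$, goes around one leaf there, and comes back; $C_2'$ uses the other leaf. Every interior vertex of the chain has exactly two incident arrows in $Q$ (for $\widetilde D_{2n-1}$ this includes the fixed arrow $n\to\tau n$). So \Cref{lem: sliding short loops} lets me slide short loops along the chain, and \Cref{cycle homotopically trivial} identifies homotopically trivial back-and-forth paths with powers of short loops. Applying $\mu_{\tau3}$ at the far end splits the short loop there as $\tau C_1+\tau C_1'$, and sliding back gives $C_2+C_2'=C_3^L$. The exponent $L$ is the length count coming from the chain, and the signs should cancel by the orientation bookkeeping of \Cref{lem: sliding short loops}.

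For $C_2C_3=C_3C_2'$, I will slide the $C_3$ factor through the chain to $\tau3$, where it becomes $\tau C_1+\tau C_1'$. Since $(\tau C_1)^2=0$, only the cross term survives, and sliding back turns $C_2$ into $C_2'$. The identity $C_2'C_3=C_3C_2$ follows in the same way.

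\emph{Remaining relations.} For $C_2^2$, I will write $C_2=p\,(\tau C_1)\,q$ with $q$ the return path, so that $C_2^2=p\,\tau C_1\,(qp)\,\tau C_1\,q$. Here $qp$ is a homotopically trivial cycle at $\tau3$, hence equals $\pm(\tau C_1+\tau C_1')^{L'}$. Among the terms, $\tau C_1(\tau C_1+\tau C_1')^{L'}\tau C_1$ only keeps the alternating words, and $\tau C_1\tau C_1'\tau C_1\cdots$ must begin and end with $\tau C_1$. This forces a parity condition: for one parity the product collapses to $C_2C_3^L$, and for the other it vanishes. This is where the two cases for $L$ come from.

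The identity for $(C_1C_2C_3)^2$ comes from expanding with the commutation rules above: I move $C_3$ to the right, exchanging $C_1\leftrightarrow C_1'$ and $C_2\leftrightarrow C_2'$. Then I substitute $C_1'=C_3-C_1$ and $C_2'=C_3^L-C_2$, expand, and apply $C_1^2=0$.

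I expect the main obstacle to be the sign and parity bookkeeping in $C_2^2$ and in $C_2+C_2'=C_3^L$: tracking the orientations along the chain, and the exact exponent produced by the homotopically trivial middle cycle. I would first check small cases ($n=3,4$) by hand to fix the signs.
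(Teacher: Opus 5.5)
Your proposal is correct and follows essentially the same route as the paper. The shared steps are: the leaf relations for $C_1^2=(C_1')^2=(\tau C_1)^2=(\tau C_1')^2=0$; $\mu_3$ for $C_1'=C_3-C_1$; the expansion $C_1C_3=C_1C_1'=C_3C_1'$; sliding along the bivalent chain and $\mu_{\tau3}$ for $C_2'=C_3^L-C_2$ and $C_2C_3=C_3C_2'$; the decomposition $C_2^2=p\,\tau C_1\,(qp)\,\tau C_1\,q$ with $qp=(\tau C_1+\tau C_1')^{L-1}$ followed by the alternating-word count; and the commute-then-substitute expansion for $(C_1C_2C_3)^2$.

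Two small points when you write it out. In the $C_2^2$ step, the surviving alternating word has $L+1$ letters and must begin and end with $\tau C_1$, which forces $L$ even for the nonzero case. In the $(C_1C_2C_3)^2$ step, $C_1^2=0$ is not used; what closes the computation is a second application of $C_3C_2=C_2'C_3$ to the middle term $C_1C_2C_3C_2C_3^2$.
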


\begin{proof}
    The relations $C_1' = C_3 - C_1$, $C_2' = C_3^L - C_2$, and
    $C_1^2 = (C_1')^2 = (\tau C_1)^2 = (\tau C_1')^2 = 0$
    follow from the moment map relations
    $\mu_3,\mu_{\tau3},\mu_1,\mu_2,\mu_{\tau1},\mu_{\tau2}$
    (see \Cref{rmq: relation moment}).
    
    The relation $C_1C_3 = C_3C_1'$ is obtained by combining
    $C_1^2 = (C_1')^2$ with $C_1' = C_3-C_1$.
    Similarly, one obtains the relations
    $C_2C_3=C_3C_2'$, $C_1'C_3=C_3C_1$, and
    $C_2'C_3=C_3C_2$.
    
    The relation
    $$(C_1C_2C_3)^2= C_1C_2^2C_3^3- C_1C_2C_1C_3^{L+2}+ (C_1C_2)^2C_3^2$$
    is obtained by
    \begin{align*}
        (C_1C_2C_3)^2 & = C_1C_2C_1'C_2'C_3^2 = C_1C_2(C_3 - C_1)(C_3^{L} - C_2)C_3^2 \\
        & = C_1C_2C_3^{L+3} - C_1C_2C_3C_2C_3^2 -C_1C_2C_1C_3^{L+2} + (C_1C_2)^2C_3^2 \\
        & = C_1C_2C_3^{L+3} - C_1C_2C_2'C_3^3 -C_1C_2C_1C_3^{L+2} + (C_1C_2)^2C_3^2 \\
        & = C_1C_2C_3^{L+3} - C_1C_2(C_3^L - C_2)C_3^3 -C_1C_2C_1C_3^{L+2} + (C_1C_2)^2C_3^2\\
        & = C_1C_2C_3^{L+3} - C_1C_2C_3^{L+3} + C_1C_2^2C_3^3 -C_1C_2C_1C_3^{L+2} + (C_1C_2)^2C_3^2, \\
        & = C_1C_2^2C_3^3 -C_1C_2C_1C_3^{L+2} + (C_1C_2)^2C_3^2,
    \end{align*}
    The last relation, concerning $C_2^2$, is
    \begin{align*}
        C_2^2 & = \left(\phasmeCtwo\right)^2 = \left(\riri\right) \left(\donald\right) \left(\fifi\right) \\
        & = \left(\riri\right) (\tau C_3)^{L-1} \left(\fifi\right) \\
        & = \left(\riri\right) \begin{cases}
            \left(\fifi\right) C_3^{L-1}& \text{ if } L-1\text{ even } \\
            \left(\loulou\right) C_3^{L-1}& \text{ if } L-1\text{ odd } \\
        \end{cases}\\
        & = \begin{cases}
            0& \text{ if } L\text{ odd } \\
            \left(\picsou\right) C_3^{L-1}& \text{ if } L\text{ even } \\
        \end{cases}\\
        & = \begin{cases}
            0& \text{ if } L\text{ odd } \\
            \left(\zaza\right) C_3^{L-1}& \text{ if } L\text{ even }\\
        \end{cases} = \begin{cases}
            0& \text{ if } L\text{ odd } \\
            C_2 C_3^{L}& \text{ if } L\text{ even. }\\
        \end{cases} \qedhere
    \end{align*}
\end{proof}

\begin{prop}\label{generation phasme}
    Any cycle $C$ contained in the doubled quiver of $\widetilde{D}_{2n-2}$ or $\widetilde{D}_{2n-1}$ $(n\geq 3)$ is trace-equivalent to a linear combination of cycles 
    \begin{align}\label{simple word}
        (C_1C_2)^{k_1}C_3^{k_2},C_2(C_1C_2)^{k_1}C_3^{k_2},C_1(C_2C_1)^{k_1}C_3^{k_2},(C_2C_1)^{k_1}C_3^{k_2} \text{ for pairs } (k_1,k_2)\in\NN^2.
    \end{align}
    Therefore, for any dimension vector $\dd$, we have:
    $$\CC[\Mgot_{\dd}]= \CC\left[
        \begin{matrix}
        \tr((C_1C_2)^{k_1}C_3^{k_2}),&
        \tr(C_2(C_1C_2)^{k_1}C_3^{k_2}),\\
        \tr(C_1(C_2C_1)^{k_1}C_3^{k_2}),&
        \tr((C_2C_1)^{k_1}C_3^{k_2})
        \end{matrix}
        \ \middle|\ k_1,k_2\in\NN
    \right] \subset \CC[\mu^{-1}(0)].$$
\end{prop}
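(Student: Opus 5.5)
We follow the strategy of \Cref{prop:topo cycle}, with the two ends of the $\widetilde{D}$ diagram replacing the single loop of the cyclic quiver. First, using \Cref{lem:pqqp} and the sliding lemmas (\Cref{lem: sliding short loops,lem: tlp short loop}) along the chain of bivalent vertices $3,\ldots,n,\ldots,\tau 3$, we reduce to cycles based at vertex $3$. Such a cycle splits into excursions of three kinds: into the fork $\{1,2\}$ at vertex $3$, along the chain, and into the fork $\{\tau 1,\tau 2\}$ at vertex $\tau 3$.

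Near vertex $3$, any excursion returning to $3$ is a word in the short loops through $1$ and $2$, namely $C_1$ and $C_1'$. By \Cref{lem:relation phasme} we have $C_1'=C_3-C_1$ and $C_1^2=(C_1')^2=0$, so each such excursion is a polynomial in $C_1$ and $C_3$. Along the chain, \Cref{cycle homotopically trivial} handles the homotopically trivial pieces: these become powers of $C_3$ (or of $\tau C_3$, which slides back to $C_3$). The part of a cycle that genuinely reaches $\tau 3$ and turns around through $\tau 1$ or $\tau 2$ is $C_2$ or $C_2'=C_3^L-C_2$, up to sliding short loops. The upshot is that every cycle at $3$ equals, in $\Pi(Q)$, a noncommutative polynomial in $C_1,C_2,C_3$.

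Next we normalize these words. The relations $C_iC_3=C_3C_i'$ and $C_i'C_3=C_3C_i$ let us push all copies of $C_3$ to the right, at the cost of replacing $C_i$ by $C_i'$. We then re-expand $C_i'$ using $C_1'=C_3-C_1$ and $C_2'=C_3^L-C_2$. This rewriting should terminate by induction on the number of $C_1,C_2$ letters, since each substitution either preserves that number or strictly decreases it. After the rewriting we are left with words $W\,C_3^{k}$, where $W$ is a word in $C_1,C_2$. Then $C_1^2=0$ and $C_2^2\in\{0,\,C_2C_3^L\}$ (again from \Cref{lem:relation phasme}) remove repeated letters, so $W$ alternates between $C_1$ and $C_2$. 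These alternating words are exactly the four families in \eqref{simple word}.

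Finally, \Cref{rmq: prepro trace} turns the equalities in $\Pi(Q)$ into trace-equivalences. Combining this with \Cref{rmq: prepro trace 2}, i.e.\ \Cref{thm:invariant procesi and zubkov} together with linear reductivity of $G^b_\dd$, the traces of these words generate $\CC[\Mgot_\dd]$.

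The main obstacle is the first step: showing rigorously that every cycle is a polynomial in $C_1,C_2,C_3$ in $\Pi(Q)$. The sliding lemmas require exactly two incident arrows at each intermediate vertex, and this hypothesis fails at the trivalent vertices $3$ and $\tau 3$. We would handle this by induction on cycle length. Each time the cycle passes a trivalent vertex, we cut it there and use the relations $\mu_3,\mu_{\tau 3}$ to rewrite the local short loop as a combination of $C_1,C_1'$ (or their $\tau$-images) and short loops along the chain. For $(\widetilde D_{2n-1},a)$ we also need the fixed arrow; there we invoke \Cref{lem:tracetau} to identify $\tau C_3$ with $\pm C_3$ up to trace. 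Termination of the right-pushing of $C_3$ is a secondary issue, which we would settle by the induction on the number of $C_1,C_2$ letters described above.
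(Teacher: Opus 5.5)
Your proposal follows the paper's proof essentially step for step. Its two stages are the same. First, every cycle is reduced, up to trace-equivalence, to the subalgebra generated by $C_1,C_1',C_2,C_2',C_3$, by decomposing at the trivalent vertices and using \Cref{cycle homotopically trivial} with the relations of \Cref{lem:relation phasme}. Second, an induction on the number of letters among $C_1,C_1',C_2,C_2'$ pushes $C_3$ to the right, substitutes $C_1'=C_3-C_1$ and $C_2'=C_3^L-C_2$, and removes repeated letters via $C_1^2=0$ and $C_2^2\in\{0,C_2C_3^L\}$.

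One small correction concerns cycles that meet $\tau 3$ but never $3$ (e.g.\ $\tau C_1(\tau C_3)^m$ for small $m$): \Cref{lem:pqqp} and sliding alone do not bring these to vertex $3$. The paper uses \Cref{lem:tracetau} for exactly this in both families, not only for $(\widetilde{D}_{2n-1},a)$; there the fixed arrow sits between bivalent vertices and plays no special role in the sliding.
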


\begin{proof}
    We proceed in two steps. First, we prove that every cycle in the doubled quiver is trace-equivalent to an element of
    $$\CC\langle C_1,C_1',C_2,C_2',C_3\rangle \subset \Pi(Q).$$
    Second, we prove by induction that any such cycle can be reduced to a linear combination of terms described in \eqref{simple word}.

    Using \Cref{cycle homotopically trivial,lem: tlp short loop}, any cycle is trace-equivalent to another cycle passing through $3$ or $\tau 3$. By \Cref{lem:tracetau,lem:pqqp}, we may further assume that it is based at $3$.
    For the first step, it suffices to consider cycles passing through $3$ exactly once. Let $C$ be such a cycle.
    If $C$ passes through $1$ or $2$, then it is equal to $C_1$ or $C_1'$. Otherwise, $C$ can be written as
    $$C = p_1 q_1 p_2 \cdots q_r p_{r+1},$$
    where the $p_k$ are paths between $3$ and $\tau 3$, and the $q_k$ are words in $\tau C_1$ and $\tau C_1'$.
    Since the paths $p_2,\ldots,p_r$ lie entirely on the line between $3$ and $\tau 3$, \Cref{cycle homotopically trivial} implies that each of them is equal to a power of $\tau C_3$.
    Using the relations from \Cref{lem:relation phasme}, we obtain
    $$ C = p_1 q'_1 q'_2 \cdots q'_r p_{r+1} C_3^{k}, $$
    for some integer $k$. Since each $q'_i$ is again either $\tau C_1$ or $\tau C_1'$, and $(\tau C_1)^2 = (\tau C_1')^2 = 0 \in \Pi(\widetilde{D}_n)$ (see \Cref{lem:relation phasme}), the cycle $C$ can be reduced to a linear combination of terms of the form $C_2C_3^{k}$ or $C_2'C_3^{k}$, for various integers $k$.

    We now proceed to the second step. Any cycle $C$ obtained as a product of $C_1,C_1',C_2,C_2',C_3$ can be written as a linear combination of terms described in \eqref{simple word}.
    We prove this by induction on the number $r$ of cycles among $C_1,C_1',C_2,C_2'$ appearing in $C$.
    If $r=0$, then $C$ is a power of $C_3$.
    Assume that a cycle $C$ contains $r+1$ cycles among $C_1,C_1',C_2,C_2'$. By \Cref{lem:relation phasme}, we may write
    $$C = C' C_3^{k},$$
    where $k \in \NN$, $C'$ contains no occurrence of $C_3$ and is a product of $r+1$ sub-cycles among $C_1,C_1',C_2,C_2'$.
    Using the relations of \Cref{lem:relation phasme},
    $$C_1' = C_3 - C_1,\qquad C_2' = C_3^L - C_2,$$
    we can eliminate every occurrence of $C_1'$ and $C_2'$ in $C'$.
    Expanding the resulting expression, we obtain a term $\pm C'' C_3^{k}$, where $C''$ is a product of $C_1$ and $C_2$, together with a linear combination of cycles to which the induction hypothesis applies.
    If $C''$ contains two consecutive occurrences of $C_1$, or two consecutive occurrences of $C_2$ with $L$ even, then $C''=0 \in \Pi(Q)$ by \Cref{lem:relation phasme}.
    If $C''$ contains two consecutive occurrences of $C_2$ with $L$ odd, then \Cref{lem:relation phasme} gives $C_2^2 = C_2 C_3^L$, and we can apply the induction hypothesis to the resulting term.
    
    As explained in \Cref{thm:invariant procesi and zubkov} and \Cref{rmq: prepro trace 2}, the ring
    $$\CC[\Mgot_{\dd}]=\CC[\mu^{-1}(0)]^{G^b_{\dd}}$$
    is generated by the cycle invariants defined in \Cref{defn:invariant tr relation}.
    Moreover, using the relations available in the preprojective algebra (see \Cref{rmq: prepro trace}), we deduce that the cycles in \eqref{simple word} also generate the invariant ring:
    $$\CC[\Mgot_{\dd}]= \CC\left[
        \begin{matrix}
        \tr((C_1C_2)^{k_1}C_3^{k_2}),&
        \tr(C_2(C_1C_2)^{k_1}C_3^{k_2}),\\
        \tr(C_1(C_2C_1)^{k_1}C_3^{k_2}),&
        \tr((C_2C_1)^{k_1}C_3^{k_2})
        \end{matrix}
        \ \middle|\ k_1,k_2\in\NN
    \right] \subset \CC[\mu^{-1}(0)].$$
\end{proof}

\begin{lem}\label{table Da}
    For $n \geq 3$, let $(\widetilde{D}_{2n-1},a)$ denote the following orthogonal quivers:
    $$
    \begin{tikzpicture}[x=0.7pt,y=0.7pt,yscale=-1,xscale=1]
    
    \draw [color={rgb, 255:red, 155; green, 155; blue, 155 }  ,draw opacity=1 ][line width=1.5]    (353,95) -- (353,210) ;
    \draw [line width=1.5]  [dash pattern={on 5.63pt off 4.5pt}]  (278.96,153.3) -- (315.27,153.12) ;
    \draw [shift={(319.27,153.1)}, rotate = 179.72] [fill={rgb, 255:red, 0; green, 0; blue, 0 }  ][line width=0.08]  [draw opacity=0] (13.4,-6.43) -- (0,0) -- (13.4,6.44) -- (8.9,0) -- cycle    ;
    \draw [line width=1.5]  [dash pattern={on 5.63pt off 4.5pt}]  (385.6,152.77) -- (422.36,152.89) ;
    \draw [shift={(426.36,152.91)}, rotate = 180.2] [fill={rgb, 255:red, 0; green, 0; blue, 0 }  ][line width=0.08]  [draw opacity=0] (13.4,-6.43) -- (0,0) -- (13.4,6.44) -- (8.9,0) -- cycle    ;
    \draw [line width=1.5]    (335.6,153.1) -- (365.27,153.1) ;
    \draw [shift={(369.27,153.1)}, rotate = 180] [fill={rgb, 255:red, 0; green, 0; blue, 0 }  ][line width=0.08]  [draw opacity=0] (13.4,-6.43) -- (0,0) -- (13.4,6.44) -- (8.9,0) -- cycle    ;
    \draw [line width=1.5]    (248.66,118.7) -- (264.45,142.56) ;
    \draw [shift={(266.66,145.9)}, rotate = 236.5] [fill={rgb, 255:red, 0; green, 0; blue, 0 }  ][line width=0.08]  [draw opacity=0] (13.4,-6.43) -- (0,0) -- (13.4,6.44) -- (8.9,0) -- cycle    ;
    \draw [line width=1.5]    (441.66,164.7) -- (457.45,188.56) ;
    \draw [shift={(459.66,191.9)}, rotate = 236.5] [fill={rgb, 255:red, 0; green, 0; blue, 0 }  ][line width=0.08]  [draw opacity=0] (13.4,-6.43) -- (0,0) -- (13.4,6.44) -- (8.9,0) -- cycle    ;
    \draw [line width=1.5]    (248.16,190.4) -- (263.95,166.54) ;
    \draw [shift={(266.16,163.2)}, rotate = 123.5] [fill={rgb, 255:red, 0; green, 0; blue, 0 }  ][line width=0.08]  [draw opacity=0] (13.4,-6.43) -- (0,0) -- (13.4,6.44) -- (8.9,0) -- cycle    ;
    \draw [line width=1.5]    (441.66,143.9) -- (457.45,120.04) ;
    \draw [shift={(459.66,116.7)}, rotate = 123.5] [fill={rgb, 255:red, 0; green, 0; blue, 0 }  ][line width=0.08]  [draw opacity=0] (13.4,-6.43) -- (0,0) -- (13.4,6.44) -- (8.9,0) -- cycle    ;
    
    \draw (237.29,100.65) node [anchor=north west][inner sep=0.75pt]  [font=\normalsize]  {$1$};
    \draw (236.96,191.32) node [anchor=north west][inner sep=0.75pt]  [font=\normalsize]  {$2$};
    \draw (267.62,147.32) node [anchor=north west][inner sep=0.75pt]  [font=\normalsize]  {$3$};
    \draw (321.96,147.32) node [anchor=north west][inner sep=0.75pt]  [font=\normalsize]  {$n$};
    \draw (367.62,148.65) node [anchor=north west][inner sep=0.75pt]  [font=\normalsize]  {$\tau n$};
    \draw (427.96,146.32) node [anchor=north west][inner sep=0.75pt]  [font=\normalsize]  {$\tau 3$};
    \draw (452.96,96.32) node [anchor=north west][inner sep=0.75pt]  [font=\normalsize]  {$\tau 1$};
    \draw (451.96,193.65) node [anchor=north west][inner sep=0.75pt]  [font=\normalsize]  {$\tau 2$};

    \end{tikzpicture}
    $$
    The fixed arrow $n \to \tau n$ leads to two distinct families of group actions:
    $$\GL(V_1)\times \GL(V_2) \times \cdots \times \GL(V_n) \action \Hom(V_1,V_2)\oplus \Hom(V_2,V_3)\oplus \Hom(V_3,V_4) \oplus \cdots \oplus \left|
\begin{array}{l}
        \Sym(V_n)\\
        \Lambda^2(V_n)
\end{array}
\right.$$
    If the sign $s(n \to \tau n)=+1$, then there is a closed immersion
    $$\Mgot_{\delta} \inj \VV(z^2-x^{n-1}y+xy^2).$$
    If the sign $s(n\to \tau n)=-1$, then $\Mgot_\delta$ is a reduced point.
\end{lem}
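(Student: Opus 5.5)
Let me consider $\delta=(1,1,2,\dots,2,1,1)$ for $(\widetilde{D}_{2n-1},a)$, so $V_1,V_2,V_{\tau1},V_{\tau2}$ are lines and $V_3,\dots,V_n,V_{\tau n}$ are planes. By \Cref{generation phasme}, $\CC[\Mgot_\delta]$ is generated by the traces of the words in \eqref{simple word}. Each of $C_1,C_2,C_3$ is a $2\times2$ matrix at vertex $3$, and $C_1$ factors through the line $V_1$, so $C_1$ has rank at most one.

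The first step is to reduce the generators to finitely many using Cayley--Hamilton (\Cref{Hamilton-Cayley scheme morphism}) and \Cref{lem:relation phasme}. Since $C_1$ factors through a line, $C_1XC_1=\tr(C_1X)\,C_1$ for every cycle $X$ at $3$, and $\tr(C_1)=\tr(\text{short loop at }1)=0$ because $C_1^2=0$. Cayley--Hamilton for $C_3$ and the relations $C_iC_3=C_3C_i'$ should let us replace $C_3$ by the scalars $\tr(C_3),\det(C_3)$ plus lower terms. I expect the ring to be generated by
$$x=\tr(C_3)\ (\text{or }\det C_3),\qquad y=\tr(C_2)\ (\text{or }\tr(C_1C_2)),\qquad z=\tr(C_1C_2C_3).$$
The exact choice will be fixed by degree considerations: $\ell(C_3)=2$ and $\ell(C_2)=2L$ with $L=2n-4$. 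For the equation $z^2=x^{n-1}y-xy^2$ to be homogeneous we need $2\deg z=(n-1)\deg x+\deg y$. With $\deg x=4$ this gives $\deg y=4(n-1)=2L+4$, which matches $\tr(C_1C_2C_3)$ or $\tr(C_2)\det(C_3)$. This weight bookkeeping will be used to pin down the generators.

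The second step derives the relation. Squaring $C_1C_2C_3$ via the identity $(C_1C_2C_3)^2=C_1C_2^2C_3^3-C_1C_2C_1C_3^{L+2}+(C_1C_2)^2C_3^2$ of \Cref{lem:relation phasme} and taking traces, we use $L$ even, so $C_2^2=C_2C_3^L$. The rank-one trick gives $C_1C_2C_1=\tr(C_1C_2)C_1$. Cayley--Hamilton on $C_3$ expresses $C_3^{k}$ through $\tr C_3$ and $\det C_3$. We also use \Cref{lem:tracetau}: with $s=+1$ every trace is $\tau$-invariant, and the relation $C_1+C_1'=C_3$ at a trivalent vertex where the $\mu_3$ relation holds should force $\tr(C_3)$ and $\det(C_3)$ to be related, for instance $C_3$ scalar, analogous to the short-loop argument in \Cref{table Av-a}. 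The result should be exactly $z^2-x^{n-1}y+xy^2=0$, giving the closed immersion into $\VV(z^2-x^{n-1}y+xy^2)$.

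For sign $-1$, the fixed arrow $n\to\tau n$ is a $2\times2$ skew-symmetric map. By \Cref{lem: tlp short loop}, $C_3$ is trace-equivalent to $\pm$ a product of two skew $2\times2$ matrices, hence scalar. I would then show each generator vanishes: $\tr(C_2)$ and $\tr(C_1C_2)$ are odd under \Cref{lem:tracetau} because $C_2$ crosses the fixed arrow an odd number of times, and $\tr(C_3)$ should vanish via the $\mu$-relations at the leaves together with $\tr C_1=0$. So $\Mgot_\delta$ is a reduced point.

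The main obstacle is the second step: correctly tracking signs from \Cref{lem: sliding short loops} and \Cref{lem:tracetau} so that the coefficients come out as $-x^{n-1}y+xy^2$. I will first test the computation on the case $n=3$, i.e.\ $L=2$, by explicit matrices.
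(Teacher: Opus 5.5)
There is a genuine gap in the sign $+1$ case, and it cannot be closed in the direction you are aiming.

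\textbf{The generators.} Your first step never pins them down, and the candidates you float do not work. By $\mu_1,\mu_2,\mu_3$ one has $\tr(C_3)=\tr(C_1)+\tr(C_1')=0$ identically. So $C_3$ is not scalar; it satisfies $C_3^2=-\det(C_3)1_2$, and a traceless scalar would be $0$. Also $\tr(C_2)=\tr(C_1C_3^{L-1})$ is, up to sign, a power of $\det(C_3)$. Hence neither $\tr(C_3)$ nor $\tr(C_2)$ can serve as a coordinate. The paper's reduction uses rank at most one for $C_1$ and $C_2$, the identities $2\tr(C_1C_3)=\tr(C_3^2)$ and $\tr(C_2C_1C_3)=\tr(C_1C_3^{L+1})-\tr(C_1C_2C_3)$, and Cayley--Hamilton. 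It leaves $x=-\det(C_3)$, $y=\tr(C_1C_2)$, $z=\tr(C_1C_2C_3)$, of path-length degrees $4$, $4n-6$, $4n-4$.

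\textbf{The weight count.} Your count is wrong. The polynomial $z^2-x^{n-1}y+xy^2$ is weighted homogeneous only if both $\deg y=(n-2)\deg x$ and $2\deg z=(2n-3)\deg x$ hold. That forces degrees $4$, $4n-8$, $4n-6$, which do not fit the surviving generators.

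\textbf{The mismatch is real.} Take traces in the identity of \Cref{lem:relation phasme}, using $C_2^2=C_2C_3^{L}$ and $L+3=2n-1$. This gives $\tr(C_1C_2^2C_3^3)=(-\det C_3)^{n-1}\tr(C_1C_2C_3)$. The paper's proof writes $\tr(C_1C_2)$ in this term, which makes its relation non-homogeneous. The correct relation is $z^2=x^{n-1}z+xy^2$, that is,
\[
\bigl(z-\tfrac12x^{n-1}\bigr)^2=xy^2+\tfrac14x^{2n-2},
\]
a $D_{2n-1}$ singularity. By contrast, $\VV(z^2-x^{n-1}y+xy^2)$ becomes $z^2=\tfrac14x^{2n-3}-xy^2$ after $y\mapsto y+\tfrac12x^{n-2}$, a $D_{2n-2}$ singularity; this is the polynomial of \Cref{table Dv}.

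The $D_{2n-1}$ answer is forced. By the surjectivity in \Cref{thm:invariant procesi and zubkov} applied to $\overline{Q}$, together with reductivity, $\Mgot_\delta$ is a closed subscheme of the classical variety $\Mgotgras_{0,0,\delta}(\widetilde{D}_{2n-1})$. That variety is Kronheimer's $D_{2n-1}$ surface, and since $\Mgot_\delta$ is $2$-dimensional it is all of it. So no argument yields the stated closed immersion. Your planned $n=3$ test would show $z^2=x^2z+xy^2$, of type $D_5$, rather than the $D_4$ surface $\VV(z^2-x^2y+xy^2)$.

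\textbf{Sign $-1$.} Here the conclusion is right but your mechanism fails. $C_2$ and $C_1C_2$ cross the fixed edge $n$--$\tau n$ twice, via $c$ and $c^*$, so the sign in \Cref{lem:tracetau} is $(-1)^2=+1$. Even an odd sign would only give $\tr(\tau C)=-\tr(C)$, which forces vanishing only if $\tau C\sim_{\tr}C$ is known independently.

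What works is the paper's route. Slide $C_3$ and $C_1C_2$ to vertex $n$ using \Cref{lem: tlp short loop,lem:pqqp}; there they pass through the skew $2\times 2$ maps $v_c$ and $v_{c^*}$.
\begin{itemize}
\item $v_{c^*}v_c$ is scalar with trace $\pm\tr(C_3)=0$, hence zero, so $\det(C_3)=0$.
\item Since $C_1$ factors through the line $V_1$, $\tr(C_1C_2)$ becomes a product of the two skew forms each evaluated on a single vector, hence $0$.
\item $\tr(C_1C_2C_3)$ then vanishes too: $C_2C_3=C_3C_2'$, and $C_3$ slides onto $v_{c^*}v_c=0$.
\end{itemize}
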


\begin{proof}
    We consider a representation of fundamental dimension $\delta$ with $s(n \to \tau n) = +1$.  
    When two cycles $C$ and $C'$ meet at the same vertex in $\{1,2,\tau1,\tau2\}$, we have $\tr(CC') = \tr(C)\tr(C')$ (since $\delta_1 = \delta_2 = 1$).
    Using the Cayley--Hamilton theorem (see \Cref{Hamilton-Cayley scheme morphism}), we are reduced to studying
    $$\CC[\Mgot_{\delta}] = \CC\begin{bmatrix}
        \tr(C_1),\tr(C_2),\tr(C_3)\\
        \det(C_1),\det(C_2),\det(C_3) \\
        \tr(C_2C_3),\tr(C_1C_3),\tr(C_1C_2), \\
        \tr(C_1C_2C_3),\tr(C_2C_1C_3)
    \end{bmatrix} \subset \CC[\mu^{-1}(0)].$$
    Using \Cref{rmq: relation moment,Hamilton-Cayley scheme morphism}, we have
    \begin{align*}
        C_1^2 & = 0, \quad  \tr(C_1) = \tr(C_3) = \det(C_2) = \det(C_1) = 0, \\
         C_2^2 & = \tr(C_2)C_2, \quad  C_3^2 = -\det(C_3)1_2, \quad C_2 \sim_{\tr} C_1C_3^{L-1}\\
        (C_1C_2)^2 & = \tr(C_1C_2)C_1C_2, \quad (C_1C_2C_3)^2 = C_1C_2C_3\tr(C_1C_2C_3),
    \end{align*}
    and also \Cref{lem:relation phasme}
    \begin{align*}
        \tr(C_2C_1C_3) & = \tr(C_1C_3C_2) = \tr(C_1C_2'C_3) \\
        & = \tr(C_1(C_3^L - C_2)C_3)= \tr(C_1C_3^{L+1}) - \tr(C_1C_2C_3).\\
        \tr(C_1C_3) & = \tr(C_3C_1) = \tr(C_1'C_3) = \tr((C_3-C_1)C_3) = -\tr(C_1C_3) + \tr(C_3^2). \\
        2\tr(C_1C_3) & = \tr(C_3^2).
    \end{align*}
    We are reduced to studying 
    $$\CC[\Mgot_{\delta}] = \CC[\det(C_3),\tr(C_1C_2),\tr(C_1C_2C_3)] \subset \CC[\mu^{-1}(0)].$$
    Applying the relation 
    $$(C_1C_2C_3)^2= C_1C_2^2C_3^3- C_1C_2C_1C_3^{L+2}+ (C_1C_2)^2C_3^2$$
    from \Cref{lem:relation phasme} for $L = 2n - 4$: 
    \begin{align*}
        (C_1C_2C_3)^2 & = C_1C_2^2C_3^3-C_1C_2C_1C_3^{2n-2} + (C_1C_2)^2C_3^2, \\
        \tr(C_1C_2C_3)^2 & = \tr(C_1 C_2)(-\det(C_3))^{n-1}-\tr(C_1C_2C_1)(-\det(C_3))^{n-1} - \tr(C_1C_2)^2\det(C_3)\\
        & = \tr(C_1 C_2)(-\det(C_3))^{n-1} - \tr(C_1C_2)^2\det(C_3).
    \end{align*}
    There is a closed embedding $\Mgot_{\delta} \inj \VV(z^2-x^{n-1}y + xy^2)$.
    
    If we consider a representation of fundamental dimension $\delta$
    with $s(n\to\tau n)=-1$, the same argument as in the previous case applies.
    Applying \Cref{lem: tlp short loop,lem:pqqp}, we can find two cycles, trace-equivalent to $C_3$ and $C_1C_2$, based at the vertex $n$, whose representations are scalar multiples of each other.
    For $C_3$, this implies that $4\det(C_3) = \tr(C_3)^2 = 0$, and since $C_1C_2$ has a representation of rank at most one, we obtain $\tr(C_1C_2) = 0$ and
    $\tr(C_1C_2C_3) = \tr(C_1C_2)\tr(C_3) = 0$, then $\Mgot_\delta$ is a reduced point.
\end{proof}

\begin{lem}\label{table Dv}
    For $n \geq 3$, let $(\widetilde{D}_{2n-2},v)$ denote the following orthogonal quivers:
    $$
    \begin{tikzpicture}[x=0.7pt,y=0.7pt,yscale=-1,xscale=1]
    
    \draw [color={rgb, 255:red, 155; green, 155; blue, 155 }  ,draw opacity=1 ][line width=1.5]    (353,95) -- (353,210) ;
    \draw [line width=1.5]  [dash pattern={on 5.63pt off 4.5pt}]  (288.2,152.6) -- (337.9,152.48) ;
    \draw [shift={(341.9,152.48)}, rotate = 179.87] [fill={rgb, 255:red, 0; green, 0; blue, 0 }  ][line width=0.08]  [draw opacity=0] (13.4,-6.43) -- (0,0) -- (13.4,6.44) -- (8.9,0) -- cycle    ;
    \draw [line width=1.5]    (250.16,191.9) -- (265.95,168.04) ;
    \draw [shift={(268.16,164.7)}, rotate = 123.5] [fill={rgb, 255:red, 0; green, 0; blue, 0 }  ][line width=0.08]  [draw opacity=0] (13.4,-6.43) -- (0,0) -- (13.4,6.44) -- (8.9,0) -- cycle    ;
    \draw [line width=1.5]    (441.16,145.4) -- (456.95,121.54) ;
    \draw [shift={(459.16,118.2)}, rotate = 123.5] [fill={rgb, 255:red, 0; green, 0; blue, 0 }  ][line width=0.08]  [draw opacity=0] (13.4,-6.43) -- (0,0) -- (13.4,6.44) -- (8.9,0) -- cycle    ;
    \draw [line width=1.5]    (439.66,168.7) -- (455.45,192.56) ;
    \draw [shift={(457.66,195.9)}, rotate = 236.5] [fill={rgb, 255:red, 0; green, 0; blue, 0 }  ][line width=0.08]  [draw opacity=0] (13.4,-6.43) -- (0,0) -- (13.4,6.44) -- (8.9,0) -- cycle    ;
    \draw [line width=1.5]    (248.66,118.7) -- (264.45,142.56) ;
    \draw [shift={(266.66,145.9)}, rotate = 236.5] [fill={rgb, 255:red, 0; green, 0; blue, 0 }  ][line width=0.08]  [draw opacity=0] (13.4,-6.43) -- (0,0) -- (13.4,6.44) -- (8.9,0) -- cycle    ;
    \draw [line width=1.5]  [dash pattern={on 5.63pt off 4.5pt}]  (364.95,152.6) -- (414.65,152.48) ;
    \draw [shift={(418.65,152.48)}, rotate = 179.87] [fill={rgb, 255:red, 0; green, 0; blue, 0 }  ][line width=0.08]  [draw opacity=0] (13.4,-6.43) -- (0,0) -- (13.4,6.44) -- (8.9,0) -- cycle    ;
    
    \draw (236.29,99.98) node [anchor=north west][inner sep=0.75pt]  [font=\normalsize]  {$1$};
    \draw (235.62,192.98) node [anchor=north west][inner sep=0.75pt]  [font=\normalsize]  {$2$};
    \draw (268.96,146.32) node [anchor=north west][inner sep=0.75pt]  [font=\normalsize]  {$3$};
    \draw (347.29,146.98) node [anchor=north west][inner sep=0.75pt]  [font=\normalsize]  {$n$};
    \draw (425.62,147.65) node [anchor=north west][inner sep=0.75pt]  [font=\normalsize]  {$\tau 3$};
    \draw (454.29,97.98) node [anchor=north west][inner sep=0.75pt]  [font=\normalsize]  {$\tau 1$};
    \draw (453.62,197.65) node [anchor=north west][inner sep=0.75pt]  [font=\normalsize]  {$\tau 2$};
    \end{tikzpicture}
    $$
    which describes the group representation
    $$\GL(V_1)\times \GL(V_2) \times \cdots \times \OO(V_n) \action \Hom(V_1,V_2)\oplus \Hom(V_2,V_3)\oplus \Hom(V_3,V_4) \oplus \cdots \oplus \Hom(V_{n-1},V_n)$$
    Then we have $\Mgot_{\delta} \inj \VV(z^2 - x^{n-1}y + xy^2)$.
\end{lem}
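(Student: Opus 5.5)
The plan is to follow the proof of \Cref{table Da} with sign $+1$, replacing the fixed arrow by the fixed vertex $n$. In this case $L = 2n-5$, which is odd.

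First, \Cref{generation phasme} reduces $\CC[\Mgot_\delta]$ to traces of the words in \eqref{simple word}. At the valency-one vertices $1,2,\tau1,\tau2$ we have $\delta=1$, so $\tr(CC')=\tr(C)\tr(C')$ for cycles meeting there. Since $\delta_3=2$, the Cayley--Hamilton relations of \Cref{Hamilton-Cayley scheme morphism} reduce all cycles at vertex $3$ to the same finite list used in \Cref{table Da}: the traces and determinants of $C_1,C_2,C_3$, and $\tr(C_iC_j)$, $\tr(C_1C_2C_3)$, $\tr(C_2C_1C_3)$.

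Second, I eliminate generators with the relations already available, exactly as before:
\begin{itemize}
\item $C_1$ has rank at most one and $C_1^2=0$, so $\tr(C_1)=\det(C_1)=0$; similarly $\det(C_2)=0$.
\item $\tr(C_3)=0$ by \Cref{lem:tracetau}, since $C_3\sim_{\tr}\tau C_3$ with a sign. Hence $C_3^2=-\det(C_3)1_2$.
\item $2\tr(C_1C_3)=\tr(C_3^2)$ and $\tr(C_2C_1C_3)=\tr(C_1C_3^{L+1})-\tr(C_1C_2C_3)$, both from \Cref{lem:relation phasme}.
\item $C_2\sim_{\tr} C_1C_3^{L-1}$ up to sign, by sliding along the chain through $n$ with \Cref{lem: sliding short loops}.
\end{itemize}
This should leave $\CC[\Mgot_\delta]=\CC[\det(C_3),\tr(C_1C_2),\tr(C_1C_2C_3)]$.

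The one genuinely new point is the fixed vertex. Here $L$ is odd, so \Cref{lem:relation phasme} gives $C_2^2=0$ rather than $C_2^2=\tr(C_2)C_2$. The $\OO(V_n)$ symmetry should then force $\tr(C_2)$ to agree with a power of $\det(C_3)$ up to sign, and this needs a separate check.

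Third, take traces in
$$(C_1C_2C_3)^2=C_1C_2^2C_3^3-C_1C_2C_1C_3^{L+2}+(C_1C_2)^2C_3^2 .$$
\begin{itemize}
\item $C_2^2=0$, so the first term vanishes.
\item $L+2=2n-3$ is odd, so $C_3^{L+2}=(-\det C_3)^{n-2}C_3$. The second term then gives $\tr(C_1C_2C_1C_3)\,(-\det C_3)^{n-2}$, and $\tr(C_1C_2C_1C_3)=\tr(C_1C_2)\tr(C_1C_3)$ because $\delta_1=1$.
\item $(C_1C_2)^2=\tr(C_1C_2)\,C_1C_2$ and $C_3^2=-\det(C_3)1_2$, so the third term gives $-\tr(C_1C_2)^2\det(C_3)$.
\item Using $\tr(C_1C_3)=\tfrac12\tr(C_3^2)=-\det C_3$ and $(C_1C_2C_3)^2=\tr(C_1C_2C_3)\,C_1C_2C_3$, all terms are expressed in the three generators.
\end{itemize}
The result should be
$$\tr(C_1C_2C_3)^2=\pm\tr(C_1C_2)(-\det C_3)^{n-1}-\tr(C_1C_2)^2\det(C_3).$$
After rescaling the variables by signs, this is $z^2=x^{n-1}y-xy^2$ with $x=\pm\det C_3$, $y=\tr(C_1C_2)$, $z=\tr(C_1C_2C_3)$. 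That gives the closed immersion $\Mgot_\delta\inj\VV(z^2-x^{n-1}y+xy^2)$.

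The main obstacle is bookkeeping signs and parities. The orientation signs from \Cref{lem: sliding short loops} must be tracked along the odd-length chain through the fixed vertex $n$. One must also confirm that the odd-$L$ case of \Cref{lem:relation phasme}, combined with the second term rather than the first, still yields exponent $n-1$. I would check this first in the smallest case $n=3$, where $L=1$, by explicit matrices.
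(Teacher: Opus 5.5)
Your proposal is correct and is essentially the paper's proof. You reduce to $\CC[\det(C_3),\tr(C_1C_2),\tr(C_1C_2C_3)]$ as in \Cref{table Da}, then take traces in the identity of \Cref{lem:relation phasme} with $L=2n-5$ odd, where $C_2^2=0$ kills the first term. The one variation is how you evaluate $\tr(C_1C_2C_1C_3)$. The paper writes $C_1C_2C_1C_3=C_1C_2C_3C_1'$ and then uses $C_1'=C_3-C_1$ and $C_1^2=0$. You instead factor through the one-dimensional space $V_1$ and use $\tr(C_1C_3)=-\det(C_3)$. Both routes give $-\det(C_3)\tr(C_1C_2)$, hence
$\tr(C_1C_2C_3)^2=-\tr(C_1C_2)(-\det C_3)^{n-1}-\tr(C_1C_2)^2\det(C_3)$.

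Two of your justifications need fixing.

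\emph{The vanishing of $\tr(C_3)$.} It does not come from \Cref{lem:tracetau}. For a short loop $b^*b$ the sign is $s(b)s(b^*)=+1$, so that lemma only gives $\tr(\tau C_3)=\tr(C_3)$. The correct reason is $C_3=C_1+C_1'$ together with $\tr(C_1)=\tr(C_1')=0$, which follow from the moment map relations at the leaves $1,2$. You already use these in your first bullet.

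\emph{The extra check at the fixed vertex.} No check involving $\OO(V_n)$ is needed. In dimension $\delta$ the matrix $C_2$ has rank at most one, so it satisfies $C_2^2=\tr(C_2)C_2$ and $C_2^2=0$ simultaneously; these are not alternatives. Your own relation $C_2\sim_{\tr}C_1C_3^{L-1}$ gives $\tr(C_2)=(-\det C_3)^{(L-1)/2}\tr(C_1)=0$, so $\tr(C_2)$ is zero rather than a power of $\det(C_3)$. The fixed vertex enters the computation only through the parity of $L$.

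For the final normalization, take $x=\det(C_3)$, $y=(-1)^n\tr(C_1C_2)$ and $z=\tr(C_1C_2C_3)$. Changing the sign of $x$ alone fails for odd $n$. The exponent $n-1$ that worried you is confirmed by homogeneity: $x$, $y$, $z$ have weights $4$, $4n-8$, $4n-6$, and every term of the relation has weight $8n-12$.
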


\begin{proof}
    We consider a representation of fundamental dimension $\delta$.  
    Arguing as in the proof of \Cref{table Da} (with sign $+1$), we are reduced to studying:
    $$\CC[\Mgot_{\delta}] = \CC[\det(C_3),\tr(C_1C_2),\tr(C_1C_2C_3)] \subset \CC[\mu^{-1}(0)].$$
    
    Applying \Cref{lem:relation phasme} with $L=2n-5$, we obtain
    \begin{align*}
        (C_1C_2C_3)^2 & = C_1C_2^2C_3^3-C_1C_2C_1C_3^{2n-3} + (C_1C_2)^2C_3^2, \\
        (C_1C_2C_3)^2 & = -C_1C_2C_1C_3^{2n-3} + (C_1C_2)^2C_3^2, \\
        \tr(C_1C_2C_3)^2 & = -\tr(C_1C_2C_1C_3)(-\det(C_3))^{n-2} - \tr(C_1C_2)^2\det(C_3)\\
        & = -\tr(C_1C_2C_3C_1')(-\det(C_3))^{n-2} - \tr(C_1C_2)^2\det(C_3)\\
        & = -\tr(C_1C_2)(-\det(C_3))^{n-1} - \tr(C_1C_2)^2\det(C_3).\\
    \end{align*}
    There is a closed embedding $$\Mgot_{\delta} \inj\VV(z^2 - x^{n-1}y + xy^2).$$
\end{proof}

\subsection{Two-parameter families}\label{two parameter family}

In the preceding section, we computed embeddings of quiver varieties into singular surfaces.
In order to prove that these quiver varieties have dimension two, we construct in this section explicit two-dimensional families of the form
$${[tF_{\lambda}]}_{(t,\lambda)\in \CC^*\times U} \in \Mgot_{\dd}.$$

For a given orthogonal quiver, \Cref{lambda family} provides an explicit one-parameter family $(F_\lambda)_{\lambda\in U}$ of symmetric representations of the doubled quiver $\overline{Q}$, where $U\subset\CC$ is an open subset.
These representations are given explicitly by their coordinates and satisfy the moment map relations (see \Cref{rmq: relation moment}):
$$F_\lambda=(x_\lambda,y_\lambda)\in \Srep(Q,\dd)\oplus\Srep(Q^{\opp},\dd),
\qquad \text{such that } \forall \lambda\in U,\ F_\lambda\in\mu^{-1}(0).$$

For each family
$[tF_\lambda]_{(t,\lambda)\in\CC^*\times U}\in\Mgot_{\dd}$,
the second parameter $t\in\CC^*$ comes from the conical structure of
$$\Mgot_{\dd}=\mu^{-1}(0)\sslash G_{\dd}\subset
\Srep(\overline{Q},\dd)\sslash G_{\dd}.$$

\subsubsection{Asymptotic directions}\label{asymp direction}

In order to prove that the families of classes $[tF_\lambda]$ remain two-dimensional in the quiver varieties $\Mgot_{\dd}$, we proceed as follows.

First, we construct a $\CC^*$-equivariant embedding
$$\Mgot_{\dd}\hookrightarrow \CC^N$$
using $N$ invariant functions $(f_1,\cdots,f_N)$ associated with cycles, as given by \Cref{thm:invariant procesi and zubkov}.
Since the weight $(w_1,\cdots,w_N)$ of the conical $\CC^*$-action on an invariant of the form $\tr(\Lambda^k C)$ is given by $k\cdot \ell(C)$, we extend this action to $\CC^N$ using these weights, and denote by $w_{\max}$ the maximal weight.

In a second step, we compactify $\CC^N$ into the projective space
$$\PP_{\CC}^N=\CC^N\sqcup \partial_\infty,
\qquad \text{with } \partial_\infty=\{[\ast:\cdots:\ast:0]\}\subset\PP_{\CC}^N.$$

Then, for each fixed $\lambda$, the map in the parameter $t\in\CC^*$ extends to the compactification $\CC^*\subset\PP^1_{\CC}$ as follows:
\begin{align*}
\begin{matrix}
    & \CC^*\times U \longrightarrow \Mgot_{\dd}\\  
    & (t,\lambda) \mapsto t\cdot[F_{\lambda}], \\ 
    & \PP^1_{\CC}\times U \longrightarrow \PP^N_{\CC},\\
    & ([t:u],\lambda) \mapsto [t^{w_1}f_1(F_\lambda)u^{w_{\max}-w_1}:\cdots:t^{w_N}f_N(F_\lambda)u^{w_{\max}-w_N}:u^{w_{\max}}].
\end{matrix}
\end{align*}

When $t$ takes the value $\infty=[1:0]\in\PP^1_{\CC}$, it determines the asymptotic direction as $t\to\infty$, yielding a point on the boundary $\partial_\infty$.
If this limiting point depends nontrivially on the parameter $\lambda$, then the intersection
$$\partial_\infty\cap\overline{[tF_\lambda]}$$
contains a curve parametrized by $\lambda$ in the boundary of the compactification.
Consequently, the family $[tF_\lambda]_{(t,\lambda)}\subset\Mgot_{\dd}$ is two-dimensional.

\subsubsection{List of representations and asymptotic directions}\label{lambda family}

Whenever dotted arrows appear in the quiver, this indicates that the same matrix is repeated along all arrows represented by the dotted arrow.

For the orthogonal quivers $(\widetilde{A}_{2n-1},c)$, we choose the six invariants
$(x,y,z)=(\det(C_3),\tr(C_1),\tr(C_1C_3))$ for the embedding, together with
$(x^{n+1},y^2x,z^2)$ to detect nonzero asymptotic directions.
This yields an embedding
$$\Mgot_{2\delta}\inj \CC^6.$$

Consider now the following family of representations parametrized by $\lambda\in U=\CC^*$, where $J$ denotes the matrix in \eqref{matrice J}.
$$
\begin{tikzpicture}[x=0.6pt,y=0.6pt,yscale=-1,xscale=1]

\draw [line width=0.75]  [dash pattern={on 4.5pt off 4.5pt}]  (104.27,29.6) .. controls (123.79,26.25) and (129.96,26.76) .. (157.36,30.26) ;
\draw [shift={(159.97,30.59)}, rotate = 187.23] [fill={rgb, 255:red, 0; green, 0; blue, 0 }  ][line width=0.08]  [draw opacity=0] (10.72,-5.15) -- (0,0) -- (10.72,5.15) -- (7.12,0) -- cycle    ;
\draw [line width=0.75]  [dash pattern={on 4.5pt off 4.5pt}]  (98.93,124.6) .. controls (117.04,128.48) and (123.86,127.97) .. (152.3,123.68) ;
\draw [shift={(155,123.27)}, rotate = 171.48] [fill={rgb, 255:red, 0; green, 0; blue, 0 }  ][line width=0.08]  [draw opacity=0] (10.72,-5.15) -- (0,0) -- (10.72,5.15) -- (7.12,0) -- cycle    ;
\draw [line width=0.75]    (54.2,68.87) .. controls (55.79,60.59) and (60.42,46.23) .. (78.26,37.99) ;
\draw [shift={(80.87,36.87)}, rotate = 151.07] [fill={rgb, 255:red, 0; green, 0; blue, 0 }  ][line width=0.08]  [draw opacity=0] (10.72,-5.15) -- (0,0) -- (10.72,5.15) -- (7.12,0) -- cycle    ;
\draw [line width=0.75]    (55,88.27) .. controls (55.41,96.91) and (57.37,106.48) .. (70.8,115.39) ;
\draw [shift={(73.27,116.93)}, rotate = 216.49] [fill={rgb, 255:red, 0; green, 0; blue, 0 }  ][line width=0.08]  [draw opacity=0] (10.72,-5.15) -- (0,0) -- (10.72,5.15) -- (7.12,0) -- cycle    ;
\draw [line width=0.75]    (175.67,116.6) .. controls (185.9,113.05) and (191.8,100.44) .. (197.64,86.2) ;
\draw [shift={(198.76,83.46)}, rotate = 112.49] [fill={rgb, 255:red, 0; green, 0; blue, 0 }  ][line width=0.08]  [draw opacity=0] (10.72,-5.15) -- (0,0) -- (10.72,5.15) -- (7.12,0) -- cycle    ;
\draw [line width=0.75]    (173.2,37.01) .. controls (185.46,41.79) and (193.64,51.81) .. (198.46,65.09) ;
\draw [shift={(199.42,67.9)}, rotate = 249.65] [fill={rgb, 255:red, 0; green, 0; blue, 0 }  ][line width=0.08]  [draw opacity=0] (10.72,-5.15) -- (0,0) -- (10.72,5.15) -- (7.12,0) -- cycle    ;
\draw [line width=0.75]  [dash pattern={on 4.5pt off 4.5pt}]  (367.32,34.73) .. controls (385.28,31.81) and (394.68,31.93) .. (415,36.65) ;
\draw [shift={(364.09,35.27)}, rotate = 350.77] [fill={rgb, 255:red, 0; green, 0; blue, 0 }  ][line width=0.08]  [draw opacity=0] (10.72,-5.15) -- (0,0) -- (10.72,5.15) -- (7.12,0) -- cycle    ;
\draw [line width=0.75]  [dash pattern={on 4.5pt off 4.5pt}]  (361.79,130.9) .. controls (378.94,134.3) and (392.05,134.43) .. (410,130.65) ;
\draw [shift={(358.76,130.27)}, rotate = 11.22] [fill={rgb, 255:red, 0; green, 0; blue, 0 }  ][line width=0.08]  [draw opacity=0] (10.72,-5.15) -- (0,0) -- (10.72,5.15) -- (7.12,0) -- cycle    ;
\draw [line width=0.75]    (314.67,71.6) .. controls (316.91,62.73) and (322.79,49.69) .. (340.69,42.53) ;
\draw [shift={(314.03,74.53)}, rotate = 284.13] [fill={rgb, 255:red, 0; green, 0; blue, 0 }  ][line width=0.08]  [draw opacity=0] (10.72,-5.15) -- (0,0) -- (10.72,5.15) -- (7.12,0) -- cycle    ;
\draw [line width=0.75]    (315.05,96.99) .. controls (315.9,105.31) and (319.15,114.29) .. (333.09,122.6) ;
\draw [shift={(314.83,93.93)}, rotate = 84.17] [fill={rgb, 255:red, 0; green, 0; blue, 0 }  ][line width=0.08]  [draw opacity=0] (10.72,-5.15) -- (0,0) -- (10.72,5.15) -- (7.12,0) -- cycle    ;
\draw [line width=0.75]    (441.76,119.63) .. controls (450.8,113.97) and (457.14,102.8) .. (458.58,89.12) ;
\draw [shift={(439,121.15)}, rotate = 327.92] [fill={rgb, 255:red, 0; green, 0; blue, 0 }  ][line width=0.08]  [draw opacity=0] (10.72,-5.15) -- (0,0) -- (10.72,5.15) -- (7.12,0) -- cycle    ;
\draw [line width=0.75]    (435.9,43.93) .. controls (447.38,49.48) and (454.93,60) .. (459.25,73.57) ;
\draw [shift={(433.03,42.68)}, rotate = 25.78] [fill={rgb, 255:red, 0; green, 0; blue, 0 }  ][line width=0.08]  [draw opacity=0] (10.72,-5.15) -- (0,0) -- (10.72,5.15) -- (7.12,0) -- cycle    ;

\draw (113.56,69.63) node [anchor=north west][inner sep=0.75pt]  [font=\LARGE,color={rgb, 255:red, 155; green, 155; blue, 155 }  ,opacity=1 ]  {$\ast $};
\draw (45.67,69.07) node [anchor=north west][inner sep=0.75pt]  [font=\normalsize]  {$\mathbb{C}^{2}$};
\draw (81.34,24.07) node [anchor=north west][inner sep=0.75pt]  [font=\normalsize]  {$\mathbb{C}^{2}$};
\draw (160.01,23.73) node [anchor=north west][inner sep=0.75pt]  [font=\normalsize]  {$\mathbb{C}^{2}$};
\draw (196.67,67.07) node [anchor=north west][inner sep=0.75pt]  [font=\normalsize]  {$\mathbb{C}^{2}$};
\draw (155.01,109.07) node [anchor=north west][inner sep=0.75pt]  [font=\normalsize]  {$\mathbb{C}^{2}$};
\draw (80.01,112.07) node [anchor=north west][inner sep=0.75pt]  [font=\normalsize]  {$\mathbb{C}^{2}$};
\draw (373.39,75.3) node [anchor=north west][inner sep=0.75pt]  [font=\LARGE,color={rgb, 255:red, 155; green, 155; blue, 155 }  ,opacity=1 ]  {$\ast $};
\draw (305.5,74.73) node [anchor=north west][inner sep=0.75pt]  [font=\normalsize]  {$\mathbb{C}^{2}$};
\draw (341.17,29.73) node [anchor=north west][inner sep=0.75pt]  [font=\normalsize]  {$\mathbb{C}^{2}$};
\draw (419.83,29.4) node [anchor=north west][inner sep=0.75pt]  [font=\normalsize]  {$\mathbb{C}^{2}$};
\draw (456.5,72.73) node [anchor=north west][inner sep=0.75pt]  [font=\normalsize]  {$\mathbb{C}^{2}$};
\draw (414.83,114.73) node [anchor=north west][inner sep=0.75pt]  [font=\normalsize]  {$\mathbb{C}^{2}$};
\draw (339.83,117.73) node [anchor=north west][inner sep=0.75pt]  [font=\normalsize]  {$\mathbb{C}^{2}$};
\draw (5,29.4) node [anchor=north west][inner sep=0.75pt]    {$\lambda 1_{2} +\ J$};
\draw (102.5,2.4) node [anchor=north west][inner sep=0.75pt]    {$\lambda 1_{2} +\ J$};
\draw (185,16.4) node [anchor=north west][inner sep=0.75pt]    {$\lambda 1_{2} +\ J$};
\draw (1,102.4) node [anchor=north west][inner sep=0.75pt]    {$\lambda 1_{2} -\ J$};
\draw (97,133.4) node [anchor=north west][inner sep=0.75pt]    {$\lambda 1_{2} -\ J$};
\draw (198.5,98.4) node [anchor=north west][inner sep=0.75pt]    {$\lambda 1_{2} -\ J$};
\draw (240,43.4) node [anchor=north west][inner sep=0.75pt]    {$\lambda ^{-1} 1_{2} +\ J$};
\draw (358,6.9) node [anchor=north west][inner sep=0.75pt]    {$\lambda ^{-1} 1_{2} +\ J$};
\draw (454,41.4) node [anchor=north west][inner sep=0.75pt]    {$\lambda ^{-1} 1_{2} +\ J$};
\draw (462.5,99.9) node [anchor=north west][inner sep=0.75pt]    {$\lambda ^{-1} 1_{2} -\ J$};
\draw (363.5,140.9) node [anchor=north west][inner sep=0.75pt]    {$\lambda ^{-1} 1_{2} -\ J$};
\draw (256,130.9) node [anchor=north west][inner sep=0.75pt]    {$\lambda ^{-1} 1_{2} -\ J$};
\end{tikzpicture}
$$

Then the cycle $C_3$ (see \Cref{prop:topo cycle}) is represented by the product
$$
tF_\lambda(C_3)
=t(\lambda^{-1}1_2+J)t(\lambda1_2+J)
=t^2(1_2-1_2+J(\lambda+\lambda^{-1}))
=t^2J(\lambda+\lambda^{-1}),
$$
and hence
$$
\det(tF_\lambda(C_3))
=t^4\det(J)(\lambda+\lambda^{-1})^2
=t^4(\lambda+\lambda^{-1})^2.
$$

The weight of the action of $t\in\CC^*$ on cycle invariants is known (see \Cref{defn:invariant tr relation}); we only need to compute the values of the invariants depending on the parameter $\lambda\in\CC^*$:
\begin{align*}
    F_\lambda(C_1) = (\lambda^{-1}1_2 + J)^n(\lambda1_2-J)^n = (1_2 -J^2 + J(\lambda -\lambda^{-1}))^n = (21_2 + \lambda J - \lambda^{-1}J)^n.
\end{align*}

We want to compute $\tr(F_\lambda(C_1))$.
In the expanded form of the previous expression, the contributions of the $J$-terms vanish.
Therefore, it is sufficient to compute only the first and last terms in
$\CC[\lambda,\lambda^{-1}]$.
\begin{align*}
    y = \tr(F_\lambda(C_1)) & = \begin{cases}
        (-1)^{n/2}\lambda^n + \cdots + (-1)^{n/2}\lambda^{-n}, & n \text { even},\\
        2(-1)^{(n-1)/2}\lambda^{n-1}+ \cdots -2(-1)^{(n-1)/2}\lambda^{-(n-1)},& n \text{ odd},
    \end{cases}\\
    z = F_\lambda(C_1C_3) & = F_\lambda(C_1)F_\lambda(C_3) = (21_2 + \lambda J - \lambda^{-1}J)^nJ(\lambda+ \lambda^{-1}), \\
    \tr(F_\lambda(C_1C_3)) & = \begin{cases}
2( -1)^{n/2} \lambda^{n} +\cdots +2(-1)^{n/2}\lambda^{-n}, & n\ \text{even},\\
(-1)^{( n+1) /2} \lambda^{n+1} +\cdots +(-1)^{(n+1)/2}\lambda^{-(n+1)} , & n\ \text{ odd}.
\end{cases}\\
\end{align*}

As described in \Cref{asymp direction}, we consider the compactification
$$
\Mgot_{\dd}\subset
\CC^6_{\left(x,y,z,x^{n+1},xy^2,z^2\right)}
\subset \PP^6_{\CC},
$$
and extend the map $(t,\lambda)\mapsto [tF_\lambda]$ to the parameter
$t\in\PP^1_{\CC}$. 
This gives the asymptotic direction as
$t\to\infty$ in $\PP^1_{\CC}$.

\begin{align*}
    x^{n+1} = \det(C_3(tF_\lambda))^{n+1} & \underset{t \to \infty}{\to}  (\lambda +\lambda ^{-1})^{2n+2},\\
    y^{2} x = \tr(C_1(tF_\lambda))^2\det(C_3(tF_\lambda))& \underset{t \to \infty}{\to}  \begin{cases}
\lambda ^{2n+2} + \cdots + \lambda^{-(2n+2)} , & n\ \text{even},\\
-4\lambda ^{2n} + \cdots - 4\lambda ^{-2n} , & n\ \text{odd},
\end{cases}\\
z^{2} = \tr(C_1C_3(tF_\lambda))^2 & \underset{t \to \infty}{\to}  \begin{cases}
4\lambda ^{2n} +\cdots +4\lambda ^{-2n} , & n\ \text{even},\\
\lambda ^{2n+2} +\cdots  +\lambda ^{-(2n+2)} , & n\ \text{odd}.
\end{cases}
\end{align*}
For even values of $n$, the asymptotic direction of the family
$[tF_\lambda]\in\PP^6_{\CC}$ is given by the following coordinates in
$\partial_\infty$:
$$
[0,0,0,(\lambda+\lambda^{-1})^{2n+2},
\lambda^{2n+2}+\cdots+\lambda^{-(2n+2)},
4\lambda^{2n}+\cdots+4\lambda^{-2n},0].
$$
Hence, the family $[tF_\lambda]\in\PP^6_{\CC}$ is two-dimensional.

The same argument applies for odd values of $n$ and for the other orthogonal quivers considered in this section. We only state the resulting asymptotic directions.

For orthogonal quivers $(\widetilde{A}_{2n-2},v\text{-}a)$:
$$
$$

For orthogonal quivers $(\widetilde{A}_{2n-1},a\text{-}a)$:
$$%
$$
For orthogonal quivers $(\widetilde{A}_{2n-1},v\text{-}v)$:
$$%
$$
For orthogonal quivers $(\widetilde{D}_{2n-1},a)$
$$%
$$
For orthogonal quivers $(\widetilde{D}_{2n-2},v)$
$$%
$$

\newpage
\subsection{Proof of \Cref{Thm}}\label{proof of Thm}

\begin{proof}[Proof of \Cref{Thm}]\label{proof thm}
    Let $Q$ be an orthogonal quiver whose underlying graph is an affine Dynkin diagram.
    The quiver variety $\Mgot_{\dd}$ is an affine scheme over $\CC$, defined by its invariant ring:
    \begin{align*}
    \Mgot_{\dd}
    =\mu^{-1}(0)\sslash G^b_{\dd}
    =\Spec\bigl(\CC[\mu^{-1}(0)]^{G^b_{\dd}}\bigr).
    \end{align*}
    By \Cref{thm:invariant procesi and zubkov}, the invariant ring is generated by functions associated with cycles contained in the doubled quiver $\overline{Q}$.
    In the following cases:
    \begin{table}[H]
    \centering
        \renewcommand{\arraystretch}{1.2}
        \begin{tabular}{|c|c|c||c|}
        \hline
        Orthogonal quivers & sign of fixed arrow &  dimension $\dd$ & $\Mgot_{\dd}$\\
        \hline
    $(\widetilde{A}_{2n-2} ,v\text{-}a), n\geq 2$ &$-1$&$\delta$& $\{\pt\}$\\
        \hline
    $(\widetilde{A}_{2n-1} ,a\text{-}a),$ & $(+1,-1)$ & $\delta$ & $\{\pt\}$\\
        $n \geq 1$& $(-1,-1)$ & $\delta$ & $\{\pt\}$\\
        \hline
        $(\widetilde{A}_{2n-1} ,c), n \geq 1$& & $\delta$& $\{\pt\}$\\
        \hline
        $(\widetilde{D}_{2n-1} ,a), n \geq 3$& $-1$ & $\delta$ & $\{\pt\}$\\
        \hline 
        \end{tabular}
    \end{table}
    the quiver varieties are reduced points, as established by the computations in \Cref{table A c,table Aa-a,table Av-a,table Da}.
    In the remaining cases:
    \begin{table}[H]
    \centering
        \renewcommand{\arraystretch}{1.2}
        \begin{tabular}{|c|c|c||c|c|}
        \hline
        Orthogonal quivers & sign of fixed arrow &  dimension $\dd$ & $\Mgot_{\dd}$& Kleinian singularity\\
        \hline
    $(\widetilde{A}_{2n-2} ,v\text{-}a),n\geq 2$ &$+1$& $\delta$ & $\VV(xy-z^{2n-1})$& $A_{2n-2}$ \\
        &$-1$& $2\delta$ & $\VV(xy-z^{4n-2})$& $A_{4n-3}$\\
        \hline
          $(\widetilde{A}_{2n-1} ,a\text{-}a),n \geq 1$ & $(+1,+1)$ & $\delta$ & $\VV(xy-z^{2n})$ & $A_{2n-1}$\\
        & $(+1,-1)$ & $2\delta$ & $\VV(xy-z^{4n})$ & $A_{4n-1}$\\
        & $(-1,-1)$ & $2\delta$ & $\VV(xy-z^{2n})$ & $A_{2n-1}$\\
        \hline
    $(\widetilde{A}_{2n-1} ,v\text{-}v), n \geq 1$ && $\delta$ & $\VV(xy - z^{2n})$ & $A_{2n-1}$ \\
        \hline
        $(\widetilde{A}_{2n-1} ,c), n \geq 1$ & & $2\delta$ & $\VV(x^{n+1} - y^2x - z^2)$ & $D_{n+2}$ \\
        \hline 
        $(\widetilde{D}_{2n-2} ,v), n \geq 3$& & $\delta$ & $\VV(z^2 - x^{n-1}y + xy^2)$ & $D_{n+1}$ \\
        \hline
        $(\widetilde{D}_{2n-1} ,a),n\geq3$& $+1$ & $\delta$ & $\VV(z^2-x^{n-1}y + xy^2)$ & $D_{n+1}$ \\
        \hline 
        \end{tabular}
    \end{table}
    applying the graphical calculus developed in \Cref{graphical calculus}, we show in
    \Cref{table A c,table Aa-a,table Av-a,table Da,table Dv,table Av-v} that only three invariants generate the invariant ring $\CC[\Mgot_{\dd}]^{G_{\dd}^b}$.
    The three invariants define a closed embedding of schemes
    \begin{align*}
        \Mgot_{\dd}&\hookrightarrow \VV(P)\subset\CC^3,
    \end{align*}
    for some irreducible polynomial $P\in\CC[x,y,z]$ defining a surface with a Kleinian singularity of type $A$ or $D$.
    It remains to prove that each of these embeddings is an isomorphism.
    In \Cref{two parameter family}, we construct a two-parameter family
    $$(tF_\lambda)_{\lambda\in U,\ t\in\CC^*}\subset\mu^{-1}(0),$$
    whose image in the quotient $\Mgot_{\dd}$ has dimension $2$.
    Hence, the quiver variety has dimension at least two.
    Since $\VV(P)$ is a reduced irreducible surface, the closed embedding is necessarily surjective in each case. Therefore, it is a scheme isomorphism:
    \begin{align*}
    \Mgot_{\dd}&\simeq \VV(P). \qedhere
    \end{align*}
\end{proof}

\section*{Appendix: Notation used in the article}\label{section: notation}

\begin{description}
    \item[$Q$] a quiver; set of arrows,
    \item[$I$] set of vertices,
    \item[$\widetilde{A}_n,\widetilde{D}_n,\widetilde{E}_{6},\widetilde{E}_{7},\widetilde{E}_{8}$] affine Dynkin diagram,
    \item[$\delta$] minimal imaginary root of an affine Dynkin diagram,
  \item[$\tau$] involution on a quiver; see \Cref{defn:ortho quivers}
  \item[$Q^{\opp}$] opposite quiver; see \Cref{defn:opp/double quivers}
  \item[$\overline{Q}$] doubled quiver; see \Cref{defn:opp/double quivers}
  \item[$b$] signed form; see \Cref{defn:srep}
  \item[$J_i$] evaluation in the first variable of a signed form; see \Cref{defn:srep}
  \item[$J$] denotes the $2\times 2$ skew-symmetric matrix
  \begin{align}\label{matrice J}
      \begin{pmatrix}
    0&1\\
    -1&0 \\
\end{pmatrix} \in \mathrm{M}_2(\CC),
  \end{align}
  \item[$\repinvolution$] involution on the space of quiver representations; see \Cref{defn:srep},
  \item[$\Rep(Q,V),\Srep(Q,V)$] spaces of quiver representations and symmetric representations; see \Cref{defn:srep},
  \item[$\End_Q(v)$] set of endomorphisms of quiver representation,
  \item[$G_V$] base-change group; see \Cref{defn:group}
  \item[$G_V^b$] subgroup of $G_V$ that leaves $b$ invariant; see \Cref{defn:group}
  \item[$\ggot_V$,$\ggot_V^b$] Lie algebra of $G_V,G^b_V$; see \Cref{defn:group}
  \item[$\lieinvolution$] Lie algebra involution; see \Cref{defn:group}
  \item[$\kappa,\kappa^b$] perfect pairing on a Lie algebra; see \Cref{lem: pairing kappa}
  \item[$\mugras,\mu$] moment map for quiver and orthogonal quiver; see \Cref{defn:moment map quivers} and \Cref{moment of symmetric quivers}
  \item[$\mu_Q$] formal moment map, element of $\CC \overline{Q}$; see \Cref{defn:formal moment}
  \item[$\Mgotgras_{\zeta,\chi,\dd},\ \Mgot_{\zeta,\chi,\dd},\ \Mgot_{\dd}$] quotient variety $\mugras^{-1}(\zeta)\sslash_{\chi} G_{\dd},\ \mu^{-1}(\zeta)\sslash_{\chi} G_{\dd}$, $\mu^{-1}(0)\sslash G^b_V$; see \Cref{defn:quiv varieties},
  \item[$\CC Q$] path algebra of a quiver; see \Cref{defn:path algebra}
  \item[$\Pi(Q)$] preprojective algebra of a quiver; see \Cref{defn:preprojectiv algebra}
  \item[$\sim_{\tr}$] equivalence relation on cycles; see \Cref{defn:invariant tr relation}
  \item[$|Q|$] underlying topological space of a quiver; see \Cref{homotopic class}
\end{description}
\printbibliography

\end{document}